\documentclass[10pt]{amsart}

% !TEX root = coh-master.tex
%%%%%PACKAGES%%%%%%

%\parskip=2.5pt
\usepackage[dvipsnames, rgb]{xcolor}
\usepackage{amssymb, amsmath, amsthm, amsfonts, stmaryrd, mathrsfs}
\usepackage{graphicx}
\usepackage{tikz}
\usepackage{tikz-cd}
\usetikzlibrary{shapes,arrows}
\usepackage{float}
\usepackage{hvfloat}
\usepackage[width=\textwidth]{caption}
\usepackage{rotating}
\usepackage{adjustbox}

\usepackage{url}
\usepackage[all,arc,2cell]{xy}
\UseAllTwocells
\usepackage{enumerate}

\usepackage{multicol}

\usepackage{hyperref}
  \definecolor{dark-red}{rgb}{0.6,0.15,0.15}
   \definecolor{dark-blue}{rgb}{0.15,0.15,0.6}
   \definecolor{medium-blue}{rgb}{0,0,0.5}
   
   \definecolor{cbf-bluedark}{RGB}{17, 112, 170}
           \definecolor{cbf-blue}{RGB}{95, 162, 206}
               \definecolor{cbf-bluepale}{RGB}{163, 204, 233}
               
                        \definecolor{cbf-bluepale2}{RGB}{162, 200, 236}
        
      \definecolor{cbf-orangedark}{RGB}{200, 82, 0}
          \definecolor{cbf-orange}{RGB}{252, 125, 11}
      \definecolor{cbf-orangepale}{RGB}{255, 188, 121}

\setcounter{secnumdepth}{3}
\setcounter{tocdepth}{2}
\hypersetup{
    colorlinks, 
    linkcolor=dark-red,
    citecolor=dark-blue, urlcolor=medium-blue
}

\usepackage[nameinlink,capitalise,noabbrev]{cleveref}

\numberwithin{equation}{section}

%theoremstyle{plain} --- default
\newtheorem{thm}{Theorem}[section]
\newtheorem{theorem}{Theorem}[section]

\newtheorem{cor}{Corollary}[section]

\newtheorem{prop}{Proposition}[section]

\newtheorem{lem}{Lemma}[section]
\newtheorem{lemma}{Lemma}[section]

\theoremstyle{definition}
\newtheorem{defn}{Definition}[section]

\newtheorem{notation}{Notation}[section]

\newtheorem{rem}{Remark}[section]
\newtheorem{warn}{Warning}[section]

\newtheorem*{openprob}{Open Problem}

\makeatletter
\let\c@equation=\c@thm
\let\c@lem=\c@thm
\let\c@theorem=\c@thm
\let\c@lemma=\c@thm
\let\c@Theorem=\c@thm
\let\c@Lemma=\c@thm
\let\c@cor=\c@thm
\let\c@corollary=\c@thm
\let\c@Corollary=\c@thm
\let\c@conj=\c@thm
\let\c@conjecture=\c@thm
\let\c@prop=\c@thm
\let\c@proposition=\c@thm
\let\c@Proposition=\c@thm
\let\c@defn=\c@thm
\let\c@definition=\c@thm
\let\c@Definition=\c@thm
\let\c@notation=\c@thm
\let\c@note=\c@thm
\let\c@exmp=\c@thm
\let\c@ex=\c@thm
\let\c@exmps=\c@thm
\let\c@rem=\c@thm
\let\c@warn=\c@thm
\let\c@claim=\c@thm
\let\c@convention=\c@thm
\let\c@conventions=\c@thm
\let\c@quest=\c@thm
\let\c@facts=\c@thm
\let\c@slogan=\c@thm
\let\c@ass=\c@thm
\makeatother

\newcommand{\F}{\mathbb{F}}

\newcommand{\Z}{\mathbb{Z}}
\newcommand{\W}{\mathbb{W}}

\newcommand{\G}{\mathbb{G}}

\newcommand{\ZZ}{\mathbb{Z}}
\newcommand{\FF}{\mathbb{F}}
\newcommand{\QQ}{\mathbb{Q}}
\newcommand{\GG}{\mathbb{G}}

\def\SS{\mathbb{S}}

\newcommand{\xra}{\xrightarrow}

\def\makeop#1{\expandafter\def\csname #1\endcsname{\mathop{\mathrm{#1}}\nolimits}}

\makeop{Gal}
\makeop{id}
\makeop{Mod}
\makeop{Hom}
\makeop{Tot}
\makeop{gr}
\makeop{Out}
\makeop{Hom}
\makeop{Ext}
\makeop{End}
\makeop{Aut}
\makeop{Tor}
\makeop{ev}
\makeop{hocolim}
\makeop{holim}
\makeop{map}
\makeop{id}
\makeop{colim}
\makeop{im}
\makeop{pic}
%

   %%%%%IRINA%%%%%

 %Toda brackets
\newcommand{\br}[1]{\llbracket #1 \rrbracket} %power series brackets
\newcommand{\kappabar}{\bar{\kappa}}

\newcommand{\WW}{\mathbb{W}}
\newcommand{\sC}{\mathscr{C}}

%%%%%%TODONOTES%%%%%%%
\usepackage{xargs} 
\setlength{\marginparwidth}{2.5cm}
\usepackage[textwidth=2.5cm]{todonotes}
\newcommandx{\irina}[2][1=]{\todo[linecolor=red,backgroundcolor=red!25,bordercolor=red,#1]{#2}}
\newcommandx{\cuong}[2][1=]{\todo[linecolor=orange,backgroundcolor=orange!25,bordercolor=orange,#1]{#2}}
\newcommandx{\agnes}[2][1=]{\todo[linecolor=blue,backgroundcolor=blue!25,bordercolor=blue,#1]{#2}}
\newcommandx{\paul}[2][1=]{\todo[linecolor=green,backgroundcolor=green!25,bordercolor=green,#1]{#2}}
\newcommandx{\vesna}[2][1=]{\todo[linecolor=magenta,backgroundcolor=magenta!25,bordercolor=magenta,#1]{#2}}
\newcommandx{\vesnainline}[2][1=]{\todo[linecolor=magenta,backgroundcolor=magenta!25,bordercolor=magenta,#1,inline]{#2}}
\newcommandx{\agnesinline}[2][1=]{\todo[linecolor=blue,backgroundcolor=blue!25,bordercolor=blue,#1,inline]{#2}}

%%%%%%%%%%%%%%%%%%%%%%%%
%%%%%%%%%%%%%COLORED TEXT%%%%%%%%%%%%%%%%%%%%

%%%%12
\newcommandx{\tagnes}[2][1=]{\todo[linecolor=green,backgroundcolor=green!25,bordercolor=green,#1]{#2}}

\makeatletter
\newcommand{\mylabel}[2]{#2\def\@currentlabel{#2}\label{#1}}
\makeatother

\newcommand{\EE}{\mathbf{E}}
\newcommand{\MM}{\mathbf{M}}

\newcommand{\red}{r}

\newcommand{\wchi}{\widetilde{\chi}}
\newcommand{\etachi}{\eta\chi}

\newcommand{\hbone}{\left[\frac{\sigma \eta}{v_1^2}\right]}
\newcommand{\gbone}{\left[\frac{\sigma \eta^2}{v_1^3}\right]}
\newcommand{\hbonebar}{\left[\frac{\sigma\eta\chi}{v_1^2}\right]}

\newcommand{\ghbone}{\left[\frac{\sigma\eta^3}{v_1^4}\right] }
\newcommand{\ghbonebar}{\left[\frac{\sigma\eta^3 \chi }{v_1^4}\right]} 
\newcommand{\hchi}{\left[\frac{\eta \chi}{v_1}\right]}
\newcommand{\gchi}{\left[\frac{\eta^2 \chi}{v_1^2}\right]}

\newcommand{\kchi}{\left[\frac{\eta^4 \chi}{v_1^4}\right]}

\newcommand{\gwchi}{\left[\frac{\eta^2 \wchi}{v_1^2}\right]}

\newcommand{\hbtwo}{[hb_2]}

\newcommand{\btwobar}{[\overline{b}_2]}

\newcommand*\act{{\mkern 0.5mu \cdot\mkern 0.5mu}}
\newcommand*\piact{\pi\act}

\title[Cohomology of the Morava stabilizer group]{Cohomology of the Morava stabilizer group through the duality resolution at $n=p=2$}
\date{\today}

\author[Beaudry]{Agn\`es Beaudry}
\address{Department of Mathematics, University of Colorado Boulder, Campus Box 395, Boulder, CO, 80309, USA}
\email{agnes.beaudry@colorado.edu}

\author[Bobkova]{Irina Bobkova}
\address{Department of Mathematics, Texas A\&M University, College Station, TX, 77843, USA}
\email{ibobkova@tamu.edu}

\author[Goerss]{Paul G. Goerss}
\address{Department of Mathematics, Northwestern University,  2033 Sheridan Road, Evanston, Illinois, 60208, USA}
\email{pgoerss@math.northwestern.edu}

\author[Henn]{Hans-Werner Henn}
\address{Institut de Recherche Math\'ematique Avanc\'ee, C.N.R.S. et 
Universit\'e de Strasbourg, rue Ren\'e Descartes, 67084 Strasbourg Cedex, France}
\email{henn@math.unistra.fr}

\author[Pham]{Viet-Cuong Pham}
\email{phamvietcuonga2@gmail.com}

\author[Stojanoska]{Vesna Stojanoska}
\address{Department of Mathematics, University of Illinois, Urbana-Champaign, 273 Altgeld Hall
1409 W. Green Street, 
Urbana, IL 61801, USA}
\email{vesna@illinois.edu}

\begin{document}

\begin{abstract}
We compute the continuous cohomology of the Morava stabilizer group with coefficients in Morava $E$-theory, $H^*(\GG_2, \EE_t)$, at $p=2$, for $0\leq t < 12$, using the Algebraic Duality Spectral Sequence. Furthermore, in that same range, we compute the $d_3$-differentials in the homotopy fixed point spectral sequence for the $K(2)$-local sphere spectrum. These cohomology groups and differentials play a central role in $K(2)$-local stable homotopy theory.
\end{abstract}
 
\maketitle
\setcounter{tocdepth}{1}
\tableofcontents

\section{Introduction}\label{sec:intro}

 % !TEX root = coh-master.tex

The main purpose of this article is to analyze the cohomology of the Morava stabilizer group with coefficients in the homotopy groups of Morava $E$-theory at the prime $n=p=2$ in a small range.
We aim to explain how these groups can be computed using the algebraic duality resolution, which appeared in \cite{BeaudryRes}, from the already existing mod 2 computations of \cite{BGH, BeaudryTowards}.

To be more precise, let us establish some notation. Let $\Gamma$ be the formal group law of the super-singular elliptic curve $C$ with Weierstrass equation $y^2+y=x^3$ over $\FF_4$. Define 
\[\mathbb{S}_2 =\Aut_{\F_4}(\Gamma)\] 
 to be the automorphism group of $\Gamma$. The Galois group $\Gal=\Gal(\F_4/\F_2)$ acts on $\mathbb{S}_2$ and we let
\[\mathbb{G}_2 =\Aut_{\F_4}(\Gamma) \rtimes \Gal.\]
We let $\EE = E(\F_4, \Gamma)$ be the Lubin--Tate theory (a.k.a Morava $E$-theory) associated to the pair $(\F_4,\Gamma)$ and $\EE_t=\pi_t\EE$.

Our \emph{first} goal is to analyze the \emph{continuous}\footnote{In this paper, group cohomology $H^*$ always denotes \emph{continuous cohomology}.} cohomology
$H^*(\GG_2, \EE_t)$
 in the range $0\leq t<12$ using the algebraic duality resolution. For the reader who likes charts, the final result is depicted in \cref{fig:EinfS2}. For the one who prefers lists, see \cref{fig:table2}. We have a number of other figures -- and a table -- to visually guide the reader through the intermediate calculations.

The importance of these cohomology groups to homotopy theory lies in the fact that
the $K(2)$-local $\EE$-based ANSS can be identified with the homotopy fixed points spectral sequence
\[E_2^{*,*}=H^*(\GG_2, \EE_* )\Longrightarrow \pi_{*}L_{K(2)}S^0.\]
See \cite[Appendix A]{DH}. 
Our \emph{second} result is the computation of the $d_3$ differentials in this spectral sequence in our range.

The spectrum $L_{K(2)}S^0$ is one of the fundamental objects of study in chromatic homotopy theory. 
It is closely related to the $\EE$-local sphere $L_2S^0$. The homotopy of $L_{K(2)}S^0$ can be recovered from that of $L_2S^0$ via the chromatic fracture square. Furthermore, a calculation of the $E_2$-term of the ANSS for $L_2S^0$ appears in the work of Shimomura--Wang \cite{SW} and information about the $E_2$-term of ANSS for $L_{K(2)}S^0$ can be extracted from \cite{SW}. However, such an extraction process is technical and misses the  organizational principle offered by the duality resolution perspective, which has been key to many recent results about the $K(2)$-local category both at the primes $2$ and $3$. For example, the duality resolution was used to study the Hopkins' Chromatic Splitting Conjecture and compute the rational homotopy of the $K(2)$-local sphere in \cite{GoerssSplit, BGH}. At $p=3$, the duality resolution was used to compute the $K(2)$-local Picard group in \cite{GHMRPicard}. Our work here at $p=2$ is designed to be used by the authors to compute the group of exotic elements in the $K(2)$-local Picard group at $p=2$ \cite{BBGHPS_Pic}.

We also note the close relationship between $H^*(\GG_2, \WW)$ and the cohomology of the Morava Stabilizer Algebra. Ravenel was the first to compute an associated graded for this cohomology ring in \cite[Theorem (3.4)]{RavCoh} (although his setup and method are quite different than the duality resolution approach).

\begin{openprob}
 In principle, our computation could be attempted for all $t$ rather than the small range we explore. 
 We invite any interested reader to take on this challenge.
\end{openprob}

Before describing the approach, we define some key subgroups of $\SS_2$ and introduce important element of its cohomology.

\subsection{Subgroups}\label{sec:subgroups}
The automorphisms $\Aut_{\FF_4}(C)$ of the elliptic curve $C$ over $\F_4$ are well-known (see \cite[Appendix A]{Silverman}) and form a group of order $24$, denoted by
\[G_{24} := \Aut_{\F_4}(C) \cong Q_8 \rtimes \F_4^{\times}.\]
We remind the reader that $ \F_4^{\times}$ is a cyclic group of order $3$. Since we will be working in a 2-complete setting, the quaternion subgroup plays the most important role here. 
The action of $ \F_4^{\times}$ stabilizes the center $(\pm 1)$ of $Q_8$ so $G_{24}$ has a subgroup
\[C_6 = (\pm 1) \rtimes  \F_4^{\times}.\]
Since $\Aut_{\FF_4}(C)$ embeds into $\SS_2=\Aut_{\FF_4}(\Gamma)$, these are also subgroups of $\SS_2$.

The group $\SS_2$ admits a surjective homomorphism to the group of units in the $2$-adic integers, called the determinant (see, e.g., \cite[\S 1.2]{ghmr}) and denoted
 \[\det: \SS_2 \to \Z_2^{\times} \cong (\pm 1) \times (1+4\Z_2).\] 
If we compose the determinant with the quotient map of $\Z_2^{\times} $ by the torsion subgroup $(\pm 1)$, we get a surjective homomorphism 
\begin{equation}\label{eq:zeta}
\zeta \colon \SS_2 \to  \Z_2^{\times}/(\pm 1) \cong \Z_2.
\end{equation}
The group $\SS_2^1$ is defined to be the kernel of $\zeta$, and we have a split extension
\[1 \to \SS_2^1 \to \SS_2 \xrightarrow{\zeta} \Z_2 \to 1 . \]
Any choice of $\psi \in \SS_2$ which maps to a topological generator of $\Z_2 = \Z_2^{\times}/(\pm 1)$ determines an isomorphism
\[\SS_2 \cong \SS_2^1 \rtimes \Z_2.\]
The group $G_{24}$ is contained in $\SS^1_2$ since $\Z_2$ is torsion free. 

In \cite{BeaudryRes, BeaudryTowards}, a useful choice 
of a splitting is made, defined using the element $\pi\in \SS_2$, which we recall now. For this, we note for the Witt vectors 
\[\WW:=W(\F_4) \cong \ZZ_2[\omega]/{(\omega^2+\omega+1)}\]
there is a natural inclusion $\WW^{\times}\subseteq \SS_2$. See \cite[Appendix 2]{ravgreen}.

\begin{defn}\label{rem:pialpha}
We let
\[\pi = 1+2\omega \in \W^\times \subseteq \SS_2,\]
where $\omega$ is a primitive third root of unity in $\WW^{\times}\subseteq \SS_2$.
We also define
\[\alpha = \frac{1-2\omega}{\sqrt{-7}}, \quad \sqrt{-7} \equiv 1 \mod 4 . \]
\end{defn}

Note that $\alpha \in \SS^1_2$, whereas $\pi \not\in \SS^1_2$. 

\begin{rem}\label{rem:defG24prime}
Although the subgroup $G_{24}$ is unique up to conjugacy in $\SS_2$, the subgroups $G_{24}$ and
\[ G_{24}' := \pi G_{24} \pi^{-1}\]
are not conjugate in $\mathbb{S}^1_2$. Both these subgroups of $\SS^1_2$ will appear later.
\end{rem}

Recall from \cite[\S 3]{BeaudryTowards} that $\mathbb{S}_2$ is the group of units in 
\[  \W\langle T \rangle/ (T^2+2, T \omega - \omega^{\sigma}T).\]
We let 
\[F_{i/2}\SS_2 = \{x \in \SS_2 \mid x \equiv 1 \mod T^i \}.\]
\begin{defn}\label{defn:K}
Let
\[
K := \overline{\langle \alpha, F_{3/2}\SS_2 \rangle} .
\]
We let $K^1 = \SS^1_2 \cap K$. 
\end{defn}

In the definition, the notation means that $K$ is the closure of the subgroup of $\SS_2$ generated by $\alpha$ and the elements of $F_{3/2}\mathbb{S}_2 $.
The group $K$ is a Poincar\'e duality group of dimension $4$, and an open normal subgroup of $\SS_2$ \cite{BeaudryRes}. There is a split exact sequence
\[ 1 \to K \to \SS_2 \to G_{24} \to 1,\]
which exhibits $G_{24}$ as a quotient rather than a subgroup. We thus get an isomorphism \cite{BeaudryRes}
\[\SS_2 \cong K \rtimes G_{24}.\]

\subsection{Cohomology Classes}\label{rem:chizeta}\label{sec:cohomologyclasses}
Ultimately, we want to access classes in the cohomology of $\GG_2$. 
 However, the inclusion $\SS_2 \to \GG_2$ induces an isomorphism
\[ H^*(\GG_2, \WW) \xrightarrow{\cong} H^*(\SS_2, \WW)^{\Gal}.\]
Furthermore, as $\Gal$-modules, we have an isomorphism
\[H^*(\SS_2, \WW) \cong H^*(\SS_2, \ZZ_2)\otimes_{\ZZ_2} \WW\]
with $H^*(\SS_2, \ZZ_2)$ viewed as the trivial $\Gal$-module and $\WW$ equipped with its canonical Galois action (see, e.g., \cite[Lemma 1.32]{BobkovaGoerss}). Since $\W \cong \Z_2[\Gal]$ as a $\Gal$-module, its fixed points are $\Z_2$, and in turn,   
any class in $H^*(\SS_2, \ZZ_2)$ naturally corresponds to an element in $H^*(\GG_2, \WW)$.

The homomorphism $\zeta$ defined in \eqref{eq:zeta} gives rise to our first important class
\begin{equation}\label{def:zeta}
\boxed{\zeta \in H^1(\SS_2, \Z_2) \cong H^1(\GG_2, \WW).}
\end{equation}

\begin{rem}The image of $\zeta$ in $H^1(\GG_2,\EE_0)$ survives to detect a same named homotopy class 
 $\zeta \in \pi_{-1}L_{K(2)}S^0$.
This class is discussed by Devinatz--Hopkins in \cite[Proposition 8.2]{DH}.
\end{rem}

Another important class is obtained by composing the determinant with the reduction modulo 4 map,
\begin{equation*}
\chi \colon \SS_2 \to  \Z_2^{\times}/(1+4\Z_2) \cong \Z/2.
\end{equation*}
This gives a class 
$\chi \in H^1(\SS_2,\Z/2)$ 
and its Bockstein is an element of order $2$ which is our second key class (see \cite{BGH} for details)
\begin{equation}\label{def:wchi}
 \boxed{\wchi \in H^2(\SS_2, \ZZ_2) \cong H^2(\GG_2, \WW).}
 \end{equation}

\begin{rem}
Under the maps induced by inclusion of subgroups (i.e., the restrictions), the classes $\chi$ and $\wchi$ map to non-trivial classes in the cohomology of $\GG^1_2 = \SS^1_2 \cap \GG_2$ and $\SS^1_2$ (with coefficients $\Z/2$ for $\chi$ and $\Z_2$ for $\wchi$), while $\zeta$ maps to zero in the cohomology of $\GG^1_2$ and $\SS^1_2$ (with $\Z_2$ coefficients).
\end{rem}

The third important class comes from the periodicity generator of the cohomology of $G_{24}$.
The composition of group homomorphisms $G_{24} \subseteq \SS_2 \to \SS_2/K \cong G_{24}$ gives an algebra splitting
\[H^*(G_{24},\ZZ_2) \xrightarrow{s} H^*(\SS_2, \ZZ_2) \xrightarrow{\mathrm{res}} H^*(G_{24},\ZZ_2)  \]
where the second map is the restriction in cohomology induced by the inclusion. Since $H^*(G_{24},\ZZ_2) \cong \ZZ_2[k]/8k$
for a class $k\in H^4(G_{24},\ZZ_2)$, we have
\begin{equation}\label{eq:defk}
\boxed{ k \in H^4(\SS_2, \ZZ_2) \cong H^4(\GG_2, \WW)} 
\end{equation}
We will see later that $k$ is related to the famous  element $\kappabar \in \pi_{20}S^0$.

\begin{rem}\label{rem:splitting}
We have a commutative diagram
\[\xymatrix{
G_{24} \ar[r] \ar@{=}[d] & \mathbb{S}_2^1 \ar[r] \ar[d]&  \mathbb{S}_2^1/K^1 \cong G_{24} \ar[d]^-{\cong} \\
G_{24} \ar[r]  & \mathbb{S}_2 \ar[r]   &  \mathbb{S}_2/K \cong G_{24}  
}\] 
so we get compatible splittings in the sense that the diagram
\[\xymatrix{
H^*(\mathbb{S}_2/K, \ZZ_2) \ar[d]^-{\cong} \ar[r]^-s &  H^*(\mathbb{S}_2, \ZZ_2) \ar[d] \ar[r] &  H^*(G_{24},\ZZ_2 )\ar@{=}[d] \\
H^*(\mathbb{S}_2^1/K^1, \ZZ_2) \ar[r]^-s &  H^*(\mathbb{S}_2^1, \ZZ_2) \ar[r] &  H^*(G_{24},\ZZ_2 )  
}\]
commutes, 
where the horizontal composites are isomorphisms.
The splittings also exist if we replace the coefficients $\ZZ_2$ by $\WW$.
\end{rem}

\subsection{Strategy and organization of the paper}

The strategy is to work up the iterated group extensions and perform the following sequence of computations
\begin{equation}  \label{eq:steps}
H^*(\SS^1_2, \EE_*/2)  \rightsquigarrow H^*(\SS^1_2, \EE_*)  \rightsquigarrow H^*(\SS_2, \EE_*) \rightsquigarrow H^*(\GG_2, \EE_*)
\end{equation}
where the arrows mean ``then''.

Let $\MM$ be a (graded) profinite continuous left $\Z_2[\![\SS^1_2]\!]$-module. We will call such an $\MM$ an \emph{$\SS_2^1$-module} for short.  The algebraic duality spectral sequence (ADSS) is a spectral sequence with signature
\begin{equation}\label{eq:adss-first-mention}
E_1^{p,q}(\MM):=H^q(F_p, \MM) \Rightarrow H^{p+q}(\SS_2^1, \MM)
\end{equation}
where $F_0=G_{24}$, $F_1=F_2 = C_6$ and $F_3=G_{24}'$.

The paper is organized by following the steps outlined in \eqref{eq:steps}. We start by recalling the (background) input to the ADSS for $\EE_*/2$ and $\EE_*$ in \Cref{sec:Cohomology}. The left-most step in \eqref{eq:steps}, i.e., the computation of $H^*(\SS^1_2, \EE_*/2)$, was performed in \cite{BeaudryTowards} using the ADSS and our calculation is based on that work.
We recall the main results of this computation in \Cref{sec:BeaudryTowards}. Our paper illustrates how simple it is to extract a computation of $H^*(\SS^1_2, \EE_*) $ from that of  $H^*(\SS^1_2, \EE_*/2)$ given enough information about the $d_1$-differentials of the ADSS, and we do most of this work in \Cref{sec:main}. We then determine $H^*(\SS_2,\EE_*)$ in our range $0 \leq t < 12$, performing the next step of \eqref{eq:steps} in \Cref{sec:Z2action}. Taking the Galois fixed points in the last step of \eqref{eq:steps} is rather straightforward and accomplished in the brief \Cref{sec:gal}. 

The last section of the paper is devoted to the computation of $d_3$-differentials in the homotopy fixed point spectral sequence 
\[ H^*(\G_2, \EE_*) \Rightarrow \pi_* L_{K(2)}S^0,\]
whose sources are in the range $0 \leq t < 12$. This is bootstrapped from the complete calculation of the $v_1$-localized analogous spectral sequence for the Moore spectrum done in \cite{BGH}.  Again, we invite the interested readers to study this spectral sequence for all values of $t$ as well as the higher differentials.

\subsection*{Acknowledgements}
This material is based upon work supported by 
the National Science Foundation under grants No. DMS-2005627, DMS-1906227 and DMS-1812122.
 A large portion of this work was conducted at the Max Planck Institute for Mathematics in Bonn and the authors would like to thank the MPIM for the hospitality. The authors would also like to thank the Hausdorff Research Institute for Mathematics for the hospitality in the context of the Trimester program Spectral Methods in Algebra, Geometry, and Topology, funded by the Deutsche Forschungsgemeinschaft (DFG, German Research Foundation) under Germany's Excellence Strategy -- EXC-2047/1 -- 390685813.
 
 The authors would also like to thank Mark Behrens and Mike Hill for helpful conversations.

\section{The cohomology of $G_{24}$ and $C_6$.}\label{sec:Cohomology}
  % !TEX root = coh-master.tex

The goal of this section is to gather information about cohomology of $G_{24}$ and $C_6$ with coefficients in $\EE_*$ and $\EE_*/2$. 
This information is required for describing the $E_1$-page of \eqref{eq:adss-first-mention} when $M=\EE_*$ and $\EE_*/2$. The results in this section are all well-known, and we simply collect them here to establish notation.

\subsection{Cohomology $H^*(C_6, \EE_*)$}
Recall that
\[\EE_* \cong \WW\br{u_1}[u^{\pm 1}], \quad u\in \EE_{-2}.\]
The central $C_2 =\{\pm 1 \} \subset \GG_2$ acts trivially on $\EE_0\cong \WW[\![u_1]\!]$ and by multiplication by $-1$ on $u$. With this action, 
\[
H^*(C_2, \EE_*) =\WW[\![u_1]\!][[u^{2}]^{\pm 1}, \alpha]/(2\alpha),
\]
where $\alpha \in H^1(C_2, \EE_2)$ is the image of the generator of $H^1(C_2 , \ZZ_{sgn})$ under the map which
sends the generator of the sign representation $\ZZ_{sgn}$ to $u^{-1}$. 

\begin{rem}
The square brackets around a cohomology class such as $[u^2]$ indicate that this is an indecomposable element in the cohomology ring.
\end{rem}

Since the pairing 
\[ H^1(C_2,\ZZ_{sgn})\otimes H^1(C_2,\ZZ_{sgn}) \to H^2(C_2,\ZZ) \] is an isomorphism and $[u^2]$ is an invariant unit in $\EE_0$, 
the class
\[g:=\alpha^2[u^2]\] generates $H^*(C_2,\EE_0)$ as a polynomial $\EE_0$-algebra.

The action of $C_3\cong \FF_4^{\times} =\langle \omega \rangle$ is given by 
\begin{align}\label{eq:omegaAction}
\omega_* u=\omega u  \qquad \omega_*u_1 =\omega u_1 \qquad \omega_*\alpha=\omega^{2}\alpha
\end{align}
and we have 
\[
H^*(C_6, \EE_*) \cong H^*(C_2, \EE_*)^{\FF_4^{\times}}.
\]
To describe this ring of invariants, define the following $\F_4^{\times}$-invariant elements
\begin{align*} 
w_5&:=u^{-2}\alpha \in H^1(C_6, \EE_6) & j_0&:=u_1^3 \in H^0(C_6,\EE_0) \\ 
[v_1v_2]&:=u_1u^{-4} \in H^0(C_6, \EE_8) & [v_2^2]&:=u^{-6} \in H^0(C_6, \EE_{12}).
\end{align*}

One verifies that the elements above generate the $\FF_4^\times$-invariant subring of $H^*(C_2, \EE_*)$. We then have the following explicit description.
\begin{lemma}\label{lem:C6coh}
There is an isomorphism
\[H^*(C_6, \EE_*) \cong \WW[\![j_0]\!]  \big[w_5, [v_1v_2], [v_2^2]^{\pm 1} \big]/ \big( 2w_5, [v_1v_2]^3 - j_0 [v_2^2]^2\big). 
\]
\end{lemma}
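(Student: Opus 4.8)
The plan is to reduce the statement to a computation in finite invariant theory. Since $|\FF_4^\times|=3$ is prime to $2$, taking $\FF_4^\times$-fixed points is exact, and we have already recorded the identification $H^*(C_6,\EE_*)\cong H^*(C_2,\EE_*)^{\FF_4^\times}$; so it suffices to compute the invariants of $C_3=\langle\omega\rangle$ acting by \eqref{eq:omegaAction} on
\[R:=H^*(C_2,\EE_*)=\WW[\![u_1]\!]\big[[u^2]^{\pm1},\alpha\big]/(2\alpha).\]
First I would use the cohomological grading to split $R$ as a $C_3$-module. Its degree-zero part is the torsion-free ring $S:=H^0(C_2,\EE_*)=\WW[\![u_1]\!][[u^2]^{\pm1}]$, while for each $j\ge1$ multiplication by $\alpha^j$ gives an isomorphism $S/2\xrightarrow{\ \sim\ }H^j(C_2,\EE_*)$ of $C_3$-modules up to the character $\omega\mapsto\omega^{2j}$ by which $\alpha^j$ transforms. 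Hence $R^{C_3}$ is controlled by $S^{C_3}$ together with the $\omega$-weight decomposition of $S/2$.

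Next I would compute $S^{C_3}$ directly. Each monomial $u_1^{a}[u^2]^{b}$ is an $\omega$-eigenvector, and by \eqref{eq:omegaAction} it is invariant exactly when $(a,b)$ satisfies one linear congruence modulo $3$; solving this congruence recovers the assertion made before the lemma that $j_0=u_1^3$, $[v_1v_2]=u_1u^{-4}$ and $[v_2^2]^{\pm1}=u^{\mp6}$ generate $S^{C_3}$ over $\WW$, the $u_1$-completion being harmless since $\WW[\![u_1]\!]^{C_3}=\WW[\![u_1^3]\!]=\WW[\![j_0]\!]$. For the relations, note $[v_1v_2]^3=u_1^3u^{-12}=j_0[v_2^2]^2$, and that the elements $\{\,[v_2^2]^m,\ [v_1v_2][v_2^2]^m,\ [v_1v_2]^2[v_2^2]^m : m\in\ZZ\,\}$ form a $\WW[\![j_0]\!]$-module basis of $S^{C_3}$: these three families sit in internal degrees $\equiv 0,\,8,\,4 \pmod{12}$ respectively, hence they exhaust the internal degrees in which $S^{C_3}$ is nonzero and contribute exactly one $\WW[\![j_0]\!]$-generator in each. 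As the same list is evidently a basis of $\WW[\![j_0]\!][[v_1v_2],[v_2^2]^{\pm1}]/([v_1v_2]^3-j_0[v_2^2]^2)$, the tautological surjection onto $S^{C_3}$ is an isomorphism.

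Finally I would splice in $w_5$. The key observation is that $u^{-2}$ is an $\omega$-eigenvector which is a unit, so each $\omega$-isotypic summand of $S/2$ is free of rank one over $(S/2)^{C_3}=S^{C_3}/2$, generated by the appropriate power of $u^{-2}$. Matching the $\omega$-weight of $\alpha^j$ against that of $u^{-2j}$, one sees that multiplication by $w_5^{j}=u^{-2j}\alpha^j$ carries $S^{C_3}/2$ isomorphically onto $H^j(C_2,\EE_*)^{C_3}$ for every $j\ge1$. Summing over $j\ge0$ gives $R^{C_3}\cong S^{C_3}[w_5]/(2w_5)$, and substituting the presentation of $S^{C_3}$ produces the claimed isomorphism. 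I expect the only real obstacle to be the bookkeeping that confirms there are no relations beyond $2w_5=0$ and $[v_1v_2]^3=j_0[v_2^2]^2$; because $3$ is invertible, everything else reduces to congruences modulo $3$ on exponents and the internal-degree count above. Alternatively one can skip the explicit basis and merely check that the proposed ring and $H^*(C_2,\EE_*)^{\FF_4^\times}$ have the same invariants in every internal and cohomological bidegree, which is again a matter of the same congruences.
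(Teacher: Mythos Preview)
Your proposal is correct and follows exactly the route the paper indicates: the paper simply states that ``one verifies that the elements above generate the $\FF_4^\times$-invariant subring of $H^*(C_2,\EE_*)$'' and records the presentation without further argument, so your write-up is a fully fleshed-out version of that omitted verification. The weight-counting argument you give (splitting into the torsion-free degree-zero part $S$, identifying its invariants via the congruence $a\equiv b\pmod 3$ on monomials $u_1^a[u^2]^b$, and then using that $u^{-2}$ is a unit eigenvector to handle higher cohomological degrees via powers of $w_5$) is precisely the standard computation being alluded to, and there are no gaps.
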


We also give special names to the following important invariant elements
\begin{align}\label{eq:C6cohElements}
\eta & := \alpha u_1 = w_5 [v_1v_2][v_2^2]^{-1} &  g&:= w_5^2[v_2^2]^{-1} \\
[v_1^2]&:= u_1^2u^{-2} =  [v_1v_2]^2 [v_2^2]^{-1} &
\mu&:= \eta [v_1^2].\nonumber 
\end{align}

\begin{rem}
Note that $\eta$-multiplication is not always surjective, as for example, 
\[ \eta [v_1^2] =j_0 w_5, \quad \eta^2 [v_1v_2] = j_0 w_5^2,\quad \eta^3= \nu j_0. \]
In \cref{fig:c6E}, this is depicted by the emergence of a new $\eta$-tower (of dots) at the positions where $\eta$-multiplication fails to be surjective. In particular, the different $\eta$-towers in a specific bi-degree are related by $j_0$-multiplication.
\end{rem}

 \begin{figure}[h]
 \includegraphics[page=1, width=0.9\textwidth]{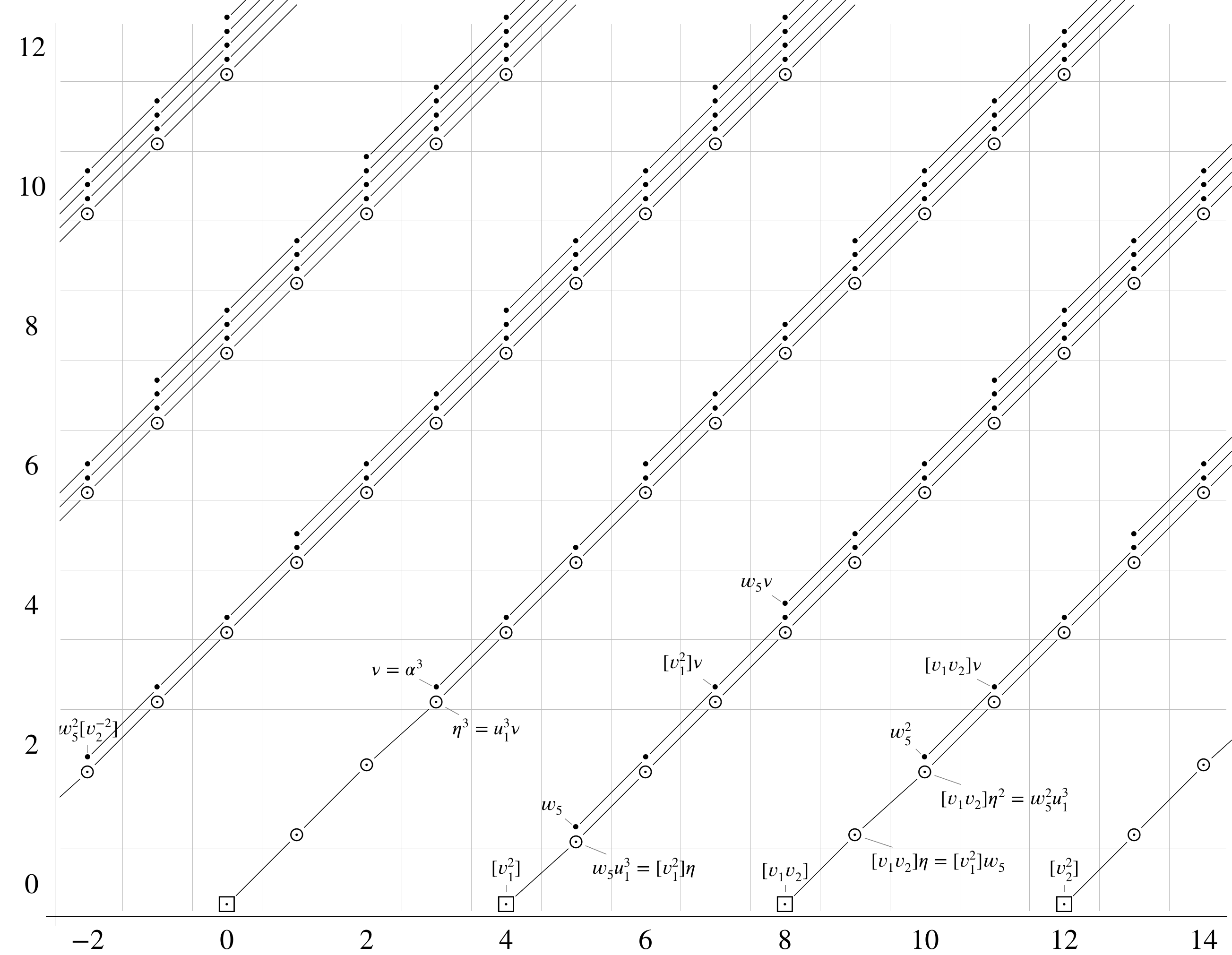}
\caption[$H^*(C_6, \EE_*)$]{$H^s(C_6, \EE_t)$. The horizontal axis is $t-s$, the vertical axis is $s$. A bullet denotes $\FF_4$, $\odot = \FF_4[\![j_0]\!]$ and $\boxdot = \WW[\![j_0]\!]$. The slope 1 lines are multiplication by $\eta$.}
\label{fig:c6E}
 \end{figure}
 
\subsection{Cohomology $H^*(C_6, \EE_*/2)$}
In a similar way, we can compute the cohomology of $C_6$ with coefficients in $\EE_*/2 \cong \FF_4[\![u_1]\!][u^{\pm 1}]$. 
Namely, $C_2$ acts trivially on $\EE_*/2$, giving 
\[ H^*(C_2, \EE_*/2) \cong \FF_4[\![u_1]\!][u^{\pm 1}] [h],\] 
where $h\in H^1(C_2,\EE_0/2)$.
 Note that $H^*(C_2,\EE_*/2)$ is a module over $H^*(C_2,\EE_*)$ and 
\[h = \alpha u.\] 
The action of $\FF_4^\times$ then follows from \eqref{eq:omegaAction}, and we can simply read off the invariants.

\begin{lemma}\label{lem:cohC6mod2}
Let $ v_1=u_1u^{-1}$ and $v_2=u^{-3}$. Then
\[
H^*(C_6, \EE_*/2)=\FF_4[\![j_0]\!][v_1, v_2^{\pm 1}, h]/(v_1^3-v_2 j_0)
\]
In this cohomology ring, $\eta=v_1h$.
\end{lemma}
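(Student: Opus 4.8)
The plan is to mirror the computation of $H^*(C_6, \EE_*)$ from \cref{lem:C6coh}, but now with the mod $2$ coefficients, exploiting that the central $C_2 = \{\pm 1\}$ acts trivially on $\EE_*/2 \cong \FF_4[\![u_1]\!][u^{\pm 1}]$ (since $-1$ acts by $-1 = 1$ on $u$ modulo $2$). First I would record $H^*(C_2, \EE_*/2)$: because the action is trivial and $\EE_*/2$ is a free $\FF_2$-module in each internal degree, the cohomology is $\EE_*/2$ tensored with $H^*(C_2, \FF_2) = \FF_2[h]$, so $H^*(C_2, \EE_*/2) \cong \FF_4[\![u_1]\!][u^{\pm 1}][h]$ with $h \in H^1(C_2, \EE_0/2)$, as stated. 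I would then pin down the identification $h = \alpha u$: the class $\alpha \in H^1(C_2, \EE_2)$ is the image of the generator of $H^1(C_2, \Z_{\mathrm{sgn}})$ sending the sign generator to $u^{-1}$, and reducing mod $2$ collapses the sign representation to the trivial one, so the reduction of $\alpha$ is the polynomial generator in $H^1(C_2, \EE_2/2)$; multiplying by the unit $u \in \EE_{-2}/2$ gives the degree-$0$ generator $h$.

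Next I would compute the $\FF_4^\times = \langle \omega \rangle$-invariants, using that $H^*(C_6, \EE_*/2) \cong H^*(C_2, \EE_*/2)^{\FF_4^\times}$ (the order of $\FF_4^\times$ is prime to $2$, so taking $C_3$-invariants is exact and computes the cohomology of the extension). From \eqref{eq:omegaAction} we have $\omega_* u = \omega u$, $\omega_* u_1 = \omega u_1$, and $\omega_* \alpha = \omega^2 \alpha$, hence $\omega_* h = \omega_*(\alpha u) = \omega^2 \omega \cdot \alpha u = h$, so $h$ is already invariant. For the rest, the invariants of $\FF_4[\![u_1]\!][u^{\pm 1}]$ under $u_1 \mapsto \omega u_1$, $u \mapsto \omega u$ are generated by the monomials of weight $\equiv 0 \pmod 3$ in the grading where $u_1$ and $u$ both have weight $1$; a minimal generating set is $v_1 = u_1 u^{-1}$ (weight $0$), $v_2 = u^{-3}$ (weight $0$, invertible), and $j_0 = u_1^3$ (weight $0$), subject to the single relation $v_1^3 = u_1^3 u^{-3} = j_0 v_2$. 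This yields $H^*(C_6, \EE_*/2) \cong \FF_4[\![j_0]\!][v_1, v_2^{\pm 1}, h]/(v_1^3 - v_2 j_0)$. Finally, to see $\eta = v_1 h$, recall $\eta = \alpha u_1$ in $H^1(C_6, \EE_*)$ from \eqref{eq:C6cohElements}, and its mod $2$ reduction is $\alpha u_1 = (\alpha u)(u_1 u^{-1}) = h v_1$.

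The only real subtlety is keeping the identifications of the named classes consistent across the reduction map $\EE_* \to \EE_*/2$ — in particular matching $h = \alpha u$ with the degree-$0$ polynomial generator and checking that the invariant-ring relation $v_1^3 = j_0 v_2$ is the \emph{only} relation, i.e. that $\{v_1, v_2^{\pm 1}, j_0, h\}$ generate and that there are no further relations among them. This is a standard invariant-theory computation for a cyclic group of order $3$ acting diagonally on a Laurent/power-series ring over $\FF_4$, and a clean way to justify it is to observe that $H^*(C_6, \EE_*/2)$ is the mod $2$ reduction of the (torsion-free-plus-$2$-torsion) ring $H^*(C_6, \EE_*)$ of \cref{lem:C6coh} in the appropriate sense, or simply to verify directly that the proposed presentation has the right Poincaré series in each bidegree. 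No step requires more than bookkeeping once the $C_2$-computation and the weight count are in hand.
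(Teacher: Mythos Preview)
Your proposal is correct and follows essentially the same approach as the paper: compute $H^*(C_2,\EE_*/2)$ using the triviality of the $C_2$-action mod~$2$, identify $h=\alpha u$, and then take $\FF_4^\times$-invariants via the weight-grading formulas \eqref{eq:omegaAction}. The paper in fact offers no formal proof beyond the sentence ``we can simply read off the invariants,'' so your write-up supplies strictly more detail (the verification that $h$ is $\omega$-invariant, the weight argument for the monomial generators, and the derivation of $\eta=v_1h$) than the paper itself does.
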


It will be important to understand the map on cohomology induced by the reduction modulo $2$ 
\[\red \colon \EE_* \to \EE_*/2. \]
In the case of $C_6$, this is easily deduced from the definition of the elements.
\begin{lem}\label{lem:maprforC6}
The map
\[\red \colon H^*(C_6, \EE_*) \to H^*(C_6, \EE_*/2)  \]
is the ring map determined by
\begin{align*}
\red([v_1v_2])&= v_1v_2, & \red([v_2^2])&= v_2^2, & \red(j_0) &= j_0,  & \red(w_5) &= hv_2.
\end{align*}
\end{lem}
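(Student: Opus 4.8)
The plan is to trace through the explicit definitions of all the named classes in $H^*(C_6,\EE_*)$ and $H^*(C_6,\EE_*/2)$ given in the preceding lemmas, and apply the reduction map $\red$ termwise. Since $\red$ is a ring homomorphism, it suffices to check its effect on the algebra generators $[v_1v_2]$, $[v_2^2]$, $j_0$, and $w_5$ appearing in the presentation of \cref{lem:C6coh}. For the first three generators the computation is immediate: they all lie in $H^0(C_6,\EE_*)$, where $\red$ is just the mod-$2$ reduction $\WW[\![u_1]\!][u^{\pm1}]\to\FF_4[\![u_1]\!][u^{\pm1}]$, and under this map $u_1^3\mapsto u_1^3=j_0$, $u_1u^{-4}\mapsto (u_1u^{-1})(u^{-3}) = v_1v_2$, and $u^{-6}\mapsto (u^{-3})^2=v_2^2$. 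So those three assertions are essentially bookkeeping once I recall that $v_1 = u_1u^{-1}$ and $v_2=u^{-3}$ as defined in \cref{lem:cohC6mod2}.

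First I would handle the one genuinely nontrivial case, $\red(w_5) = hv_2$. The element $w_5 = u^{-2}\alpha \in H^1(C_6,\EE_6)$ is built from $\alpha\in H^1(C_2,\EE_2)$, which is the image of the generator of $H^1(C_2,\ZZ_{\mathrm{sgn}})$ under the map sending the generator of $\ZZ_{\mathrm{sgn}}$ to $u^{-1}$. On the mod-$2$ side, $h\in H^1(C_2,\EE_0/2)$ satisfies $h = \alpha u$; equivalently, $h$ is the image of a generator of $H^1(C_2,\FF_4)\cong H^1(C_2,\ZZ_{\mathrm{sgn}}/2)$. The key point is naturality of the long exact (Bockstein) sequence, or more simply the compatibility of the coefficient maps $\ZZ_{\mathrm{sgn}}\to\ZZ_{\mathrm{sgn}}/2\cong\FF_2$ with $\EE_2\to\EE_2/2$ via $u^{-1}$: the class $\alpha$ reduces to $\red(\alpha)\in H^1(C_2,\EE_2/2)$, which by the module structure over $H^*(C_2,\EE_*)$ and the identity $h=\alpha u$ equals $hu^{-1}$. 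Therefore $\red(w_5) = u^{-2}\red(\alpha) = u^{-2}\cdot hu^{-1} = h u^{-3} = hv_2$, as claimed.

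The main (and really only) obstacle is pinning down $\red(\alpha)$ precisely — i.e.\ making sure the mod-$2$ reduction of the integral generator $\alpha\in H^1(C_2,\EE_2)$ is identified correctly with the $H^*(C_2,\EE_*)$-module generator so that $\red(\alpha)=hu^{-1}$ rather than something off by a unit. This is settled by observing that $\red$ is induced by the short exact sequence $0\to\EE_*\xrightarrow{2}\EE_*\to\EE_*/2\to 0$, that this sequence is one of modules over $\EE_*$, and that $\alpha$ was defined as the image under $H^1(C_2,\ZZ_{\mathrm{sgn}})\to H^1(C_2,\EE_2)$ of the canonical generator (sending $1_{\mathrm{sgn}}\mapsto u^{-1}$); reducing the coefficient sequence $0\to\ZZ_{\mathrm{sgn}}\xrightarrow{2}\ZZ_{\mathrm{sgn}}\to\FF_2\to 0$ mod $2$ and comparing with the definition of $h$ gives the result. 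After that, everything else is the termwise ring-map check described above, and one should also note for consistency that the relations are respected: $\red$ sends $[v_1v_2]^3 - j_0[v_2^2]^2$ to $(v_1v_2)^3 - j_0 v_2^4 = v_2^3(v_1^3 - j_0 v_2)$, which vanishes by the relation in \cref{lem:cohC6mod2}, and $\red(2w_5)=0$ automatically, so $\red$ is well-defined on the presented ring.
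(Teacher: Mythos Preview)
Your proposal is correct and follows exactly the approach the paper indicates: the paper offers no proof beyond the remark that the map ``is easily deduced from the definition of the elements,'' and your argument simply carries out that deduction explicitly, tracing the generators $j_0, [v_1v_2], [v_2^2]$ through mod-$2$ reduction in degree zero and using $h=\alpha u$ to identify $\red(w_5)$. If anything, you have supplied considerably more detail than the paper does.
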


 \begin{figure}[h]
 \includegraphics[page=1, width=0.9\textwidth]{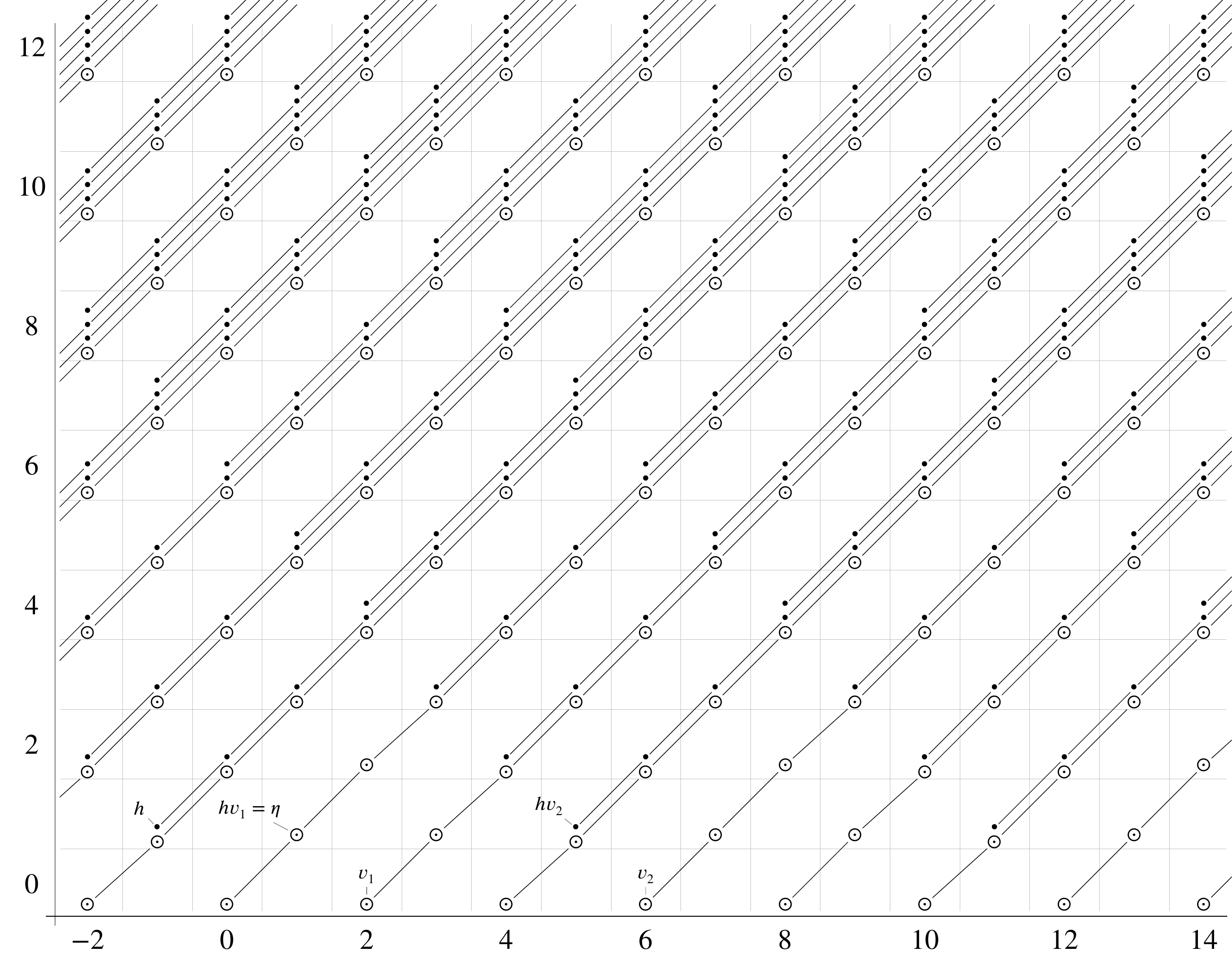}
\caption[$H^*(C_6, \EE_*/2)$]{$H^s(C_6, \EE_t/2)$. The horizontal axis is $t-s$, the vertical axis is $s$, $\bullet=\FF_4$ and $\odot = \FF_4[\![j_0]\!]$; and $v_1^3=v_2j_0$. Lines of slope 1 denote multiplication by $\eta$.
}
\label{fig:c6e2}
 \end{figure}

\subsection{Cohomology $H^*(G_{24}, \EE_*)$}

The computation of $H^*(G_{24}, \EE_*)$ is quite technical since the group action on $\EE_*$ is highly nontrivial. See, for example, \cite[\S 2.4]{BeaudryTowards}. It can be helpful to identify $H^*(G_{24}, \EE_*)$ with the completion of the cohomology of the Hopf algebroid $(A, \Gamma)$ of Weierstrass elliptic curves and their isomorphisms.
This computation is originally due to Hopkins and Mahowald, and accounts of it can be found in \cite{tbauer}, \cite{Rezk512} and \cite{tmfbook}, although a translation may be required in each case. Such translation can be found in \cite{BobkovaGoerss}, and we collect the main points below.

As in \cite{Strickland}, we use the Weierstrass curve
\[C   : y^2 +3u_1u^{-1}xy+(u_1^3-1)u^{-3}y=x^3, \] 
which is a universal deformation of a supersingular elliptic curve over $\FF_4[u^{\pm 1}]$.
Our Morava $E$-theory is then the Landweber exact theory whose formal group law is that of $C$. See \cite[\S 2]{BeaudryTowards}.
For the invariants we have
\[
H^0(G_{24},\EE_\ast) \cong \WW[\![j]\!][c_4,c_6,\Delta^{\pm 1}]/(c_4^3 - c_6^2 - 1728\Delta,\Delta j - c_4^3),
\]
where 
\begin{align}\label{eq:modularforms}
c_4&:=9u_1u^{-4}(u_1^3+8), & 
c_6&:=27u^{-6}(u_1^6-20u_1^3-8), \\
\Delta&:= 27 u^{-12} ( u_1^3-1)^3,  & \nonumber
j&:=c_4^3/\Delta.
\end{align}

\begin{rem}
Since $C_6 \subseteq G_{24}$, $H^0(G_{24},\EE_\ast)$ is a subring of the ring $H^0(C_6,\EE_*)$ described in \Cref{lem:C6coh}. In particular, under the restriction 
 \[ i \colon H^*(G_{24},\EE_*) \to H^*(C_6,\EE_*),\]
we have
\begin{align*}
i(c_4) &=9 [v_1v_2] (j_0 + 8), & i(c_6)&=27 [v_2^2](j_0^2-20j_0-8), &
i(\Delta) &=  27[v_2^2]^2 (j_0-1)^3.
\end{align*}
\end{rem}

\begin{rem}\label{rem:classesincoh}
To describe the higher cohomology of $G_{24}$, we need the following standard elements, and we specify their images under the map $i$: 
\begin{enumerate}[(1)]
\item $\eta \in H^1(G_{24},\EE_2)$ maps to $\eta \in H^1(C_6,\EE_2)$ defined in \eqref{eq:C6cohElements} and generates this group as a module over $H^0(G_{24},\EE_0) \cong \WW[\![j]\!]$; 
\item $\nu \in H^1(G_{24},\EE_4) \cong \W/4$ is a generator. This class maps to zero under $i$; 
\item $\mu \in H^1(G_{24},\EE_6)$ which corresponds to the Greek family element $\alpha_3$. It maps to $\eta [v_1^2] = \mu$ under $i$; 
\item $\epsilon \in H^2(G_{24},\EE_{10})$ and $\kappa \in H^2(G_{24},\EE_{16})$ detect the same named elements in homotopy (also called $\beta_2$ and $\beta_3$). They both map to zero under $i$;
\item $k \in H^4(G_{24},\EE_0)$ is the the periodicity generator of $H^*(G_{24}, \EE_0)$.
Under $i$ the element $k$ maps to $g^2 \in H^4(C_6, \EE_0)$.
\end{enumerate}
\begin{rem}\label{rem:kappabar}
The element $k$ is related to the homotopy class $\kappabar \in \pi_{20}{S^0}$ via
\[k\Delta=\kappabar.\]
\end{rem}

See also \Cref{sec:Hurewicz} for a discussion of the relationship with the Adams--Novikov spectral sequence $E_2$-page.
For more on classes in $H^*(G_{24},\EE_*)$ detecting elements in homotopy groups of spheres, see \cite[Ch.13]{tmfbook} and \cite{BehrensQuigley}.
\end{rem}

\begin{thm}[{\it{c.f.}{\cite[Theorem 2.15]{BobkovaGoerss}}}]\label{G24}
There is an isomorphism 
\[
H^\ast(G_{24},\EE_\ast) \cong
H^0(G_{24},E_\ast)[\eta,\nu,\mu,\epsilon,\kappa,k]/R,
\] 
where $R$ is the ideal generated by 
\begin{gather*}
2\eta,\quad 4\nu,  \quad 2\mu, \quad \nu c_4, \quad \mu c_4-\eta c_6 , \quad \nu c_6,  \quad \mu c_6-\eta c_4^2 ;\\
 \eta\nu,  \quad 2\nu^2,  \quad  2\epsilon,\quad \mu\nu,  \quad \mu^2-\eta^2c_4, \quad 2\kappa,  \quad c_4\epsilon, \quad c_6\epsilon, \quad c_4\kappa, \quad c_6\kappa;\\
\eta\epsilon-\nu^3, \quad \nu\epsilon, \quad \mu\epsilon, \quad \mu\kappa,  \quad \epsilon\kappa ;\\
\epsilon^2, \quad  8k,\quad  \nu^2\kappa-4k\Delta,  \quad c_4k-\eta^4,  \quad  \kappa^2, \quad c_6k-\eta^3\mu.
\end{gather*}
\end{thm}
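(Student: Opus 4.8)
The plan is to reduce the statement to a purely algebraic computation over the supersingular Weierstrass curve and then feed this into the standard group‑cohomology machinery. Since the theorem is classical (going back to Hopkins--Mahowald), I will only outline the steps.

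First I would exploit that $G_{24}\cong Q_8\rtimes\FF_4^\times$ with $3=|\FF_4^\times|$ invertible on the $2$-complete modules $\EE_t$, so that taking $\FF_4^\times$-fixed points is exact and
\[H^*(G_{24},\EE_*)\cong H^*(Q_8,\EE_*)^{\FF_4^\times}.\]
Equivalently --- and this is the point of view that makes the modular forms $c_4,c_6,\Delta$ visible --- one identifies $H^*(G_{24},\EE_*)$ with the $j$-adically completed cohomology of the Hopf algebroid of Weierstrass curves and their isomorphisms, as in \cite{BobkovaGoerss}. Setting this up requires checking that the base change from $\FF_4[u^{\pm1}]$ to $\EE_0$ is flat (so that cohomology is unchanged) and that the automorphism group of the supersingular curve is indeed $G_{24}$ (\cite[Appendix A]{Silverman}). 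The degree-zero part $H^0(G_{24},\EE_*)$ is then computed directly from the invariants of the Weierstrass action; the two relations $c_4^3-c_6^2-1728\Delta$ and $\Delta j-c_4^3$ are the discriminant identity and the definition of $j$, and both can be verified inside $H^0(C_6,\EE_*)$ via the restriction $i\colon H^*(G_{24},\EE_*)\to H^*(C_6,\EE_*)$ of \cref{lem:C6coh}, which is injective in degree $0$.

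For the higher cohomology I would run the Lyndon--Hochschild--Serre spectral sequence for the central extension $1\to C_2\to Q_8\to C_2\times C_2\to 1$,
\[E_2^{s,t}=H^s\big(C_2\times C_2,\,H^t(C_2,\EE_*)\big)\Longrightarrow H^{s+t}(Q_8,\EE_*),\]
starting from $H^*(C_2,\EE_*)=\WW[\![u_1]\!][[u^2]^{\pm1},\alpha]/(2\alpha)$ recalled above. The differentials are governed by the $k$-invariant of $Q_8$, whose mod-$2$ cohomology is periodic of period $4$ with periodicity class of order $8$; after passing to $\FF_4^\times$-invariants the $E_\infty$-page yields exactly the generators in the statement: $\eta,\nu,\mu$ in cohomological degree $1$, $\epsilon,\kappa$ in degree $2$, and $k$ in degree $4$ with $8k=0$. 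The relations supported on the $H^0$-, $\eta$- and $\mu$-classes --- for example $2\eta$, $2\mu$, $\mu c_4-\eta c_6$, $\mu c_6-\eta c_4^2$, $\mu^2-\eta^2c_4$, $c_4k-\eta^4$ --- I would verify through $i$, using $i(\eta)=\eta$, $i(\mu)=\mu$ and $i(k)=g^2$ from \cref{rem:classesincoh}. The remaining relations --- $4\nu$, $\nu c_4$, $\nu c_6$, $\eta\nu$, $2\nu^2$, $2\epsilon$, $2\kappa$, $c_4\epsilon$, $c_6\epsilon$, $c_4\kappa$, $c_6\kappa$, $\mu\nu$, $\eta\epsilon-\nu^3$, $\nu\epsilon$, $\mu\kappa$, $\epsilon\kappa$, $\epsilon^2$, $8k$, $\nu^2\kappa-4k\Delta$, $c_6k-\eta^3\mu$ --- all involve $\nu$, $\epsilon$ or $\kappa$, and since these restrict to zero on $C_6$ they must be read off from the $Q_8$-spectral sequence directly, including the hidden extensions and Massey products it encodes. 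Finally I would confirm that $R$ is the \emph{full} relation ideal by checking that the presented ring and the $E_\infty$-page agree as modules over $H^0(G_{24},\EE_0)=\WW[\![j]\!]$ in every bidegree, and that the $j$-adic completion neither creates nor destroys relations.

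The hard part will be handling $\nu$, $\epsilon$ and $\kappa$. Because these classes are invisible to the restriction to $C_6$, the relations among them --- and in particular the multiplicative-periodicity relations $\nu^2\kappa=4k\Delta$, $c_6k=\eta^3\mu$ and $\eta\epsilon=\nu^3$ --- cannot be obtained by the formal bookkeeping of restriction maps; they require genuine extension and Massey-product information from the $Q_8$-level (equivalently, supersingular elliptic curve) computation. This is exactly the technical input supplied by the cited accounts \cite{tbauer,Rezk512,tmfbook,BobkovaGoerss}, and carrying it out is where essentially all the work resides.
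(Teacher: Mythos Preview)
The paper does not prove this theorem; it is stated as background with a citation to \cite[Theorem 2.15]{BobkovaGoerss}, and the surrounding text explicitly says the results in that section ``are all well-known, and we simply collect them here to establish notation.'' So there is no proof in the paper to compare your proposal against.

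That said, your outline is the standard route taken in the cited references (Hopkins--Mahowald via \cite{tbauer,Rezk512,tmfbook,BobkovaGoerss}): identify $H^*(G_{24},\EE_*)$ with the $j$-completed cohomology of the Weierstrass Hopf algebroid, and compute the latter. One comment: the usual calculations do not literally run the LHS spectral sequence for $1\to C_2\to Q_8\to C_2\times C_2\to 1$ as you describe; rather, they work with the May-type or algebraic Novikov filtrations on the Weierstrass Hopf algebroid (as in \cite{tbauer}) or with the elliptic spectral sequence directly. Your $Q_8$-LHS approach would in principle give the same answer, but the bookkeeping of the $\FF_4^\times$-action on a spectral sequence whose $E_2$-page involves $H^*(C_2\times C_2,-)$ is awkward, and the hidden extensions (notably $\nu^2\kappa=4k\Delta$) are more cleanly obtained from the Hopf-algebroid side. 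Your acknowledgment that the $\nu,\epsilon,\kappa$ relations are the hard part and require the cited technical input is accurate.
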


The cohomology $H^*(G_{24}, \EE_*)$ is displayed in \Cref{K2localE2}. 

\begin{figure}[h]
 \includegraphics[page=1, width=1\textwidth]{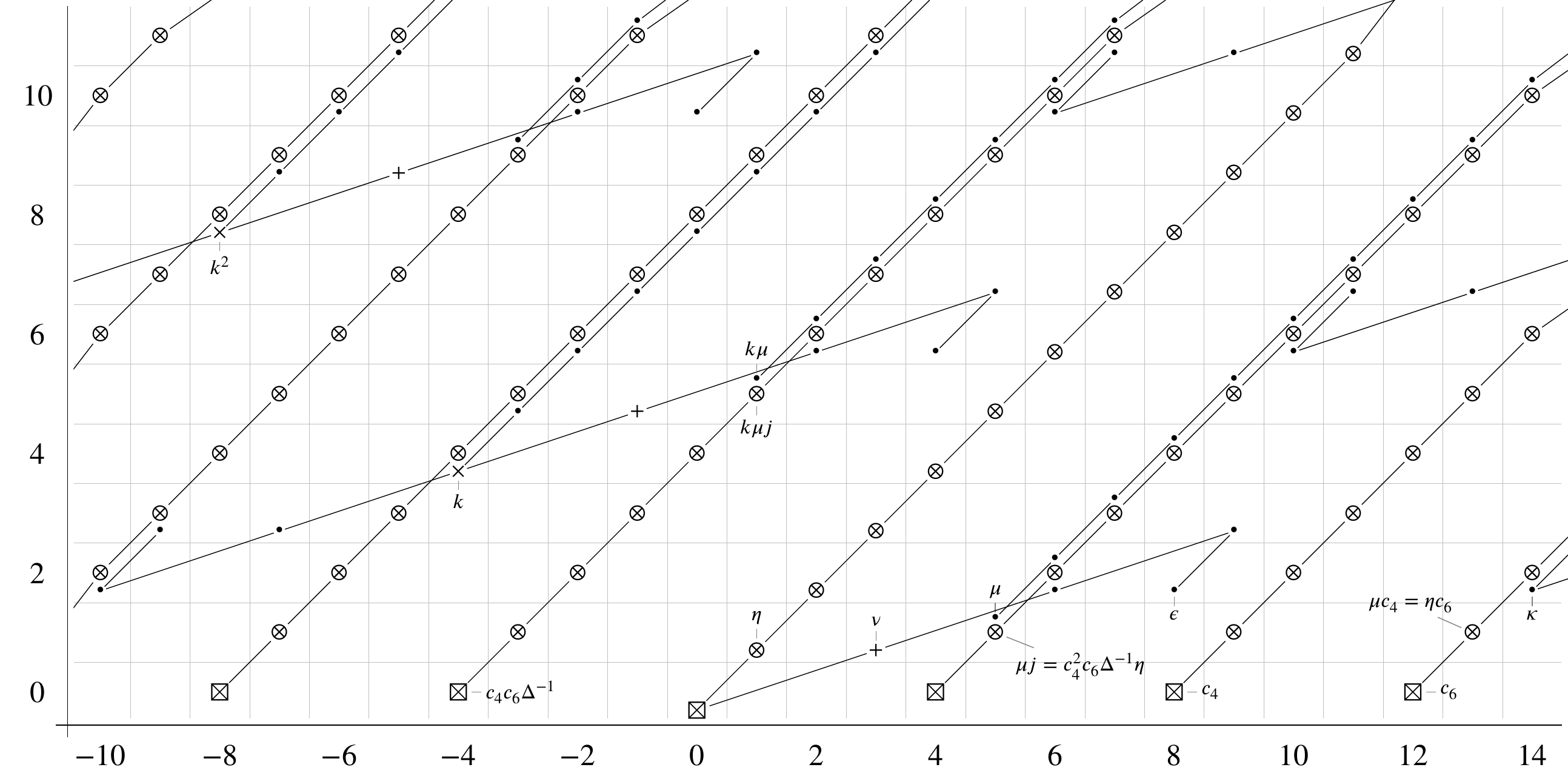}
\caption[$H^*(G_{24}, \EE_*)$]{$H^*(G_{24}, \EE_*)$, where $\otimes=\mathbb{F}_4\br{j}$, $\boxtimes=\WW\br{j}$, $\bullet$ is $\W/2$, $+$ is $\WW/4$ and $\times$ is $\W/8$. The $\W/8$ are generated by  classes of the form $\Delta^ik^j$ and the $\W/4$ by their $\nu$ multiples. 
The solid lines of slope 1 represent multiplication by $\eta$ and solid lines of slope $1/3$ denote multiplication by $\nu$. The infinite $\eta$-towers in the same bi-degrees are related by multiplication by $j$. Multiplication by $\Delta$ gives an isomorphism $H^s(G_{24}, \EE_t)\cong H^{s}(G_{24}, \EE_{t+24})$ for all $t, s$ and multiplication by $k$ gives an isomorphism $H^s(G_{24}, \EE_t)\cong H^{s+4}(G_{24}, \EE_{t})$ for $s>0$.}
\label{K2localE2}
 \end{figure}
 
 \subsection{Cohomology $H^*(G_{24}, \EE_*/2)$}
A reference for this computation is Appendix by Henn in \cite{BeaudryTowards}. 
First, the invariants are given by
\begin{align}\label{eq:mod2invariants}
H^0(G_{24},\EE_\ast/2) \cong \FF_4[\![j]\!][v_1, \Delta^{\pm 1}]/(v_1^{12}-j\Delta)
\end{align}
where 
\[v_1 := u_1u^{-1}\] 
while $\Delta$ and $j$ are the reduction modulo $2$ of the same named classes in \eqref{eq:modularforms}.

\begin{rem}\label{rem:xyboundaries}
To describe $H^*(G_{24},\EE_\ast/2)$, we need to introduce some classes:
\begin{enumerate}[(1)]
\item The classes $\eta$, $\nu$, $k$ are the reductions modulo 2 of the classes of \Cref{rem:classesincoh}.
\item Since $2\nu^2=2\kappa=0$, the classes $\nu^2$ and $\kappa$ are in the image of the connecting homomorphism 
\[\partial \colon H^1(G_{24}, \EE_*/2) \to H^2(G_{24}, \EE_*)\]
for the exact sequence $\EE_* \xrightarrow{2} \EE_* \to \EE_*/2$.
So there are classes $x \in H^1(G_{24},\EE_8/2)$ and $y \in H^1(G_{24},\EE_{16}/2)$ such that $\partial(x)=\nu^2$ and $\partial(y)=\kappa$. 
\end{enumerate}
\end{rem}

\begin{rem}
Note that $\partial(v_1 x) =  \epsilon$.
\end{rem}

\begin{thm}[\Cref{fig:c6e2v0}, {\it{c.f.}} {\cite[Appendix]{BeaudryTowards}}]\label{G24-mod2}
There is an isomorphism 
\[
H^\ast(G_{24},\EE_\ast/2) \cong
H^0(G_{24},\EE_\ast/2)[ \eta, \nu, x, y,k]/R,
\]
where $R$ is the ideal defined by the relations
\begin{gather*}
v_1\nu, \qquad v_1^2x \qquad v_1y;\\
\eta \nu, \quad \nu x-v_1\eta x, \quad \eta y-v_1x^2, \quad xy, \quad y^2-\nu^2\Delta; \\
\eta^2 x-\nu^3, \quad x^3-\nu^2y;\\
\eta^4-v_1^4k. 
\end{gather*}
\end{thm}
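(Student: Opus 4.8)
\textbf{Proof proposal for \Cref{G24-mod2}.}

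The plan is to compute $H^*(G_{24},\EE_*/2)$ from the already-established computation of $H^*(G_{24},\EE_*)$ in \Cref{G24} by running the long exact sequence in cohomology associated to the short exact sequence of coefficients $\EE_* \xrightarrow{2} \EE_* \xrightarrow{\red} \EE_*/2$. The key structural input is that $2$ acts as zero on $H^s(G_{24},\EE_*)$ for all $s>0$ (one reads this off the relations in \Cref{G24}: $2\eta$, $4\nu$, $2\mu$, $2\epsilon$, $2\kappa$, $8k$, together with the fact that $\Delta$, $c_4$, $c_6$ are invariant units-up-to-$j$, i.e.\ $\WW[\![j]\!]$ has no torsion but everything in positive cohomological degree does). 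Consequently the long exact sequence splits, for $s>0$, into short exact sequences
\[
0 \to H^s(G_{24},\EE_*)/2 \xrightarrow{\ \red\ } H^s(G_{24},\EE_*/2) \xrightarrow{\ \partial\ } (H^{s+1}(G_{24},\EE_*))[2] \to 0,
\]
where $[2]$ denotes the $2$-torsion subgroup, while in degree $0$ we have $H^0(G_{24},\EE_*/2) = H^0(G_{24},\EE_*)/2$ surjecting onto $H^1(G_{24},\EE_*)[2]$; the description \eqref{eq:mod2invariants} of $H^0$ is immediate from reducing \eqref{eq:modularforms} mod $2$ (note $c_4 \equiv u_1^4 u^{-4} = v_1^4$, $c_6 \equiv u_1^6 u^{-6} = v_1^6$, $\Delta \equiv v_1^{12}$ up to units, so $j = c_4^3/\Delta$ is the same class and the relation $\Delta j - c_4^3$ becomes $v_1^{12} - j\Delta$). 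First I would lay out, degree by degree, the groups $H^s(G_{24},\EE_*)/2$ and $H^{s+1}(G_{24},\EE_*)[2]$ from \Cref{G24}: the quotient-by-$2$ picks up $\eta$-towers, $\nu$, $\mu$, $\epsilon$, $\kappa$, $k$ and the $\WW[\![j]\!]$-classes mod $2$; the $2$-torsion subgroup in degree $s+1$ is everything except the free $\WW[\![j]\!]$ part in $H^0$, so it contributes exactly the generators $\partial^{-1}$ of which are the new mod-$2$ classes. This identifies $x \in H^1(G_{24},\EE_8/2)$ with $\partial(x) = \nu^2$ and $y \in H^1(G_{24},\EE_{16}/2)$ with $\partial(y) = \kappa$, as in \Cref{rem:xyboundaries}, and pins down the additive structure.

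Next I would establish the multiplicative relations. The classes $\eta$, $\nu$, $k$ and the $H^0$-classes are reductions of the integral classes, so every relation among them in \Cref{G24} that does not involve a factor of $2$ reduces to a relation mod $2$: this immediately yields $v_1\nu = \red(\nu c_4) = 0$ (since $\nu c_4$ is $2$-divisible? no — rather $\nu c_4 = 0$ already integrally, so $v_1 \nu = \red(\nu)\red(c_4)$; one checks $\red(c_4) = v_1^4$, so $v_1^4 \nu = 0$, and the sharper $v_1\nu=0$ comes from $i(\nu)=0$ and the structure of $H^*(C_6,\EE_*/2)$, or directly from $\nu$ being $\partial$ of something $v_1$-torsion), $\eta\nu = \red(\eta\nu)=0$, $\eta^4 = \red(\eta^4) = \red(c_4 k) = v_1^4 k$, and so on. The relations involving the genuinely new classes $x, y$ — namely $v_1^2 x$, $v_1 y$, $\nu x - v_1\eta x$, $\eta y - v_1 x^2$, $xy$, $y^2 - \nu^2\Delta$, $\eta^2 x - \nu^3$, $x^3 - \nu^2 y$ — I would verify by applying $\partial$ and using the derivation/module properties of the connecting map: $\partial$ is a map of $H^*(G_{24},\EE_*)$-modules, so e.g.\ $\partial(\eta^2 x) = \eta^2 \partial(x) = \eta^2\nu^2 = \nu\cdot\eta^2\nu$; but $\eta\nu = 0$, so this is... one must be careful, $\eta^2\nu^2 = (\eta\nu)^2\cdot$(stuff) — actually $\eta^2\nu^2 = 0$ since $\eta\nu=0$, which shows $\eta^2 x$ lies in the image of $\red$, and then a degree count in \Cref{G24} identifies it with $\red(\nu^3)=\nu^3$ (using $\eta\epsilon = \nu^3$, and $\epsilon$ has the right internal degree). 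Similarly $\partial(x^3) = x^2\partial(x) + \ldots$; since $\partial$ is only a derivation up to the action, the cleanest route is $\partial(x^2) = ?$ — here I would instead use that $\partial(v_1 x) = \epsilon$ (\Cref{rem:xyboundaries} remark) and compute $\partial(x^3 - \nu^2 y)$ via the module structure over the integral ring: $\partial(\nu^2 y) = \nu^2\partial(y) = \nu^2\kappa$, and $x^3$ maps under $\red\circ(\text{lift})$... The honest statement is that $\partial(x^3)$ and $\partial(\nu^2 y) = \nu^2\kappa$ must be compared; since $\nu^2\kappa = 4k\Delta$ is $4$-divisible it is $2$-torsion but not in the image of $\partial$ from a product of $x$'s unless the combination $x^3 - \nu^2 y$ is in $\red$'s image, which a dimension count confirms, and then it must be $0$ or $v_1^{12}$-multiple of something — degree reasons force $0$.

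The main obstacle, and where I would spend the most care, is precisely this last point: verifying that the enumerated list of relations is \emph{complete}, i.e.\ that $H^*(G_{24},\EE_*/2)$ is not merely \emph{generated} by $\eta,\nu,x,y,k$ over $H^0$ modulo the listed $R$, but is \emph{exactly} that quotient. The strategy for this is a Poincar\'e-series / rank count: compute the bigraded dimension (over $\FF_4[\![j]\!]$ or $\FF_4$ in each bidegree) of the abstract algebra $H^0(G_{24},\EE_*/2)[\eta,\nu,x,y,k]/R$ — this is a finite combinatorial bookkeeping once one notes the $\Delta$-periodicity and $k$-periodicity reduce it to the range $0\le t < 24$ — and compare it bidegree-by-bidegree with the dimension count coming from the short exact sequences above, $\dim H^s(G_{24},\EE_*/2) = \dim H^s(G_{24},\EE_*)/2 + \dim H^{s+1}(G_{24},\EE_*)[2]$, extracted from \Cref{G24}. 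If the two match in every bidegree, the surjection from the abstract algebra is an isomorphism. (This is exactly the pattern used to prove the analogous integral \Cref{G24} in \cite{BobkovaGoerss}.) Alternatively, and perhaps more cleanly, one can cite the Appendix by Henn in \cite{BeaudryTowards} directly, as the theorem statement does, and present the above only as a guided reconstruction — but either way the completeness of $R$ is the crux, and the Bockstein long exact sequence plus a rank count is the tool.
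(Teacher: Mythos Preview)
The paper does not prove this theorem; it simply cites \cite[Appendix]{BeaudryTowards}, so there is no ``paper's own proof'' to compare against. Your Bockstein approach via the long exact sequence for $\EE_*\xrightarrow{2}\EE_*\to\EE_*/2$ is the standard and correct strategy, and the dimension-count closing argument is exactly right.

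That said, there is a genuine error in your reasoning, even if the conclusion survives. You assert that ``$2$ acts as zero on $H^s(G_{24},\EE_*)$ for all $s>0$'', but this is false: $\nu\in H^1$ has order $4$ and $k\in H^4$ has order $8$ (these are the relations $4\nu$ and $8k$ in \Cref{G24}), so $2\nu\neq 0$ and $2k\neq 0$. Fortunately your short exact sequence
\[
0 \to H^s(G_{24},\EE_*)/2 \xrightarrow{\red} H^s(G_{24},\EE_*/2) \xrightarrow{\partial} H^{s+1}(G_{24},\EE_*)[2] \to 0
\]
is correct regardless --- it is just the usual cokernel/kernel splicing of any long exact sequence --- so the additive computation goes through. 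But you should fix the justification.

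Your handling of the multiplicative relations involving $x$ and $y$ is muddled, and you seem aware of it. The connecting map $\partial$ is not a derivation; what is a derivation is the mod-$2$ Bockstein $\beta=\red\circ\partial$ on $H^*(G_{24},\EE_*/2)$. For instance $\beta(x^2)=2x\beta(x)=0$ in characteristic $2$, so $\partial(x^2)$ lies in $2H^3(G_{24},\EE_{16})$, and you must then identify it there. For relations like $v_1\nu=0$ your argument only gives $v_1^4\nu=0$ from $\nu c_4=0$; the sharper statement $v_1\nu=0$ needs a direct argument in $H^1(G_{24},\EE_6/2)$, e.g.\ by computing that group via your SES and checking there is no room for a nonzero $v_1\nu$. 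These gaps are fillable, but they are the places where the real work lies, and your proposal as written does not yet close them.
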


 \begin{figure}[h]
 \includegraphics[page=1, width=\textwidth]{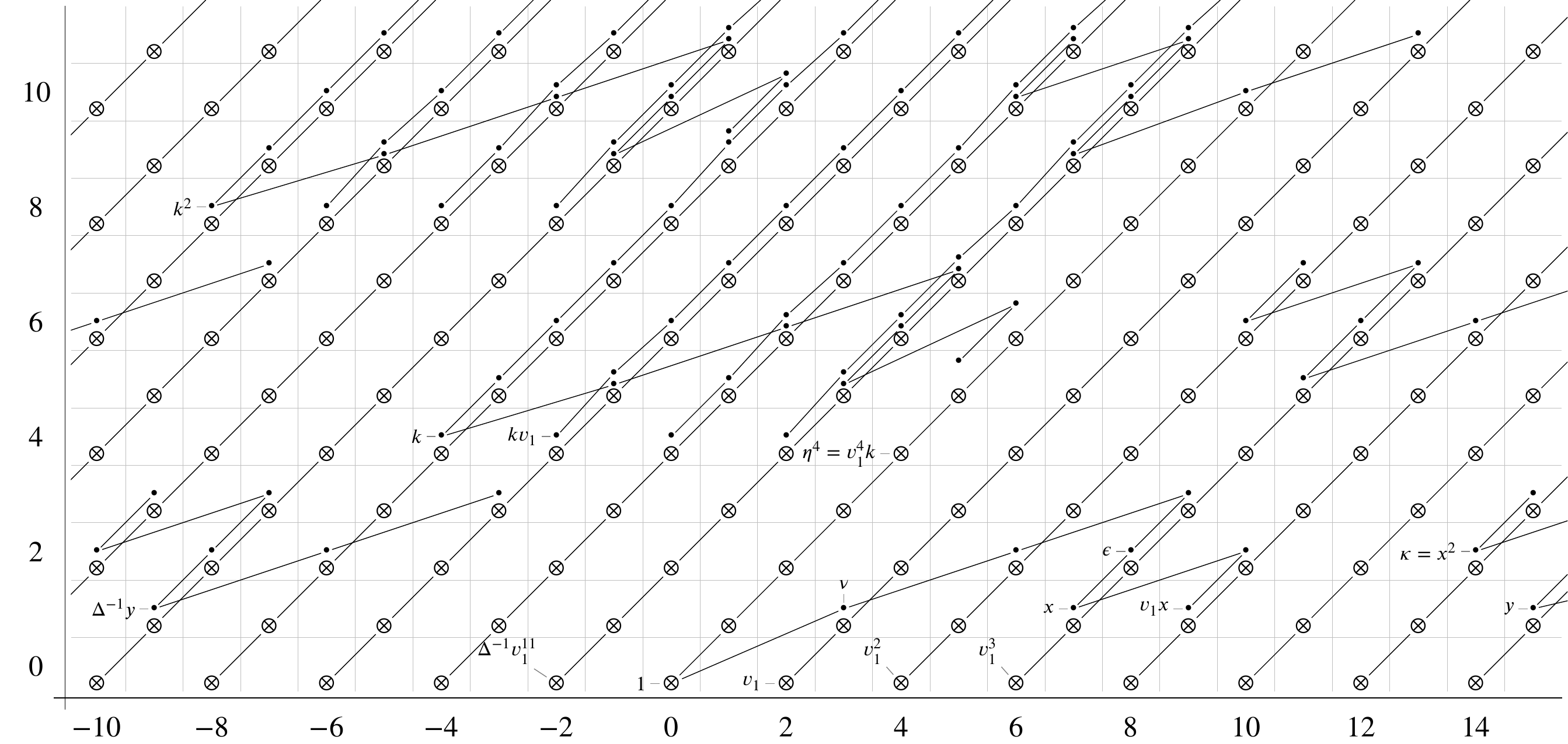}
\caption[$H^*(G_{24}, \EE_*/2)$]{$H^*(G_{24}, \EE_*/2)$, where $\otimes  =\F_4[\![j]\!]$ and $\bullet = \F_4$. Lines of slope one are multiplication by $\eta$, lines of slope 1/3 are multiplication by $\nu$. The  infinite $\eta$-towers in the same bi-degree are related by multiplication by $j$. Multiplication by $\Delta$ gives an isomorphism $H^s(G_{24}, \EE_t/2)\cong H^{s}(G_{24}, \EE_{t+24}/2)$ for all $t, s$ and multiplication by $k$ gives an isomorphism $H^s(G_{24}, \EE_t/2)\cong H^{s+4}(G_{24}, \EE_{t}/2)$ for $s\geq 0$.}
\label{fig:c6e2v0}
 \end{figure}
 
Finally, we write down the map on cohomology induced by $r \colon \EE_* \to \EE_*/2$.
\begin{lemma}\label{lem:from-integral-to-mod2}
The map 
\[
\red: H^*(G_{24}, \EE_*) \to H^*(G_{24}, \EE_*/2) 
\]
is the ring homomorphism determined by
\begin{align*}
r(j)&=j & r(c_4) &= v_1^4 & r(c_6)&=v_1^6 & r(\Delta)=\Delta \\
r(\eta)&=\eta & r(\nu)&=\nu & r(\mu)&=v_1^2\eta  \\
r(\epsilon) &=\eta x & r(\kappa)&=x^2 & r(k)&=k . 
\end{align*}
\end{lemma}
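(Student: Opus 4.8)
**Proof plan for Lemma 1.29 (the map $\red\colon H^*(G_{24},\EE_*)\to H^*(G_{24},\EE_*/2)$).**

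The plan is to verify the claimed formulas generator-by-generator, using the three structural facts already assembled: the explicit presentation of $H^*(G_{24},\EE_*)$ in Theorem \ref{G24}, the presentation of $H^*(G_{24},\EE_*/2)$ in Theorem \ref{G24-mod2}, and naturality of the reduction map with respect to the restriction $i\colon H^*(G_{24},\EE_*)\to H^*(C_6,\EE_*)$ (and its mod-$2$ analogue). Since $\red$ is a ring homomorphism out of a ring generated by $j, c_4, c_6, \Delta, \eta,\nu,\mu,\epsilon,\kappa, k$, it suffices to compute it on these classes. First I would dispatch the invariants: $j$, and $\Delta$ are \emph{defined} as the mod-$2$ reductions of the same-named integral classes (as stated just before Theorem \ref{G24-mod2}), so $\red(j)=j$ and $\red(\Delta)=\Delta$ are immediate; and from \eqref{eq:modularforms} one reads $c_4 = 9u_1u^{-4}(u_1^3+8)\equiv u_1u^{-4}\cdot u_1^3 = (u_1u^{-1})^4 = v_1^4 \pmod 2$, and likewise $c_6 = 27u^{-6}(u_1^6-20u_1^3-8)\equiv u^{-6}u_1^6 = v_1^6 \pmod 2$. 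Then $\red(\eta)=\eta$, $\red(\nu)=\nu$, $\red(k)=k$ hold because the mod-$2$ classes $\eta,\nu,k$ were \emph{defined} in Remark \ref{rem:xyboundaries}(1) to be the reductions of their integral counterparts.

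The remaining cases $\mu,\epsilon,\kappa$ require a small argument each. For $\mu$: by Remark \ref{rem:classesincoh}(3) we have $i(\mu)=\eta[v_1^2]$ in $H^*(C_6,\EE_*)$, and $[v_1^2]$ reduces to $v_1^2$ under $\red$ for $C_6$ (this follows from $[v_1^2]=u_1^2u^{-2}$ and the definition $v_1 = u_1u^{-1}$ in Lemma \ref{lem:cohC6mod2}, or from Lemma \ref{lem:maprforC6} via $[v_1^2]=[v_1v_2]^2[v_2^2]^{-1}$); hence $i(\red\mu)=\red(i\mu)=\eta v_1^2$ in $H^*(C_6,\EE_*/2)$. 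Since $H^1(G_{24},\EE_6/2)$ is small and the restriction $i$ to $C_6$ is injective on the relevant summand (one checks from Theorems \ref{G24-mod2} and \ref{lem:cohC6mod2} that $v_1^2\eta$ is the unique class mapping to $\eta v_1^2$), we conclude $\red(\mu)=v_1^2\eta$. For $\epsilon$: the cleanest route is the Bockstein/boundary remark right after Theorem \ref{G24-mod2}, namely $\partial(v_1 x)=\epsilon$ for the connecting map $\partial\colon H^1(G_{24},\EE_*/2)\to H^2(G_{24},\EE_*)$ of $\EE_*\xrightarrow{2}\EE_*\to\EE_*/2$. Then $\red(\epsilon)=\red\partial(v_1x)$; using the standard identity $\red\circ\partial = Sq^1$-type formula — more precisely, that $\red\partial(z)$ is computed by lifting and re-reducing, which for these $2$-torsion classes gives $\red(\epsilon)=\eta\cdot(v_1x)/v_1 = \eta x$ after accounting for the relation $\eta y = v_1 x^2$ and $\nu^2=\partial x$; alternatively, and more robustly, I would use injectivity considerations plus the known relation $\mu\epsilon = 0$, $\eta\epsilon=\nu^3$ in Theorem \ref{G24} together with their mod-$2$ shadows $\eta^2 x=\nu^3$ in Theorem \ref{G24-mod2} to pin down $\red(\epsilon)$ among the finitely many classes in $H^2(G_{24},\EE_{10}/2)$, forcing $\red(\epsilon)=\eta x$. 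For $\kappa$: similarly $\partial(y)=\kappa$, and the same bootstrapping — now using $\nu^2\kappa = 4k\Delta$ reducing to $0$, $\kappa^2=0$, and $c_4\kappa=0$ reducing to $v_1^4 x^2 = 0$ (consistent with $v_1^2 x=0$ in Theorem \ref{G24-mod2}) — identifies $\red(\kappa)$ as the unique such class, namely $x^2$, in $H^2(G_{24},\EE_{16}/2)$.

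The main obstacle is purely bookkeeping rather than conceptual: for $\epsilon$ and $\kappa$ one must be careful that the candidate value is \emph{uniquely} determined, i.e.\ that the restriction $i$ to $C_6$ is not injective on these classes (indeed $i(\epsilon)=i(\kappa)=0$ by Remark \ref{rem:classesincoh}(4)), so one genuinely needs the relation-matching / connecting-map argument rather than the restriction-to-$C_6$ trick that handled $\mu$. Concretely, the cleanest writeup is: (i) reduce on invariants by direct formula; (ii) invoke that $\eta,\nu,k$ are reductions by definition; (iii) handle $\mu$ by restriction to $C_6$ and injectivity there; (iv) handle $\epsilon,\kappa$ by commuting $\red$ past the integral Bockstein $\partial$ — using the classes $x,y$ of Remark \ref{rem:xyboundaries}(2) with $\partial x = \nu^2$, $\partial y=\kappa$, $\partial(v_1x)=\epsilon$ — and checking that the resulting elements $\eta x$, $x^2$ are the only classes in the relevant bidegrees compatible with all the relations of Theorems \ref{G24} and \ref{G24-mod2}. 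That $\red$ is a ring homomorphism at all is automatic (reduction mod $2$ of coefficients is multiplicative), so once the generators are pinned down the lemma is complete; one final sanity check is to confirm that every relation in the ideal $R$ of Theorem \ref{G24} maps into the ideal $R$ of Theorem \ref{G24-mod2} under the proposed assignment, which I would list but not belabor (e.g.\ $\mu c_4-\eta c_6\mapsto v_1^2\eta\cdot v_1^4 - \eta v_1^6 = 0$, $c_4 k-\eta^4\mapsto v_1^4 k-\eta^4 = 0$, and $\nu^2\kappa-4k\Delta \mapsto \nu^2 x^2 - 0$, which vanishes by $x^3=\nu^2 y$ combined with $v_1\nu=0$, $v_1^2 x = 0$).
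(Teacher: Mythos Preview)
The paper gives no proof of this lemma; it is stated and left to the reader, presumably because most of the values are immediate from definitions. Your plan is essentially correct and supplies exactly the kind of routine verification the paper omits: the values on $j,\Delta,\eta,\nu,k$ are definitions (Remark~\ref{rem:xyboundaries}(1) and the sentence after \eqref{eq:mod2invariants}), the values on $c_4,c_6$ are a one-line computation from \eqref{eq:modularforms}, and your restriction-to-$C_6$ argument for $\mu$ is clean.

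One genuine wobble: your ``$r\circ\partial = Sq^1$-type'' computation for $\epsilon$ and $\kappa$ is not correct as written. The composite $r\circ\partial$ is the Bockstein $\beta$ for $\EE_*/2\to\EE_*/4\to\EE_*/2$, and there is no general formula of the shape $r(\epsilon)=\eta\cdot(v_1x)/v_1$; that line should be deleted. Your alternative relation-matching argument is the right one and does work, but you should make the injectivity step explicit. For $\epsilon$: from $\eta\epsilon=\nu^3$ in Theorem~\ref{G24} and $\eta^2 x=\nu^3$ in Theorem~\ref{G24-mod2} you get $\eta\,(r(\epsilon)-\eta x)=0$; then check directly that $\eta$-multiplication is injective on $H^2(G_{24},\EE_{10}/2)$ (the generators $\eta x$ and $j^n v_1^3\eta^2$ have nonzero, linearly independent $\eta$-multiples $\nu^3$ and $j^n v_1^3\eta^3$). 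For $\kappa$: the cleanest route is $c_4\kappa=0\Rightarrow v_1^4 r(\kappa)=0$, which kills the $v_1$-free part of $H^2(G_{24},\EE_{16}/2)$ and forces $r(\kappa)\in\FF_4\{x^2\}$; since $H^2(G_{24},\EE_{16})$ is already $2$-torsion, $r$ is injective there and $r(\kappa)\neq 0$. The remaining unit in $\FF_4^\times$ is fixed either by Galois invariance of $\kappa$ and $x$, or simply absorbed into the choice of $x$ in Remark~\ref{rem:xyboundaries}(2); you should say which convention you are invoking. Your final relation check $\nu^2\kappa-4k\Delta\mapsto \nu^2 x^2$ is fine, but the vanishing is more directly seen via $\nu x = v_1\eta x$ and $v_1^2 x=0$, giving $\nu^2 x^2=(v_1\eta x)^2=\eta^2(v_1^2 x)x=0$.
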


\section{The Algebraic Duality Spectral Sequence for $\EE_*/2$}\label{sec:BeaudryTowards}
%!TEX root = coh-master.tex
This section consists of recollections of computations in \cite{BeaudryTowards}.

\subsection{The construction of the spectral sequence}
The  \emph{algebraic duality resolution} is an exact sequence
\[  0\to \sC_3 \to \sC_2 \to \sC_1 \to \sC_0 \to \Z_2 \to 0 \]
of $\SS_2^1$-modules, where 
\[\sC_0 = \Z_2[\![\SS_2^1/G_{24}]\!], \quad \sC_1=\sC_2=\Z_2[\![\SS_2^1/C_{6}]\!], \quad \text{and} \quad \sC_3 = \Z_2[\![\SS_2^1/G_{24}']\!].\]
We write $F_0=G_{24}$, $F_1=F_2 =C_6$, and $F_3 = G_{24}'$, so that $\sC_p = \ZZ_2[\![\SS_2^1/F_p]\!]$ for $0\leq p\leq 3$ (see \cref{sec:subgroups}). We let $\sC_p=0$ otherwise.

  \begin{warn}
  Note that $\sC_3$ is  not isomorphic to $\Z_2[\![\mathbb{S}_2^1/G_{24}]\!]$ as an $\mathbb{S}_2^1$-module. 
 \end{warn}

 The $\sC_p$'s are not projective $\SS_2^1$-modules, but their projective $\SS_2^1$-resolutions $P_{p,*}$ give rise to a double complex. Mapping this double complex into an $\SS_2^1$-module $\MM$ gives a double complex $E_0^{p,q}(\MM) = \Hom_{\Z_2[\![\SS_2^1]\!]}^c(P_{p, q}, \MM)$, such that the cohomology of its total complex is $H^*(\SS_2^1,\MM)$. The Algebraic Duality Spectral Sequence (ADSS) is the spectral sequence obtained by filtering this double complex by columns (i.e. taking ``vertical cohomology'' in the $(p,q)$-plane).
 
To identify the $E_1$-page of this spectral sequence, we use Shapiro's lemma 
 \[\Ext_{\Z_2[\![\SS_2^1]\!]}^q(\sC_p, \MM) \cong \Ext_{\Z_2[F_p]}^q(\Z_2, \MM) \cong H^q(F_p, \MM) .\] 
 Thus, the spectral sequence takes form
 \begin{equation}
\label{eq:ADSS}
E_1^{p,q}(\MM)=H^q (F_p, \MM) \Longrightarrow H^{p+q} (\SS_2^1, \MM),
\tag{ADSS}
\end{equation}
with differentials $d_r\colon E_r^{p,q}(\MM) \to E_r^{p+r, q-r+1}(\MM)$.

\subsection{Module structure of the spectral sequence}\label{sec:moduleoverE}

The  \ref{eq:ADSS} is not a multiplicative spectral sequence. 
However, if $\MM$ is a (possibly graded) ring, for example when $\MM=\WW$, $\EE_*$ or $\EE_*/2$, the ADSS is a module over its target $H^*(\SS_2^1,\MM)$ \cite[Lemma 4.1.3]{BeaudryTowards}. 
The identification
\[E_1^{p,*}(\MM)\cong H^*(F_p, \MM),  \]
is such that the action of $H^*(\mathbb{S}_{2}^1,\MM)$ is via the restriction
\[ H^*(\mathbb{S}_{2}^1,\MM) \to  H^*(F_p, \MM)  \]
induced by the inclusion $F_p \subseteq \mathbb{S}_2^1$.  

As a consequence, if a class $x\in H^*(\mathbb{S}_{2}^1,\MM) $ maps to zero in $H^*(F_p, \MM)$, then for any $y\in H^*(F_p, \MM)$, the product $xy$ is detected in ADSS filtration $\geq p+1$.

This might sound circular since the spectral sequence converges to $H^*(\SS_2^1,\MM)$ and this is what we aim to compute, but some elements of $H^*(\SS_2^1,\MM)$ are easy to spot, e.g., classes coming from the classical Adams--Novikov $E_2$-page like $\eta$ and $\nu$ (see \cref{sec:Hurewicz}), or the key classes defined in \cref{sec:cohomologyclasses}. 

\begin{warn}
For $p=3$, we may identify $E_1^{3,\ast}(\MM)$ with $H^*(G_{24},\MM)$, but if we do so, there is a non-trivial twist coming from the fact that the last module $\mathscr{C}_3$ in the algebraic duality resolution is $\Z_2[\![\mathbb{S}_2^1/G_{24}']\!]$. Indeed,
the action of $H^*(\mathbb{S}_2^1, \MM) $ on $E_1^{3,*}(\MM) = H^*(G_{24}', \MM)$ is given by the restriction. Therefore, the action on $E_1^{3,*}(\MM)$ after identifying $H^*(G_{24}', \MM) $ with $H^*(G_{24}, \MM)  $ is given by the composition
  \[ H^*(\mathbb{S}_2^1, \MM)  \xrightarrow[\cong]{\pi^{-1}(-)\pi} H^*(\mathbb{S}_2^1,  \MM) \to   H^*(G_{24}',  \MM) . \]
  \end{warn}
  
\subsection{The edge homomorphism and $k$-linearity}
We quickly discuss the edge homomorphism of the ADSS for  $E_*^{\bullet,*}(\WW)$,  $E_*^{\bullet,*}(\EE_0)$. 
Everything in this section also holds true if we replace $\WW$ by $\FF_4$ and $\EE_0$ by $\EE_0/2$, with the obvious adjustments.

The inclusion $\WW \to \EE_0$ by the unit induces 
a commutative diagram
\[ 
\xymatrix{
H^q(\SS_2^1, \WW) \ar[d]^{\cong} \ar@{->>}[r] & E_{\infty}^{0,q}(\WW) \ar[d]^-{\cong} \ar[r]^-{\subseteq}_-{\text{edge}}  & E_1^{0,q}(\WW) \ar[d] \cong H^q(G_{24}, \WW) \ar[d] \\
H^q(\SS_2^1, \EE_0) \ar@{->>}[r] & E_{\infty}^{0,q}(\EE_0) \ar[r]^-{\subseteq}_-{\text{edge}}  & E_1^{0,q}(\EE_0) \cong H^q(G_{24}, \EE_0).
} \]
The left isomorphism is an instance of the Chromatic Vanishing Conjecture \cite[Theorem 6.1.6]{BGH}. The middle is isomorphism
 \cite[Lemma 6.1.5]{BGH}. It follows from \cref{rem:splitting} that the edge homomorphism of $E_*^{\bullet,*}(\WW)$ splits, since the edge is the restriction induced by the inclusion $G_{24} \subseteq \SS_2^1$. Using the diagrams of \cref{rem:splitting}, we get the following result. See also \cite[Lemma 5.2.2]{BGH}.

\begin{lem} \label{lem:edge}
The edge homomorphism of the ADSS $E_*^{\bullet,*}(\WW)$
\[ed : H^*(\SS_2^1,\WW ) \to E_1^{0,*}(\WW) \cong H^*(G_{24},\WW),\]
is split via a ring map which decomposes as
\[H^*(G_{24},\WW) \xrightarrow{s} H^*(\SS_2,\WW) \to  H^*(\SS_2^1,\WW )^{\ZZ_2} \xrightarrow{\subseteq} H^*(\SS_2^1,\WW ) \xrightarrow{ed} H^*(G_{24},\WW).\] 
\end{lem}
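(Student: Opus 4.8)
The plan is to assemble the claimed splitting out of the compatible splittings already recorded in \cref{rem:splitting}, together with the two identifications listed in the commutative diagram preceding the statement. First I would recall that the edge homomorphism $ed\colon H^*(\SS_2^1,\WW)\to E_1^{0,*}(\WW)\cong H^*(G_{24},\WW)$ of the \ref{eq:ADSS} is, by construction of the spectral sequence, exactly the restriction map induced by the inclusion $G_{24}=F_0\subseteq \SS_2^1$ (this is the meaning of the top row of the diagram above the statement, where the composite $E_\infty^{0,q}\hookrightarrow E_1^{0,q}$ is the edge and the outer vertical identifications are $H^q(\SS_2^1,\WW)\twoheadrightarrow E_\infty^{0,q}$, an isomorphism by the Chromatic Vanishing Conjecture \cite[Theorem 6.1.6]{BGH}, and $E_1^{0,q}\cong H^q(G_{24},\WW)$). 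So it suffices to produce a ring-theoretic section of the restriction $H^*(\SS_2^1,\WW)\to H^*(G_{24},\WW)$ which factors as stated.

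Next I would invoke the bottom composite splitting $s\colon H^*(G_{24},\WW)\to H^*(\SS_2^1,\WW)$ of \cref{rem:splitting}, obtained from the isomorphism $\SS_2^1/K^1\cong G_{24}$: it is a ring map and the composite $H^*(G_{24},\WW)\xrightarrow{s}H^*(\SS_2^1,\WW)\to H^*(G_{24},\WW)$ with the restriction is the identity. The content of \cref{rem:splitting} is precisely that this $s$ for $\SS_2^1$ is compatible with the corresponding $s$ for $\SS_2$: the left square of the second diagram there commutes. Concretely, the $\SS_2$-level splitting $H^*(G_{24},\WW)\xrightarrow{s}H^*(\SS_2,\WW)$ composed with restriction to $\SS_2^1$ gives a class that is automatically $\ZZ_2$-invariant (since it comes from $\SS_2$, and $\SS_2^1\trianglelefteq\SS_2$ with quotient $\ZZ_2$ acting on $H^*(\SS_2^1,\WW)$), landing it in $H^*(\SS_2^1,\WW)^{\ZZ_2}$; including back into $H^*(\SS_2^1,\WW)$ and applying $ed$ recovers the restriction to $G_{24}$, which by the splitting property is the identity on $H^*(G_{24},\WW)$. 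This is exactly the factorization
\[
H^*(G_{24},\WW)\xrightarrow{s}H^*(\SS_2,\WW)\to H^*(\SS_2^1,\WW)^{\ZZ_2}\xrightarrow{\subseteq}H^*(\SS_2^1,\WW)\xrightarrow{ed}H^*(G_{24},\WW)
\]
claimed in the lemma, and each arrow is a ring homomorphism, so the composite is.

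The only point requiring a little care — and the step I expect to be the main (small) obstacle — is verifying that the restriction of the $\SS_2$-splitting to $\SS_2^1$ does land in the $\ZZ_2$-fixed points and that this middle map "$H^*(\SS_2,\WW)\to H^*(\SS_2^1,\WW)^{\ZZ_2}$" is well defined: this is the statement that restriction along $\SS_2^1\subseteq\SS_2$ factors through the fixed points of the $\ZZ_2=\SS_2/\SS_2^1$-action, which is standard (conjugation by elements of $\SS_2$ acts trivially on $H^*(\SS_2,-)$, hence on the image of restriction), but should be cited or spelled out. Once that is in place, chasing the commuting squares of \cref{rem:splitting} finishes the proof, and I would also remark, as the statement does, that replacing $\WW$ by $\FF_4$ (equivalently $\EE_0$ by $\EE_0/2$) changes nothing.
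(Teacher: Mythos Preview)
Your proposal is correct and follows essentially the same approach as the paper: identify the edge homomorphism with the restriction along $G_{24}\subseteq\SS_2^1$, invoke the compatible splittings of \cref{rem:splitting} at the $\SS_2$ level, and observe that the image of restriction to $\SS_2^1$ is automatically $\ZZ_2$-invariant. The paper treats this lemma as an immediate consequence of the preceding diagram and \cref{rem:splitting} (with a reference to \cite[Lemma~5.2.2]{BGH}), and your write-up simply unpacks those steps.
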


\begin{cor}\label{prop:kpc}\label{lem:kfixed}
The element $k \in H^4(\GG_2, \WW)$ of \eqref{eq:defk} has non-trivial image in $H^4(\SS_2^1, \EE_0)$
detected by $k \in E_1^{0,4}(\EE_0)$ in the ADSS. The ADSS $E_*^{\bullet,*}(\EE_*)$
is thus a module over $\W[k]$ and so the differentials are $k$-linear.  
\end{cor}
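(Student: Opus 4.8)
The plan is to deduce \Cref{prop:kpc} directly from \Cref{lem:edge} together with the module structure of the ADSS recalled in \cref{sec:moduleoverE}. First I would observe that the class $k \in H^4(\GG_2,\WW)$ defined in \eqref{eq:defk} restricts to a class in $H^4(\SS_2^1,\WW)$, which maps to $H^4(\SS_2^1,\EE_0)$ via the unit $\WW\to\EE_0$; by the commutative diagram preceding \Cref{lem:edge} (the one relating the ADSS for $\WW$ and for $\EE_0$), it suffices to track $k$ through the $\WW$-version. By construction, $k$ is the image under the splitting $s$ of the periodicity generator $k\in H^4(G_{24},\WW)$, and \Cref{lem:edge} says precisely that the edge homomorphism $ed$ sends this class back to the periodicity generator $k\in E_1^{0,4}(\WW)\cong H^4(G_{24},\WW)$, which is nonzero. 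Hence the image of $k$ in $H^4(\SS_2^1,\WW)$, and a fortiori in $H^4(\SS_2^1,\EE_0)$, is nonzero and is detected in ADSS filtration $0$ by the class named $k$ in $E_1^{0,4}$.

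Next I would promote this to the statement that the whole spectral sequence $E_*^{\bullet,*}(\EE_*)$ is a module over $\W[k]$ with $k$-linear differentials. Since $k\in H^4(\SS_2^1,\EE_0)\subseteq H^4(\SS_2^1,\EE_*)$ and the ADSS for $\EE_*$ is a module over the ring $H^*(\SS_2^1,\EE_*)$ (by \cite[Lemma 4.1.3]{BeaudryTowards}, as recalled in \cref{sec:moduleoverE}), multiplication by $k$ is an endomorphism of the spectral sequence commuting with all $d_r$; this is exactly $k$-linearity of the differentials. The $\W$-module structure is automatic since everything in sight is a $\W$-module and the differentials are $\W$-linear. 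To get an honest $\W[k]$-module structure (rather than merely an action of the element $k$) one notes that $k$ is not nilpotent: on the $E_1$-page its restriction to $E_1^{0,*}(\EE_0)\cong H^*(G_{24},\EE_0)$ is the periodicity generator, for which $k^j\ne 0$ for all $j$ by \Cref{G24}, so no polynomial relation in $k$ can hold on $E_1^{0,*}$, hence none can hold on the abutment's filtration-zero piece.

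The one genuine subtlety — and the step I expect to require the most care — is the twist at $p=3$ flagged in the \textbf{Warning} of \cref{sec:moduleoverE}: the action of $H^*(\SS_2^1,\EE_*)$ on $E_1^{3,*}(\EE_*)\cong H^*(G_{24}',\EE_*)$ is the restriction to $G_{24}'=\pi G_{24}\pi^{-1}$, which after the identification $H^*(G_{24}',\EE_*)\cong H^*(G_{24},\EE_*)$ becomes restriction precomposed with conjugation by $\pi$. So to conclude $k$-linearity uniformly in $p$ I must check that $k$ restricts nontrivially to $H^4(G_{24}',\EE_0)$ as well. This follows because conjugation by $\pi$ is an automorphism of $\SS_2$ carrying $G_{24}$ to $G_{24}'$, hence induces an isomorphism $H^*(G_{24},\EE_*)\cong H^*(G_{24}',\EE_*)$ under which the periodicity generator of the former goes to that of the latter (the periodicity class is canonical — it generates $H^4(G_{24},\EE_0)\cong\W$, which is intrinsic to the group). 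Alternatively, and perhaps more cleanly, one invokes the compatibility of the ADSS for $\SS_2^1$ with the larger group $\SS_2$: the class $k$ comes from $H^4(\SS_2,\WW)$ via the splitting $s$ of \eqref{eq:defk}, and \cref{rem:splitting} provides splittings compatible with the passage $\SS_2\to\SS_2^1$, so the $\SS_2$-level splitting automatically produces a permanent cycle in the $\SS_2^1$-ADSS whose restriction to each $F_p$ is nonzero. Either way, once $k$ acts nontrivially and non-nilpotently on $E_1^{\bullet,*}(\EE_*)$ across all columns, the module structure over $H^*(\SS_2^1,\EE_*)$ restricts to a $\W[k]$-module structure and $k$-linearity of the $d_r$ is immediate.
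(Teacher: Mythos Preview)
Your proposal is correct and follows essentially the same route as the paper: invoke \Cref{lem:edge} together with the commutative diagram preceding it to see that $k$ is detected by $k\in E_1^{0,4}(\EE_0)$, then cite the module structure of \cref{sec:moduleoverE} for $k$-linearity. The additional discussion of non-nilpotency and of the $p=3$ twist is not needed for the statement as written---being a $\W[k]$-module only requires a ring map $\W[k]\to H^*(\SS_2^1,\EE_*)$, not that $k$ act faithfully, and the $k$-linearity of differentials follows formally from the module structure regardless of how $k$ restricts to each column.
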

\begin{proof}
From \Cref{lem:edge} and the diagram preceeding it, we get an element $k \in H^4(\mathbb{S}_2^1, \EE_0)^{\ZZ_2}$ detected by $k\in E_1^{0,4}(\EE_0)$ in the ADSS. The rest follows from the module structure of the spectral sequence over $H^*(\SS_2^1, \EE_*)$ (see \cref{sec:moduleoverE}).
\end{proof}

\subsection{The spectral sequence for $\EE_*/2$}
The \ref{eq:ADSS} with $\MM=\EE_*/2$ has been computed in  \cite{BeaudryTowards}. We summarize some key features of this computation. Note that these are ``computational'' facts that one learns from \cite{BeaudryTowards}, and they do not all have ``theoretical'' justifications. 
\begin{enumerate}[(a)]
\item The spectral sequence collapses at the $E_2$-page. Since it's a spectral sequence of $\F_4$-vector spaces, there are no additive extensions.
\item The spectral sequence is a module over the subring  (see \cite[Theorem 1.2.1]{BeaudryTowards})
\[\FF_4[v_1, \eta, k]/(\eta^4-v_1^4k) \subseteq H^*(\SS_2^1,\EE_*/2).\]
\item The spectral sequence is determined by the cohomology when $q=0$, i.e., by the cohomology of the chain complex
{\tiny
\[\xymatrix{
0 \ar[r] &E_1^{0,0}(\EE_*/2) \ar@{=}[d]\ar[r]^-{d_1} & E_1^{1,0}(\EE_*/2) \ar@{=}[d]\ar[r]^-{d_1} &  E_1^{2,0}(\EE_*/2) \ar@{=}[d] \ar[r]^-{d_1} &E_1^{3,0}(\EE_*/2) \ar[r]\ar@{=}[d] & 0 \\
0 \ar[r] &H^0(G_{24},\EE_*/2) \ar@{=}[d]\ar[r]^-{d_1} & H^0(C_6,\EE_*/2) \ar@{=}[d]\ar[r]^-{d_1} &  H^0(C_6,\EE_*/2) \ar@{=}[d] \ar[r]^-{d_1} &H^0(G_{24},\EE_*/2) \ar[r]\ar@{=}[d] & 0 \\
0 \ar[r] & \frac{\F_4[\![j]\!][v_1, \Delta^{\pm 1}]}{v_1^{12}-j\Delta} \ar[r]^-{d_1} &  \frac{\F_4[\![j_0]\!][v_1, v_2^{\pm 1}]}{v_1^3-j_0v_2} \ar[r]^-{d_1} &  \frac{\F_4[\![j_0]\!][v_1, v_2^{\pm 1}]}{v_1^3-j_0v_2}\ar[r]^-{d_1} & \frac{\F_4[\![j]\!][v_1, \Delta^{\pm 1}]}{v_1^{12}-j\Delta} \ar[r] & 0,  } \]}
which is completely described in Theorem 1.2.1 of \cite{BeaudryTowards}.
This is because of the following facts:
\begin{enumerate}[i.]
\item $d_1 \colon E_1^{0,*}(\EE_*/2) \to E_1^{1,*}(\EE_*/2)$ maps any $v_1$-torsion class to zero. Any $v_1$-free class in positive $q$ degree is a multiple of $\eta$ or $k$. 
\item $d_1 \colon E_1^{1,*}(\EE_*/2) \to E_1^{2,*}(\EE_*/2)$ is an $h$-linear map.
Any class in positive $q$-degree in $E_1^{p,*}(\EE_*/2)$ for $p=1,2$ is a multiple of $h$.
\item $d_1 \colon E_1^{2,*}(\EE_*/2) \to E_1^{3,*}(\EE_*/2)$ can also be thought of as an $h$-linear map, even if there is no element called $h$ in $E_1^{3,*}(\EE_*/2)$, as follows. Note that the image of $d_1$ lies in the subring 
\[\frac{ \F_4[\![j]\!][\eta, v_1, k, \Delta^{\pm 1}]}{(v_1^{12} - j\Delta, \eta^4 - v_1^4k)}\subseteq H^*(G_{24},\EE_*/2). \]
Then for any $x$, we have
\[d_1(hx)= v_1^{-1}\eta d_1(x). \]
\end{enumerate}
\end{enumerate}

Computing the $E_2$-page of the \ref{eq:ADSS} for $\EE_*/2$ is complicated, and the interested reader should consult \cite[Theorem 1.2.2]{BeaudryTowards} for the full results. Here we only focus on the range $0 \leq \ast < 12$, and we can be even more explicit.
 As in {\it loc.cit.}, we choose suitable generators $\Delta_n \in E_1^{0,0}(\EE_*/2), b_n \in E_1^{1,0}(\EE_*/2), \overline{b}_n \in E_1^{2,0}(\EE_*/2)$ and $\overline{\Delta}_n \in E_1^{3,0}(\EE_*/2)$ with 
\[
\Delta_n \equiv \Delta^n ,\qquad b_n \equiv \overline{b}_n\equiv v_2^n, \qquad \overline{\Delta}_n \equiv \Delta^n  
\]
where the congruences are modulo $v_1$.\footnote{In \cite{BeaudryTowards}, $\overline{\Delta}_n \equiv (\Delta')^n$ where $\Delta'$ is the image of $\Delta$ under the isomorphism $H^*(G_{24}, \EE_*/2) \cong H^*(G_{24}', \EE_*/2)$ induced by conjugation by $\pi$. In this paper, we most often apply this isomorphism implicitly, so we obscure this distinction. The only place where we will need to remember the difference is in the proof of \cref{thm:techyd1} below.}  These generators play well with the differentials, and in our range they consist of
\[\Delta_0, b_0, b_1, b_2, \overline{b}_0, \overline{b}_1, \overline{b}_2, \overline \Delta_0.\]
All these are $d_1$-cycles, and we use them to summarize the needed calculation.

\begin{rem}\label{rem:defdbs}
The classes $\Delta_0$, $b_0$, $\overline{b}_0$, $\overline{\Delta}_0$ are equal to the units in $E_1^{p,0}(\EE_*/2)$, see \cite[Remark 5.3.3]{BeaudryTowards}. However, the definition of $b_1$, $b_2$, $\overline{b}_1$ and  $\overline{b}_2$ is more complicated. In \cite{BeaudryTowards}, the element $b_1$ is defined in Proposition 5.2.4, $b_2$ in the proof of Proposition 5.2.1, $\overline{b}_1$ in Lemma 5.3.2 and $\overline{b}_2$ in Proposition 5.3.1.
\end{rem}

\begin{thm}[{\cite[Theorem 1.2.1]{BeaudryTowards}}]\label{thm:Einfmod2}
The $d_1$-differentials in the \ref{eq:ADSS} for $\MM=\EE_*/2$ for $q=0$ are determined by

\begin{align*}
&d_1(\Delta_n) & &= &&
\begin{cases}
v_1^{6\cdot 2^r} b_{2^{r+1}(1+4t)},    &n=2^r(1+2t)\\
0, &n=0
\end{cases}\\
&
d_1(b_n) & &= &&
\begin{cases}
v_1^{3\cdot 2^r} \overline{b}_{2^{r+1}(1+2t)}, &n=2^r(3+4t)\\
v_1^{3\cdot 2^{r+1}} \overline{b}_{1+2^{r+1}(1+4t)}, &n=1+2^{r+2}(1+2t)\\
0, &\text{otherwise}
\end{cases}\\
&
d_1(\overline{b}_n) & &=&&
\begin{cases}
v_1^{3(2^{r+1}+1)} \overline{\Delta}_{2^{r}(1+2t)}, &n=1+2^{r+1}(3+4t)\\
0, &\text{otherwise}
\end{cases}
\end{align*}
and $v_1$-linearity.
Consequently, using $\eta$ and $k$ linearity, we have
\begin{align*}
E_2^{p,*}(\EE_0/2) &\cong  \begin{cases}
\F_4[k]\{\Delta_0, \nu^2y\Delta_{-1} \}&  \text{$p=0$}  \\
\F_4[h] \{b_0 \} & \text{$p=1$}  \\
\F_4[h] \{\overline{b}_0 \} & \text{$p=2$}  \\
\F_4[k]\{\overline{\Delta}_0, \nu^2y\overline{\Delta}_{-1}\} &  \text{$p=3$}  
\end{cases}
\end{align*}
\begin{align*}
E_2^{p,*}(\EE_2/2) &\cong  \begin{cases}
\F_4[k]\{v_1\Delta_0, \eta \Delta_0\} &  \text{$p=0$}  \\
\F_4[h]\{  v_1 b_0 \} & \text{$p=1$}  \\
\F_4[h]\{ v_1  \overline{b}_0 \} & \text{$p=2$}  \\
\F_4[k]\{v_1\overline{\Delta}_0, \eta\overline{\Delta}_0\} &  \text{$p=3$} 
\end{cases} 
\end{align*}
\begin{align*}
E_2^{p,*}(\EE_4/2) &\cong \begin{cases} 
\F_4[k]\{ v_1^2\Delta_0,\nu \Delta_0, v_1\eta\Delta_0, \eta^2\Delta_0\} & p=0 \\
\F_4[h]\{v_1^2b_0\} &  p=1 \\
\F_4[h]\{ v_1^2\overline{b}_0\} & p=2 \\
\F_4[k]\{v_1^2 \overline\Delta_0, \nu \overline\Delta_0,  v_1\eta\overline\Delta_0, \eta^2\overline\Delta_0\}   & p=3
\end{cases}
\end{align*}
\begin{align*}
E_2^{p,q}(\EE_6/2) &\cong  \begin{cases}
\F_4[k]\{v_1^3\Delta_0, v_1^2\eta\Delta_0, v_1\eta^2\Delta_0, \eta^3 \Delta_0\} & p=0 \\
\F_4[h]\{  v_1^3 b_0,  b_1  \} & p=1  \\
\F_4[h]\{ v_1^3 \overline{b}_0,  \overline{b}_1 \} & p=2 \\
\F_4[k]\{v_1^3\overline\Delta_0, v_1^2\eta\overline\Delta_0, v_1\eta^2\overline\Delta_0, \eta^3 \overline\Delta_0\}&  p=3
\end{cases}
\end{align*}
\begin{align*}
E_2^{p,q}(\EE_8/2) &\cong  \begin{cases}
\F_4[k]\{v_1^4\Delta_0,  x\Delta_0, v_1^3\eta\Delta_0,  \nu^2 \Delta_0, v_1^2\eta^2\Delta_0, v_1\eta^3 \Delta_0\} & p=0 \\
\F_4[h]\{  v_1^4 b_0,  v_1b_1  \} & p=1  \\
\F_4[h]\{ v_1^4 \overline{b}_0,  v_1\overline{b}_1 \} & p=2 \\
\F_4[k]\{v_1^4\overline\Delta_0, x\overline\Delta_0, v_1^3\eta\overline\Delta_0,  \nu^2 \overline\Delta_0,  v_1^2\eta^2\overline\Delta_0, v_1\eta^3 \overline\Delta_0\}&  p=3
\end{cases}
\end{align*}
\begin{align*}
E_2^{p,q}(\EE_{10}/2) &\cong  \begin{cases}
\F_4[k]\{v_1^5\Delta_0, v_1x\Delta_0,   v_1^4\eta\Delta_0, \eta x \Delta_0, v_1^3\eta^2\Delta_0, v_1^2\eta^3 \Delta_0\} & p=0 \\
\F_4[h]\{  v_1^5 b_0,  v_1^2b_1  \} & p=1  \\
\F_4[h]\{ v_1^5 \overline{b}_0,  v_1^2\overline{b}_1 \} & p=2 \\
\F_4[k]\{v_1^5\overline\Delta_0,  v_1x\overline\Delta_0,  v_1^4\eta\overline\Delta_0,  \eta x \overline\Delta_0, v_1^3\eta^2\overline\Delta_0, v_1^2\eta^3 \overline\Delta_0\}&  p=3.
\end{cases}
\end{align*}

\begin{align*}
E_2^{p,q}(\EE_{12}/2) &\cong  \begin{cases}
\F_4[k]\{v_1^6\Delta_0,   v_1^5\eta\Delta_0, v_1 \eta x \Delta_0, v_1^4\eta^2\Delta_0, v_1^3\eta^3 \Delta_0, \nu^3\Delta_0\} & p=0 \\
\F_4[h]\{  v_1^6 b_0,  v_1^3b_1, b_2  \} & p=1  \\
\F_4[h]\{ v_1^6 \overline{b}_0,  v_1^3\overline{b}_1, \overline{b}_2 \} & p=2 \\
\F_4[k]\{v_1^6\overline\Delta_0,   v_1^5\eta\overline\Delta_0, v_1 \eta x\overline \Delta_0, v_1^4\eta^2\overline\Delta_0, v_1^3\eta^3\overline \Delta_0, \nu^3\overline\Delta_0\}&  p=3.
\end{cases}
\end{align*}
\end{thm}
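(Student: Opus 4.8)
The first display --- the $d_1$-differentials on the $q=0$ row --- is quoted verbatim from \cite[Theorem 1.2.1]{BeaudryTowards}, so there is nothing new to prove there; I would only recall that the generators $\Delta_n, b_n, \overline b_n, \overline\Delta_n$ are the ones introduced above, that $|v_1|=2$, $|v_2|=6$, $|\Delta|=24$, and hence that on the $q=0$ row the generators lying in internal degrees at most $12$ are $\Delta_0, b_0, b_1, b_2, \overline b_0, \overline b_1, \overline b_2, \overline\Delta_0$. For the ``Consequently'' statement, the plan is to compute $E_2^{p,q}(\EE_t/2)=H\big(E_1^{\bullet,q}(\EE_t/2),d_1\big)$ separately for each of the seven internal degrees $t\in\{0,2,4,6,8,10,12\}$, using the three facts recalled just before the theorem: by~(a) the spectral sequence collapses at $E_2$ with no additive extensions; by~(b) it --- and hence $d_1$ --- is a module over $\F_4[v_1,\eta,k]/(\eta^4-v_1^4k)$; and by~(c) the whole of $d_1$ is recovered from its restriction to the $q=0$ row, together with $v_1$-linearity throughout, the $v_1$-torsion/$\eta$/$k$ structure of the column $p=0$, and $h$-linearity on the columns $p=1,2$.

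Concretely, on the $q=0$ row one has the four-term complex displayed above the theorem, and I would compute its cohomology: the kernel of $d_1$ at $p=0$, the cokernel at $p=3$, and the honest homology at $p=1,2$. The essential mechanism is that evaluating $d_1$ on a generator times a positive power of $j$ (resp.\ $j_0$) is \emph{not} $j$-linear, because $j=v_1^{12}\Delta^{-1}$ and $j_0 v_2 = v_1^3$; thus, by $v_1$-linearity, $d_1(j^i\Delta_0)=v_1^{12i}d_1(\Delta_{-i})$, which is nonzero, and similarly on the middle columns, so the a priori infinite $\F_4[\![j]\!]$- and $\F_4[\![j_0]\!]$-towers in each $E_1^{p,0}(\EE_t/2)$ collapse to the finitely many $\F_4$-classes claimed; propagating these by $\eta$-, $k$- and $h$-linearity then yields the stated free $\F_4[k]$- and $\F_4[h]$-modules. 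For the columns $p=1,2$ in positive $q$, parts~(ii)--(iii) of~(c) identify $E_1^{1,\ast}$ and $E_1^{2,\ast}$ with $h$-towers on the $q=0$ classes, so their cohomology is the $h$-tower on the surviving $b$'s and $\overline b$'s. For the columns $p=0$ and $p=3$ one has $E_2^{0,q}=\ker d_1$ and $E_2^{3,q}=\coker d_1$, since $E_1^{-1,q}=E_1^{4,q}=0$; here I would list the module generators of $H^\ast(G_{24},\EE_t/2)$ in our range from \Cref{G24-mod2} --- the $\eta$- and $k$-multiples of the degree-$t$ generator of $H^0$, handled by linearity, together with the sporadic classes built from $\nu$, $x$, $y$, namely $\nu\Delta_0$, $\nu^2\Delta_0$, $\nu^3\Delta_0$, $x\Delta_0$, $\eta x\Delta_0$, $\nu^2 y\Delta_{-1}$ and their $\overline\Delta$-analogues. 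Each such sporadic class is a $d_1$-cycle because it is annihilated by a power of $v_1$ --- this follows from the relations $v_1\nu=0$, $v_1^2x=0$, $v_1y=0$ of \Cref{G24-mod2} --- so part~(i) of~(c) applies; being at the start of the complex, the $p=0$ classes are then automatically not boundaries, while for $p=3$ the same degree and $v_1$-linearity constraints show they are not in the image of $d_1$ out of $E_1^{2,q}$ (keeping in mind the twist in the identification $E_1^{3,\ast}\cong H^\ast(G_{24},-)$).

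The hard part is not any single step but the bookkeeping: for each of the seven internal degrees one must correctly enumerate the $\F_4[\![j]\!]$- and $\F_4[\![j_0]\!]$-module generators of $H^0(G_{24},\EE_t/2)$ and $H^0(C_6,\EE_t/2)$ and their higher-cohomology companions, and match them against the three-way piecewise formula for $d_1$; the point most prone to error is tracking the relations $v_1^{12}=j\Delta$ and $v_1^3=j_0v_2$, since, as above, these are exactly what turn the infinite towers into finite free modules. Once the $q=0$ row is settled in each degree, fact~(a) finishes the argument with no further input. As a cross-check, the entire ``Consequently'' part may also be obtained by specializing \cite[Theorem 1.2.2]{BeaudryTowards} to $0\le\ast<12$.
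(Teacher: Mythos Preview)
Your proposal is correct and matches the paper's approach: the theorem is stated as a citation of \cite[Theorem~1.2.1]{BeaudryTowards} with no proof given, and the ``Consequently'' part is exactly the specialization to $0\le t\le 12$ that you outline, using the structural facts (a)--(c) and $v_1$-, $\eta$-, $k$-, $h$-linearity to propagate from the $q=0$ row. Your cross-check against \cite[Theorem~1.2.2]{BeaudryTowards} is also what the paper implicitly relies on.
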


The spectral sequence is depicted in \Cref{fig:ADSS-Mod2-E1} and \Cref{fig:ADSS-Mod2}.

In \cite[Theorem 8.2.5]{BGH}, the structure of $v_1^{-1}H^*(\SS_2^1, \EE_*/2)$ is computed as an algebra. Namely, it is shown that 
\begin{align*}
v_1^{-1}H^*(\SS_2^1, \EE_*/2) \cong \F_4[v_1^{\pm 1}, \eta, \sigma, \chi]/(\sigma^2, \chi^3)
\end{align*}
where 
\begin{enumerate}
\item $\chi$ is detected by $b_0$, 
\item $\chi^2$ by $\overline{b}_0$, 
\item $\sigma$ by $v_1 b_1$,  
\item $\sigma \chi$ by $a_0h\overline{b}_1+a_1 v_1^4\overline{b}_0$ for $a_0 \in \F_4^{\times}$ and $a_1 \in \F_4$; and
\item $\sigma \chi^2$ by $v_1^4\overline{\Delta}_0$.
\end{enumerate}
This, and the fact that the action of $H^*(\SS_2^1, \EE_*/2) $ on the ADSS preserves filtration allows us to describe many products. We use this to name classes in $H^*(\SS_2^1, \EE_*/2)$. See \cref{fig:tableV0}. 

\newpage
\begin{figure}[H]
 \includegraphics[page=1, width=0.48\textwidth]{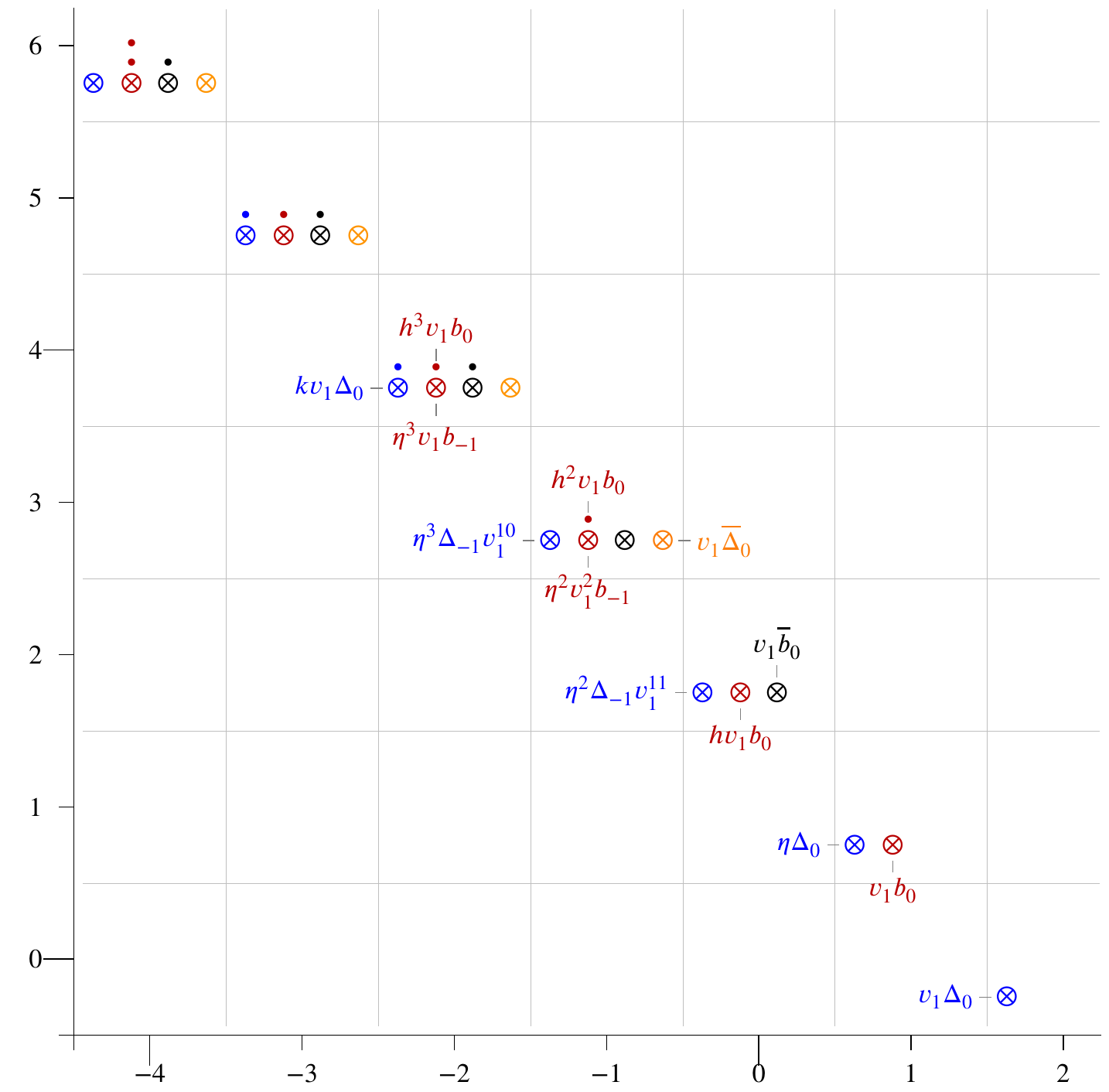}
  \includegraphics[page=1, width=0.48\textwidth]{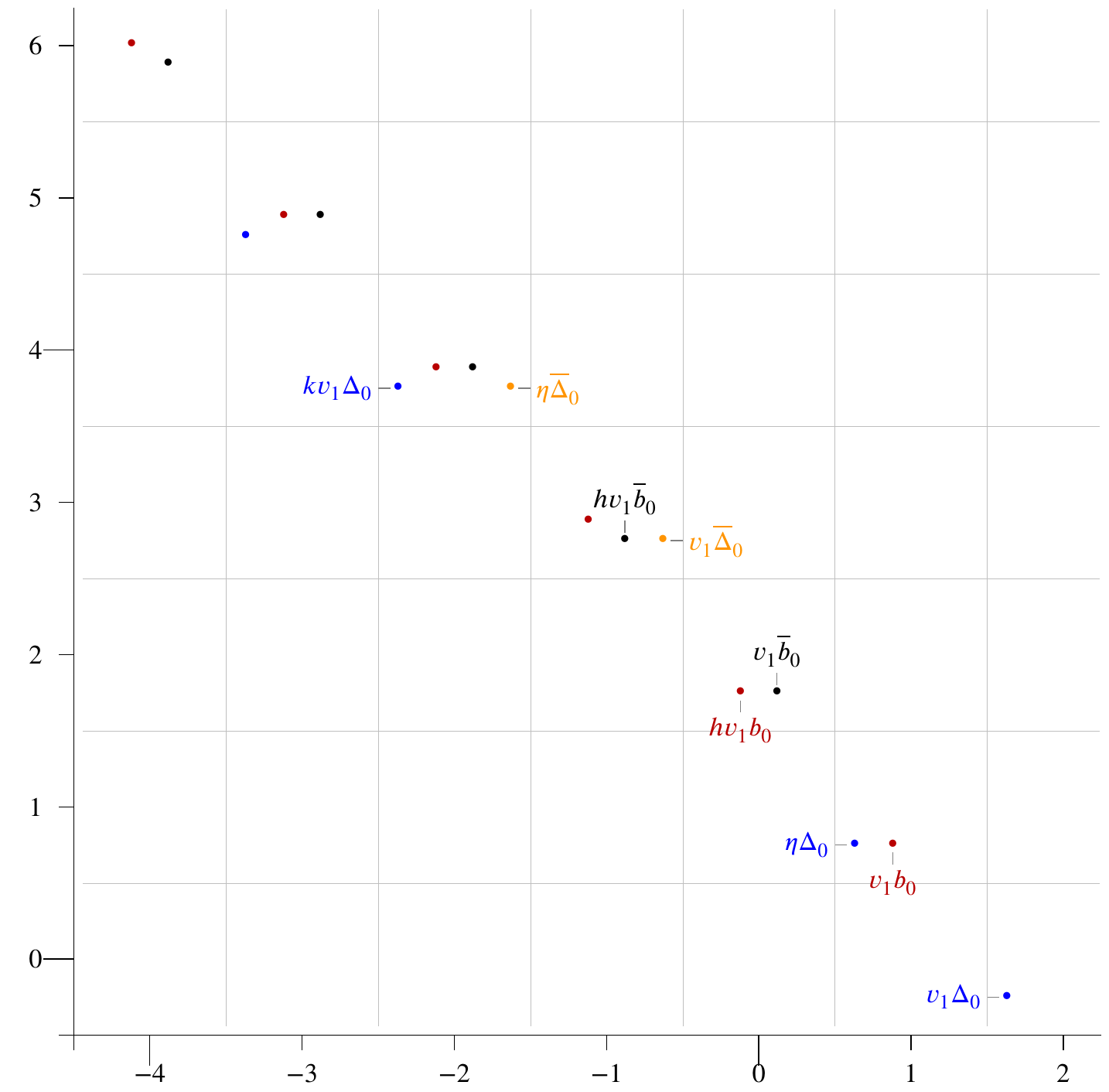}
 \caption[$E_1$ and $E_2$-pages of the ADSS for $\EE_2/2$]{Left: $E_1$-page of the \eqref{eq:ADSS} $E_1^{p,q}(\EE_t/2)$ for $t=2$. This is drawn in Adams grading so that the vertical axis is $p+q$ and the horizontal axis is $t-p-q$. The ADSS preserves $t$, and so in this picture, we have fixed $t=2$. Classes in {\color{blue} blue} and  {\color{YellowOrange} orange} come from \cref{fig:c6e2v0} and have filtration $p=0,3$ respectively. Classes in {\color{BrickRed} red} and black come from \cref{fig:c6e2} and have filtration $p=1,2$ respectively. All ADSS differentials raise $s=p+q$ by 1 and decrease $t-s$ by $1$. The $d_r$-differential raises $p$ by $r$. Right: The $E_2$-page. Compare with \cref{fig:ADSS-Mod2} below.}
\label{fig:ADSS-Mod2-E1}
\end{figure}

\begin{figure}[H]
 \includegraphics[page=1, width=1\textwidth]{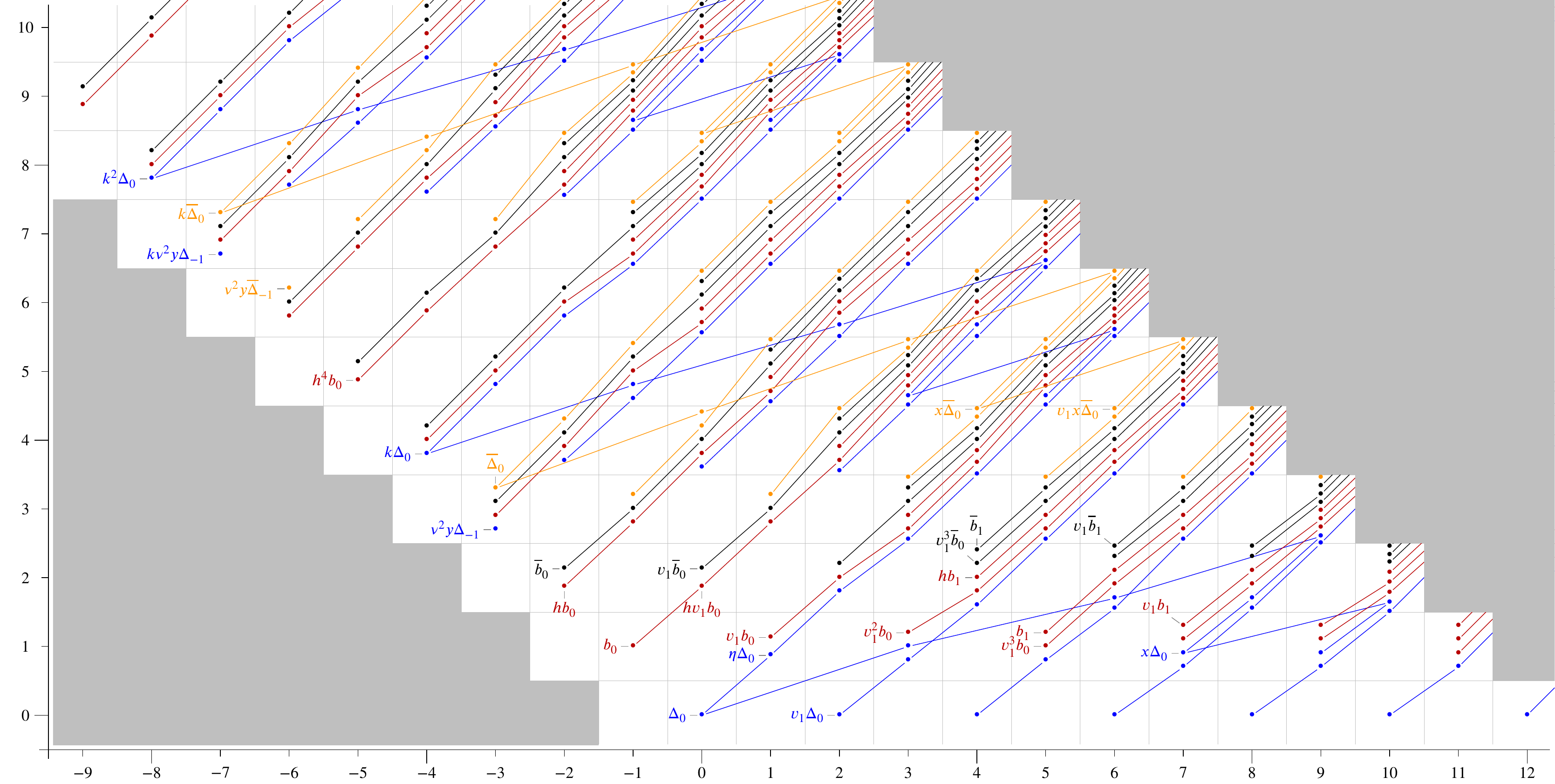}\caption[The $E_2$-page of the ADSS for $\EE_*/2$]{The $E_2$-page of the \eqref{eq:ADSS} for $M=\EE_*/2$. The figure is drawn in Adams grading corresponding to $H^{p+q}(\SS_2^1, \EE_t)$: The vertical axis is $p+q$ and the horizontal axis is $t-p-q$. The ADSS filtration is indicated by the color. {\color{blue}Blue} corresponds to $p=0$, {\color{BrickRed} red} represents $p=1$, black represents $p=2$ and {\color{YellowOrange} orange} $p=3$. A $\bullet=\F_4$. Lines of slope 1 are $\eta$ multiplication and those of slope $1/3$ are $\nu$ multiplication. }
\label{fig:ADSS-Mod2}
\end{figure}

\begin{table}[H]
  \begin{adjustbox}{addcode={\begin{minipage}{\width}}{\caption{%
      Generators for the cohomology groups $H^s(\SS_2^1, \EE_t/2)$ and $H^s(\GG_2^1, \EE_t/2)$. The colors correspond to the ADSS filtration.
 \label{fig:tableV0}     
      }\end{minipage}},rotate=90,center}
      \includegraphics[width=\textheight]{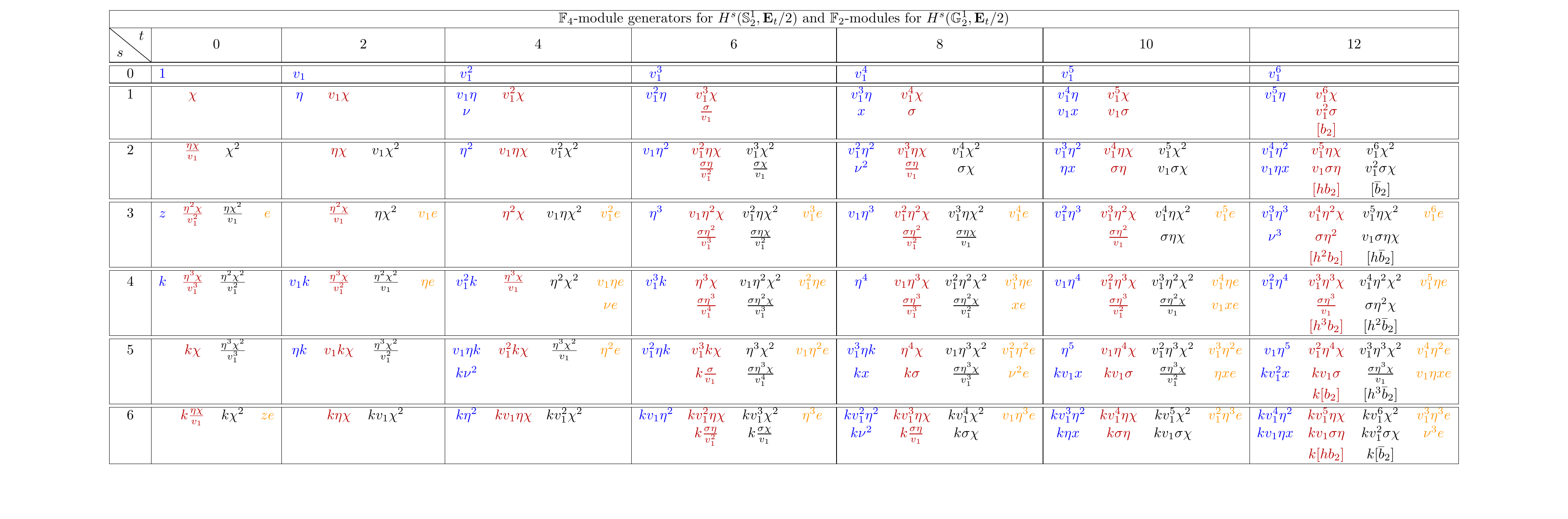}%
  \end{adjustbox}
\end{table}

\section{The Algebraic Duality Spectral Sequence for $\EE_*$}\label{sec:main}
 % !TEX root = coh-master.tex
 
In this section, we compute the \ref{eq:ADSS} with coefficients in $\EE_t$ \
\begin{equation*}
E_1^{p,q}(\EE_t)=H^q(F_p, \EE_t) \Longrightarrow H^{p+q}(\SS^1, \EE_t),
\end{equation*}
in the range $0\leq t <12$.
The first step is to compute the $E_2$-term, which is the cohomology $H^{\bullet}(E_1^{\bullet,*}(\EE_t))$ of the chain complex $( E_1^{\bullet,*}(\EE_t), d_1)$ with differentials 
\[d_1 \colon E_1^{\bullet,*}(\EE_t) \to E_1^{\bullet+1,*}(\EE_t) .\] 
Expanded out,  $H^{\bullet}(E_1^{\bullet,*}(\EE_t))$ denotes the cohomology of the chain complex
\[E_1^{\bullet, *}(\EE_t)= \{ 0 \to H^*(G_{24}, \EE_t) \xrightarrow{d_1} H^*(C_6, \EE_t) \xrightarrow{d_1} H^*(C_6, \EE_t) \xrightarrow{d_1} H^*(G_{24}, \EE_t) \to 0 \}. \]

In the range of focus, there will be no higher differentials and we will see that $E_2^{*,*}(\EE_t)\cong E_{\infty}^{*,*}(\EE_t)$.

\subsection{The strategy}
We will bootstrap from the computation of the \ref{eq:ADSS} for $\EE_*/2$ to that for $\EE_*$. For each $t$, we have an exact sequence of $\mathbb{S}_2^1$-modules,
\[  0 \to \EE_t \xrightarrow{2} \EE_t \to \EE_t/2 \to 0 \ ,\]
which gives long exact sequences
\begin{equation}\label{eq:lesE2} \ldots \to E_1^{p,q}(\EE_{t})  \xrightarrow{2}  E_1^{p,q}(\EE_{t}) \xrightarrow{r} E_1^{p,q}(\EE_{t}/2) \xrightarrow{\partial} E_1^{p,q+1}(\EE_{t})  \xrightarrow{2} \ldots 
\end{equation}
for each $0\leq p\leq 3$.
If we denote
\[F_1^{p,q}(\EE_{t}) :=  E_1^{p,q}(\EE_{t})/2,\]
let $\ker_2(M)$ denote the kernel of the multiplication by $2$ map, and $\im_2(M)$ denote its image;
then we have 
a commutative diagram as in \Cref{fig:bigdiagraminitial}, where the columns are extracted from \eqref{eq:lesE2} and the horizontal maps are induced by the $d_1$-differentials in the \ref{eq:ADSS}. 
 \begin{figure}
\[\xymatrix{
 \im_2(E_1^{0,q}(\EE_{t})) \ar[d]^-{\subseteq} \ar[r] &  \im_2(E_1^{1,q}(\EE_{t})) \ar[r]\ar[d]^-{\subseteq}& \im_2(E_1^{2,q}(\EE_{t})) \ar[r] \ar[d]^-{\subseteq}& \im_2(E_1^{3,q}(\EE_{t})) \ar[d]^-{\subseteq}  \\
E_1^{0,q}(\EE_{t}) \ar@{->>}[d] \ar[r] &E_1^{1,q}(\EE_{t}) \ar[r] \ar@{->>}[d] & E_1^{2,q}(\EE_{t})  \ar[r]  \ar@{->>}[d] & E_1^{3,q}(\EE_{t}) \ar@{->>}[d]  \\ 
 F_1^{0,q}(\EE_{t}) \ar[r] \ar[d]^-{\subseteq} &F_1^{1,q}(\EE_{t})\ar[d]^-{\subseteq}\ar[r] & F_1^{2,q}(\EE_{t})  \ar[r] \ar[d]^-{\subseteq}& F_1^{3,q}(\EE_{t}) \ar[d]^-{\subseteq} \\ 
 E_1^{0,q}(\EE_{t}/2) \ar[r]  \ar@{->>}[d] &E_1^{1,q}(\EE_{t}/2)\ar[d] \ar[r] & E_1^{2,q}(\EE_{t}/2) \ar[d]  \ar[r]& E_1^{3,q}(\EE_{t}/2)   \ar@{->>}[d]  \\ 
  \ker_2(E_1^{0,q+1}(\EE_{t})) \ar[r] &    \ker_2(E_1^{1,q+1}(\EE_{t})) \ar[r] &   \ker_2(E_1^{2,q+1}(\EE_{t})) \ar[r] &   \ker_2(E_1^{3,q+1}(\EE_{t}) 
}\]
\caption[A commutative diagram of $E_1$-terms for the ADSS]{A commutative diagram of $E_1$-terms for the ADSS where $q\geq 0$ and $t\in \Z$. The first three rows and the last three rows each form short exact sequences of chain complexes. Hence, we get two long exact sequences on the cohomology of these chain complexes.}
\label{fig:bigdiagraminitial}
\end{figure}

So, what do we know and what remains to be computed? Let us summarize the knowns, so far.
\begin{itemize}
\item Each group in this diagram is well-known as it is either a subgroup or subquotient of the cohomology groups
\[H^q(G_{24},\EE_t), \quad \text{or} \quad H^q(C_6, \EE_t);\]
\item In particular, when $q>0$, \Cref{lem:C6coh} gives that the higher $C_6$-cohomology of $\EE_t$ is all 2-torsion, implying that $\im_2(E_1^{1,q}(\EE_t)) = 0 = \im_2(E_1^{2,q}(\EE_t))$;
\item Every vertical map in this diagram is understood from results in \Cref{sec:Cohomology};
\item The cohomology of the fourth row, $H^{\bullet}(E_1^{\bullet,*}(\EE_t/2))$ for any $t$ is computed in \cite{BeaudryTowards}, and discussed in detail in \Cref{sec:BeaudryTowards} for $0\leq t <12$.
\end{itemize}
Furthermore, as we will see, in many cases, the groups $E_1^{p,q}(\EE_t)$ are zero based on the parity of $q+t/2$, and that these groups when non-trivial are often 2-torsion (i.e., $2E_1^{p,q}(\EE_t)=0$), which simplifies this diagram even further. For these reasons, the comparison of $E_1^{\bullet,q}(\EE_t) $ and $E_1^{\bullet,q}(\EE_t/2)$ is for the most part straightforward. The crucial missing piece for doing this analysis for all $t$ would be a computation of the differential $d_1 \colon E_1^{\bullet,0}  (\EE_t) \to E_1^{\bullet+1,0}  (\EE_t)$ (i.e., for $q=0$). When $q=0$, we have torsion free classes and so the comparison with $E_1^{\bullet,q}(\EE_t/2) $ is trickier.

\subsection{Some general cases}
We begin with some general remarks that simplify the diagram of \Cref{fig:bigdiagraminitial} in various cases before restricting the range of $t$. We note that $\EE_t=0$ if $t$ is odd, so henceforth, we always assume that $t$ is even. Then there are two over-arching cases, when $q+t/2$ is odd and when $q+t/2$ is even. This division is coming from the fact that the restriction of $\EE_t$ to $(\pm1)\subseteq G_{24}$ is the sign representation when $t/2$ is odd and the trivial representation when $t/2$ is even.

\subsubsection{The case $q+t/2$ odd}

The computation of the $E_2$-term of the ADSS for $\EE_t$ when $q+t/2$ is odd is straightforward.

\begin{lemma}\label{lem:q+t/2odd}
If  $q+t/2$ is odd, there are isomorphisms
\[E_1^{p,q}(\EE_t)  \cong  E_2^{p,q}(\EE_t).  \] 
\end{lemma}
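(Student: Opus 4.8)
The claim is that when $q + t/2$ is odd, the $d_1$-differential on $E_1^{p,q}(\EE_t)$ vanishes, so that $E_1^{p,q} \cong E_2^{p,q}$. The strategy is to reduce the computation of the chain complex $(E_1^{\bullet,q}(\EE_t), d_1)$ to the already-understood mod-$2$ situation via the long exact sequences of \Cref{fig:bigdiagraminitial}, exploiting the parity. First I would recall that when $q + t/2$ is odd, the restriction of $\EE_t$ to the central $(\pm 1) \subseteq G_{24}$ is a nontrivial sign-type representation, so the groups $E_1^{p,q}(\EE_t) = H^q(F_p, \EE_t)$ are concentrated in degrees governed by the $C_2$-cohomology computed in \Cref{lem:C6coh} and \Cref{G24}: in particular these groups are all $2$-torsion (killed by multiplication by $2$) when they are nonzero, since the relevant classes ($\alpha$, $w_5$, $\eta$, $\nu$, $\mu$, $\epsilon$, $\kappa$ and their products) are all annihilated by powers of $2$, and the torsion-free part of $H^*(F_p, \EE_*)$ sits in degrees with $q + t/2$ even.

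**Key steps.** With $2 E_1^{p,q}(\EE_t) = 0$ in this parity, the multiplication-by-$2$ map in \eqref{eq:lesE2} is zero, so each column of \Cref{fig:bigdiagraminitial} degenerates: $\im_2 = 0$ and $\ker_2 = $ everything, giving short exact sequences
\[
0 \to E_1^{p,q-1}(\EE_t/2) \xrightarrow{\partial} E_1^{p,q}(\EE_t) \xrightarrow{r} E_1^{p,q}(\EE_t/2) \to 0
\]
compatible with $d_1$. (Here one must be a little careful: the Bockstein $\partial$ shifts $q$ by one, so $E_1^{p,q}(\EE_t)$ is an extension of the $q$-part and the $(q-1)$-part of the mod-$2$ $E_1$-page.) Now the point is that $d_1$ vanishes on the mod-$2$ $E_1$-page in these $q$-degrees. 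Indeed, from \Cref{thm:Einfmod2} and the facts recalled in Section~\ref{sec:BeaudryTowards}(c), the $d_1$-differential for $\EE_*/2$ is determined in $q = 0$ and propagated by $\eta$- and $k$-linearity; in positive $q$-degree every class in $E_1^{p,*}(\EE_*/2)$ for $p = 1,2$ is an $h$-multiple, every $v_1$-free class in $p = 0, 3$ is an $\eta$- or $k$-multiple, and the $d_1$ out of $q = 0$ lands in a subring where one checks (by the internal-degree bookkeeping $t - p - q$ and the known formulas) that the surviving targets in this parity are forced to be zero. So $d_1 = 0$ on both the sub and the quotient of the short exact sequence above, hence $d_1 = 0$ on $E_1^{p,q}(\EE_t)$ by the five lemma (or directly, since $d_1$ is zero on the two outer terms of a three-term exact sequence of chain complexes implies it is zero on the middle). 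Therefore $E_1^{p,q}(\EE_t) \cong E_2^{p,q}(\EE_t)$.

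**Main obstacle.** The delicate point is not the vanishing of $d_1$ mod $2$ — that is essentially read off from \cite{BeaudryTowards} — but rather making the parity claim "$2 E_1^{p,q}(\EE_t) = 0$ when $q + t/2$ is odd and $q > 0$, and $E_1^{p,0}(\EE_t)$ is torsion-free only when $t/2$ is even" completely precise for all of $F_0 = F_3 = G_{24}$ and $F_1 = F_2 = C_6$ simultaneously, including the twisted $G_{24}'$ identification (\Cref{rem:defG24prime} and the conjugation-by-$\pi$ twist in the $p = 3$ column). One has to invoke the structure of $H^*(G_{24}, \EE_*)$ from \Cref{G24} carefully: the torsion-free classes are exactly the $H^0(G_{24}, \EE_*)$-multiples of $k$-powers in even $q$, and everything in odd $q + t/2$ is $2$-power torsion. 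I expect this case-check — organized by the residue of $t/2 \bmod 2$ and whether $q = 0$ — to be the only real content; once it is in hand, the diagram chase collapses and the conclusion is immediate.
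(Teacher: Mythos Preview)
Your approach is far more complicated than necessary and contains a genuine error. The claim that $2 E_1^{p,q}(\EE_t) = 0$ whenever $q + t/2$ is odd is false for $p = 0,3$: for example, $\nu \in H^1(G_{24}, \EE_4)$ has order $4$ (so $q + t/2 = 3$ is odd but $2\nu \neq 0$), and more generally the $\WW/4$ and $\WW/8$ classes in $H^*(G_{24},\EE_*)$ (see \Cref{G24} and \Cref{K2localE2}) occur precisely in bidegrees with $q + t/2$ odd. So your Bockstein reduction does not degenerate the way you claim, and the short exact sequence you write down is not available. The subsequent assertion that $d_1$ vanishes on the mod~$2$ $E_1$-page in these degrees is also wrong: the mod~$2$ differentials are $h$-linear, and for instance $d_1(h\,b_6) = h\, v_1^6 \overline{b}_8 \neq 0$ in $E_1^{1,1}(\EE_{36}/2)$, where $q + t/2 = 19$ is odd.

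The paper's proof avoids all of this by a single observation you overlooked: from \Cref{lem:C6coh}, every monomial $w_5^a [v_1v_2]^b [v_2^2]^c j_0^d$ in $H^*(C_6,\EE_*)$ has $q + t/2 = a + (3a + 4b + 6c) = 4a + 4b + 6c$, which is always even. Hence $H^q(C_6, \EE_t) = 0$ whenever $q + t/2$ is odd, so $E_1^{1,q}(\EE_t) = E_1^{2,q}(\EE_t) = 0$. Since the chain complex $(E_1^{\bullet,q}(\EE_t), d_1)$ has only four terms and the middle two vanish, every $d_1$ has either zero source or zero target, and $E_1 = E_2$ is immediate. No reduction mod $2$, no Bockstein, no case analysis of $G_{24}$-cohomology is needed.
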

\begin{proof}
When $q+t/2$ is odd, by \Cref{lem:C6coh} (depicted in \Cref{fig:c6E}) we have that
\[
E_1^{2,q}(\EE_t)=E_1^{1,q}(\EE_t)= H^q(C_6, \EE_t)=0,
\]
hence all the differentials $d_1: E_1^{p,q}(\EE_t)\to E_1^{p+1, q}(\EE_t)$ are zero in this case.
\end{proof}

\subsubsection{Generalities for $q+t/2$ even, $q>0$}

Our next goal is to analyze the situation when $q>0$, $q+t/2$ even. This assumption implies that the diagram in \Cref{fig:bigdiagraminitial} simplifies to the diagram of \Cref{fig:bigdiagram}. Indeed, by \Cref{lem:C6coh}, we know that $H^{q+1}(C_6,\EE_t)$ is zero, so the corresponding $\ker_2$ terms vanish as well.

\begin{figure}
\[\xymatrix{
 \im_2(E_1^{0,q}(\EE_{t})) \ar[d]^-{\subseteq} \ar[r] &  0 \ar[r]\ar[d] & 0 \ar[r] \ar[d]& \im_2(E_1^{3,q}(\EE_{t})) \ar[d]^-{\subseteq}  \\
E_1^{0,q}(\EE_{t}) \ar@{->>}[d] \ar[r] &E_1^{1,q}(\EE_{t}) \ar[r]\ar[d]^-{\cong} & E_1^{2,q}(\EE_{t})  \ar[r] \ar[d]^-{\cong}& E_1^{3,q}(\EE_{t}) \ar@{->>}[d]  \\ 
 F_1^{0,q}(\EE_{t}) \ar[r] \ar[d]^-{\subseteq} &F_1^{1,q}(\EE_{t}) \ar[d]^-{\cong}\ar[r] & F_1^{2,q}(\EE_{t})  \ar[r] \ar[d]^-{\cong}& F_1^{3,q}(\EE_{t}) \ar[d]^-{\subseteq} \\ 
 E_1^{0,q}(\EE_{t}/2) \ar[r]  \ar@{->>}[d] &E_1^{1,q}(\EE_{t}/2)\ar[d] \ar[r] & E_1^{2,q}(\EE_{t}/2) \ar[d]  \ar[r]& E_1^{3,q}(\EE_{t}/2)   \ar@{->>}[d]  \\ 
  \ker_2(E_1^{0,q+1}(\EE_{t})) \ar[r] &  0 \ar[r] & 0 \ar[r] &   \ker_2(E_1^{3,q+1}(\EE_{t}) 
}\]
\caption[Diagram of ADSS's in case $q>0 $ and $q+t/2$ even]{The ADSS in the case $q>0$ and $q+t/2$ even.}
\label{fig:bigdiagram}
\end{figure}

In degrees $q>0$, the cohomology of $G_{24}$ with coefficients in $\EE_*$ has two periodicities. The first is a periodicity in the topological degree $t$, which comes from multiplication by the modular form $\Delta \in H^0(G_{24},\EE_{24})$. The second is in the cohomological degree $q$, coming from multiplication by the element $k\in H^4(G_{24},\EE_0)$. Therefore, the cases we consider depend on $(q,t)$ modulo $(4,24)$.

In \Cref{figure:cases}, we highlight the three different cases (for $q>0$), and we discuss each of them separately in the next three subsections.

 \begin{figure}
 \includegraphics[page=1, width=\textwidth]{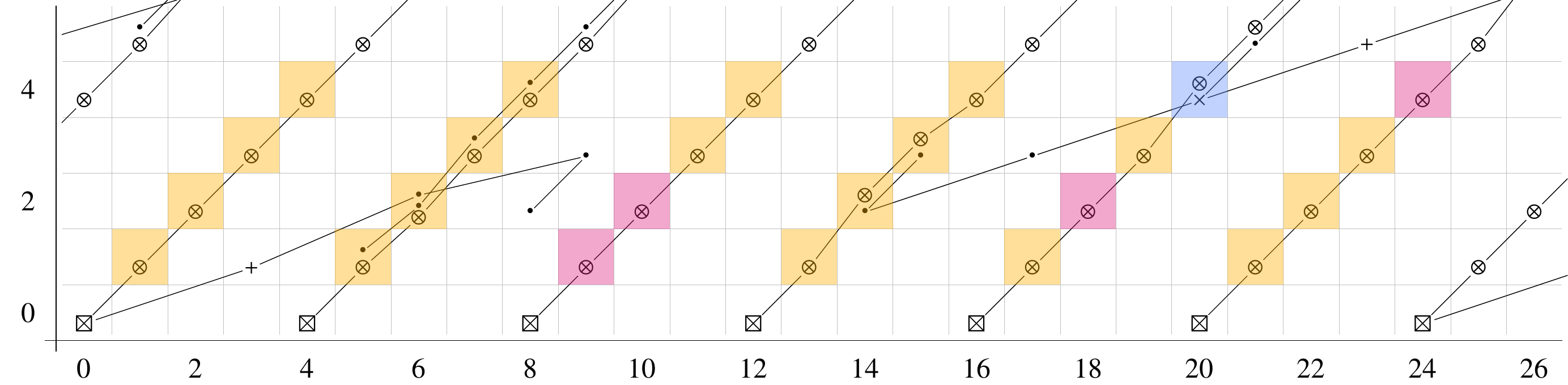}
 \caption[Cases for the analysis of the passage from $E_1$ to $E_2$]{Cases for the analysis of the passage from $E_1$ to $E_2$. In the yellow cases, the top and bottom row of \Cref{fig:bigdiagram} are zero. In the red cases, the top row is zero, and in the blue case, the bottom row is zero.}
 \label{figure:cases} 
 \end{figure}

\subsubsection{The cases $q>0$, $q+t/2$ even and $(q,t)\neq (0,0), (0,4), (1,10), (2,12)$ or $(2,20)$ modulo  $(4, 24)$}
These are the cases highlighted in yellow in  \Cref{figure:cases}.

In these cases, when $p=0,3$,
\begin{itemize}
\item $\im_2(E_1^{p,q}(\EE_t))=0$, and
\item $\ker_2(E_1^{p,q+1}(\EE_t)) \subseteq E_1^{p,q+1}(\EE_t) =0$.
\end{itemize}
So the diagram of \Cref{fig:bigdiagram} simplifies to give isomorphisms of \emph{chain complexes}
\[ E_1^{\bullet,q}(\EE_t) \cong  F_1^{\bullet,q}(\EE_t) \cong E_1^{\bullet,q}(\EE_t/2)   ,\]
hence the cohomology groups of these chain complexes are also isomorphic and we finished the proof of the following lemma. 
\begin{lemma}\label{lem:q+t/2evenGen} 
If  $q>0$, $q+t/2$ even and $(q,t)\neq (0,0), (0,4), (1,10), (2,12), (2,20)$ modulo  $(4, 24)$, then
\[E_2^{p,q}(\EE_t)\cong E_2^{p,q}(\EE_t/2) . \]
\end{lemma}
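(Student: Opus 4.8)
The statement to prove is \Cref{lem:q+t/2evenGen}, asserting $E_2^{p,q}(\EE_t)\cong E_2^{p,q}(\EE_t/2)$ under the hypotheses $q>0$, $q+t/2$ even, and $(q,t)\neq(0,0),(0,4),(1,10),(2,12),(2,20)$ modulo $(4,24)$. The approach is to reduce to the commutative diagram of \Cref{fig:bigdiagram}, already established whenever $q>0$ and $q+t/2$ is even, and then show that in the remaining listed cases the outer (top and bottom) rows of that diagram vanish entirely, so that the middle three rows collapse to a genuine isomorphism of chain complexes rather than merely a short exact sequence of complexes.

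\textbf{Key steps, in order.} First, I would recall that for $q>0$ and $q+t/2$ even, \Cref{lem:C6coh} already forces $E_1^{1,q}(\EE_t)=E_1^{2,q}(\EE_t)=H^q(C_6,\EE_t)$ to be $2$-torsion (in fact killed by $2$), which is why the middle columns of \Cref{fig:bigdiagram} at $p=1,2$ are isomorphisms and the associated $\im_2$ and $\ker_2$ terms there vanish. So the only possible obstruction to an isomorphism of chain complexes lives in the $p=0$ and $p=3$ columns. Second, for $p\in\{0,3\}$ I would invoke the structure of $H^q(G_{24},\EE_t)$ from \Cref{G24} (and \Cref{K2localE2}): because of the $\Delta$-periodicity in $t$ (period $24$) and the $k$-periodicity in $q$ (period $4$) in positive cohomological degree, the groups $E_1^{p,q}(\EE_t)$ for $p=0,3$ are completely determined by $(q,t)\bmod(4,24)$. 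The torsion-free (equivalently, $\W/8$-containing) classes in positive cohomological degree occur precisely in the bidegrees governed by $\nu^2\kappa=4k\Delta$ and its $k$- and $\Delta$-translates, i.e. exactly in the excluded congruence classes $(0,0),(0,4),(1,10),(2,12),(2,20)$ mod $(4,24)$ — here $(0,0)$ and $(0,4)$ correspond to $q\equiv0$ but with the relevant class sitting in a shifted $q$ once one accounts for $k$-periodicity, and the others track the $\nu^2 k^j\Delta^i$ and $\epsilon,\kappa$ families. Third, therefore, \emph{outside} those congruence classes, every class in $E_1^{p,q}(\EE_t)$ for $p=0,3$ and $q>0$ is killed by $2$; hence $\im_2(E_1^{p,q}(\EE_t))=0$ and also $\ker_2(E_1^{p,q+1}(\EE_t))\subseteq E_1^{p,q+1}(\EE_t)$, which is again $2$-torsion and — one must check the congruence class of $(q+1,t)$ as well — in the yellow region this group is either zero or itself $2$-torsion, so in either case $\ker_2=$ the whole group, but the point needed is that the top row $\im_2(\cdots)$ vanishes at $p=0,3$ and the bottom row, which is $\ker_2(E_1^{p,q+1})$, sits inside $E_1^{p,q+1}(\EE_t/2)$ compatibly. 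Concretely, once the top row of \Cref{fig:bigdiagram} is the zero complex and the bottom row injects into the fourth row as a subcomplex that the middle reduction map $F_1^{\bullet,q}\to E_1^{\bullet,q}(\EE_t/2)$ identifies with the part coming from $\ker_2$, the snake-type comparison degenerates: the surjections $E_1^{p,q}(\EE_t)\twoheadrightarrow F_1^{p,q}(\EE_t)$ and the inclusions $F_1^{p,q}(\EE_t)\hookrightarrow E_1^{p,q}(\EE_t/2)$ are both isomorphisms (the former because $\im_2=0$, the latter because the obstruction $\ker_2(E_1^{p,q+1}(\EE_t))$ is accounted for on the nose), giving isomorphisms of chain complexes $E_1^{\bullet,q}(\EE_t)\cong F_1^{\bullet,q}(\EE_t)\cong E_1^{\bullet,q}(\EE_t/2)$. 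Taking cohomology of chain complexes then yields $E_2^{p,q}(\EE_t)\cong E_2^{p,q}(\EE_t/2)$, as claimed.

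\textbf{Main obstacle.} The routine part is the diagram chase; the delicate part is the bookkeeping in the second step — verifying that the bidegrees where $H^q(G_{24},\EE_t)$ fails to be annihilated by $2$ (i.e. where a $\W/8$ or $\W/4$ summand appears in positive cohomological degree, generated by $k^j\Delta^i$ and $\nu k^j\Delta^i$ respectively, cf. \Cref{rem:classesincoh} and \Cref{K2localE2}) are \emph{exactly} the five excluded residue classes mod $(4,24)$, after correctly reconciling the $k$-periodicity ($q\mapsto q+4$) and $\Delta$-periodicity ($t\mapsto t+24$) and the low-degree exceptions where a class like $\epsilon\in H^2(G_{24},\EE_{10})$ or $\nu^3\in H^3(G_{24},\EE_{12})$ forces extra $2$-torsion structure. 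In other words, the whole content of the lemma is that the complement of the excluded cases is precisely the region where all of $E_1^{0,q}(\EE_t)$ and $E_1^{3,q}(\EE_t)$ consist of classes of order $2$, so that reduction mod $2$ loses no information; once that is checked against \Cref{G24} the isomorphism is immediate.
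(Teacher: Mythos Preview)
Your overall strategy---reduce to the diagram of \Cref{fig:bigdiagram} and show that both the top and bottom rows vanish---is exactly the paper's approach. However, there is a genuine gap in how you characterize the excluded cases, and this causes the argument to break down.

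You claim that the five excluded congruence classes $(0,0),(0,4),(1,10),(2,12),(2,20)$ are precisely those where $H^q(G_{24},\EE_t)$ contains classes not killed by $2$. This is false: only the case $(q,t)\equiv(0,0)$ has that feature (the $\WW/8\{k^n\Delta^m\}$ summand). In the other four excluded cases $(0,4),(1,10),(2,12),(2,20)$, the group $H^q(G_{24},\EE_t)$ \emph{is} annihilated by $2$; what goes wrong there is that $H^{q+1}(G_{24},\EE_t)\neq 0$ (generated by $2\nu k^n\Delta^m$, $\epsilon k^n\Delta^m$, $\nu^3 k^n\Delta^m$, and $\nu\kappa k^n\Delta^m$ respectively). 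So your ``Main obstacle'' paragraph, which summarizes the content of the lemma as ``all of $E_1^{0,q}(\EE_t)$ and $E_1^{3,q}(\EE_t)$ consist of classes of order $2$'', would apply equally well to the red cases---but the conclusion of the lemma fails there.

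The missing observation is this: for the inclusion $F_1^{p,q}(\EE_t)\hookrightarrow E_1^{p,q}(\EE_t/2)$ at $p=0,3$ to be an isomorphism, you need the cokernel $\ker_2(E_1^{p,q+1}(\EE_t))$ to vanish. Your sentence ``in the yellow region this group is either zero or itself $2$-torsion, so in either case $\ker_2$ equals the whole group'' gets the logic backwards: if $\ker_2$ equals a \emph{nonzero} group, the bottom row is nonzero and you cannot conclude the chain complexes are isomorphic. The paper's proof makes the sharper claim that in the yellow region one actually has $E_1^{p,q+1}(\EE_t)=H^{q+1}(G_{24},\EE_t)=0$ for $p=0,3$; this is what you need to check against \Cref{G24}, and it is exactly what singles out the four red cases where $H^{q+1}(G_{24},\EE_t)$ is nonzero. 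Once that vanishing is established, both outer rows of \Cref{fig:bigdiagram} are zero and the isomorphism of chain complexes, hence of $E_2$-terms, follows immediately.
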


\subsubsection{The cases $q>0$, $q+t/2$ even and $(q,t) \equiv (0,4), (1,10), (2,12), (2,20)$ modulo $(4, 24)$}
These are the cases highlighted in red in \Cref{figure:cases}.
In these cases, $\im_2(E_1^{0,q}(\EE_t))=\im_2(E_1^{3,q}(\EE_t))=0$ so the first row of the diagram of \Cref{fig:bigdiagram} is zero. Therefore, the map between the second and third rows is an isomorphism, so composing from the second to fourth row, the diagram simplifies to:
\begin{equation}\label{eq:3rows}\xymatrix{
 E_1^{0,q}(\EE_{t}) \ar[r] \ar[d]^-{\subseteq}_-{r} &E_1^{1,q}(\EE_{t}) \ar[d]^-{\cong}_-{r} \ar[r] & E_1^{2,q}(\EE_{t})  \ar[r] \ar[d]^-{\cong}_-{r}& E_1^{3,q}(\EE_{t}) \ar[d]^-{\subseteq}_-{r} \\ 
 E_1^{0,q}(\EE_{t}/2) \ar[r]  \ar@{->>}[d] &E_1^{1,q}(\EE_{t}/2)\ar[d] \ar[r] & E_1^{2,q}(\EE_{t}/2) \ar[d]  \ar[r]& E_1^{3,q}(\EE_{t}/2)   \ar@{->>}[d]  \\ 
  \ker_2(E_1^{0,q+1}(\EE_{t})) \ar[r] &  0 \ar[r] & 0 \ar[r] &   \ker_2(E_1^{3,q+1}(\EE_{t}) .
}
\end{equation}

This diagram is a short exact sequence of chain complexes: each row is a chain complex and the columns are exact sequences by construction of \Cref{fig:bigdiagraminitial}. This short exact sequence induces a long exact sequence on cohomology of these chain complexes.
Because of the zeros in the second and third columns, this long exact sequence breaks up into the exact sequence
\begin{equation}\label{eq:lesfirstrowzero}
 0 \to E_2^{0,q}(\EE_t) \xrightarrow{r} E_2^{0,q}(\EE_t/2) \xra{\partial}   \ker_2(E_1^{0,q+1}(\EE_{t}))  \xra{\delta} E_2^{1,q}(\EE_{t}) \to E_2^{1,q}(\EE_{t}/2) \to 0 ,
 \end{equation}
the isomorphism
\[ E_2^{2,q}(\EE_{t}) \xrightarrow{\cong} E_2^{2,q}(\EE_{t}/2),  \]
and the short exact sequence
\[ 0 \to E_2^{3,q}(\EE_{t}) \xrightarrow{r} E_2^{3,q}(\EE_{t}/2) \xrightarrow{\partial} \ker_2(E_1^{3,q+1}(\EE_{t}))  \to 0. \]

To analyze these exact sequences, we first restrict attention to the case of $(q,t) $ congruent to $(1,10), (2,12), (2,20)$, as we have easier and more uniform behavior in this case.

\begin{lemma}\label{lem:q+t/2evenCases1}
If $q>0$, $q+t/2$ even and $(q,t) \equiv (1,10), (2,12), (2,20)$ modulo $(4, 24)$, then there are isomorphisms
\[ E_2^{p,q}(\EE_{t}) \xrightarrow{\cong} E_2^{p,q}(\EE_{t}/2),  \quad p=1,2\]
and short exact sequences
\[ 0 \to E_2^{p,q}(\EE_{t}) \xrightarrow{r}  E_2^{p,q}(\EE_{t}/2) \xrightarrow{\partial}   \ker_2(E_1^{p,q+1}(\EE_{t}))  \to 0, \quad p=0,3 . \] 
\end{lemma}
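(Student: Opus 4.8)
The plan is to specialize the exact sequences \eqref{eq:lesfirstrowzero}, together with the isomorphism $E_2^{2,q}(\EE_t)\xrightarrow{\cong}E_2^{2,q}(\EE_t/2)$ and the short exact sequence for $p=3$, to the cases $(q,t)\equiv(1,10),(2,12),(2,20)\bmod(4,24)$, and show that in these cases the connecting maps $\partial\colon E_2^{0,q}(\EE_t/2)\to\ker_2(E_1^{0,q+1}(\EE_t))$ are surjective (equivalently, that $\delta=0$). Once that is established, \eqref{eq:lesfirstrowzero} immediately breaks into the short exact sequence for $p=0$ together with an isomorphism $E_2^{1,q}(\EE_t)\xrightarrow{\cong}E_2^{1,q}(\EE_t/2)$; the $p=2$ isomorphism is already in hand, and the $p=3$ short exact sequence is already displayed above, so the lemma follows.

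First I would use the periodicities: by $\Delta$-periodicity in $t$ (degree $24$) and $k$-periodicity in $q$ (degree $4$, valid for $q>0$, cf.\ \Cref{K2localE2}), it suffices to check the three representative bidegrees $(q,t)=(1,10),(2,12),(2,20)$, and then quote \Cref{cor:kfixed} to spread the conclusion across the $k$-tower. Next, in each of these three bidegrees I would read off from \Cref{G24} the relevant piece of $E_1^{0,q+1}(\EE_t)=H^{q+1}(G_{24},\EE_t)$ and identify $\ker_2$ of it. For instance, at $(q,t)=(1,10)$ we need $\ker_2 H^2(G_{24},\EE_{10})$, which is generated by $\epsilon$; at $(q,t)=(2,12)$ we need $\ker_2 H^3(G_{24},\EE_{12})$, generated by $\nu^3$ (using $\eta\epsilon=\nu^3$); and at $(q,t)=(2,20)$ we need $\ker_2 H^3(G_{24},\EE_{20})$, generated by $\nu\kappa=\nu^3\Delta$ up to the relation $\nu^2\kappa=4k\Delta$. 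The same classes occur at $p=3$ via the $\pi$-conjugate identification of $E_1^{3,*}$ with $H^*(G_{24},-)$, and the displayed $p=3$ short exact sequence already records surjectivity of $\partial$ there.

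The surjectivity of $\partial$ at $p=0$ is the crux, and the natural way to see it is the compatibility of the integral and mod $2$ Bocksteins. Concretely, $\partial\colon E_2^{0,q}(\EE_t/2)\to\ker_2 E_1^{0,q+1}(\EE_t)$ is induced by the connecting homomorphism of $\EE_t\xrightarrow{2}\EE_t\to\EE_t/2$ applied levelwise, so it suffices to exhibit, for each generator $z$ of $\ker_2 H^{q+1}(G_{24},\EE_t)$, a class $\widetilde z\in E_2^{0,q}(\EE_t/2)$ (i.e.\ a $d_1$-cycle in $E_1^{0,q}(\EE_t/2)=H^q(G_{24},\EE_t/2)$) whose $G_{24}$-level Bockstein is $z$. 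The classes $x\in H^1(G_{24},\EE_8/2)$ and $y\in H^1(G_{24},\EE_{16}/2)$ of \Cref{rem:xyboundaries} do exactly this at the level of $G_{24}$: $\partial(v_1 x)=\epsilon$, $\partial(\eta x)=\nu^3$ (from $\eta\epsilon=\nu^3$), and $\partial$ of the appropriate $\Delta$-multiple of a class involving $y$ hits $\nu\kappa$. What remains is to check these representatives survive to $E_2$, i.e.\ are $d_1$-cycles, which one reads off from \Cref{thm:Einfmod2}: in the stated bidegrees the classes $\eta x\Delta_0$ (at $t=10$), $v_1\eta x\Delta_0$ and $\nu^3\Delta_0$ (at $t=12$), and the $\EE_{20}/2$-analogue sit in $E_2^{0,q}(\EE_t/2)$, so they do survive. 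One then matches these survivors with the Bockstein targets, using \Cref{lem:from-integral-to-mod2} to keep track of $r$ and the naturality of the long exact sequence with respect to $r$ and $\partial$.

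I expect the main obstacle to be bookkeeping rather than conceptual: one must be careful that the generators chosen for $\ker_2 E_1^{0,q+1}(\EE_t)$ really are \emph{all} of the $2$-torsion (so that $\partial$ is surjective, not merely nonzero), and that the chosen mod $2$ lifts are genuine $d_1$-cycles and not just elements of $E_1$. The relations in \Cref{G24} (especially $\nu^2\kappa=4k\Delta$, $\eta\epsilon=\nu^3$, $2\epsilon=2\kappa=0$) and in \Cref{G24-mod2} must be used to pin down the exact size of $\ker_2$ and of $E_2^{0,q}(\EE_t/2)$; comparing dimensions on the two sides of \eqref{eq:lesfirstrowzero} then forces $\delta=0$ once $\partial$ is shown surjective. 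The $p=3$ case is formally identical after applying the $\pi$-conjugation isomorphism, with the only subtlety being the twist flagged in the Warning of \Cref{sec:moduleoverE}; since the relevant torsion classes ($\epsilon$, $\nu^3$, $\nu\kappa$) are restrictions of classes already known in $H^*(\SS_2^1,\EE_*)$, this twist does not affect the argument.
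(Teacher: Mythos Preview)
Your approach is exactly the paper's: show that $\partial\colon E_2^{0,q}(\EE_t/2)\to\ker_2(E_1^{0,q+1}(\EE_t))$ is surjective by exhibiting explicit Bockstein preimages that survive to $E_2$, whence \eqref{eq:lesfirstrowzero} splits as claimed.

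However, two of your explicit preimages are wrong. First, $\partial(\eta x)=\eta\,\partial(x)=\eta\nu^2=0$ since $\eta\nu=0$; the correct lift of $\nu^3$ is $\eta v_1 x$ (equivalently $\nu x$, using the relation $\nu x=v_1\eta x$ from \Cref{G24-mod2}), giving $\partial(\eta v_1 x)=\eta\epsilon=\nu^3$. Second, at $(q,t)=(1,10)$ you need an element of $E_2^{0,1}(\EE_{10}/2)\subseteq H^1(G_{24},\EE_{10}/2)$, but $\eta x$ lies in $H^2$; the correct class is $v_1 x\Delta_0$, which is indeed listed in \Cref{thm:Einfmod2} and satisfies $\partial(v_1 x)=\epsilon$. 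With these corrections (and $\nu y k^n\Delta^m\mapsto\nu\kappa k^n\Delta^m$ at $(2,20)$, which the paper makes explicit), the argument goes through. The paper also notes that in these bidegrees the entire group $E_1^{0,q+1}(\EE_t)$ is already $2$-torsion, so $\ker_2$ is the whole thing and there is no subtlety in identifying it.
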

\begin{proof}
We will show the map $\partial \colon E_2^{0,q}(\EE_{t}/2) \to   \ker_2(E_1^{0,q+1}(\EE_{t}))$ in \eqref{eq:lesfirstrowzero} is surjective. Then the claims all follow from the previous discussion.
The map $\partial$ is induced by the map on $E_1$-pages
 \begin{equation}\label{eq:deltaE1}
 \partial \colon E_1^{0,q}(\EE_{t}/2) \to \ker_2(E_1^{0,q+1}(\EE_{t}))
 \end{equation}
  coming from the long exact sequence on $E_1^{p,*}(-)$ induced by the short exact sequence $\EE_t \xrightarrow{2} \EE_t \to \EE_t/2$, as
defined in \eqref{eq:lesE2}.
In all the cases we are considering here, we have
\[ \ker_2(E_1^{0,q+1}(\EE_{t}))  \cong E_1^{0,q+1}(\EE_{t}) \cong H^{q+1}(G_{24}, \EE_t),\]
and we know these groups explicitly. Namely, there are suitable integers $n,m$ such that
\begin{align*}
&\text{if } (q,t)\equiv(1,10), & &
\F_4\{\epsilon k^n\Delta^m\} \cong E_1^{0,2+4n}(\EE_{10+24m})\cong H^{2+4n}(G_{24}, \EE_{10+24m}); \\
&\text{if } (q,t)\equiv(2,12), & &
\F_4\{\nu^3k^n\Delta^m\}  \cong E_1^{0,3+4n}(\EE_{12+24m})\cong H^{3+4n}(G_{24}, \EE_{12+24m}); \\
&\text{if } (q,t)\equiv(2,20), & &
\F_4\{\nu\kappa k^n\Delta^m\} \cong  E_1^{0,3+4n}(\EE_{20+12m})\cong H^{3+4n}(G_{24}, \EE_{20+24m}).
\end{align*}
Before passing to cohomology, the map $\partial$ of \eqref{eq:deltaE1} is surjective since from \Cref{rem:xyboundaries}, we have
\[\partial(v_1x k^n\Delta^m)=\epsilon k^n\Delta^m, \quad \partial(\eta v_1x k^n\Delta^m)=\nu^3 k^n\Delta^m, \quad \partial(\nu y k^n\Delta^m)=\nu \kappa k^n\Delta^m. \]
Now from \cite[Theorem~1.2.2]{BeaudryTowards}, we see that $v_1x k^n\Delta^m$, $\eta v_1x k^n\Delta^m$  and $\nu y k^n\Delta^m$ survive to $E_2^{0,q}(\EE_{t}/2)$ and so $\partial$ in \eqref{eq:lesfirstrowzero} is indeed surjective at the $E_2$-page, which proves the claims.
\end{proof}

In the next result, we discuss the case $(q,t)=(0,4) \mod (4,24)$.

\begin{lemma} \label{lem:q+t/2:04}
Suppose $q>0$, $q+t/2$ even and $(q,t) \equiv (0,4)$ modulo  $(4, 24)$. Then there are isomorphisms
\[ E_2^{2,q}(\EE_{t}) \xrightarrow{\cong} E_2^{2,q}(\EE_{t}/2)   \] 
and short exact sequences
\[ 0 \to E_2^{3,q}(\EE_{t}) \to  E_2^{3,q}(\EE_{t}/2) \to   \ker_2(E_1^{3,q+1}(\EE_{t})) \to 0. \]
When 
$t=4$, 
there are  short exact sequences
\[ 0 \to E_2^{0,q}(\EE_4) \to  E_2^{0,q}(\EE_4/2) \to   \ker_2(E_1^{0,q+1}(\EE_4)) \to 0 \]
and isomorphisms
\[E_2^{1,q}(\EE_4) \xrightarrow{\cong}  E_2^{1,q}(\EE_4/2). \]
When $t \neq 4$,
there are isomorphisms
\[ E_2^{0,q}(\EE_{t}) \xrightarrow{\cong}  E_2^{0,q}(\EE_{t}/2)\] 
and short exact sequences
\[ 0 \to  \ker_2(E_1^{0,q+1}(\EE_{t})) \to E_2^{1,q}(\EE_{t}) \to  E_2^{1,q}(\EE_{t}/2) \to   0. \]
\end{lemma}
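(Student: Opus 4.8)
The $p=2$ isomorphism and the $p=3$ short exact sequence are precisely two of the three pieces into which the long exact sequence attached to \eqref{eq:3rows} was already seen to decompose, so there is nothing new to prove for them. All of the content concerns the five-term exact sequence \eqref{eq:lesfirstrowzero} for $p=0,1$, and it reduces to a single computation: whether the connecting homomorphism $\partial\colon E_2^{0,q}(\EE_t/2)\to\ker_2(E_1^{0,q+1}(\EE_t))$ is surjective — in which case $\delta=0$ and \eqref{eq:lesfirstrowzero} yields the conclusions of the $t=4$ case — or zero — in which case $\delta$ is injective and \eqref{eq:lesfirstrowzero} yields the conclusions of the $t\neq 4$ case.

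First I would pin down the target. Writing $q=4n$ with $n\geq 1$ and $t=4+24m$, the $\Delta$- and $k$-periodicities of $H^*(G_{24},\EE_*)$ (\Cref{G24}) together with $H^1(G_{24},\EE_4)\cong\W/4\{\nu\}$ identify
\[
E_1^{0,q+1}(\EE_t)=H^{q+1}(G_{24},\EE_t)\cong\W/4\{\nu k^n\Delta^m\},
\qquad\text{so}\qquad
\ker_2\big(E_1^{0,q+1}(\EE_t)\big)\cong\F_4\{2\nu k^n\Delta^m\}.
\]
Next I would compute $\partial$ on $E_1$-pages, i.e.\ at the level of $G_{24}$-cohomology. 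From the formula for $r$ in \Cref{lem:from-integral-to-mod2} and the description $H^0(G_{24},\EE_4)\cong\W[\![j]\!]\{c_4^2c_6\Delta^{-1}\}$ one gets $r(c_4^2c_6\Delta^{-1})=j\,v_1^2$, hence $\coker\big(r\colon H^0(G_{24},\EE_4)\to H^0(G_{24},\EE_4/2)\big)\cong\F_4\{v_1^2\}$ and therefore $\partial(v_1^2)=2\nu$ up to a unit. Since $k$ and $\Delta$ are reductions of integral classes, $\partial$ is $H^*(G_{24},\EE_*)$-linear, so $\partial(v_1^2k^n\Delta^m)=2\nu k^n\Delta^m$ and $\partial$ is \emph{surjective} on $E_1$-pages for every $m$.

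It then remains to decide whether a $\partial$-preimage of the generator $2\nu k^n\Delta^m$ survives to $E_2^{0,q}(\EE_t/2)$. When $m=0$, i.e.\ $t=4$: the class $v_1^2k^n$ is a $d_1$-cycle, because the mod-$2$ ADSS $d_1$ is $v_1$- and $k$-linear and $d_1(\Delta_0)=0$ with $\Delta_0$ a unit (\Cref{thm:Einfmod2}), and it is not a $d_1$-boundary as nothing maps into the $p=0$ column; hence $\partial$ is onto at $E_2$, $\delta=0$, and \eqref{eq:lesfirstrowzero} collapses to the short exact sequence and the isomorphism of the $t=4$ case. When $m\neq 0$, i.e.\ $t\neq 4$: now $d_1(\Delta_m)\neq 0$ (\Cref{thm:Einfmod2}), and since $E_1^{1,\ast}(\EE_\ast/2)=H^\ast(C_6,\EE_\ast/2)$ is $v_1$-torsion free (\Cref{lem:cohC6mod2}) one checks that neither $v_1^2k^n\Delta^m$ nor any of its translates by $\im(r)=\ker\partial$ is a $d_1$-cycle; equivalently, reading $E_2^{0,q}(\EE_t/2)$ off from \cite[Theorem~1.2.2]{BeaudryTowards} shows it lies entirely in $\im(r)$. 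Thus $\partial=0$ at $E_2$, $\delta$ is injective, and \eqref{eq:lesfirstrowzero} gives the isomorphism and short exact sequence of the $t\neq 4$ case.

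The main obstacle is the $t\neq 4$ case. Unlike for $t=4$, the vanishing of $\partial$ at $E_2$ is not formal — at the $E_1$-page $\partial$ is surjective — so one must show that the unique generator $v_1^2k^n\Delta^m$ of $\coker(r)$ on $E_1$-pages dies in the mod-$2$ ADSS even after correcting by an arbitrary reduction. Carrying this out rests on the complete description of the mod-$2$ $E_2$-page in \cite{BeaudryTowards}, together with the (implicit) compatibility of the integral and mod-$2$ $d_1$-differentials under $r$.
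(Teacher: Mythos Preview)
Your proposal is correct and follows essentially the same approach as the paper's proof: both identify the target as $\F_4\{2\nu k^n\Delta^m\}$, compute $\partial(v_1^2k^n\Delta^m)=2\nu k^n\Delta^m$ on $E_1$-pages, and then reduce everything to whether $v_1^2k^n\Delta^m$ is a $d_1$-cycle in the mod-$2$ ADSS, citing \cite[Theorem~1.2.1]{BeaudryTowards} for the dichotomy $m=0$ versus $m\neq 0$. If anything, you are more explicit than the paper about the subtlety in the $t\neq 4$ case---that one must rule out \emph{every} translate of $v_1^2k^n\Delta^m$ by $\im(r)$ being a cycle, not just the chosen representative---and your fallback to reading $E_2^{0,q}(\EE_t/2)$ directly from \cite[Theorem~1.2.2]{BeaudryTowards} is the cleanest way to close that gap; the $v_1$-torsion-freeness remark alone does not quite do it.
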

\begin{proof}
Let $m,n$ be integers such that  $q=4n >0$ and $t=4+24m$. 
We have from \Cref{G24} and \Cref{G24-mod2}
\begin{align*}
E_1^{0,q}(\EE_{t}/2) &\cong H^{4n}(G_{24}, \EE_{4+24m}/2)\cong  \FF_4[\![j]\!]\{ v_1^2k^n \Delta^{m}  \}, \\
E_1^{0,q}(\EE_{t}) &\cong H^{4n}(G_{24}, \EE_{4+24m})\cong \F_4[\![j]\!]\{c_4^2c_6 k^n \Delta^{m-1}\}.
\end{align*}
The diagram \eqref{eq:3rows} then becomes
\begin{equation*}
\resizebox{1\hsize}{!}{\xymatrix{
\F_4[\![j]\!]\{c_4^2c_6 k^n \Delta^{m-1} \}  \ar[r]^-{d_1} \ar[d]^-{\subseteq}_-{r} &E_1^{1,4n}(\EE_{4+24m}) \ar[d]^-{\cong}\ar[r]^-{d_1} & E_1^{2,4n}(\EE_{4+24m})  \ar[r]^-{d_1} \ar[d]^-{\cong}& \F_4[\![j]\!]\{c_4^2c_6 k^n \Delta^{m-1} \}  \ar[d]^-{\subseteq}_-{r} \\ 
 \F_4[\![j]\!]\{v_1^2k^n\Delta^m \} \ar[r]^-{d_1}  \ar@{->>}[d]_{\partial} &E_1^{1,4n}(\EE_{4+24m}/2)\ar[d] \ar[r]^-{d_1} & E_1^{2,4n}(\EE_{4+24m}/2) \ar[d]  \ar[r]^-{d_1}& \F_4[\![j]\!]\{v_1^2k^n\Delta^m \}   \ar@{->>}[d]^{\partial}  \\ 
\F_4\{2\nu k^n  \Delta^m\}  \ar[r] &  0 \ar[r] & 0 \ar[r] &  \F_4\{2\nu k^n  \Delta^m\},}
}
\end{equation*}
where we have $r(c_4^2c_6\Delta^{-1})=jv_1^2$ and $\partial (v_1^2k^n \Delta^m)=2\nu k^n \Delta^m$.
Therefore, if $d_1(v_1^2k^n\Delta^m)$ is zero, then the map $\partial:E_2^{0,q} \to \ker_2(E_1^{0,q+1}(\EE_t))$ in \eqref{eq:lesfirstrowzero} is surjective, while if $d_1(v_1^2k^n\Delta^m) \neq 0$, then $\partial $ is zero and $\delta: \ker_2(E_1^{0,q+1}(\EE_t)) \to E_2^{1,q}(\EE_t)$ is injective. From \cite[Theorem 1.2.1]{BeaudryTowards}, the first case occurs when $m=0$, and the latter when $m\neq 0$. Corresponding to these cases, \eqref{eq:lesfirstrowzero} breaks up into the claimed exact sequences and isomorphisms.
\end{proof}

\subsubsection{The case $q>0$, $q+t/2$ even and $(q,t) \equiv (0,0)$ modulo  $(4, 24)$}\label{sec:q+t/2zero}
This case is highlighted in blue in \Cref{figure:cases}; it is more subtle so we do not do a complete explicit analysis, but we collect some general remarks.
For one, in this case, the last row of the diagram \Cref{fig:bigdiagram} is zero, so
we get short exact sequences
\[ 0 \to   \im_2(E_1^{0,q}(\EE_{t})) \to E_2^{0,q}(\EE_{t}) \to  E_2^{0,q}(\EE_{t}/2) \to 0,  \]
isomorphisms
\[E_2^{1,q}(\EE_{t}) \xrightarrow{\cong}  E_2^{1,q}(\EE_{t}/2)\]
and a long exact sequence
\[0 \to E_2^{2,q}(\EE_{t}) \to E_2^{2,q}(\EE_{t}/2) \xrightarrow{\delta} \im_2(E_1^{3,q}(\EE_{t})) \to E_2^{3,q}(\EE_{t}) \to E_2^{3,q}(\EE_{t}/2) \to 0 \ . \]

\subsubsection{The case $q=0$ and $t/2$ even.}\label{sec:qzero}
This is the final and most subtle case. To analyze this case, one considers the diagram (cf. \Cref{fig:bigdiagraminitial})
\begin{align}\label{eq:q=0complexes}
\xymatrix{
E_1^{0,0}(\EE_{t}) \ar[d]^-{\times 2}_{\subseteq} \ar[r] &E_1^{1,q}(\EE_{t}) \ar[r]  \ar[d]^-{\times 2}_{\subseteq}  & E_1^{2,0}(\EE_{t})  \ar[r]  \ar[d]^-{\times 2}_{\subseteq}  & E_1^{3,0}(\EE_{t})  \ar[d]^-{\times 2}_{\subseteq}  \\
E_1^{0,0}(\EE_{t})  \ar[r] \ar@{->>}[d] &E_1^{1,0}(\EE_{t}) \ar[r]  \ar@{->>}[d] & E_1^{2,0}(\EE_{t})  \ar[r] \ar@{->>}[d] & E_1^{3,0}(\EE_{t}) \ar@{->>}[d]  \\
 E_1^{0,0}(\EE_{t}/2) \ar[r]   &E_1^{1,0}(\EE_{t}/2)  \ar[r] & E_1^{2,0}(\EE_{t}/2)   \ar[r]& E_1^{3,0}(\EE_{t}/2)     
}\end{align}
whose columns are short exact. However, this is a rather subtle analysis which requires information about the $d_1$-differentials, which we provide in \Cref{thm:techyd1} below, though only in cases relevant to our range.

\subsection{Technical results about the $d_1$-differential}
In this section, we prove some technical facts about the $d_1$-differential. Understanding this section will require diving deeper into \cite{BeaudryTowards,BGH} than some readers might want to, but these facts can also be taken for granted without impairing readability.

\begin{notation}[{\it{c.f.} \cref{rem:defdbs}}]
\label{notn:ClassDefs}
We let $\Delta_0$, $b_0$, $\overline{b}_0$, $\overline{\Delta}_0$ be the units in $E_1^{p,0}(\EE_0)$ for $p=0,1,2,3$, respectively. From the discussion around Definition 5.2.3 of \cite{BeaudryTowards}, $d_1(c_4\Delta_0)$ is zero mod $16$ and there is an element which we name
\[v_1b_1 := d_1(c_4\Delta_0)/16 \in E_1^{1,0}(\EE_8).\]
Note that $v_1b_1$ is not a product despite the name. This element is called $b_{1,0,0}$ in \cite[Remark 4.7]{BeaudryAlpha} and there it is shown to detect $\alpha_{4/4}$.

We let $v_1\overline{b}_1 \in E_1^{2,0}(\EE_8)$ be any class which reduces to the same named class in $E_1^{2,0}(\EE_8/2)$ and has the property that $v_1\overline{b}_1\equiv v_1v_2 \mod (4,2u_1^3, u_1^6)$. Note that by Lemma 5.3.2 of \cite{BeaudryTowards}, the class $\overline{b}_1 \in E_1^{2,0}(\EE_8/2)$ is congruent to $v_2$ modulo $u_1^6$ so such a choice is possible.
\end{notation}

\begin{theorem}\label{thm:techyd1}
For $d_1\colon E_1^{p,0}(\EE_*) \to E_1^{p+1,0}(\EE_*)$ differential of the ADSS, we have:
\begin{align}
d_1(b_0) &= 2\overline{b}_0  \mod 8 & d_1 &\colon E_{1}^{1,0} (\EE_0) \to E_{1}^{2,0} (\EE_0) \label{d1} \\
d_1(v_1^2b_0) &= 2v_1^2\overline{b}_0  \mod 4 & d_1 &\colon E_{1}^{1,0} (\EE_4) \to E_{1}^{2,0} (\EE_4)  \label{d2}  \\
d_1(c_4\Delta_0) &\equiv 16v_1b_1 \mod 32  &  d_1 &\colon E_{1}^{0,0} (\EE_8) \to E_{1}^{1,0} (\EE_8)   \label{d3} \\ 
d_1(v_1^4b_0) &\equiv 2v_1^4\overline{b}_0 \mod 8  &  d_1 &\colon E_{1}^{1,0} (\EE_8) \to E_{1}^{2,0} (\EE_8)  \label{d4}   \\ 
d_1(v_1\overline{b}_1) &\equiv 2c_4 \overline{\Delta}_0 \mod (4,j) &  d_1 &\colon E_{1}^{2,0} (\EE_8) \to E_{1}^{3,0} (\EE_8)  \label{d5} 
\end{align}
\end{theorem}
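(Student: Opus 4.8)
The plan is to establish each of the five congruences by leveraging the comparison between the integral ADSS and its mod-2 reduction, together with the explicit multiplicative structure recalled from \cite{BeaudryTowards,BGH}. The overarching principle is that the mod-2 $d_1$-differentials are known exactly (\Cref{thm:Einfmod2}), so an integral differential $d_1(z)$ is determined modulo its reduction mod 2 by whatever class it reduces to; the content of each congruence is then pinning down the next 2-adic layer, which we do by exploiting $v_1$-, $\eta$-, and $k$-linearity (\Cref{prop:kpc}), the module structure over $H^*(\SS_2^1,\EE_*)$ (\Cref{sec:moduleoverE}), and the behavior of the connecting maps $\partial$ in the long exact sequences \eqref{eq:lesE2}.

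For \eqref{d1} and \eqref{d2}: both $b_0$ and $v_1^2 b_0$ are $d_1$-cycles mod 2 (indeed $b_0,\overline b_0$ are units, and $d_1(b_0)\equiv 0 \bmod 2$ by \Cref{thm:Einfmod2}), so $d_1(b_0) \in 2E_1^{2,0}(\EE_0)$ and we must show the coefficient of $\overline b_0$ is a unit mod 4 (equivalently nonzero mod 4). The cleanest route is to compare with $H^*(\SS_2^1,\EE_*/4)$, or more directly: the class $\chi$ (or rather $b_0$ detecting $\chi$ in the mod-2 picture) has a known integral lift, and the relation $2\chi \ne 0$ behavior, traced through the Bockstein $\EE_0 \xrightarrow{2} \EE_0 \to \EE_0/2$, forces $d_1(b_0)\equiv 2\overline b_0 \bmod 4$; the mod-8 refinement in \eqref{d1} is then read off from the $E_2^{\bullet,0}(\EE_0/4)$ computation or from the known structure of $H^*(\SS_2^1,\WW)$ in low degrees, since $E_2^{1,0}(\EE_0)$ and $E_2^{2,0}(\EE_0)$ are determined by the requirement that the cohomology of the $q=0$ complex match $H^*(\SS_2^1,\WW)$. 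Statement \eqref{d2} follows the same template with the extra factor $v_1^2$, using $v_1$-linearity and noting that one needs only the mod-4 statement there, which is strictly less refined.

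For \eqref{d4}: this is $v_1^4$ times the pattern of \eqref{d1}, so $v_1$-linearity of $d_1$ on $E_1^{\bullet,0}(\EE_*/2)$ (fact (iii) in \Cref{sec:BeaudryTowards}) together with \eqref{d1} gives $d_1(v_1^4 b_0)\equiv 2v_1^4\overline b_0$ modulo higher $2$-powers; the claim is the mod-8 precision, which again comes from comparing with $\EE_*/4$ or from the known answer for $E_2^{\bullet,0}(\EE_8)$ forced by the $H^*(\SS_2^1,\EE_8)$ computation being assembled in this section. For \eqref{d3}: here $c_4\Delta_0$ reduces to $v_1^4\Delta_0$ mod 2, which by \Cref{thm:Einfmod2} (the $d_1(\Delta_n)$ formula with $n$ a power-of-two index, applied $v_1$-linearly starting from $d_1(\Delta_0)=0$ and $d_1(c_4\Delta_0)$) maps to $v_1^6\cdot$(something) — but in fact $c_4\Delta_0$ must be a $d_1$-cycle mod lower powers so that $d_1(c_4\Delta_0)$ lands in a large power of $2$; \Cref{notn:ClassDefs} already records that it vanishes mod 16 and \emph{defines} $v_1 b_1$ as $d_1(c_4\Delta_0)/16$, so \eqref{d3} is the assertion that this is not further divisible, i.e., $v_1 b_1 \not\equiv 0 \bmod 2$. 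That nonvanishing is exactly the statement that $v_1 b_1$ detects $\alpha_{4/4}$ (cited from \cite{BeaudryAlpha}), which is a nonzero element; alternatively it follows because $\overline b_1$, the mod-2 reduction, is nonzero in $E_2^{1,0}(\EE_8/2)$ per \Cref{thm:Einfmod2}.

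For \eqref{d5}, which I expect to be \textbf{the main obstacle}: the target $E_1^{3,0}(\EE_8)$ carries the $\pi$-twist (the \textbf{Warning} in \Cref{sec:moduleoverE}), so one must be careful about what "$\overline\Delta_0$" and "$c_4\overline\Delta_0$" mean — this is precisely the place flagged in the footnote to \Cref{thm:Einfmod2} where the distinction between $\Delta$ and $\Delta'$ cannot be obscured. The strategy: first establish the mod-2 reduction using $d_1(v_1\overline b_1)$ in $E_1^{\bullet,0}(\EE_8/2)$ via fact (iii) of \Cref{sec:BeaudryTowards} ($d_1(h x) = v_1^{-1}\eta\, d_1(x)$) applied to the defining relations for $\overline b_1$ in Lemma 5.3.2 of \cite{BeaudryTowards}, together with $\red(c_4)=v_1^4$ from \Cref{lem:from-integral-to-mod2}; this shows $d_1(v_1\overline b_1) \equiv$ (the reduction of $2c_4\overline\Delta_0$) only after identifying how $c_4\overline\Delta_0$ reduces, which requires knowing the mod-2 differential lands in the subring $\F_4[\![j]\!][\eta,v_1,k,\Delta^{\pm1}]/(\ldots)$ and matching coefficients there. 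Then the mod-4 refinement: since $\im_2(E_1^{2,q}(\EE_8))=0$ for $q>0$ but for $q=0$ the group is torsion-free, one uses the diagram \eqref{eq:q=0complexes} and the fact that $d_1\circ d_1 = 0$ forces $d_1(v_1\overline b_1)$ to be consistent with $d_1(v_1 b_1)$ computed in \eqref{d3}, pinning the coefficient of $c_4\overline\Delta_0$ to be a unit mod 4 modulo $j$ (the "$\bmod (4,j)$" hedge absorbs the ambiguity in lower-order $j$-divisible terms coming from the twist). I would organize the proof of \eqref{d5} as: (a) compute the mod-2 reduction carefully with the $\pi$-twist bookkeeping; (b) show $d_1(v_1\overline b_1)$ is divisible by 2 (it is $j$-adically a $2$-torsion-target issue, using that $v_1\overline b_1$ reduces to a $d_1$-cycle away from the relevant term); (c) use $d_1^2=0$ against \eqref{d3} to rule out divisibility by 4 modulo $(4,j)$.
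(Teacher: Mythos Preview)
Your treatment of \eqref{d1}--\eqref{d4} is in the right spirit but looser than necessary. The paper simply cites \cite[Theorem~2.5.1]{BGH} for \eqref{d1}, then observes that since $v_1$ is $\GG_2$-invariant mod $2$, the power $v_1^{2}$ is invariant mod $4$ and $v_1^4$ mod $8$, so the $d_1$'s are $v_1^2$-linear mod $4$ and $v_1^4$-linear mod $8$; this immediately yields \eqref{d2} and \eqref{d4} from \eqref{d1}. Your Bockstein and ``match the known $H^*(\SS_2^1,\WW)$'' arguments work in principle (since that cohomology is computed independently in \cite{BGH}), but the invariance-of-$v_1^{2n}$ argument is both shorter and gives the precise moduli directly. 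One small slip: in your discussion of \eqref{d3} you write ``$\overline b_1$, the mod-2 reduction''---but $\overline b_1$ lives in filtration $p=2$, not $p=1$; the mod-2 reduction of $v_1b_1\in E_1^{1,0}(\EE_8)$ is the same-named class $v_1b_1\in E_1^{1,0}(\EE_8/2)$. Your $\alpha_{4/4}$ argument for the nonvanishing is fine and is essentially what the paper says, citing \cite[Prop.~8.2.9]{BGH}.

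The genuine gap is in \eqref{d5}. Your step (b) is correct: $v_1\overline b_1$ reduces to a $d_1$-cycle mod $2$ (it survives to $E_2^{2,0}(\EE_8/2)$ in \Cref{thm:Einfmod2}), so $d_1(v_1\overline b_1)\in 2E_1^{3,0}(\EE_8)$. But your step (c) cannot work: $d_1^2=0$ only gives \emph{upper} bounds on differentials (things in the image of $d_1$ die), never lower bounds. Concretely, the constraints you can extract are $d_1(v_1b_1)=0$ (from \eqref{d3} and torsion-freeness) and $d_1(v_1^4\overline b_0)\equiv 0\bmod 4$ (from \eqref{d4}); since $v_1^4\overline b_0 = j_0\cdot[v_1v_2]$ in $H^0(C_6,\EE_8)$ while $v_1\overline b_1\equiv [v_1v_2]$ modulo higher terms, the latter constraint becomes vacuous precisely modulo $j$, which is exactly where you need the information. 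The paper instead computes \eqref{d5} directly: the map $d_1\colon E_1^{2,0}\to E_1^{3,0}$ is the action of the explicit group-ring element $\pi(1+i+j+k)\pi^{-1}(1-\alpha^{-1})$ from \cite[Thm.~1.1.1]{BeaudryTowards}, and one evaluates this on $v_1v_2\equiv v_1\overline b_1$ using the formulas for $t_0(\gamma),t_1(\gamma)$ from \cite[\S6.2--6.3]{BeaudryTowards}, together with the partial computation $\pi(1+i+j+k)_*(v_1v_2)\equiv v_1^4\bmod 2$ from \cite[Prop.~8.2.10]{BGH}. This is the only place in the theorem where one must actually get one's hands dirty with the group action, and there is no formal shortcut.
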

\begin{proof}
The differential \eqref{d1} follows from \cite[Theorem 2.5.1]{BGH}, see also Figure 4 of \cite{BGH}. 
The element $v_1$  is fixed modulo $2$ (by all of $\GG_2$), and so $v_1^2 \in \EE_4$ is fixed modulo $4$. So the $d_1$-differentials are $v_1^2$-linear modulo $4$. In particular, for $d_1 \colon E_{1}^{1,0} (\EE_4) \to E_{1}^{2,0} (\EE_4)$, we have
\[d_1(v_1^2 b_0) \equiv v_1^2d_1(b_0) \mod 4.\]
This implies the differential \eqref{d2}. The differential \eqref{d3} follows from our definition of $b_1$, but the essential content for it is proved in \cite[Proof of Prop. 8.2.9]{BGH}. 
The differential \eqref{d4} uses the $v_1^4$-linearity of $d_1$ modulo $8$.

The differentials \eqref{d5} requires more work.
For the rest of the proof, we use the notation of \Cref{rem:pialpha}, and let $i,j,k$ be the usual elements of $Q_8$.

To prove \eqref{d5}, we note that from \cite[Thm. 1.1.1]{BeaudryTowards} (and the fact that the elements $\pi$ and $\alpha$ commute), 
\[d_1 \colon E_1^{2,0}(\EE_*) \to E_1^{3,0}(\EE_*) \cong H^*(G_{24}', \EE_*)\] 
is given by the action of the element 
\begin{equation}\label{eq:lastmap}\pi(1+i+j+k)\pi^{-1}(1-\alpha^{-1}) \in \Z_2[\![\mathbb{S}_2^1]\!].\end{equation}
In particular, in this proof, we remember that we have been implicitly using the isomorphism
\[H^*(G_{24}', \EE_*) \xrightarrow{\cong} H^*(G_{24}, \EE_*)\] 
induced by conjugation by $\pi^{-1}$ in our presentation. We do not apply this isomorphism here when computing the map \eqref{eq:lastmap}.

We will use the formulas of \cite[Sec. 6.2]{BeaudryTowards}. First, recall that any element of $\gamma \in \SS_2$ can be written as
\[\sum_{i\geq 0} a_iT^i\]
for elements $a_i\in \WW$ such that $a_i^4-a_i=0$, and $a_0\neq 0$, and that 
\[F_{2/2}\SS_2 = \{x\in \SS_2 \mid x\equiv 1 \mod T^2 \}.\]
We first compute the action of $\pi^{-1}(1-\alpha^{-1})$ on $v_1v_2$. Both $\pi^{-1}$ and $\alpha^{-1}$ are in $F_{2/2}\SS_2$.

From Theorem 6.2.2 of \cite{BeaudryTowards}, we have that
\begin{align*}
\gamma_* u_1 &= t_0(\gamma)u_1+2t_0(\gamma)^{-1}t_1(\gamma) \mod 4 \\
\gamma_*u &= t_0(\gamma)u .
\end{align*}
(Note that in Theorem 6.2.2, the second formula is stated modulo $2$, but this formula holds integrally, see the discussion around (2.3.1) of that reference.)
From Propositions 6.3.10 and 6.3.9 of \cite{BeaudryTowards}, for $\gamma$ such that $a_0=1$ and $a_1=a_3=0$, we have
\begin{align*}
t_1=t_1(\gamma)&= a_2^2u_1 \mod (2,u_1^6) \\
t_0=t_0(\gamma)&= 1+2a_2+(a_2+a_2^2)u_1^3 \mod (4,2u_1^2,u_1^6).
\end{align*}
Using that $v_1=u_1u^{-1}$ and $v_2=u^{-3}$, we compute:
\begin{align*}
\gamma_*(v_1v_2) &= (t_0u_1+2 t_0^{-1}t_1) t_0^{-4}u^{-4}  \mod 4 \\
&= t_0^{-3}u_1u^{-4}+2t_0^{-5}t_1u^{-4} \mod 4  \\
&=(1+2a_2+(a_2+a_2^2) u_1^3)u_1u^{-4} + 2a_2^2u_1u^{-4} \mod (4,2u_1^3, u_1^6)\\
&=v_1v_2 + 2(a_2+a_2^2)v_1v_2+ (a_2+a_2^2)v_1^4  \mod (4,2u_1^3, u_1^6).
\end{align*}
The elements $\alpha^{-1}$ and $\pi^{-1}$ satisfy the conditions placed on $\gamma$ above with $a_2=\omega$. Therefore, we have
\[\pi^{-1}(1-\alpha^{-1})_*(v_1v_2) \equiv 2v_1v_2+v_1^4 \mod (4,2u_1^3, u_1^6) , \]
where we've used the fact $v_1^4$ is fixed modulo $8$, and hence modulo $4$.
It was computed that $\pi(1+i+j+k)_*(v_1v_2) \equiv v_1^4 \mod 2$ in the proof of \cite[Prop. 8.2.10]{BGH}. We also have
\[\pi(1+i+j+k)_*v_1^4 \equiv 0 \mod 4.\]
Therefore, 
\begin{align*}
\pi(1+i+j+k)\pi^{-1}(1-\alpha^{-1})_*(v_1v_2)
&\equiv 2v_1^4 \mod (4, 2u_1^3,u_1^6) \ .
\end{align*}
Since $v_1\overline{b}_1  \equiv v_1v_2 \mod (4, 2u_1^3,u_1^6) $, the same formula holds for $v_1\overline{b}_1$.
Finally, since $E_1^{3,0}(\EE_8)\cong \W[\![j]\!] \{c_4\overline{\Delta}_0\}$ with $j\equiv v_1^{12} \mod 2$ and $c_4 \equiv v_1^4 \mod 2$, the congruence implies $d_1(v_1\overline{b}_1) \equiv 2c_4 \overline{\Delta}_0$ modulo $(4,j)$.
\end{proof}

\subsection{Specific results in the range $0\leq t<12$}\label{sec:Einf}
We next state the results we need in our range. These results only require additional arguments when $q=0$ and $t/2$ is even, as the analysis above and the computation of \cite[Thm. 1.2.2]{BeaudryTowards} give the stated claims otherwise.

\begin{lem}[$t=0$]\label{lem:t=0}
There are isomorphisms
\[E_2^{p,*}(\EE_0) \cong  \begin{cases}
\W[k]/(8k)\{\Delta_0 \}&  \text{$p=0$}  \\
\F_4[g] \{gb_0 \} & \text{$p=1$}  \\
\F_4[g] \{\overline{b}_0 \} & \text{$p=2$}  \\
\W[k]/(8k)\{\overline{\Delta}_0\} &  \text{$p=3$}  
\end{cases} \]
where $k \in E_2^{p,4}(\EE_0)$ for $p=0,3$ and $g\in E_2^{p,2}(\EE_0)$ for $p=1,2$.
\end{lem}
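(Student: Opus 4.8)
The plan is to identify the $E_2$-page of the ADSS for $\EE_0$ by comparing with the already-known mod 2 computation of \Cref{thm:Einfmod2}, using the diagram \eqref{eq:q=0complexes} and the $d_1$-information from \Cref{thm:techyd1}. Observe first that for $t=0$ we have $t/2=0$ even, so the only subtle degree is $q=0$; for $q>0$ the relevant cases are governed by \Cref{lem:q+t/2evenGen}, \Cref{lem:q+t/2evenCases1} and \Cref{lem:q+t/2:04}, together with the $q+t/2$-odd vanishing of \Cref{lem:q+t/2odd}. So I would split the argument into the higher-$q$ part (mostly bookkeeping) and the $q=0$ part (the real content).

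For $q>0$: since $k \in E_1^{0,4}(\EE_0)$ survives by \Cref{lem:kfixed}, and since $H^*(G_{24},\EE_0)$ and $H^*(C_6,\EE_0)$ are $k$-periodic (resp. $g$-periodic) above degree $0$, it suffices to pin down the columns in degrees $q=1,2,3,4$ and then propagate by $k$- and $g$-linearity. In these degrees $(q,0)$ is never congruent to $(0,4),(1,10),(2,12),(2,20)$ mod $(4,24)$ except $(0,0)$ itself (the $k$-periodic copy at $q=4$), so \Cref{lem:q+t/2evenGen} gives $E_2^{p,q}(\EE_0)\cong E_2^{p,q}(\EE_0/2)$ for $q=1,2,3$, while at $q=4$ one is in the blue case $(q,t)\equiv(0,0)$ of \Cref{sec:q+t/2zero}, giving the short exact sequences there. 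Reading off \Cref{thm:Einfmod2}, $E_2^{p,q}(\EE_0/2)$ for $q>0$ is $\F_4[h]\{b_0\}$, $\F_4[h]\{\overline b_0\}$ in columns $p=1,2$ and $\F_4[k]\{\overline\Delta_0,\nu^2y\overline\Delta_{-1}\}$-type in $p=0,3$; matching against the integral $C_6$- and $G_{24}$-cohomology (where the higher cohomology is $2$-torsion, so $F_1^{p,q}=E_1^{p,q}(\EE_0/2)$ and $\im_2=0$ in columns $1,2$) produces $\F_4[g]\{gb_0\}$ in column $1$ and $\F_4[g]\{\overline b_0\}$ in column $2$; note the shift to $gb_0$ reflects that the degree-$2$ class $\eta b_0$ rather than $b_0$ generates integrally since $b_0$ itself sits in $q=0$. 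In columns $0$ and $3$ the blue-case short exact sequences assemble the $\W[k]/(8k)$-module structure from the integral input $H^*(G_{24},\EE_0)\cong \W[k]/(8k)$ of \Cref{G24} restricted appropriately.

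For $q=0$: this is where \eqref{eq:q=0complexes} and \Cref{thm:techyd1} are needed. One knows $E_1^{p,0}(\EE_0)$ is torsion-free, with units $\Delta_0,b_0,\overline b_0,\overline\Delta_0$ in the four columns (\Cref{notn:ClassDefs}), and the reduction to $E_1^{p,0}(\EE_0/2)$ is surjective with kernel $2E_1^{p,0}(\EE_0)$. The key input is \eqref{d1} of \Cref{thm:techyd1}: $d_1(b_0)=2\overline b_0 \bmod 8$. Combined with the mod-$2$ computation (where $d_1(b_0)=0$, so $b_0$ and $\overline b_0$ both survive mod $2$ in $q=0$), the snake lemma on \eqref{eq:q=0complexes} forces: in column $0$, $\Delta_0$ is a $d_1$-cycle (it maps to $v_1^0 b_0\cdot(\text{stuff})$? — in fact $d_1(\Delta_0)=0$ since $\Delta_0\equiv\Delta^0$ and there is no $n=0$ differential by \Cref{thm:Einfmod2}) and $H^0=\W\{\Delta_0\}$, which with the $k$-tower from $q>0$ gives $\W[k]/(8k)\{\Delta_0\}$ after accounting for the $8k$ relation coming from $H^*(G_{24},\EE_0)$; in columns $1,2$ the map $d_1\colon E_1^{1,0}(\EE_0)\to E_1^{2,0}(\EE_0)$ has image exactly $2\,\W\{\overline b_0\}+\cdots$ (by \eqref{d1}) so its cokernel contributes $\F_4\{\overline b_0\}$ to $E_2^{2,0}$ and its kernel is trivial in the relevant part, while $E_2^{1,0}$ vanishes except that the surviving $q=2$ class $gb_0$ is $g$-periodic; in column $3$, $d_1\colon E_1^{2,0}(\EE_0)\to E_1^{3,0}(\EE_0)$ — the relevant differential here, by \Cref{thm:techyd1}-type analysis of the last map \eqref{eq:lastmap} in degree $0$, needs checking but should give $H^0=\W\{\overline\Delta_0\}$ with the $8k$ relation, i.e.\ $\W[k]/(8k)\{\overline\Delta_0\}$.

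The main obstacle is the $q=0$ analysis in columns $2$ and $3$: one must show the last differential $d_1\colon E_1^{2,0}(\EE_0)\to E_1^{3,0}(\EE_0)$ — given by the action of $\pi(1+i+j+k)\pi^{-1}(1-\alpha^{-1})$ as in the proof of \Cref{thm:techyd1} — behaves correctly, namely that $\overline b_0$ survives to generate $E_2^{2,0}(\EE_0)\cong\F_4\{\overline b_0\}$ (so the differential hits $\overline\Delta_0$ with the right $2$-divisibility) and that $\overline\Delta_0$ generates a free $\W$-summand of $E_2^{3,0}(\EE_0)$. This requires carefully threading the integral $d_1$'s of \Cref{thm:techyd1} through the snake lemma applied to \eqref{eq:q=0complexes}, and then checking the $k$-module extension (the $8k$ relation) is inherited from $H^*(G_{24},\EE_0)$ via the edge homomorphism of \Cref{lem:edge} and \Cref{lem:kfixed}. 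Everything else is a matter of transcribing \Cref{thm:Einfmod2} and matching $\F_4$-dimensions column by column.
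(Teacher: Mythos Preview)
Your case decomposition is the same as the paper's: $q$ odd (\Cref{lem:q+t/2odd}), $q\equiv 2 \bmod 4$ (\Cref{lem:q+t/2evenGen}), $q\equiv 0\bmod 4$ positive (the blue case of \Cref{sec:q+t/2zero}), and $q=0$ (\Cref{sec:qzero}). The difference is in how the last two cases are handled. The paper does not carry out the $q=0$ and $q\equiv 0\bmod 4$ analysis at all; it simply cites \cite[Lemma~6.1.5]{BGH}, where the computation of $E_\infty^{p,*}(\EE_0)$ (including the extensions producing the $\W/8$'s) is done in full. You instead try to reconstruct that computation from the tools internal to this paper, chiefly \Cref{thm:techyd1} and the snake lemma on \eqref{eq:q=0complexes}.

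That reconstruction is the right idea but runs into a real gap. \Cref{thm:techyd1} only gives you the middle differential at $t=0$, namely \eqref{d1}: $d_1(b_0)\equiv 2\overline{b}_0 \bmod 8$. It says nothing about $d_1\colon E_1^{2,0}(\EE_0)\to E_1^{3,0}(\EE_0)$, which you correctly flag as ``needs checking''. Without that, you cannot conclude that $E_2^{2,0}(\EE_0)\cong\F_4\{\overline{b}_0\}$ and $E_2^{3,0}(\EE_0)\cong\W\{\overline{\Delta}_0\}$ from the information in this paper alone; one genuinely needs the computation in \cite[Theorem~2.5.1, Figure~4]{BGH} of all three integral $d_1$'s at $t=0$. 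Similarly, resolving the extensions in the blue case at $q\equiv 0\bmod 4$ (to get $\W/8\{k^n\}$ rather than, say, $\W/4\oplus\F_4$) requires knowing that $k^n$ survives as a class of order $8$, which is again the content of the BGH reference via \Cref{lem:kfixed}. A minor point: your explanation that ``the degree-$2$ class $\eta b_0$ rather than $b_0$ generates integrally since $b_0$ itself sits in $q=0$'' is garbled---the actual reason $E_2^{1,0}(\EE_0)=0$ is that $d_1(b_0)=2\overline{b}_0+\cdots\neq 0$ integrally, so $b_0$ is not a $d_1$-cycle; the class $gb_0$ in $q=2$ survives because that row is governed by \Cref{lem:q+t/2evenGen}.
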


\begin{rem}
Here and below, the class $g \in H^2(C_6,\EE_0) $ is the one introduced in \eqref{eq:C6cohElements}; by \Cref{lem:maprforC6}, it reduces to $h^{2}$ mod $2$.
\end{rem}
\begin{proof}
When $q$ is odd, \Cref{lem:q+t/2odd} gives the claim -- everything is zero in those bidegrees. When $q\equiv 2 \mod 4$, then \Cref{lem:q+t/2evenGen} and \Cref{thm:Einfmod2} apply to give that in these bidegrees we have 
$\F_4 \{g^{(q-2)/2}gb_0\}$ and $\F_4 \{g^{q/2}\overline{b}_0\}$ for $p=1,2$ 
respectively, and zeros when $p=0$ or $3$. When $q \equiv 0 \mod 4$ and positive, then \Cref{sec:q+t/2zero} applies (but there are some non-trivial extensions to be solved), and for $q=0$ we have \Cref{sec:qzero}. We refer the reader to \cite[Lemma 6.1.5]{BGH} for these remaining details.
\end{proof}

\begin{lem}[$t=2$]\label{lem:t=2}
There are isomorphisms
\[E_2^{p,*}(\EE_2) \cong  \begin{cases}
\F_4[k]\{\eta \Delta_0\} &  \text{$p=0$}  \\
\F_4[g]\{  \eta b_0 \} & \text{$p=1$}  \\
\F_4[g]\{  \eta  \overline{b}_0 \} & \text{$p=2$}  \\
\F_4[k]\{\eta\overline{\Delta}_0\} &  \text{$p=3$} 
\end{cases} \] 
\end{lem}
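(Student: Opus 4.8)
The plan is to bootstrap from the mod-$2$ computation of \Cref{thm:Einfmod2} in the spirit of the general lemmas already proved in this section; since $t/2=1$ is odd, $t=2$ is one of the ``easy'' degrees, as the delicate $q=0$ analysis of \Cref{sec:qzero} never enters. Because $t/2$ is odd, $q+t/2$ is odd exactly when $q$ is even, and in that case \Cref{lem:q+t/2odd} gives $E_2^{p,q}(\EE_2)\cong E_1^{p,q}(\EE_2)$. I would then check that all four of these $E_1$-groups vanish: one has $E_1^{1,q}(\EE_2)=E_1^{2,q}(\EE_2)=H^q(C_6,\EE_2)=0$ since $q+t/2$ is odd (\Cref{lem:C6coh}, \Cref{fig:c6E}); and by the structure of $H^*(G_{24},\EE_\ast)$ recorded in \Cref{G24} (see also \Cref{K2localE2}), the group $H^*(G_{24},\EE_2)$ is concentrated in odd cohomological degrees, being generated over $\WW[\![j]\!][k]$ by $\eta\in H^1(G_{24},\EE_2)$, so $E_1^{0,q}(\EE_2)=H^q(G_{24},\EE_2)=0$ and likewise $E_1^{3,q}(\EE_2)=H^q(G_{24}',\EE_2)\cong H^q(G_{24},\EE_2)=0$. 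Hence $E_2^{p,q}(\EE_2)=0$ for all $p$ whenever $q$ is even.

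For $q$ odd we have $q>0$ and $q+t/2$ even, and $(q,t)$ is never congruent modulo $(4,24)$ to one of $(0,0),(0,4),(1,10),(2,12),(2,20)$, simply because $t\equiv 2\pmod{24}$ while those exceptional pairs all have $t\not\equiv 2$. So \Cref{lem:q+t/2evenGen} applies and gives $E_2^{p,q}(\EE_2)\cong E_2^{p,q}(\EE_2/2)$ for every $p$ and every odd $q$, the isomorphism induced by reduction mod $2$; in particular each such group is an $\F_4$-vector space.

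Combining the two cases, $E_2^{p,q}(\EE_2)$ vanishes for $q$ even and is identified with $E_2^{p,q}(\EE_2/2)$ for $q$ odd, so it remains to extract the odd-$q$ part of each line of \Cref{thm:Einfmod2} at $t=2$. For $p=0$ the group $\F_4[k]\{v_1\Delta_0,\eta\Delta_0\}$ has $v_1\Delta_0$ in cohomological degree $0$ and $k$ raising this degree by $4$, so its odd part is $\F_4[k]\{\eta\Delta_0\}$; the case $p=3$ is identical with $\overline{\Delta}_0$ in place of $\Delta_0$. For $p=1$ the odd part of $\F_4[h]\{v_1 b_0\}$ is $\F_4[h^2]\{h v_1 b_0\}$, and under reduction mod $2$ this matches $\F_4[g]\{\eta b_0\}$ because $\eta=v_1 h$ in $H^*(C_6,\EE_\ast/2)$ (\Cref{lem:cohC6mod2}) and $g$ reduces to $h^2$ (the remark following \Cref{lem:t=0}, via \Cref{lem:maprforC6}); the case $p=2$ is the same with $\overline{b}_0$. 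Since $\eta\in H^1(\SS_2^1,\EE_2)$ and the unit classes $\Delta_0,b_0,\overline{b}_0,\overline{\Delta}_0$ are defined integrally, the displayed generators lift to $E_2^{p,\ast}(\EE_2)$, there are no additive extensions as every group is $2$-torsion, and the degrees of $k$ and $g$ are as recorded in \Cref{lem:t=0}.

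I do not expect a genuine obstacle here: the content is supplied by the two general inputs \Cref{lem:q+t/2odd} and \Cref{lem:q+t/2evenGen} together with \Cref{thm:Einfmod2}, and the only care required is the routine bookkeeping translation between the mod-$2$ generator names ($v_1 b_0$, $h$) and the integral ones ($\eta b_0$, $g$). The degree $t=2$ is deliberately among those the section's machinery disposes of without extra work, precisely because $t/2$ is odd.
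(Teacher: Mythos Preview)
Your proof is correct and follows exactly the paper's approach: apply \Cref{lem:q+t/2odd} for $q$ even to get zero groups, apply \Cref{lem:q+t/2evenGen} for $q$ odd to reduce to \Cref{thm:Einfmod2}, and read off the answer. The paper's own proof is essentially a two-sentence summary of what you wrote; your additional bookkeeping (verifying the exceptional $(q,t)$ cases do not arise and translating the mod-$2$ names to integral ones) is accurate and only makes the argument more explicit.
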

\begin{proof}
In this case, when $q$ is even, then \Cref{lem:q+t/2odd} ~gives that $E_2^{p,q}(\EE_2) \cong E_1^{p,q}(\EE_2)$, and we find these groups are zero from \Cref{G24} and \Cref{lem:C6coh}. When $q$ is odd, we are in the situation of \Cref{lem:q+t/2evenGen}, so we read off the answer from \Cref{thm:Einfmod2}.
\end{proof}

\begin{lem}[$t=4$]\label{lem:caset4}
There are isomorphisms
\[E_2^{p,*}(\EE_4) \cong \begin{cases} 
\W/4[k]\{\nu {\Delta}_0\} \oplus  \F_4[k] \{\eta^2 {\Delta}_0\} & p=0 \\
\F_4[g]\{gv_1^2b_0\} &  p=1 \\
\F_4[g]\{ v_1^2\overline{b}_0\} & p=2 \\
\W/4[k]\{\nu \overline{\Delta}_0\} \oplus  \F_4[k] \{\eta^2 \overline{\Delta}_0\}  & p=3
\end{cases}\]
\end{lem}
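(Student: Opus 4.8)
The plan is to determine $E_2^{p,q}(\EE_4)$, the $d_1$-cohomology of the complex $E_1^{\bullet,*}(\EE_4)$, by a case analysis on $q$ modulo $4$, importing the mod-$2$ answer of \Cref{thm:Einfmod2} through the comparison results proved above in this section. When $q$ is odd, $q+t/2=q+2$ is odd, so by \Cref{lem:q+t/2odd} there are no $d_1$-differentials and $E_2^{p,q}(\EE_4)\cong E_1^{p,q}(\EE_4)=H^q(F_p,\EE_4)$; by \Cref{lem:C6coh} this vanishes for $p=1,2$, and by \Cref{G24} (together with the $\pi$-conjugation twist of \cref{sec:moduleoverE} when $p=3$) it is $\W/4\{\nu k^{(q-1)/4}\}$ for $q\equiv 1\pmod 4$ and zero for $q\equiv 3\pmod 4$. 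This produces the summands $\W/4[k]\{\nu\Delta_0\}$ and $\W/4[k]\{\nu\overline{\Delta}_0\}$.

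For $q$ even and positive, $(q,t)$ is $(2,4)$ or $(0,4)$ modulo $(4,24)$. If $q\equiv 2\pmod 4$ we are outside the exceptional list, so \Cref{lem:q+t/2evenGen} gives $E_2^{p,q}(\EE_4)\cong E_2^{p,q}(\EE_4/2)$, which \Cref{thm:Einfmod2} identifies with the classes $k^{(q-2)/4}\eta^2\Delta_0$, $g^{q/2}v_1^2b_0$, $g^{q/2}v_1^2\overline{b}_0$, $k^{(q-2)/4}\eta^2\overline{\Delta}_0$ (recall that $g$ reduces to $h^2$ mod $2$). If $q\equiv 0\pmod 4$ with $q>0$, we are in the case $(q,t)\equiv(0,4)$ of \Cref{lem:q+t/2:04}, which gives $E_2^{p,q}(\EE_4)\cong E_2^{p,q}(\EE_4/2)$ for $p=1,2$ (contributing $g^{q/2}v_1^2b_0$ and $g^{q/2}v_1^2\overline{b}_0$) and short exact sequences $0\to E_2^{p,q}(\EE_4)\to E_2^{p,q}(\EE_4/2)\to\ker_2(E_1^{p,q+1}(\EE_4))\to 0$ for $p=0,3$; in these bidegrees both outer groups are one-dimensional over $\F_4$ — the middle by \Cref{thm:Einfmod2}, the right because $E_1^{p,q+1}(\EE_4)\cong\W/4\{\nu k^{q/4}\}$ by \Cref{G24} — so the sequence forces $E_2^{0,q}(\EE_4)=E_2^{3,q}(\EE_4)=0$, matching the asserted answer.

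The remaining case $q=0$ (where $t/2=2$ is even) is the delicate one and is handled as in \cref{sec:qzero}, working directly with the torsion-free complex $E_1^{\bullet,0}(\EE_4)$ whose terms are $E_1^{0,0}(\EE_4)\cong\W[\![j]\!]\{c_4^2c_6\Delta^{-1}\}$, $E_1^{1,0}(\EE_4)\cong E_1^{2,0}(\EE_4)\cong\W[\![j_0]\!]\{[v_1^2]\}$ and $E_1^{3,0}(\EE_4)\cong\W[\![j]\!]$ by \Cref{G24} and \Cref{lem:C6coh}. The subtlety is that mod-$2$ reduction $E_1^{p,0}(\EE_4)\to E_1^{p,0}(\EE_4/2)$ is surjective only for $p=1,2$ and has cokernel $\F_4$ for $p=0,3$, the obstruction living in $\ker_2 H^1(G_{24},\EE_4)\cong\F_4\{2\nu\}$; this cokernel is precisely the source of the mod-$2$ classes $v_1^2\Delta_0$, $v_1^2\overline{\Delta}_0$ which survive in $E_2^{p,0}(\EE_4/2)$ but have no integral lift. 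Feeding the integral differential $d_1(v_1^2b_0)\equiv 2v_1^2\overline{b}_0\pmod 4$ of \eqref{d2} into the diagram \eqref{eq:q=0complexes}, together with the mod-$2$ description of the $d_1$'s in \Cref{thm:Einfmod2}, a diagram chase shows that the complex $E_1^{\bullet,0}(\EE_4)$ is acyclic, i.e. $E_2^{p,0}(\EE_4)=0$ for all $p$. Reconciling this integral $d_1$-differential with the failure of surjectivity in filtrations $0$ and $3$ is the main obstacle, and is exactly what \Cref{thm:techyd1} is needed for. Assembling the cases and recording the module structure — $\W/4[k]$ and $\F_4[k]$ in filtrations $0,3$ by the $k$-linearity of \Cref{prop:kpc}, and $\F_4[g]$ in filtrations $1,2$ inherited from the mod-$2$ spectral sequence — yields the stated isomorphisms.
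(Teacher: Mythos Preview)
Your case analysis for $q>0$ is correct and matches the paper's argument. The error is in the $q=0$ case: the complex $E_1^{\bullet,0}(\EE_4)$ is \emph{not} acyclic. In fact $E_2^{2,0}(\EE_4)\cong\F_4\{v_1^2\overline{b}_0\}$, and this class is precisely the bottom of the $\F_4[g]$-tower at $p=2$ in the statement. With your claim, the assembled answer at $p=2$ would be $\F_4[g]\{gv_1^2\overline{b}_0\}$ rather than $\F_4[g]\{v_1^2\overline{b}_0\}$, so you should have noticed the mismatch.

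The point is that the differential \eqref{d2}, $d_1(v_1^2b_0)\equiv 2v_1^2\overline{b}_0\pmod 4$, only kills $2v_1^2\overline{b}_0$, not $v_1^2\overline{b}_0$ itself, so a class of order $2$ survives in $E_2^{2,0}$. The paper makes this precise by working with the mod-$2$ complex $F_1^{\bullet,0}(\EE_4):=E_1^{\bullet,0}(\EE_4)/2$, which differs from $E_1^{\bullet,0}(\EE_4/2)$ at $p=0,3$ by the cokernels $\F_4\{v_1^2\Delta_0\}$ and $\F_4\{v_1^2\overline{\Delta}_0\}$. Comparing with \Cref{thm:Einfmod2} gives $F_2^{p,0}(\EE_4)=0$ for $p=0,3$ and $F_2^{p,0}(\EE_4)\cong\F_4$ for $p=1,2$. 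The long exact sequence for $E_1^{\bullet,0}\xrightarrow{2}E_1^{\bullet,0}\to F_1^{\bullet,0}$ then yields $E_2^{0,0}=E_2^{3,0}=0$ by $2$-completeness, and
\[
0\to E_2^{1,0}(\EE_4)\xrightarrow{2}E_2^{1,0}(\EE_4)\to\F_4\{v_1^2b_0\}\xrightarrow{\delta}E_2^{2,0}(\EE_4)\xrightarrow{2}E_2^{2,0}(\EE_4)\to\F_4\{v_1^2\overline{b}_0\}\to 0.
\]
From \eqref{d2} one reads off $\delta(v_1^2b_0)\equiv v_1^2\overline{b}_0\bmod 2$, so $\delta$ is injective; hence $E_2^{1,0}(\EE_4)=0$ by $2$-completeness, and then $E_2^{2,0}(\EE_4)/\operatorname{im}(\delta)$ has multiplication by $2$ an isomorphism, so it vanishes and $\delta$ gives $E_2^{2,0}(\EE_4)\cong\F_4\{v_1^2\overline{b}_0\}$.
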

\begin{proof}

Again, we use \Cref{lem:q+t/2odd} for the case $q$ odd, \Cref{lem:q+t/2evenGen} when $q \equiv 2 \mod 4$, and \Cref{lem:q+t/2:04} when $q>0$ and divisible by 4.
The only case that remains is when $q=0$. We consider the diagram (cf. \Cref{fig:bigdiagraminitial})
\[\xymatrix@C=1pc{ 
0 \ar[r] & E_{1}^{0,0} (\EE_4)\ar[r] \ar[d]^-{\times 2} & E_{1}^{1,0} (\EE_4) \ar[r]  \ar[d]^-{\times 2} & E_{1}^{2,0} (\EE_4) \ar[r]  \ar[d]^-{\times 2} & E_{1}^{3,0} (\EE_4) \ar[r] \ar[d]^-{\times 2} & 0
\\
0 \ar[r] & E_{1}^{0,0} (\EE_4)\ar[r]  \ar[d] & E_{1}^{1,0} (\EE_4) \ar[r]  \ar[d] & E_{1}^{2,0} (\EE_4) \ar[r]  \ar[d]& E_{1}^{3,0} (\EE_4) \ar[r]  \ar[d] & 0 
\\
0 \ar[r] & \F_4[\![j]\!]\{c_4^2c_6/\Delta\} \ar[r]  \ar[d]^-{\subseteq} & \F_4[\![j_0]\!] \{[v_1^2]  \} \ar[r] \ar[d]^-{=} &  \F_4[\![j_0]\!] \{[v_1^2]  \} \ar[r] \ar[d]^-{=}  & \F_4[\![j]\!]\{c_4^2c_6/\Delta\} \ar[r] \ar[d]^-{\subseteq} & 0 \\
0 \ar[r] & E_{1}^{0,0} (\EE_4/2)   \ar[d]\ar[r]  & E_{1}^{1,0} (\EE_4/2) \ar[d] \ar[r]  & E_{1}^{2,0} (\EE_4/2) \ar[d] \ar[r]  & E_{1}^{3,0} (\EE_4/2) \ar[r]  \ar[d] & 0  \\
0 \ar[r] & \F_4\{ v_1^2\Delta_0\} \ar[r]  &  0 \ar[r]  &  0 \ar[r]  &  \F_4\{ v_1^2\overline{\Delta}_0\} \ar[r]   & 0 
}\]
where the middle row is  $F_1^{\bullet,0}(\EE_4) := E_1^{\bullet,0}(\EE_4)/2 $.
Noting that there is no $2$-torsion on the $E_1$-page in degrees $q=0$, the first three and last three rows again form an exact sequence of chain complexes.

From  \Cref{thm:Einfmod2}, we have
\[E_2^{p,0}(\EE_4/2)  \cong  \begin{cases} 
\F_4\{ v_1^2\Delta_0\} & p=0 \\
\F_4\{ v_1^2b_0\} & p=1 \\
\F_4\{ v_1^2\overline{b}_0\} & p=2 \\
\F_4\{ v_1^2\overline{\Delta}_0\} & p=3\ ,
\end{cases}\]
which implies (using the exact sequence of chain complexes formed by the last three rows) that
\[ F_2^{p,0}(\EE_4) \cong \begin{cases} 
0 & p=0,3 \\
\F_4\{ v_1^2b_0\} & p=1 \\
\F_4\{ v_1^2\overline{b}_0\} & p=2  \ .
\end{cases} \]
From this and the exact sequence of chain complexes formed by the first three rows, we have that
\[  E_{2}^{p,0} (\EE_4) \xrightarrow{\times 2}E_{2}^{p,0} (\EE_4)  \to F_2^{p,0}(\EE_4) =0 \quad p=0,3.\]
So the multiplication by $2$ map on $E_{2}^{p,0} (\EE_4)$ is surjective for $p=0,3$. But $E_{2}^{p,0} (\EE_4)$ is 2-complete, so this implies that $E_{2}^{p,0} (\EE_4) =0$ when $p=0,3$ as claimed.  

The remaining terms assemble in a long exact sequence
\begin{align*}
&0 \to  E_{2}^{1,0} (\EE_4) \xrightarrow{ 2}E_{2}^{1,0} (\EE_4)  \to 
 F_2^{1,0}(\EE_4)   
  \xrightarrow{\delta}   E_{2}^{2,0} (\EE_4) \xrightarrow{ 2}E_{2}^{2,0} (\EE_4)  \to    
 F_2^{2,0}(\EE_4)  
 \to  0.
\end{align*}
As stated in \Cref{thm:techyd1},  $d_1(v_1^2b_0) =2v_1^2\overline{b}_0 \mod 4$ in the ADSS for $\EE_4$; here, the class $v_1^2 b_0 \in E_1^{1,0}(\EE_4)$ maps to the homonymous class in $F_1^{1,0}(\EE_4)$ which is a $d_1$ cycle. Chasing the diagram, this implies that $\delta(v_1^2b_0) \equiv v_1^2\overline{b}_0 \mod 2$  
under the connecting homomorphism 
$\delta \colon F_2^{1,0}(\EE_4)  \to E_2^{2,0}(\EE_4) $. Since $v_1^2\overline{b}_0\neq 0$ in
$E_{2}^{2,0} (\EE_4)/2 \cong F_{2}^{2,0}(\EE_4)$,
$\delta$ is injective (as it is non-zero and $\delta$ is $\W$-linear).
In particular, $E_{2}^{1,0} (\EE_4)  \xrightarrow{2}  E_2^{1,0}(\EE_4)$ is  an isomorphism, making the 2-complete group $E_{2}^{1,0} (\EE_4)$ trivial.

We are left with the exact sequence 
\[0 \to \F_4\{v_1^2b_0\}  \xrightarrow{\delta}  E_{2}^{2,0} (\EE_4) \xrightarrow{ 2}E_{2}^{2,0} (\EE_4)  \to    \F_4\{v_1^2\overline{b}_0\} \to 0, \] 
in which we know that $\delta(v_1^2b_0) \equiv v_1^2\overline{b}_0 \mod 2$, so we get an isomorphism
\[E_{2}^{2,0} (\EE_4)/\im(\delta) \xrightarrow[\cong]{2}E_{2}^{2,0} (\EE_4)/\im(\delta)\]
As before, this implies that $E_{2}^{2,0} (\EE_4)/\im(\delta) =0$, thus $\delta$ induces an isomorphism 
$E_{2}^{2,0} (\EE_4)\cong \F_4\{v_1^2\bar{b}_0\}$.
\end{proof}

The proof of the next result is completely analogous to that of \Cref{lem:t=2}.

\begin{lem}[$t=6$]
There are isomorphisms 
\[ E_2^{p,q}(\EE_6) \cong  \begin{cases}
\F_4[k]\{\mu\Delta_0, \eta^3 \Delta_0\} &  \text{$p=0$}  \\
\F_4[g]\{  v_1^2 \eta b_0, h b_1  \} & \text{$p=1$}  \\
\F_4[g]\{ v_1^2 \eta \overline{b}_0, h \overline{b}_1 \} & \text{$p=2$}  \\
\F_4[k]\{\mu\overline{\Delta}_0, \eta^3 \overline{\Delta}_0\} &  \text{$p=3$}  \\
\end{cases} \]
where $hb_1$ and $h\overline{b}_1$ are the unique classes in $E_2^{1,1}(\EE_6)$ and $E_2^{2,1}(\EE_6)$, respectively, which reduce to the same named classes in $E_2^{1,1}(\EE_{6}/2)$ and $E_2^{2,1}(\EE_{6}/2)$.
\end{lem}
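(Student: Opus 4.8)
The plan is to run the argument of \Cref{lem:t=2} essentially verbatim, using that $t/2 = 3$ is odd, so that $q + t/2$ is odd precisely when $q$ is even. For $q$ even, \Cref{lem:q+t/2odd} gives $E_2^{p,q}(\EE_6) \cong E_1^{p,q}(\EE_6) = H^q(F_p, \EE_6)$, and I would check directly that all of these vanish: for $p = 1, 2$ this is \Cref{lem:C6coh} (equivalently \Cref{fig:c6E}), since $q + 3$ is odd; for $p = 0, 3$ one reads it off \Cref{G24}, as there is no weight-$3$ modular form, so $H^0(G_{24}, \EE_6) = 0$, and no monomial in $\eta, \nu, \mu, \epsilon, \kappa, k$ (with coefficients in $\WW\br{j}$) lies in internal degree $6$ and even cohomological degree, so $H^{q}(G_{24}, \EE_6) = 0$ for even $q > 0$ as well. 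Hence $E_2^{p,q}(\EE_6) = 0$ whenever $q$ is even.

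For $q$ odd we have $q > 0$ and $q + t/2$ even, and since $6 \not\equiv 0, 4, 10, 12, 20 \pmod{24}$ the pair $(q, 6)$ avoids every exceptional residue of \Cref{figure:cases}; thus \Cref{lem:q+t/2evenGen} applies and gives $E_2^{p,q}(\EE_6) \cong E_2^{p,q}(\EE_6/2)$, realized by the reduction map $r$. I would then extract the odd-$q$ summands of \Cref{thm:Einfmod2} and translate the generator names using \Cref{lem:from-integral-to-mod2} and \Cref{lem:maprforC6}, together with $r(\mu) = v_1^2\eta$, $r(g) = h^2$ and $\eta = v_1 h$ in $H^*(C_6, \EE_*/2)$: this identifies the odd-$q$ part of $E_2^{p,*}(\EE_6/2)$ with the $r$-image of $\F_4[k]\{\mu\Delta_0, \eta^3\Delta_0\}$, $\F_4[g]\{v_1^2\eta b_0, hb_1\}$, $\F_4[g]\{v_1^2\eta\overline{b}_0, h\overline{b}_1\}$, $\F_4[k]\{\mu\overline{\Delta}_0, \eta^3\overline{\Delta}_0\}$ for $p = 0, 1, 2, 3$ respectively, which (since $r$ is an isomorphism in these bidegrees) is the claimed answer. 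The generators $hb_1$ and $h\overline{b}_1$ are not products of previously named integral classes, so I would define them as the unique $r$-preimages of the same-named classes $hb_1 \in E_2^{1,1}(\EE_6/2)$ and $h\overline{b}_1 \in E_2^{2,1}(\EE_6/2)$, which exist precisely because $r$ is an isomorphism.

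There is no genuine obstacle: all the content is carried by \Cref{lem:q+t/2odd}, \Cref{lem:q+t/2evenGen}, and \Cref{thm:Einfmod2}. The only steps needing a little care are confirming the even-$q$ vanishing from the explicit description of $H^*(G_{24}, \EE_*)$ in \Cref{G24}, and the bookkeeping of translating mod-$2$ generator names to integral ones; in particular one must observe that $6 \bmod 24$ misses every exceptional residue, so that none of the subtler $q = 0$-type analyses of \Cref{sec:qzero} is required here.
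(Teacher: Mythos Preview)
Your proposal is correct and matches the paper's approach exactly: the paper states that the proof is completely analogous to that of \Cref{lem:t=2}, and what you have written is precisely that analogy spelled out in detail, using \Cref{lem:q+t/2odd} for even $q$ and \Cref{lem:q+t/2evenGen} plus \Cref{thm:Einfmod2} for odd $q$.
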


\begin{rem}
The classes $hb_1$ and $h\overline{b}_1$ are detected modulo $j_0$ by $w_5$ in $E_1^{1,1}(\EE_6)$, respectively in $E_1^{2,1}(\EE_{6})$.
\end{rem}

For the next statement, recall that $v_1b_1$ was already defined in \cref{notn:ClassDefs}.

\begin{lem}[$t=8$]\label{lem:caset8}
There are isomorphisms
\[E_2^{p,*}(\EE_8) \cong 
\begin{cases} 
\F_4[k]\{\nu^2{\Delta}_0, \eta \mu{\Delta}_0,  kc_4 {\Delta}_0\} & p=0 \\
\W/16 \{v_1b_1\} \oplus \F_4[g]\{\eta^2v_1^2 b_0, \eta hb_1\} & p=1\\
\F_4[g]\{v_1^4\overline{b}_0, \eta h\overline{b}_1\} & p=2\\
\F_4[k]\{c_4\overline{\Delta}_0, \nu^2\overline{\Delta}_0, \eta \mu\overline{\Delta}_0  \}  & p=3 
\end{cases}\]
where $ k c_4 \overline{\Delta}_0 = \eta^4\overline{\Delta}_0$ and also  $g v_1b_1 = \eta hb_1$.
 Furthermore, the class $\sigma$ is detected by $v_1b_1$ modulo $2$.
\end{lem}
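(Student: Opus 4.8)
The plan is to split into the cases $q>0$ and $q=0$, exactly as in the proofs of \Cref{lem:t=2} and \Cref{lem:caset4}. For $q>0$ everything is formal. Since $t/2=4$, ``$q+t/2$ odd'' means $q$ odd, in which case \Cref{lem:q+t/2odd} identifies $E_2^{p,q}(\EE_8)$ with $E_1^{p,q}(\EE_8)=H^q(F_p,\EE_8)$, and all of these vanish by \Cref{G24} and \Cref{lem:C6coh} (there is no odd cohomology of $G_{24}$, nor positive cohomology of $C_6$, in internal degree $8$). For $q$ even and positive, the pair $(q,8)$ is never congruent modulo $(4,24)$ to any of $(0,0),(0,4),(1,10),(2,12),(2,20)$, so \Cref{lem:q+t/2evenGen} gives $E_2^{p,q}(\EE_8)\cong E_2^{p,q}(\EE_8/2)$, and \Cref{thm:Einfmod2} supplies the right-hand side. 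I would then name the generators by transporting along the reduction maps of \Cref{lem:from-integral-to-mod2} and \Cref{lem:maprforC6} (for instance $r(\nu^2)=\nu^2$, $r(\eta\mu)=v_1^2\eta^2$, $r(c_4k)=v_1^4k$, $r(g)=h^2$), so that $\nu^2\Delta_0$, $\eta\mu\Delta_0$, $c_4k\Delta_0$ and their $\overline{\Delta}_0$-versions account for $p=0,3$, while $\eta^2v_1^2b_0$, $\eta hb_1$ and their $\overline{b}$-versions account for $p=1,2$. The identities $c_4k\overline{\Delta}_0=\eta^4\overline{\Delta}_0$ and $gv_1b_1=\eta hb_1$ are checked mod $2$: the first is the relation $c_4k=\eta^4$ of \Cref{G24} transported through the conjugation isomorphism $H^*(G_{24}',\EE_*)\cong H^*(G_{24},\EE_*)$, and since $E_2^{p,q}(\EE_8)$ is $2$-torsion in the relevant bidegrees, equality mod $2$ is enough.

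The case $q=0$ is where the work is, and I would run the argument of \Cref{lem:caset4}. First, $E_1^{p,1}(\EE_8)=H^1(F_p,\EE_8)=0$ for every $p$, so in diagram \eqref{eq:q=0complexes} the middle row is $E_1^{\bullet,0}(\EE_8/2)$ and the rows form a short exact sequence of chain complexes $0\to E_1^{\bullet,0}(\EE_8)\xrightarrow{2}E_1^{\bullet,0}(\EE_8)\to E_1^{\bullet,0}(\EE_8/2)\to 0$. The resulting long exact sequence on cohomology is
\[ \cdots\to E_2^{p,0}(\EE_8)\xrightarrow{2}E_2^{p,0}(\EE_8)\xrightarrow{r}E_2^{p,0}(\EE_8/2)\xrightarrow{\delta}E_2^{p+1,0}(\EE_8)\to\cdots, \]
with mod $2$ terms $\F_4\{v_1^4\Delta_0\}$, $\F_4\{v_1^4b_0,v_1b_1\}$, $\F_4\{v_1^4\overline{b}_0,v_1\overline{b}_1\}$, $\F_4\{v_1^4\overline{\Delta}_0\}$ for $p=0,1,2,3$ by \Cref{thm:Einfmod2}. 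The connecting map $\delta$ is computed by lifting a mod $2$ cocycle, applying $d_1$, and dividing by $2$, and the needed values of $d_1$ are supplied by \Cref{thm:techyd1}: \eqref{d3} gives $\delta(v_1^4\Delta_0)=8v_1b_1$ (using that $E_1^{1,0}(\EE_8)=\W[\![j_0]\!]\{v_1b_1\}$ is free of rank one, this is an order-$2$ element of $\W/16\{v_1b_1\}$), which together with the rest of the sequence forces $E_2^{0,0}(\EE_8)=0$ and $E_2^{1,0}(\EE_8)\cong\W/16\{v_1b_1\}$; \eqref{d4} gives $\delta(v_1^4b_0)\equiv v_1^4\overline{b}_0\bmod 2$, yielding $E_2^{2,0}(\EE_8)=\F_4\{v_1^4\overline{b}_0\}$; and \eqref{d5} gives $\delta(v_1\overline{b}_1)\equiv c_4\overline{\Delta}_0\bmod(2,j)$, yielding $E_2^{3,0}(\EE_8)=\F_4\{c_4\overline{\Delta}_0\}$. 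As in \Cref{lem:caset4}, at each step one invokes $2$-completeness of the groups $E_2^{p,0}(\EE_8)$ to turn surjectivity (resp.\ injectivity) of multiplication by $2$ into vanishing (resp.\ $2$-torsion-freeness). Finally, the assertion that $\sigma$ is detected by $v_1b_1$ modulo $2$ follows from the computation of $v_1^{-1}H^*(\SS_2^1,\EE_*/2)$ in \cite[Theorem 8.2.5]{BGH} together with the naturality of the ADSS under $\EE_*\to\EE_*/2$.

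The main obstacle is the $q=0$ step, and within it the precise exponent in $\W/16\{v_1b_1\}$: this requires the congruence \eqref{d3} sharpened to the modulus $32$ (a weaker statement modulo $16$ would only give $\W/8$), together with $d_1^2=0$ and the torsion-freeness of $E_1^{2,0}(\EE_8)$ to control the higher-order correction terms hidden in \eqref{d3}--\eqref{d5} and to verify that the classes produced by ``$d_1$ divided by $2$'' are genuine cocycles; one also needs the $2$-completeness input to exclude the \emph{a priori} possibility that some non-obvious lift of $v_1^4\Delta_0$ is a $d_1$-cocycle, which would make $E_2^{0,0}(\EE_8)$ nonzero. Beyond that, there is some bookkeeping in tracking which of the two mod $2$ generators of $E_2^{1,0}(\EE_8/2)$ and of $E_2^{2,0}(\EE_8/2)$ is hit by $\delta$ and which survives $r$, but this is determined once the three differentials above are in hand.
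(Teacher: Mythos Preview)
Your proposal is correct and follows essentially the same approach as the paper: split into $q>0$ (handled by \Cref{lem:q+t/2odd} and \Cref{lem:q+t/2evenGen}) and $q=0$ (handled by the long exact sequence on $E_2$-terms of the short exact sequence $0\to E_1^{\bullet,0}(\EE_8)\xrightarrow{2}E_1^{\bullet,0}(\EE_8)\to E_1^{\bullet,0}(\EE_8/2)\to 0$, with the connecting maps read off from \Cref{thm:techyd1}). The paper's own proof is terse and simply points to the analogous argument in \Cref{lem:caset4}; you have filled in the details it omits, including the key observation that $v_1b_1=d_1(c_4\Delta_0)/16$ is an integral $d_1$-cycle (via $d_1^2=0$ and torsion-freeness of $E_1^{2,0}(\EE_8)$) and that $16v_1b_1=d_1(c_4\Delta_0)$ is a boundary, which together pin down the order as exactly $16$.
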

\begin{proof}
As before, \Cref{lem:q+t/2odd} gives us zeros for all $p$ when $q $ is odd. When $q$ is even and positive, \Cref{lem:q+t/2evenGen} and \Cref{thm:Einfmod2} give the claim, using \Cref{lem:maprforC6,lem:from-integral-to-mod2} to get the claimed integral representatives.
Again, we only need to provide additional details when $q=0$.
In this case, we have the exact sequence of chain complexes 
\[ 0\to E_1^{\bullet,0 }(\EE_8) \xrightarrow{2} E_1^{\bullet,0}(\EE_8) \to E_1^{\bullet,0}(\EE_8/2) \to 0 \ . \]
The claims follow from an analysis analogous to that of \Cref{lem:caset4} of the associated long exact sequence on $E_2$-terms, using the information provided in \Cref{thm:techyd1}.
\end{proof}

\begin{lem}[$t=10$]
There are isomorphisms 
\[E_2^{p,*}(\EE_{10}) \cong \begin{cases}
\F_4[k]\{\eta c_4 \Delta_0, \epsilon \Delta_0, \eta^2 \mu\Delta_0 \} & p=0 \\
\F_4[g]\{v_1^4\eta b_0, v_1^2hb_1 \}  & p=1 \\
\F_4[g]\{v_1^4\eta \overline{b}_0, v_1^2h\overline{b}_1 \}& p=2 \\
\F_4[k]\{\eta c_4 \overline{\Delta}_0,\epsilon\overline{\Delta}_0, \eta^2 \mu\overline{\Delta}_0  \}& p=3  \ 
\end{cases}\]
with relation $k\eta c_4 \overline{\Delta_0}= \eta^5\overline{\Delta}_0$.
\end{lem}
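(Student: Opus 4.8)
The plan is to run the same bootstrap from $\EE_*/2$ to $\EE_*$ as in \Cref{lem:t=2,lem:caset4,lem:caset8}, organized by the residue of $q$ modulo $4$. Since $t/2=5$ is odd, $q+t/2$ is odd precisely when $q$ is even, so $q=0$ is covered by \Cref{lem:q+t/2odd} and, in contrast to $t=4,8$, no separate $q=0$ argument is needed. For $q$ even I would invoke \Cref{lem:q+t/2odd} to get $E_2^{p,q}(\EE_{10})=E_1^{p,q}(\EE_{10})$: this is $H^q(C_6,\EE_{10})=0$ for $p=1,2$ (by \Cref{lem:C6coh}, including $q=0$ by a degree count), while for $p=0$ it is $H^q(G_{24},\EE_{10})$ and for $p=3$ its $\pi$-conjugate. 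By \Cref{G24} these equal $\F_4\{k^{(q-2)/4}\epsilon\}$ when $q\equiv2$ and $0$ when $q\equiv0\bmod4$, producing the $\epsilon\Delta_0$ and $\epsilon\overline{\Delta}_0$ summands.

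For $q$ odd, $q+t/2$ is even. When $q\equiv3\bmod4$ we are in the ``yellow'' case $(q,t)\equiv(3,10)$, so by \Cref{lem:q+t/2evenGen} there are isomorphisms $E_2^{p,q}(\EE_{10})\cong E_2^{p,q}(\EE_{10}/2)$ for all $p$; I would read the right-hand side off \Cref{thm:Einfmod2} and identify integral lifts using \Cref{lem:from-integral-to-mod2,lem:maprforC6} — for instance $r(\eta^2\mu)=v_1^2\eta^3$ identifies the $p=0$ generator with $\eta^2\mu\Delta_0$, and $r(w_5)=hv_2$, $r(\eta)=v_1h$ identify the $p=1,2$ generators with $v_1^2hb_1$ and $v_1^4\eta b_0$ (up to the chosen lifts of the $b_i$ and $\overline{b}_i$). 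When $q\equiv1\bmod4$ we are in the ``red'' case $(q,t)\equiv(1,10)$ of \Cref{lem:q+t/2evenCases1}: for $p=1,2$ we again get $E_2^{p,q}(\EE_{10})\cong E_2^{p,q}(\EE_{10}/2)$, handled as above, and for $p=0,3$ there is a short exact sequence
\[0\to E_2^{p,q}(\EE_{10})\xrightarrow{r}E_2^{p,q}(\EE_{10}/2)\xrightarrow{\partial}\ker_2\!\big(E_1^{p,q+1}(\EE_{10})\big)\to0.\]
Setting $q=1+4n$, \Cref{thm:Einfmod2} gives $E_2^{0,q}(\EE_{10}/2)=\F_4\{k^nv_1x\Delta_0,\,k^nv_1^4\eta\Delta_0\}$ and the first paragraph gives $\ker_2 E_1^{0,q+1}(\EE_{10})=\F_4\{k^n\epsilon\}$. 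Using $\partial(v_1x)=\epsilon$ (\Cref{rem:xyboundaries} and the remark after it) together with $k$-linearity, $\partial(k^nv_1x\Delta_0)=k^n\epsilon\ne0$, whereas $\partial(k^nv_1^4\eta\Delta_0)=0$ since $v_1^4\eta\Delta_0=r(k^n\eta c_4\Delta_0)$ by \Cref{lem:from-integral-to-mod2}. Hence $E_2^{0,q}(\EE_{10})\cong\ker\partial=\F_4\{k^n\eta c_4\Delta_0\}$, and the $p=3$ case is identical after the implicit identification $H^*(G_{24}',\EE_*)\cong H^*(G_{24},\EE_*)$.

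Finally I would assemble the answer: each column is free over $\F_4[k]$ on the stated generators by $k$-linearity (\Cref{prop:kpc}), and for $p=1,2$ the $\F_4[g]$-description follows from the $H^*(\SS_2^1,\EE_*)$-module structure of the ADSS, since $g^2=k$, $r(g)=h^2$, and $h^2$-multiplication on $E_2^{1,*}(\EE_{10}/2)=\F_4[h]\{v_1^5b_0,v_1^2b_1\}$ (and its $p=2$ analogue) is injective. The relation $k\eta c_4\overline{\Delta}_0=\eta^5\overline{\Delta}_0$ is simply the relation $c_4k=\eta^4$ of \Cref{G24}, which holds in $E_1^{3,*}(\EE_*)\cong H^*(G_{24}',\EE_*)$ and survives to $E_2$ because the entire $p=3$ column consists of $d_1$-cycles. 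The step that requires genuine care is the analysis of $\partial$ in the $p=0,3$ short exact sequences: one must check that the surviving generator is the integral lift $\eta c_4\Delta_0$ (resp.\ $\eta c_4\overline{\Delta}_0$) of $v_1^4\eta\Delta_0$, rather than $v_1x\Delta_0$, whose integral lift does not exist; this is done by following the connecting homomorphisms through \cite{BeaudryTowards} exactly as in the proof of \Cref{lem:q+t/2evenCases1}.
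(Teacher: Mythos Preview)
Your proposal is correct and follows the same route as the paper: split into $q$ even (invoke \Cref{lem:q+t/2odd}), $q\equiv 1\bmod 4$ (invoke \Cref{lem:q+t/2evenCases1} and use $\partial(k^nv_1x)=k^n\epsilon$ for $p=0,3$), and $q\equiv 3\bmod 4$ (invoke \Cref{lem:q+t/2evenGen}). You have simply written out more of the bookkeeping than the paper does; there is one harmless typo where you write $v_1^4\eta\Delta_0=r(k^n\eta c_4\Delta_0)$ but mean $k^nv_1^4\eta\Delta_0=r(k^n\eta c_4\Delta_0)$.
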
 

\begin{proof}
When $q$ is even, we are in the situation of \Cref{lem:q+t/2odd}, which gives that $E_2^{0,2*}(\EE_{10}) \cong \F_4[k]\{\epsilon {\Delta}_0  \} $,  similarly $E_2^{3,2*}(\EE_{10})  \cong \F_4[k]\{\epsilon \overline{\Delta}_0 \}  $, and $E_2^{p,2*}(\EE_{10}) = 0$ for $p=1,2$. When $q \equiv 1 \mod 4$, we are in the situation of \Cref{lem:q+t/2evenCases1}, which gives $E_2^{p,q}(\EE_{10}) \cong E_2^{p,q} (\EE_{10}/2)$ when $p=1,2$, so by \Cref{thm:Einfmod2}, we get the claimed terms in these bidegrees. We use that $\partial (k^n v_1 x) = k^n \epsilon $ in the case $p=0,3$ and $q \equiv 1 \mod 4$. When $q \equiv 3 \mod 4$, we use \Cref{lem:q+t/2evenGen} to read off the answer from \Cref{thm:Einfmod2}, using \Cref{lem:from-integral-to-mod2} to get the claimed integral representatives.
\end{proof}

This concludes the computation of the $E_2$-page of the \ref{eq:ADSS} for $\EE_t$ in the range $0\leq t <12$. 

\subsection{Higher differentials and additive extensions}

It turns out that the rest of the spectral sequence is very simple to analyze in this range: There is very little room for higher differentials, and when there is a possibility, simple arguments show that no differentials occur. A similar story holds for the extensions, with one exception as we will see.
\begin{lem}\label{lem:collapseE2}
In the range $0\leq t <12$, the algebraic duality spectral sequence $E_r^{*,*}(\EE_*)$ collapses at the $E_2$-page, i.e., there are no higher differentials.
\end{lem}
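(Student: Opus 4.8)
The strategy is a bidegree-by-bidegree inspection of the $E_2$-page computed in Lemmas~\ref{lem:t=0}--\ref{lem:caset8} (and the $t=6,10$ lemmas), using that a $d_r$-differential in the \ref{eq:ADSS} has signature $d_r\colon E_r^{p,q}(\EE_t)\to E_r^{p+r,q-r+1}(\EE_t)$, so it preserves $t$ and raises $p$ by $r$. Since $0\le p\le 3$, only $d_2$ and $d_3$ can be nonzero, and $d_3$ can only run from column $p=0$ to column $p=3$. So the entire claim reduces to: (i) every $d_2\colon E_2^{0,q}(\EE_t)\to E_2^{2,q-1}(\EE_t)$ and $d_2\colon E_2^{1,q}(\EE_t)\to E_2^{3,q-1}(\EE_t)$ vanishes, and (ii) every $d_3\colon E_3^{0,q}(\EE_t)\to E_3^{3,q-2}(\EE_t)$ vanishes, for $0\le t<12$.

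\textbf{First I would} dispose of the many bidegrees where the target (or source) group is simply zero: consulting the explicit lists in Lemmas~\ref{lem:t=0}--\ref{lem:caset8} and the $t=6,10$ statements, in most $(p,q,t)$ with $0\le t<12$ at least one of the relevant groups vanishes, killing the differential for degree reasons. \textbf{Next}, for the surviving potential differentials I would exploit the module structure over $H^*(\SS_2^1,\EE_*)$ from \cref{sec:moduleoverE}, in particular $k$-linearity (\cref{prop:kpc}) and $\eta$-, $g$-linearity: many $E_2^{p,q}$ groups in our range are generated over $\F_4[k]$ or $\F_4[g]$ by a handful of classes, so it suffices to check the differential on those generators, and often the generators are images of classes from the mod~$2$ ADSS (which collapses, \Cref{thm:Einfmod2}) or are restrictions of globally defined classes like $\eta,\nu,\mu,k,\wchi$ from $H^*(\SS_2^1,\EE_*)$, hence permanent cycles. \textbf{Then} I would handle the remaining named generators (e.g. $\nu^2\Delta_0$, $\epsilon\Delta_0$, $c_4k\Delta_0$, $v_1b_1$, the $\overline\Delta_0$-classes) one at a time: a class is a permanent cycle if it is detected by the edge homomorphism (\cref{lem:edge}), or if it is $\nu$-, $\eta$- or $k$-divisible by such a class, or if its would-be target is not hit (i.e. the target is $v_1$-torsion while $d_r$ output here is forced $v_1$-divisible), or simply by comparing ranks with the known $E_\infty=H^*(\SS_2^1,\EE_*/2)$ via the Bockstein long exact sequence already set up in \Cref{fig:bigdiagraminitial}.

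\textbf{The main obstacle} will be the handful of bidegrees where both source and target are nonzero and neither class is obviously a restriction of a global permanent cycle --- the most delicate being around $t=8$, where $E_2^{0,4n}(\EE_8)$ contains $c_4k\Delta_0$ and $E_2^{1,0}(\EE_8)$ contains $v_1b_1$ generating a $\W/16$, and a $d_2$ or $d_3$ out of the $p=0$ column could a priori hit the $p=2$ or $p=3$ columns. For these I expect to need the relations recorded in \Cref{thm:Einfmod2} and \Cref{thm:techyd1} together with the identification $g v_1 b_1=\eta h b_1$ and $c_4 k\overline\Delta_0=\eta^4\overline\Delta_0$ from \Cref{lem:caset8}: since $\eta$ (hence $\eta^4$) and $k$ are permanent cycles and the relevant $\eta$-towers are infinite, the classes in question are forced to survive, and a differential would contradict $k$-linearity against a surviving class in a higher $q$. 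I would also use the sparsity of the $E_2$-page in Adams grading (\Cref{fig:ADSS-Mod2}, whose integral analogue has the same shape away from the $2$-torsion) to see that no nonzero class maps to a nonzero class under any $d_r$, $r\ge 2$, in the range $0\le t<12$, which completes the proof.
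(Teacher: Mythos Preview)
Your approach is correct and is essentially the same as the paper's: degree arguments plus the module structure over $H^*(\SS_2^1,\EE_*)$, in particular $\eta$- and $k$-linearity together with the fact that ANSS classes are permanent cycles.

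The paper phrases it more crisply. Rather than treating $t=8$ as a ``main obstacle'', it observes in one stroke that \emph{every} class in filtration $p=0$ in the range $0\le t<12$ is a $k$-multiple of an element in the image of the ANSS map (using the relations $c_4k=\eta^4$ and $\eta c_4=\alpha_5$ to recognize the apparently exceptional generators at $t=8,10$); this makes the entire $p=0$ column consist of permanent cycles at once, and then $\eta$-linearity disposes of $d_2$ out of $p=1$. Your worry about $v_1b_1\in E_2^{1,0}(\EE_8)$ is misplaced: its $d_2$-target sits in $q=-1$, hence is zero, and no $d_3$ can originate in $p=1$. Likewise $c_4k\Delta_0=\eta^4\Delta_0$ is already visibly a permanent cycle. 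So the case you flagged as most delicate is in fact the easiest; your plan works, it just carries more scaffolding than needed.
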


\begin{proof}
A straightforward inspection using degree arguments and the structure of the spectral sequence as a module over $H^*(\mathbb{S}_2^1, \EE_*)$ shows that there are no higher differentials in this range.  For example, all classes in filtration $p=0$ in this range are multiples of $k$ with elements in the image of the map from the ANSS (see \Cref{sec:Hurewicz} below). 
Hence these are permanent cycles. Using linearity of the $\eta$-differentials then excludes the possibility of $d_2$-differentials with source in filtration $p=1$.
\end{proof}

\begin{lem}
In the range $0\leq t<12$, there are no exotic additive extensions in the $E_{\infty}$-page of the \ref{eq:ADSS} $E_\infty^{*,*}(\EE_t)$.
\end{lem}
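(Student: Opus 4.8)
The plan is to reduce the statement to an explicit finite check. By \Cref{lem:collapseE2} we have $E_\infty^{p,q}(\EE_t)=E_2^{p,q}(\EE_t)$, and these groups are tabulated for all $0\le t<12$ in \Cref{sec:Einf}. First I would go through each even $t$ in the range and each total degree $n$, recording which of $E_\infty^{p,n-p}(\EE_t)$, $0\le p\le 3$, are nonzero. Inspection shows that in the overwhelming majority of bidegrees at most one filtration contributes to $H^n(\SS_2^1,\EE_t)$, so there is nothing to check; this leaves only a short list of total degrees in which two (occasionally three) filtrations contribute, where an exotic extension would appear as an element whose order exceeds that of every $E_\infty$-piece it involves.

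For such an $n$ the strategy is to lift a generating set of the lowest-filtration piece $E_\infty^{p,n-p}(\EE_t)$ — the top quotient of the filtration of $H^n$ — to an honest class of the same order, splitting the extension. This is feasible because, as recorded in the proof of \Cref{lem:collapseE2}, every filtration-$0$ class in this range is a power of $k$ times the image of a class from the Adams--Novikov $E_2$-page (see \Cref{sec:Hurewicz}), the relevant ones being $\eta,\nu,\mu,\epsilon,\kappa$ and the global classes $k,\chi,\wchi$, whose orders in $H^*(G_{24},\EE_*)$ are read off from \Cref{G24}. Whenever the filtration-$0$ quotient is generated by $2$-torsion classes — which, using $2\eta=2\mu=2\epsilon=2\kappa=0$ and relations such as $c_4k=\eta^4$ from \Cref{G24}, covers almost everything — the same-named global class is itself $2$-torsion, so its double, a priori in deeper filtration, vanishes and the extension splits; then $\W[k]$- and $\eta$-linearity of the $H^*(\SS_2^1,\EE_*)$-module structure (\Cref{sec:moduleoverE}) propagates the splitting up the $k$- and $\eta$-towers. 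The filtration-$0$ and $-3$ pieces that are genuinely $\W/4$ or $\W/8$ (the $\nu$-towers for $t\equiv 4$ and the $k$-towers for $t\equiv 0$) are dealt with similarly: for $\EE_0$-coefficients the edge homomorphism is split by a ring map (\Cref{lem:edge}, \Cref{prop:kpc}), forcing $8k=0$ in $H^*(\SS_2^1,\EE_0)$, and $4\nu=0$ globally, so the lift has precisely the order of the $E_\infty$-piece.

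The part requiring genuine care — the ``exception'' — is the handful of total degrees where such a $\W/4$- or $\W/8$-generated quotient sits directly above a nonzero lower $E_\infty$-piece, since a hidden extension could there double its order. Here I would pin the order down by hand using the fine $d_1$-computations of \Cref{thm:techyd1}: the differentials \eqref{d4} and \eqref{d5} show that the $E_\infty$-classes in question have exactly the displayed orders, so no element of strictly larger order can arise; as an independent check one can chase the Bockstein long exact sequence attached to $\EE_t\xrightarrow{2}\EE_t\to\EE_t/2$, using that the ADSS for $\EE_*/2$ collapses with no additive extensions (\cref{sec:BeaudryTowards}). Verifying this last case is the main obstacle; everything else is the bookkeeping above.
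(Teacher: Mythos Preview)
Your plan is broadly right in spirit but has a genuine gap, and it is not where you think it is.

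Your lifting strategy only works when the lowest nonzero filtration contributing to $H^n(\SS_2^1,\EE_t)$ is $p=0$, because that is when the detecting class is literally an ANSS class (or a $k$-multiple thereof) with a known global order. But there are total degrees where the top quotient sits in filtration $p\ge 1$. The key example is $H^3(\SS_2^1,\EE_0)$: there $E_\infty^{0,3}(\EE_0)=0$, and the filtration reads
\[
0 \;\to\; \WW\{\overline{\Delta}_0\}=E_\infty^{3,0}(\EE_0) \;\to\; H^3(\SS_2^1,\EE_0) \;\to\; E_\infty^{1,2}(\EE_0)=\F_4\{gb_0\} \;\to\; 0.
\]
The class $gb_0$ is not an ANSS class and has no ``same-named global'' representative whose order you already know; a lift $[gb_0]$ could in principle satisfy $2[gb_0]=a[\overline{\Delta}_0]$ with $a\in\WW^\times$, giving a nonsplit extension (with $H^3\cong\WW$). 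Your proposal to pin this down via the $d_1$-formulas of \Cref{thm:techyd1} cannot work: those differentials determine the $E_2$-page, not the extensions between filtrations on $H^*$. The Bockstein idea is also not developed enough to resolve this; to make it work you would need to compute the connecting map $\partial\colon H^2(\SS_2^1,\EE_0/2)\to H^3(\SS_2^1,\EE_0)$ explicitly.

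The trick you are missing, and which the paper uses, is to multiply by $\eta$ \emph{into the deeper filtration}. In the example: if $2[gb_0]=a[\overline{\Delta}_0]$, then $0=2\eta[gb_0]=a\,\eta[\overline{\Delta}_0]$; since $\eta[\overline{\Delta}_0]\neq 0$ has order $2$, this forces $a\in 2\WW$, and then $[gb_0]-\tfrac a2[\overline{\Delta}_0]$ is a lift of order $2$. The same manoeuvre, using $2\eta=4\nu=2\nu^2=0$, handles every case in the range where a $2$-torsion top quotient in filtration $p\ge1$ sits over a larger group. The case the paper singles out as exceptional is rather $H^4(\SS_2^1,\EE_0)$, where the top quotient is $\WW/8\{k\}$ over $\F_4$; there $\eta$-multiplication does not help, and it is the split edge homomorphism (\Cref{lem:edge}, \Cref{prop:kpc}) that forces $8k=0$. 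You identified that argument correctly, but misidentified which case it is needed for.
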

\begin{proof}
 Using the fact that $2\eta = 4\nu = 2\nu^2=0$, we can rule out any additive extensions except in $H^4(\SS_2^1,\EE_0)$. We explain how this goes for $q=3$ and $t=0$: We have an exact sequence 
 \[0 \to \WW\{\overline{\Delta}_0\} \cong \EE_{\infty}^{3,0} \to H^3(\SS_2^1, \EE_0) \to \EE_{\infty}^{1,2}  \cong \F_4 \{gb_0\} \to 0. \]
 Let $[x]$ be the class in cohomology detected by $x$ in the ADSS. If $2[gb_0]$ is detected by $a\overline{\Delta}_0$ for $a\in \WW$, then $a \eta \overline{\Delta}_0 =0$, which implies $a =2a'$ for $a'\in \WW$, since $\eta \overline{\Delta}_0 \neq 0$ and has order $2$. But then $[gb_0] -a'[\overline{\Delta}_0]$ is detected by $gb_0$ and is a class of order 2, giving a splitting of the exact sequence. The other cases for $(q,t)$ are even more straightforward.

For $q=4$, we instead use the splitting of the edge homomorphism. We have
  \[0 \to \FF_4\{g\overline{b}_0\} \cong \EE_{\infty}^{2,2} \to H^4(\SS_2^1, \EE_0) \to \EE_{\infty}^{0,4}  \cong \WW/8 \{k\} \to 0  .\]
 Since the edge homomorphism splits (\Cref{lem:kfixed}), $k$ has order $8$ and so the second extension splits.
\end{proof}
The result is depicted in \Cref{fig:Einf}, in which the class $\Delta_0$ is written as $1$.

  \begin{figure}[h]
    \includegraphics[width=\textwidth]{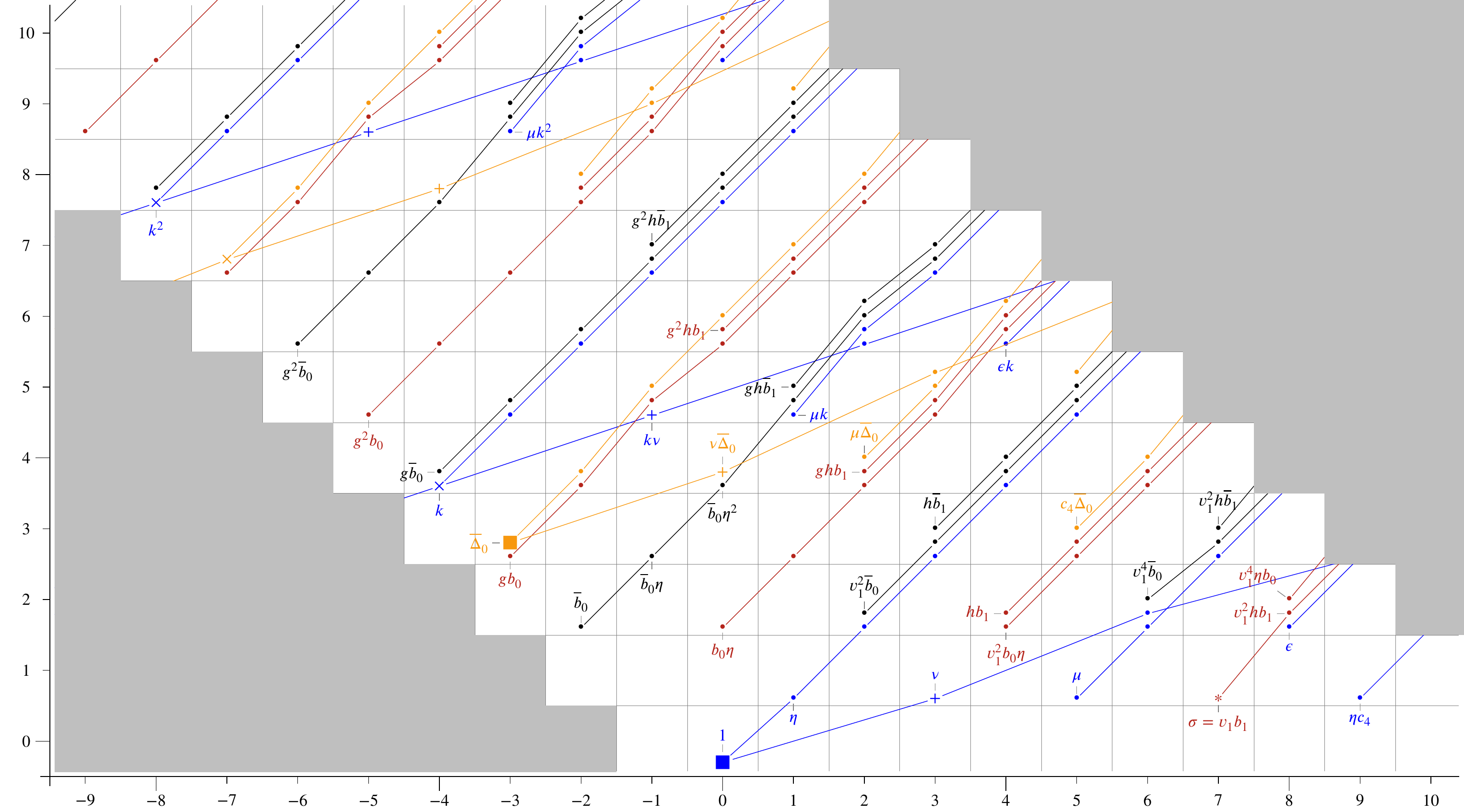}
\caption[The $E_{\infty}$-page of ADSS $E_*^{*,*}(\EE_*)$]{The $E_{\infty}$-page of ADSS $E_*^{*,*}(\EE_*)$. Because of the absence of additive extensions, this is also $H^*(\SS_2^1, \EE_*)$. A $\blacksquare = \W$, a $\bullet = \F_4$, $+ = \W/4$, $\times =\W/8$ and $\ast =\W/16$. The grading and the ADSS filtration is as in \cref{fig:ADSS-Mod2}. Classes in {\color{blue} blue} and  {\color{YellowOrange} orange} come from \cref{fig:c6e2v0} and have filtration $p=0,3$ respectively. Classes {\color{BrickRed} red} and black come from \cref{fig:c6e2} and have filtration $p=1,2$ respectively. }
\label{fig:Einf}
\end{figure}

\section{Organizing the Cohomology of $\SS_2^1$}\label{sec:name}
% !TEX root = coh-master.tex

In this section we give names to the elements in $H^*(\SS_2^1, \EE_t)$ in the range $0\leq t <12$, which was computed in the previous section, and describe the structure of $H^*(\SS_2^1, \EE_t)$ in more detail. As much as possible, we want to state our answer in terms of known elements coming from the Adams--Novikov spectral sequence, and in terms of classes that played an important role in the study of $L_{1}L_{K(2)}S^0$ in \cite{BGH}. 
\subsection{Naming conventions}\label{sec:nameconventions} Some of our conventions are summarized here:
 \begin{enumerate}[(1)]
 \item
If $b \in E_{\infty}^{p,q}(\EE_t)$ is an element of the ADSS that detects a class $\beta$ in cohomology, then we write
 \[ \beta = [b] \in H^{p+q}(\SS_2^1, \EE_t). \]
 
 \item 
 Some classes have names that look like products, these names are chosen to be suggestive of relations among the elements, and we also use $[-]$ to indicate that this is not a product despite the notation. An example is the element which we will call $[\etachi]$, discussed below. This element is not a multiple of $\eta$. However, $\eta [\etachi]$ is a product of $\eta$ and $[\etachi]$.
 \item \label{itemv1powers} The class $v_1 \in H^0(\mathbb{S}_2^1, \EE_2/2)$ acts on the cohomology groups
 $H^*(\mathbb{S}_2^1, \EE_*/2)$. Given any class $x \in H^*(\mathbb{S}_2^1, \EE_t/2)$, the $v_1$-multiple $v_1 x$ is a well-defined class in $H^*(\mathbb{S}_2^1, \EE_{t+2}/2)$.
We use this to name classes in $H^*(\mathbb{S}_2^1, \EE_*)$ as follows. Suppose that the natural maps
\begin{align}\label{eq:redmod2} 
H^s(\mathbb{S}_2^1, \EE_t) &\to H^s(\mathbb{S}_2^1, \EE_t/2) \\
H^s(\mathbb{S}_2^1, \EE_{t+2n}) &\to H^s(\mathbb{S}_2^1, \EE_{t+2n}/2) \nonumber
\end{align}
 are injective  for some $(s,t)$ and $n$. This is not always the case but often is in our range. 
Suppose further that $y \in H^s(\mathbb{S}_2^1, \EE_t)$ maps to $x \in H^s(\SS_2^1, \EE_t/2)$ and that there is a class in $H^s(\mathbb{S}_2^1, \EE_{t+2n})$ that maps to $v_1^nx$. Then we denote this unique class
 \item In our names below, we use the relations
\begin{align*}
[v_1^2]g &= \eta^2 & v_1h&=\eta
\end{align*}
in $H^*(C_6, \EE_*)$ (\Cref{lem:C6coh} and \eqref{eq:C6cohElements}) and $H^*(C_6,\EE_*/2)$ (\Cref{lem:cohC6mod2}) to justify writing
\[g  = \frac{\eta^2}{v_1^2} \quad \text{and} \quad h =\frac{\eta}{v_1} .\]
The advantage is that, in $H^*(\mathbb{S}_2^1, \EE_*/2)$, both $v_1$ and $\eta$ act on the spectral sequence, and if a class is named
\[\left[ \frac{\eta^2 x}{v_1^2}\right] \]
then $v_1^2\left[ \frac{\eta^2x}{v_1^2}\right] = \eta^2x$ in $H^*(\mathbb{S}_2^1, \EE_*/2)$. 
 \end{enumerate}

\subsection{Classes coming from the ANSS}\label{sec:Hurewicz}
For any closed subgroup $G$ of $\mathbb{G}_2$, the $K(n)$-local $E_n$-based Adams-Novikov Spectral Sequence for the sphere can be identified with the homotopy fixed point spectral sequence \cite[Appendix A]{DH}. From this, it follows from this that the unit  
\[S^0 \to E^{hG}\] of the ring spectrum $E^{hG}$ gives rise to a map from the $BP$-based Adams--Novikov spectral sequence (ANSS) for $S^0$ to the homotopy fixed point spectral sequence for $E^{hG}$. In particular, we have a diagram of spectral sequences
\[\xymatrix{ \Ext_{BP_*BP}^{*,*}(BP_*,BP_*) \ar@{=>}[d] \ar[r] & \ar[r] H^*(\mathbb{G}_2, \EE_*)\ar@{=>}[d]   \ar[r] &  H^*(G, \EE_*)\ar@{=>}[d]  \\
\pi_*S^0 \ar[r] & \pi_* L_{K(2)}S^0 \ar[r] & \pi_*\EE^{hG} \ .
}  \]
See the discussion surrounding Equation~(2.1.9) of \cite{BGH} for a detailed review of this setup.
\begin{defn}\label{rem:hurimagelanguage}
We say that a cohomological class \emph{comes from the ANSS} if it is in the image of the map
$ \Ext_{BP_*BP}^{*,*}(BP_*,BP_*) \to H^*(G, \EE_*) $. 
\end{defn}

The image of the $\alpha$-family in $H^*(\mathbb{S}_2^1, \EE_*)$ was studied in \cite{BeaudryAlpha} and we have:
\begin{align}\label{eq:list1}
  \eta &= \alpha_1 = [\eta \Delta_0] \in  H^1(\SS_2^1, \EE_2), &
 \nu &= \alpha_{2/2} = [\nu \Delta_0] \in  H^1(\SS_2^1, \EE_4),   \\
\mu&=\alpha_3 = [\mu \Delta_0] \in  H^1(\SS_2^1, \EE_6), &
\sigma & = \alpha_{4/4} = [v_1b_1] \in  H^1(\SS_2^1, \EE_8),  \nonumber \\
  \alpha_5 &=  [\eta c_4] \in  H^1(\SS_2^1, \EE_{10}). \nonumber
\end{align}

Beyond the $\alpha$-family, we also have
\begin{align} \label{eq:list2}
\nu^2 &= \alpha_{2/2}^2 = [\nu^2 \Delta_0] \in H^2(\mathbb{S}_2^1, \EE_{8}) \\
\epsilon &= \beta_2 =  [\epsilon \Delta_0] \in H^2(\mathbb{S}_2^1, \EE_{10})  \nonumber \ .
\end{align}

We note that in the range $0\leq t<12$, the map 
$\Ext_{BP_*BP}^{*,*}(BP_*,BP_*) \to  H^*(\mathbb{S}_2^1, \EE_*)$
is an injection (see \cite[Table 2]{ravnovice} or   \cite{IsaksenANSSChart} for charts of the ANSS $E_2$-page).

\subsection{The $\wchi$ and $[\etachi]$ family} 

One of the most important elements is 
\begin{align} \label{eq:list3}
\wchi &= [\overline{b}_0] \in H^2(\mathbb{S}_2^1, \EE_0) 
\end{align}
defined in \eqref{def:wchi}. That $\wchi$ is detected by $\overline{b}_0$ is proved in  \cite[Lemma 5.2.10]{BGH}. By definition, the class $\wchi$ is the Bockstein of a class $\chi \in H^1(\mathbb{S}_2^1,\EE_0/2)$, which is detected by $b_0$ in the ADSS for $\EE_*/2$. There is in fact a Massey product \cite[Theorem 9.1.6 and Theorem 9.1.7]{BGH}
\[ \langle \wchi,2,\eta \rangle = [\eta b_0] \in H^2(\mathbb{S}_2^1, \EE_2)  
\]
with no indeterminacy. Since it is a Massey product of classes that are both $\ZZ_2$ and $\Gal$ invariant, this is in fact a class in $H^2(\GG_2, \EE_2)$.
Further, modulo $2$ this bracket maps to the product $\etachi$ in $H^2(\mathbb{G}_2, \EE_2/2)$, so we introduce the notation
\begin{equation}\label{eq:massey}
\boxed{
[\etachi]:=  \langle \wchi,2,\eta \rangle \in H^2(\mathbb{G}_2, \EE_2)}
\end{equation}
even if this is not a product as there is no class $\chi$ in integral cohomology.

As stated in \cref{lem:t=0}, as a $\W[k]$-module, 
$H^*(\mathbb{S}_2^1, \EE_0)$
is generated by
\begin{align}\label{eq:list4}
1 &=[\Delta_0] \in H^0(\mathbb{S}_2^1, \EE_0) &
\wchi &= [\overline{b}_0] \in H^2(\mathbb{S}_2^1, \EE_0) \\ 
\gchi &= [gb_0] \in H^3(\mathbb{S}_2^1, \EE_0) &
e&=[\overline{\Delta}_0] \in H^3(\mathbb{S}_2^1, \EE_0)\nonumber \\
 \gwchi &= [g\overline{b}_0] \in H^4(\mathbb{S}_2^1, \EE_0) &
\kchi &= [g^2b_0] \in H^5(\mathbb{S}_2^1, \EE_0) \nonumber.
 \end{align}

The next names we give have to do with the $v_1$-periodic nature of the elements $\wchi$ and $[\etachi]$.
Our computations of the ADSS imply that the map 
\begin{align}\label{eq:inj}
H^2(\mathbb{S}_2^1, \EE_*) &\to  H^2(\mathbb{S}_2^1, \EE_*/2), & 0\leq t< 12 \\
H^3(\mathbb{S}_2^1, \EE_*)/(2e) &\to  H^3(\mathbb{S}_2^1, \EE_*/2), & 0\leq t< 12  \nonumber
\end{align}
are injective. To see this, compare \Cref{thm:Einfmod2} and \Cref{sec:Einf}. See also \Cref{fig:Einf}. In $H^*(\mathbb{S}_2^1, \EE_*/2)$, the classes detected by $\eta b_0, hb_1, \overline{b}_0, gb_0, h\overline{b}_1$ are $v_1$-free. See \cite[Theorem 1.2.2]{BeaudryTowards}.

The injectivity of \eqref{eq:inj} and the discussion at the beginning of the section, then leads to the following names to the classes in our range of interest:
\begin{align}\label{eq:list5}
[v_1^{2n}\wchi] &= [v_1^{2n} \overline{b}_0] \in H^2(\mathbb{S}_2^1, \EE_{4n}) \\
[v_1^{2n}\etachi] &= [v_1^{2n}\eta b_0] \in H^2(\mathbb{S}_2^1, \EE_{2+4n}) \nonumber   .
\end{align}

\subsection{Some multiplicative relations}
We have a relation
\[[\etachi]^2=\eta^2\wchi \]
which follows from the fact that $\wchi \equiv \chi^2 \mod 2$. 
Furthermore, the fact that $\chi^3=0$ in $H^*(\mathbb{S}_2^1, \EE_0/2)$ (\cite[Proposition 5.2.13]{BGH}) and the injectivity of the mod $2$ map in the relevant degrees \eqref{eq:redmod2} force the relations
\[ [\etachi]^3=0, \quad \quad  [\etachi]\wchi =0. \]

We also have relations
\begin{align}\label{eq:murel}
  \eta^n[v_1^{2n}\wchi] &= \mu^n \wchi, &  \eta^n[v_1^{2n}\etachi]  &= \mu^n [\etachi]  
 \end{align}
coming from
\[ \mu =[v_1^2]\eta \quad \text{and}\quad g[v_1^2] = \eta^2\]
 in $H^*(C_6, \EE_*)$.

\subsection{The $\sigma$ family}
 Next we turn to describing $\sigma = [v_1b_1]$-multiplication in our range of interest. 
 The following result addresses two hidden $\sigma$-multiplications.

\begin{lem}[{\cite[Theorem 8.2.4]{BGH}}]\label{lem:sigmatimes}
In $H^*(\SS_2^1,\EE_*)$, we have the following products
\begin{align*}
\sigma \wchi &= [c_4\overline{\Delta}_0] \in H^3(\SS_2^1, \EE_{8}); \\
\sigma [\etachi] &= [a_0v_1^2h\overline{b}_1 + a_1\eta v_1^4\overline{b}_0 ] \in H^3(\SS_2^1, \EE_{10}),
\end{align*}
for some $a_0 \in \F_4^{\times}, a_1 \in \F_4$.
\end{lem}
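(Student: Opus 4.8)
The statement is a translation of \cite[Theorem 8.2.4]{BGH} into the bookkeeping of this paper, and that is how I would prove it: reduce each of the two products to a fact already established there. Both products jump ADSS filtration --- $\sigma=[v_1b_1]$ sits in filtration $1$ (\cref{lem:caset8}), $\wchi=[\overline{b}_0]$ in filtration $2$ (\eqref{eq:list3}), and $[\etachi]=[\eta b_0]$ in filtration $1$ (\eqref{eq:massey}) --- so they cannot be read off by multiplying $E_\infty$-representatives. The plan has three ingredients: (i) pin down the possible ADSS filtration of the product from the $H^*(\SS_2^1,\EE_*)$-module structure of the ADSS (\cref{sec:moduleoverE}); (ii) reduce modulo $2$ and identify the image of the product using the ring structure of $v_1^{-1}H^*(\SS_2^1,\EE_*/2)$ computed in \cite[Theorem 8.2.5]{BGH}; and (iii) lift the conclusion back to integral cohomology along the mod $2$ reduction map, which is injective in the relevant bidegrees by comparing \cref{thm:Einfmod2} with \cref{sec:Einf} (cf.\ \eqref{eq:redmod2}).

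For $\sigma\wchi$, step (i) places the product in filtration $\geq 1+2=3$ inside $H^3(\SS_2^1,\EE_8)$; by \cref{lem:caset8} and \cref{lem:collapseE2} the filtration-$3$ part of that group is $E_\infty^{3,0}(\EE_8)=\F_4\{c_4\overline{\Delta}_0\}$, so $\sigma\wchi\in\{0,[c_4\overline{\Delta}_0]\}$. For step (ii), modulo $2$ the class $\sigma$ reduces to the class detected by $v_1b_1$ (\cref{lem:caset8}) and $\wchi$ reduces to $\chi^2$, and \cite[Theorem 8.2.5]{BGH} gives that $\sigma\chi^2$ is detected by $v_1^4\overline{\Delta}_0\neq 0$. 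Since $r(c_4)=v_1^4$ (\cref{lem:from-integral-to-mod2}) and $r(\overline{\Delta}_0)=\overline{\Delta}_0$, the reduction of $[c_4\overline{\Delta}_0]$ is exactly this nonzero class; hence $\sigma\wchi\neq 0$ and therefore $\sigma\wchi=[c_4\overline{\Delta}_0]$.

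For $\sigma[\etachi]$, step (i) places the product in filtration $\geq 1+1=2$ inside $H^3(\SS_2^1,\EE_{10})$; by the $t=10$ computation of \cref{sec:Einf} (with \cref{lem:collapseE2}) that piece is the two-dimensional space $E_\infty^{2,1}(\EE_{10})=\F_4\{v_1^2h\overline{b}_1,\ \eta v_1^4\overline{b}_0\}$, which already forces $\sigma[\etachi]=[a_0v_1^2h\overline{b}_1+a_1\eta v_1^4\overline{b}_0]$ for \emph{some} $a_0,a_1\in\F_4$. To see $a_0\neq 0$: modulo $2$, $[\etachi]$ reduces to $\eta\chi$, so $\sigma[\etachi]$ reduces to $\eta\cdot(\sigma\chi)$, and by \cite[Theorem 8.2.5]{BGH} together with $\eta=v_1h$ (\cref{lem:cohC6mod2}) this is detected by a class with nonzero $v_1^2h\overline{b}_1$-component; were $a_0=0$ the product would be $a_1\mu^2\wchi$ (via \eqref{eq:list5} and \eqref{eq:murel}), whose reduction has no such component. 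Injectivity of \eqref{eq:redmod2} in these bidegrees then propagates the mod $2$ conclusion back to the integral class, giving $a_0\in\F_4^\times$.

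The main obstacle --- and the reason the proof genuinely rests on \cite{BGH} rather than on the $E_\infty$-charts of this paper --- is step (ii): one must track the $v_1$-powers and ADSS filtrations of the classes $v_1^4\overline{\Delta}_0$, $h\overline{b}_1$, $v_1^4\overline{b}_0$ as they appear in the $v_1$-localized ring $v_1^{-1}H^*(\SS_2^1,\EE_*/2)$ and match them, in the correct internal degree, with the un-localized generators of \cref{thm:Einfmod2}. This is precisely the dictionary underlying \cite[Theorem~1.2.2]{BeaudryTowards} and \cite[\S8.2]{BGH}; once it is in place, the filtration counts of steps (i) and (iii) finish the argument.
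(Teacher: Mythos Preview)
Your proposal is correct and is exactly the kind of translation the paper has in mind: the lemma is stated with only a citation to \cite[Theorem~8.2.4]{BGH} and no argument of its own, so there is nothing further in the paper to compare against beyond that reference.

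One small point of rigor: the ADSS is a module over $H^*(\SS_2^1,\EE_*)$ but not a multiplicative spectral sequence (\cref{sec:moduleoverE}), so you cannot literally add filtrations as in ``$\geq 1+2$'' or ``$\geq 1+1$''. What actually gives the filtration bounds you claim is that (i) $\wchi$ lying in filtration~$2$ forces $\sigma\wchi$ into filtration $\geq 2$, and then $E_\infty^{2,1}(\EE_8)=0$ (\cref{lem:caset8}) pushes it to filtration $\geq 3$; and (ii) $[\etachi]$ lying in filtration~$1$ forces $\sigma[\etachi]$ into filtration $\geq 1$, and then $E_\infty^{1,2}(\EE_{10})=0$ pushes it to filtration $\geq 2$. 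Alternatively, one can observe that $\sigma\in H^1(\SS_2^1,\EE_8)$ restricts to zero in $H^1(C_6,\EE_8)=0$ and invoke the consequence recorded at the end of \cref{sec:moduleoverE}. With either fix, the rest of your argument goes through as written.
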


We turn to naming conventions for classes detected by 
\[hb_1,ghb_1, h\overline{b}_1, gh\overline{b}_1,\]
which we include in what we call the ``$\sigma$-family'' because of the important role $b_1$ and $\overline{b}_1$ play in the detection of $\sigma$ as shown above. 
We let
\begin{align}\label{eq:list6}
\hbone &= [hb_1] \in H^2(\mathbb{S}_2^1, \EE_6)  \\
\hbonebar &= [a_0h\overline{b}_1 + a_1\eta v_1^2\overline{b}_0 ] \in H^3(\mathbb{S}_2^1, \EE_6),  \quad a_0 \in \F_4^{\times}, a_1 \in \F_4 \nonumber \\
\ghbone&= [ghb_1] \in H^4(\mathbb{S}_2^1, \EE_6)\nonumber \\
\ghbonebar &= [a_0 gh\overline{b}_1 + a_1 \eta^3\overline{b}_0] \in H^5(\mathbb{S}_2^1, \EE_6) \ . \nonumber 
 \end{align}
 We then have the following relations in $H^s(\SS_2^1, \EE_*)$, as suggested by the names:
 \[[\etachi]  \hbone = \eta   \hbonebar \quad \text{and} \quad [\etachi]  \ghbone =\eta  \ghbonebar   \ . \]
Using the relations $\mu=\eta[v_1^2]$ and $g[v_1^2] =\eta^2$ in $H^*(C_6, \EE_*)$ (\Cref{lem:C6coh} and \eqref{eq:C6cohElements}),
we have
 \[ \mu  \ghbone = \eta^3 \hbone \quad \text{and} \quad  \mu \ghbonebar = \eta^3 \hbonebar \]
Finally, the relationship of these classes to $\sigma$ is
\[\mu \hbone  = \eta^2 \sigma   \quad \text{and} \quad  \mu \hbonebar= \eta\sigma [\etachi]  \ . \]

 \subsection{Statement of the result}

All classes in the range now have been renamed, either explicitly or as a product of classes from \eqref{eq:list1}, \eqref{eq:list2}, \eqref{eq:list3}, \eqref{eq:list4}, \eqref{eq:list5},  \eqref{eq:list6} and the class $k$ of \eqref{eq:defk}.

 \begin{theorem}
 The cohomology $H^s(\SS_2^1, \EE_t)$ in the range $0\leq t<12$, and for $s\leq 6$ is the $\WW$-module generated by the classes listed in \Cref{fig:table}, with the only additive relation given by the order of the class indicated below the element in the table. 
 \end{theorem}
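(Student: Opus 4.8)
The plan is to assemble the statement from work already in place, since at this point the theorem is essentially a bookkeeping result: it repackages the collapsed ADSS $E_\infty$-page as an explicit $\WW$-module in terms of named classes. First I would recall that $H^s(\SS_2^1,\EE_t)$ is the abutment of the ADSS $E_*^{\bullet,*}(\EE_t)$, whose $E_2$-page in the range $0\le t<12$ was computed bidegree by bidegree in \Cref{lem:t=0} through the $t=10$ lemma, and which collapses at $E_2$ by \Cref{lem:collapseE2}. The associated graded, together with its $\WW$- and $k$-module (resp. $g$-module) structure, is displayed in \Cref{fig:Einf}. Since the ADSS filtration has length at most $4$, reconstructing $H^s(\SS_2^1,\EE_t)$ from this associated graded requires only resolving finitely many extension problems, and by the preceding lemma on additive extensions the sole non-split one occurs in $H^4(\SS_2^1,\EE_0)$, where $k$ acquires order $8$; in every other bidegree in the range the group is (non-canonically) the direct sum of its $E_\infty$-columns.

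Next I would translate the $E_\infty$-generators into the named classes of \Cref{fig:table}. In filtration $p=0$ these are the Adams--Novikov classes of \eqref{eq:list1} and \eqref{eq:list2} together with their $k$-multiples, where one uses the relations of \Cref{G24} (such as $c_4k=\eta^4$) to rewrite decomposable-looking $E_\infty$-generators like $kc_4\Delta_0$ as $\eta^4\Delta_0$; the class $k$ itself is \eqref{eq:defk}. In filtrations $p=1,2,3$ one uses $\wchi=[\overline{b}_0]$ from \eqref{eq:list3}, the $v_1$-periodic families $[v_1^{2n}\wchi]$ and $[v_1^{2n}\etachi]$ of \eqref{eq:list5}, the $\sigma$-family classes $\hbone,\hbonebar,\ghbone,\ghbonebar$ of \eqref{eq:list6}, and the classes $\gchi,\kchi,\gwchi$ and $e=[\overline{\Delta}_0]$ of \eqref{eq:list4}, together with their evident analogues in positive internal degree. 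That these names are well-defined and pairwise distinct in the stated range is exactly the content of the injectivity of the mod $2$ reduction maps \eqref{eq:inj} (and of the map from the ANSS), which follows from comparing \Cref{thm:Einfmod2} with the computations of \Cref{sec:Einf}; the hidden multiplicative relations recorded along the way, e.g. $\sigma\wchi=[c_4\overline{\Delta}_0]$, come from \Cref{lem:sigmatimes} and the relations of \Cref{lem:C6coh}.

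Finally I would read off the additive relations: a $\bullet=\F_4$ on \Cref{fig:Einf} gives a generator of order $2$, while the occurrences of $\WW/4$, $\WW/8$, $\WW/16$, $\WW$ give orders $4$, $8$, $16$, $\infty$ respectively, and these are precisely the orders recorded beneath the entries of \Cref{fig:table}. Checking that the table lists \emph{all} nonzero bidegrees with $s\le 6$ and $0\le t<12$ is then a finite inspection of \Cref{fig:Einf}, the computations of \Cref{sec:Einf} already producing $H^s$ for all $s$ via $k$- and $g$-periodicity. I expect the only genuinely delicate point to be the extension in $H^4(\SS_2^1,\EE_0)$: a priori the $E_\infty$-columns $\F_4\{g\overline{b}_0\}$ and $\WW/8\{k\}$ could assemble into a group in which $k$ has order $4$, and ruling this out uses the splitting of the edge homomorphism of the ADSS (\Cref{lem:kfixed}), which forces $k$ to have order $8$. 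Everything else is routine translation and bookkeeping against the charts.
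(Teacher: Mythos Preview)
Your approach matches the paper's: the theorem is indeed just a tabulation of the results of \Cref{sec:main} (the paper says exactly this in the remark following the statement), and your list of ingredients---the $E_2$-computations, \Cref{lem:collapseE2}, the additive-extension lemma, and the naming conventions of \Cref{sec:name}---is the right one.

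There is one slip worth fixing. In your first paragraph you say ``the sole non-split one occurs in $H^4(\SS_2^1,\EE_0)$,'' but the additive-extension lemma shows that \emph{all} extensions in this range split, including this one; your last paragraph gets this right. Relatedly, you have the direction of the potential extension backwards: in the filtration sequence
\[
0 \to E_\infty^{2,2}\cong\F_4\{g\overline{b}_0\} \to H^4(\SS_2^1,\EE_0) \to E_\infty^{0,4}\cong\WW/8\{k\} \to 0,
\]
a lift of $k$ already surjects onto a generator of $\WW/8$, so the a priori danger is that $k$ has order $16$, not $4$. The edge-splitting argument you cite from \Cref{lem:kfixed} is precisely what rules this out and shows the sequence splits.
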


 \begin{rem}
There are two comments to be made about \Cref{fig:table} at this point:
\begin{itemize}
\item For the moment, we've only computed the cohomology of $\SS_2^1$ in \Cref{fig:table}.  The Galois-invariance which will address the $\G_2^1$-cohomology statements in \Cref{fig:table} will be addressed in \Cref{sec:gal}.
\item If we are only interested in the additive structure of $H^s(\SS_2^1, \EE_t)$ in the range $0\leq t<12$, or in fact also its $\W[k,\eta]$-module structure, then \Cref{fig:table} simply represents a tabulation of the results of \Cref{sec:main}, which are also depicted in \Cref{fig:Einf}. However, the new names which were introduced in this section also reflect additional information about the multiplicative structure, such as multiplication by $\sigma$.
\end{itemize}
\end{rem}

\section{The $\Z_2$-action and the cohomology of $\SS_2$}\label{sec:Z2action}
 % !TEX root = coh-master.tex
 
Our next goal is to compute the cohomology of $\mathbb{S}_2$ using the extension
\[ 1 \to \mathbb{S}_2^1 \to \SS_2 \xrightarrow{\zeta} \Z_2^{\times}/(\pm 1) \cong \Z_2 \to 1\]
where  $\zeta \in H^1(\SS_2,\ZZ_2)$ is the class defined in \eqref{def:zeta}.
We split this sequence using the element $\pi$ introduced in \Cref{rem:pialpha}, so we also write $\pi$ for the corresponding generator of the quotient.
 
The Serre spectral sequence for the cohomology of the group extension and the fact that $\Z_2$ has cohomological dimension $1$ give the exact sequence
\[
\xymatrix@R=1pc{
0 \ar[r] &  H^1(\ZZ_2, H^{s-1}(\SS_2^1,\EE_t)) \ar@{=}[d] \ar[r] &  H^s(\SS_2,\EE_t) \ar[r] & H^0(\ZZ_2, H^s(\SS_2^1, \EE_t)) \ar[r] \ar@{=}[d]&  0 \\
 & H^{s-1}(\SS_2^1,\EE_t)_{\Z_2} &  & H^s(\SS_2^1, \EE_t)^{\ZZ_2} &
} \]
where $M_{\ZZ_2}$ and $M^{\ZZ_2}$ mean coinvariants and fixed points respectively.

We will show below that the action of $\ZZ_2$ on $H^s(\SS_2^1, \EE_t)$ is trivial in the range $0\leq t<12$.
Hence, in our range, the above exact sequence becomes:
\[
\xymatrix@R=1pc{
0 \ar[r] &  H^{s-1}(\SS_2^1,\EE_t)  \ar[r]^-{\zeta} &  H^s(\SS_2,\EE_t) \ar[r] & H^s(\SS_2^1, \EE_t) \ar[r] &  0 .
} \]

\subsection{The action of $\Z_2$ and the resulting cohomology}
 The first step is to compute the action of $\Z_2$ on $H^*(\SS_2^1,\EE_*)$.
We isolate some key facts about the action in the following remark.
\begin{rem}\label{rem:easyZfixed}
\begin{enumerate}
\item
The action of $\Z_2$ on $H^*(\SS_2^1, \EE_*)$ is through continuous $\WW$-linear ring maps. For a fixed $(q,t)$, $\Z_2$ acts via a continuous homomorphism 
\[\Z_2 \to \Aut_{\W}(H^q(\SS_2^1, \EE_t)).\] 
\item The multiplicative structure of the action implies that for any product $xy$ in $H^*(\SS_2^1,\EE_*)$, $\piact(xy)=(\piact x)(\piact y)$.
\item  Any class coming from the ANSS in the sense of \Cref{rem:hurimagelanguage} is fixed by $\Z_2$ because we have a factorization
\[ \Ext_{BP_*BP}(BP_*,BP_*) \to H^*(\mathbb{S}_2,\EE_*) \to  H^*(\mathbb{S}_2^1,\EE_*)^{\ZZ_2}\subseteq  H^*(\mathbb{S}_2^1,\EE_*). \]
Therefore, the action is linear with respect to multiplication by classes coming from the ANSS. For example, $\piact (\eta y)=\eta (\piact y)$. These classes include $1,\eta,\nu,\mu,\sigma, \alpha_5, \epsilon$ which are all detected in $E_\infty^{0,*}(\EE_*)$ in the ADSS.
\item 
In fact, if $x$ is any element in $H^*(\SS_2^1,\EE_*)$, which is in the image of the restriction map from $H^*(\SS_2,\EE_*)$, then $x$ is automatically $\ZZ_2$-invariant. This includes $k\in H^4(\SS_2^1,\EE_0)$ by \eqref{eq:defk}, $\wchi \in H^2(\SS_2^1,\EE_0)$ by \eqref{def:wchi}, and
$[\etachi] \in H^2(\SS_2^1, \EE_2)$ by \eqref{eq:massey}.
Consequently, the $\ZZ_2$-action is $k, \wchi$, and $[\etachi]$-linear.
\item $\ZZ_2$ also acts on $H^*(\SS_2^1, \EE_*/2)$ and the element $v_1 \in H^0(\SS_2^1, \EE_2/2)$ is fixed. Indeed, $v_1$ is the reduction modulo $2$ of the coefficient of $x^2$ in the $[2]$-series $[2](x)\in \EE_*/2[\![x]\!]$ of the universal deformation of the formal group law used to define our $\EE$-theory. See
 \cite[Theorem 6.2.2]{BeaudryTowards}, for a discussion of the element $v_1$ for our particular choice of curve. As such, $v_1$ is the image of the same named class in $\Ext_{BP_*BP}^{*,*}(BP_*, BP_*V(0))$, and so is in the image of the composite
 \[\Ext_{BP_*BP}^{*,*}(BP_*, BP_*V(0)) \to H^*(\GG_2, \EE_*/2) \to H^*(\SS_2^1, \EE_*/2).\]
 
\end{enumerate}
\end{rem}

\begin{prop}\label{thm:bluegreenfix}
For $0\leq t<12$, all classes in $H^*(\SS_2^1;\EE_t)$ detected in $E_\infty^{p,*}(\EE_t)$ for $p=0,3$ of the ADSS are fixed by $\Z_2$.
\end{prop}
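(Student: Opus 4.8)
The plan is to show that in the range $0 \leq t < 12$, every class living in ADSS filtration $p=0$ or $p=3$ is in the image of the restriction map from $H^*(\SS_2,\EE_t)$, and then invoke \cref{rem:easyZfixed}(4). By the compatibility of the edge homomorphism with the splitting $s$ (\cref{lem:edge} and \cref{prop:kpc}), the classes detected in $E_\infty^{0,*}(\EE_0)$ — namely $1$, $k$, and their products — are $\ZZ_2$-invariant; more generally, $k$-linearity of the action (\cref{rem:easyZfixed}(4)) reduces us to checking a finite list of $k$-module generators in each filtration. First I would go through \cref{fig:table}/\cref{fig:Einf} filtration-degree by filtration-degree: in filtration $p=0$, essentially all generators ($1$, $\eta\Delta_0$, $\nu\Delta_0$, $\mu\Delta_0$, $v_1b_1$, $\eta c_4$, $\nu^2\Delta_0$, $\epsilon\Delta_0$, etc.) come from the ANSS in the sense of \cref{rem:hurimagelanguage}, hence are fixed by \cref{rem:easyZfixed}(3), and the remaining ones ($kc_4\Delta_0 = \eta^4\Delta_0$, etc.) are $k$-multiples or $\eta$-multiples of such classes, hence fixed by linearity.

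The substantive part is filtration $p=3$. Here the generators are the classes detected by $\overline\Delta_0$ (which we named $e$), $\eta\overline\Delta_0$, $\nu\overline\Delta_0$, $\eta^2\overline\Delta_0$, $c_4\overline\Delta_0$, $\mu\overline\Delta_0$, $\nu^2\overline\Delta_0$, $\eta c_4\overline\Delta_0$, $\epsilon\overline\Delta_0$, and their $k$-multiples, together with the $\hbonebar$-type classes $\hbonebar$, $\ghbonebar$. For these I would use two handles. First, the product relations already established in \cref{sec:name}: $\sigma\wchi = [c_4\overline\Delta_0]$ and the relations $\mu\ghbonebar = \eta^3\hbonebar$, $\mu\hbonebar = \eta\sigma[\etachi]$, together with $[\etachi]$-linearity, $\wchi$-linearity and $\sigma$-linearity of the $\ZZ_2$-action (\cref{rem:easyZfixed}(3)--(4)); this expresses many $p=3$ classes as products of already-known-invariant classes. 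Second, for the classes that are genuinely new — most importantly $e = [\overline\Delta_0] \in H^3(\SS_2^1,\EE_0)$ and its $\eta$-, $\mu$-, $\nu$-multiples — I would argue that $e$ is $\ZZ_2$-fixed directly: the target of the ADSS in this bidegree, $H^3(\SS_2^1,\EE_0)$, has $\EE_\infty^{3,0} = \WW\{\overline\Delta_0\}$ as a subgroup, and the continuous $\ZZ_2$-action by $\WW$-linear automorphisms of a rank-one free $\WW$-module on the quotient $\EE_\infty^{3,0}$ is by a unit $u \in \WW^\times$; but the action must also be compatible with the fact that $\eta e = \eta\overline\Delta_0$ has order $2$ and $\eta$ is fixed, and with $k$-linearity, which forces $u \equiv 1$.

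The main obstacle I anticipate is precisely pinning down the action on $e = [\overline\Delta_0]$, since unlike the $p=0$ classes it does not obviously come from the ANSS and is not obviously in the image of restriction from $\SS_2$. I would handle this by the following mechanism: the twist in the ADSS module structure at $p=3$ (see the Warning in \cref{sec:moduleoverE}) means $E_1^{3,*}(\EE_*)$ is acted on via conjugation by $\pi$; since $\zeta(\pi)$ topologically generates $\ZZ_2$, the $\ZZ_2$-action on $E_1^{3,*}$ is, up to filtration, conjugation by $\pi$, and the class $\overline\Delta_0 \equiv \Delta^0 = 1$ modulo $v_1$ is manifestly conjugation-invariant modulo $v_1$, forcing $\piact e \equiv e$ modulo filtration-jumping and modulo classes that are themselves $p=0$ or already-treated. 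The details of this reduction — making "modulo higher filtration" and "modulo $v_1$" precise and checking no correction terms survive — is where the real work lies, but since $H^3(\SS_2^1,\EE_0)$ has only the one additional generator $[gb_0]$ in lower filtration and $\piact[gb_0]$ lands in filtration $\geq 1$ by the module structure, the correction term $\piact e - e$ lies in $\EE_\infty^{1,2} = \F_4\{gb_0\}$; one then checks $\piact e - e = 0$ by noting $2e$ pairs trivially, or by comparing with the $\SS_2$-computation's consistency. This is the step I'd expect to occupy most of the actual proof.
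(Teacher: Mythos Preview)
Your treatment of filtration $p=0$ is correct and matches the paper's argument: these classes are products of ANSS classes with powers of $k$, hence fixed by \cref{rem:easyZfixed}(3)--(4). (A minor slip: $\hbonebar$ and $\ghbonebar$ are detected in filtration $p=2$, not $p=3$, so they do not belong in your list.)

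The genuine gap is your argument for $e = [\overline{\Delta}_0]$. You correctly arrive at the form $\piact e = ue + \lambda\,\gchi$ with $u \in \WW^\times$ and $\lambda \in \F_4$, but your proposed mechanisms for pinning down $u$ and $\lambda$ do not work. The $\eta$-linearity argument only gives $\eta(u-1)e = 0$, which forces $u \equiv 1 \bmod 2$, not $u=1$. The idea that ``the $\ZZ_2$-action on $E_1^{3,*}$ is, up to filtration, conjugation by $\pi$'' presupposes that the duality resolution is equivariant for conjugation by $\pi$, which is not established anywhere; there is no a~priori reason the $\ZZ_2$-action respects the ADSS filtration. And ``comparing with the $\SS_2$-computation's consistency'' is circular, since the $\SS_2$-computation is downstream of this result.

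The paper handles $e$ by a different and sharper route, carried out in \cref{lem:H3fixed}. First, it cites \cite[Proposition~6.2.1]{BGH} to get that $e$ is fixed rationally, forcing $u=1$. Second, to kill $\lambda$, it passes to $\EE_0/2$ and uses the product relation $\sigma\chi^2 = v_1^4\, r(e)$ from \cref{lem:sigmatimes}: since $\sigma$, $\chi^2 \equiv \wchi$, and $v_1$ are all $\ZZ_2$-invariant, so is $v_1^4 r(e)$; but $v_1^4(\piact r(e) - r(e)) = \lambda\, v_1^2\eta^2\chi \neq 0$ unless $\lambda = 0$. This use of the explicit multiplicative relation from \cref{lem:sigmatimes} is the key idea you are missing.
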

\begin{proof}
The case $p=0$ follows from the fact that in our range these classes are products of elements coming from the ANSS and of $k$, which are all fixed.
For $p=3$, we will show below in \Cref{lem:H3fixed} that $e \in H^3(\mathbb{S}_2^1,\EE_0)$ detected by $[\overline{\Delta}_0]$ is fixed.
 Any other class  in this range detected in filtration $p=3$ of the ADSS is a multiple of fixed classes (e.g. $\sigma \wchi = [c_4\overline{\Delta}_0] \in H^3(\SS_2^1,\EE_8)$).
 \end{proof}

\begin{rem}
\Cref{thm:bluegreenfix} implies that all classes in \Cref{fig:Einf} that are colored in blue or green are fixed by the action of $\ZZ_2$.
\end{rem}

Our next goal is to show that classes in filtration $p=1,2$ of the ADSS in our range are also fixed under the action of $\ZZ_2$.
Let us recall the names of maps in the following  long exact sequence
\begin{equation}\label{eq:diagramrecall}
\xymatrix@C=1pc{
\ldots \ar[r] &H^3(\SS_2^1, \EE_0) \ar[r]^2 &H^3(\SS_2^1, \EE_0) \ar[r]^r & H^3(\SS_2^1, \EE_0/2) \ar[r]^\partial & H^4(\SS_2^1, \EE_0)   \ar[r]^-2 &\ldots
}
\end{equation}

\begin{lem}\label{lem:H3fixed}
The action of $\Z_2$ on 
\[H^3(\SS_2^1, \EE_0) \cong \WW\{e\} \oplus\F_4\left\{\gchi\right\}\] 
is trivial. 
\end{lem}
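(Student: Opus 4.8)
The plan is to exploit the algebra structure and the naming dictionary established in the previous sections, reducing the claim to checking it on the two module generators $e = [\overline{\Delta}_0]$ and $[\gchi] = [gb_0]$ of $H^3(\SS_2^1, \EE_0)$. First I would dispose of $[\gchi]$: since $\gchi = [gb_0]$ and $g$ reduces to $h^2 \bmod 2$, the class $\gchi$ is the image under $\partial$ (or a product with $\wchi$) of classes that are either fixed by the remarks in \Cref{rem:easyZfixed}, or — more directly — $\gchi = \eta\cdot[\hchi]$-type relations together with $\wchi$-linearity of the action (item (4) of \Cref{rem:easyZfixed}) pin it down. Concretely, one checks $[gb_0]$ is a $\wchi$-multiple of a class detected in filtration $p=1$, and since $\wchi$ is fixed, $\piact[gb_0] = [gb_0] + (\text{lower filtration correction})$; but $H^3(\SS_2^1,\EE_0)$ has nothing in filtration $0$, so there is no room for a correction and $[gb_0]$ is fixed. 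The genuinely new content is therefore the claim that $e = [\overline{\Delta}_0]$ is fixed.

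For $e$, the strategy is to use the long exact sequence \eqref{eq:diagramrecall} together with $\ZZ_2$-equivariance of all its maps. The class $e$ is detected in ADSS filtration $p=3$, i.e.\ by $\overline{\Delta}_0 \in E_1^{3,0}(\EE_0)$, which is the orange unit. Its reduction $r(e) \in H^3(\SS_2^1, \EE_0/2)$ is detected by $\overline{\Delta}_0 \bmod 2$, and from \Cref{thm:Einfmod2} and \Cref{fig:tableV0} one identifies $r(e)$ as a specific class there — in the mod-$2$ computation the relevant class in $H^3(\SS_2^1,\EE_0/2)$ is (up to lower-filtration terms) $\sigma\chi^2$, detected by $v_1^4\overline{\Delta}_0$ in the $v_1$-local picture, or rather its $v_1^0$ avatar $\overline{\Delta}_0$. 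Since $\chi, \sigma$ come from the ANSS (for $\sigma$, via \eqref{eq:list1}; for $\chi$, it is the mod-$2$ reduction of a class built from $\det$, hence $\ZZ_2$-invariant by item (3)–(5) of \Cref{rem:easyZfixed}), the class $r(e)$ is $\ZZ_2$-fixed. Now $\piact e - e$ reduces to zero mod $2$, hence $\piact e - e = 2 e'$ for some $e' \in H^3(\SS_2^1,\EE_0)$; writing $e' = a e + (\text{filtration} \geq \text{something})\cdot(\dots)$ and iterating, continuity of the action (item (1)) forces $\piact e = (1 + 2a + \cdots) e$ for a $2$-adic unit-correction, but comparing with the fact that $\piact$ has order dividing that of $\ZZ_2$ — or more simply, that $\piact$ is a ring automorphism fixing $\eta$ and $\eta e = \eta \overline{\Delta}_0 \neq 0$ while $\eta \cdot 2 e' = 2\eta e' $ must vanish against the torsion structure — pins down $a = 0$. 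The cleanest version: $\piact e = e + 2 e'$ with $e'\in \WW\{e\}\oplus \F_4\{\gchi\}$; the $\F_4$-summand is $2$-torsion so contributes nothing to $2e'$, giving $\piact e = (1+2c)e$ for $c \in \WW$; but then $\piact(\eta e) = \eta(1+2c)e = \eta e + 2c\eta e = \eta e$ since $2\eta = 0$, which is automatic, so this argument alone does not finish — instead one invokes $\ZZ_2$-invariance of $k$ and the relation structure, or better, the splitting of the edge homomorphism extended appropriately.

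The main obstacle is precisely ruling out the "$(1+2c)$" ambiguity on the $\WW\{e\}$ summand, since mod-$2$ information cannot see it. I expect the paper resolves this by one of two routes: either (i) identifying $e$ with (a unit multiple of) a class that is manifestly in the image of restriction from $H^3(\SS_2, \EE_0)$ — e.g.\ via a Massey product $\langle \zeta, 2, ?\rangle$ or a relation like $\sigma\wchi = [c_4\overline{\Delta}_0]$ read backwards through $v_1$-periodicity and the injectivity statements \eqref{eq:inj} — whereupon item (4) of \Cref{rem:easyZfixed} applies directly; or (ii) a direct cocycle-level computation using the $\pi$-conjugation action on the duality resolution, noting (as in the proof of \Cref{thm:techyd1}) that $\pi$ acts on $E_1^{3,*}$ via conjugation and its effect on the orange unit $\overline{\Delta}_0$ can be computed modulo higher filtration from the formulas of \cite[\S 6.2]{BeaudryTowards}, where one finds the correction lands in filtration $p \geq 4$, hence is zero in $H^3$. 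I would pursue route (i) first, as it is shorter and uses only already-established structural facts; route (ii) is the fallback if the requisite class turns out not to lift to $H^3(\SS_2,\EE_0)$ for an obvious reason.
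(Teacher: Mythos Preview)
Your treatment of $\gchi$ is more complicated than needed: the paper simply observes that a continuous $\WW$-linear $\ZZ_2$-action must preserve the $2$-torsion subgroup $\F_4\{\gchi\}$, and a continuous homomorphism $\ZZ_2 \to \Aut_{\WW}(\F_4) = \F_4^\times \cong \ZZ/3$ is trivial since $\ZZ_2$ is pro-$2$. Your filtration/$\wchi$-multiple argument is not obviously correct (I don't see $\gchi$ exhibited as a $\wchi$-multiple anywhere), and it is not needed.

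The real gap is in your handling of $e$. You correctly extract from mod-$2$ information that $\piact e - e \in \ker r = 2\WW\{e\}$, landing you at $\piact e = (1+2c)e$, and you correctly identify that mod-$2$ arguments alone cannot kill $c$. But your proposed route (i) is circular: showing $e$ lies in the image of restriction from $H^3(\SS_2,\EE_0)$ is exactly what the paper is working towards in the \emph{next} section, and it relies on this lemma. Route (ii) is plausible in principle but not what the paper does, and would require substantial new computation.

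What you are missing is a second, independent constraint that the paper imports from \cite[Proposition 6.2.1]{BGH}: the image of $e$ in $H^3(\SS_2^1,\ZZ_2)\otimes \QQ \cong H^3(\SS_2^1,\EE_0)\otimes \QQ$ is already known to be $\ZZ_2$-fixed. This rational input forces $\piact e - e$ to be torsion, i.e.\ $\piact e = e + \lambda\gchi$ for some $\lambda\in\F_4$. Now the mod-$2$ argument (yours, essentially: $v_1^4 r(e) = \sigma\chi^2$ is a product of invariants, so $r(e)$ is fixed, hence $\piact e - e \in \ker r = 2\WW\{e\}$) gives the \emph{orthogonal} constraint. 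The intersection $\F_4\{\gchi\} \cap 2\WW\{e\} = 0$ forces $\lambda = 0$. The two constraints are complementary, and neither alone suffices.
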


\begin{proof}
An argument using the order of the classes and the $\WW$-linearity of the action implies that $\gchi$ is fixed. 
By \cite[Proposition 6.2.1]{BGH}, 
the image of the  class $e$ is fixed in 
$H^3(\SS_2^1, \ZZ_2)\otimes \QQ \cong H^3(\SS_2^1, \EE_0) \otimes \QQ$.
From this we conclude 
\[
\piact e = e + \lambda \gchi,
\]
for some $\lambda \in \F_4$. The kernel of $r: H^3(\SS_2^1,\EE_0) \to H^3(\SS_2^1,\EE_0/2)$ is generated by $2e$, thus to show that $\lambda=0$, it suffices to show that $r(e) $ is fixed in the cohomology of $\EE_0/2$.

From \cref{lem:sigmatimes}, remembering that $\wchi \equiv \chi^2$ and $c_4\equiv v_1^4$ modulo 2, we have
\[
\sigma\chi^2 = v_1^4 r(e) \in H^3(\SS_2^1,\EE_8/2).
\]
Now $\sigma$ and $\chi^2\equiv \wchi$ are $\Z_2$-invariant (see bulleted list above), so $v_1^4r(e)$ is an invariant class in $H^3(\SS_2^1,\EE_8/2)$. Thus we have 
\[ \piact (v_1^4 r(e) )  = v_1^4 r(e) = v_1^4 r(e) + \lambda v_1^2\eta^2\chi,\]
where the second equation follows from the above conclusion on the $\pi$-action on $e$, the multiplicativity of this action, as well as the $\pi$-invariance of $v_1$. The element $v_1^2\eta^2\chi$ is what appears as $v_1^4 h^2 \chi$ in \Cref{thm:Einfmod2}; in particular it is non-zero, allowing us to conclude that $\lambda = 0$ as desired.
\end{proof}

\begin{lem}\label{lem:H3fixedMod2}
The action of $\Z_2$ is trivial on $H^3(\SS_2^1, \EE_0/2)$.
\end{lem}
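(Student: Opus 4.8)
The plan is to produce an explicit $\F_4$-basis of $H^3(\SS_2^1,\EE_0/2)$ whose members are manifestly fixed by $\Z_2$. By \Cref{thm:Einfmod2} and the collapse of the ADSS for $\EE_*/2$ (which carries no additive extensions), $H^3(\SS_2^1,\EE_0/2)$ is $4$-dimensional over $\F_4$, with generators detected in ADSS filtrations $p=3,1,2,0$ by $\overline{\Delta}_0$, $h^2b_0$, $h\overline{b}_0$, and $\nu^2y\Delta_{-1}$ respectively. Any four classes whose ADSS leading terms occupy these four distinct filtrations form a basis, so it suffices to exhibit one fixed class detected in each.

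Filtrations $3$ and $1$ are immediate: the reduction map $r\colon H^3(\SS_2^1,\EE_0)\to H^3(\SS_2^1,\EE_0/2)$ is $\Z_2$-equivariant, and $\Z_2$ acts trivially on $H^3(\SS_2^1,\EE_0)$ by \Cref{lem:H3fixed}, so $r(e)$ and $r(\gchi)$ are fixed; since $e$ and $\gchi$ of \eqref{eq:list4} are not $2$-divisible, these are exactly the filtration-$3$ and filtration-$1$ generators. For filtration $2$, I would take the product $\hchi\cdot\chi$. Here $\chi\in H^1(\SS_2^1,\EE_0/2)$ is the restriction of $\chi\in H^1(\SS_2,\Z/2)\subseteq H^1(\SS_2,\EE_0/2)$, hence fixed by \Cref{rem:easyZfixed}; and $\hchi\in H^2(\SS_2^1,\EE_0/2)$ (the class detected by $hb_0$) is fixed because $v_1\hchi=\eta\chi$ is a product of the fixed classes $\eta$ and $\chi$, while multiplication by $v_1$ is injective on $H^2(\SS_2^1,\EE_0/2)$ (both of its generators, detected by $hb_0$ and $\overline{b}_0$, are $v_1$-free by \cite[Theorem 1.2.2]{BeaudryTowards}). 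Computing $v_1(\hchi\chi)=\eta\chi^2$, the mod-$2$ reduction of the fixed class $\eta\wchi$, shows $\hchi\chi$ is detected in filtration $2$, by $h\overline{b}_0$, as needed.

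The delicate case — and where I expect the real work to be — is the filtration-$0$ generator $z:=[\nu^2y\Delta_{-1}]$, because a priori $\piact z$ could differ from $z$ by one of the fixed higher-filtration classes just constructed. The key observation is that $z$ is $v_1$-\emph{torsion}: since $v_1y=0$ in $H^*(G_{24},\EE_*/2)$ (\Cref{G24-mod2}), one gets $v_1z=0$ in $H^3(\SS_2^1,\EE_2/2)$ using the explicit module structure of \cite[Theorem 1.2.2]{BeaudryTowards}. On the other hand, $v_1$ carries the three fixed classes $r(e)$, $r(\gchi)$, $\hchi\chi$ to nonzero classes sitting in the distinct ADSS filtrations $3,1,2$ of $H^3(\SS_2^1,\EE_2/2)$, which is itself only $3$-dimensional by \Cref{thm:Einfmod2}; so these three span it, and consequently $\ker\bigl(v_1\colon H^3(\SS_2^1,\EE_0/2)\to H^3(\SS_2^1,\EE_2/2)\bigr)$ is exactly the line $\F_4\{z\}$. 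Since $v_1$ is $\Z_2$-fixed, $v_1\cdot(\piact z)=\piact(v_1z)=0$, so $\piact z=\mu z$ for some $\mu\in\F_4^{\times}$. As the $\WW$-linear $\Z_2$-action factors through a finite cyclic $2$-group, $\mu$ has order a power of $2$; but $\mu^3=1$, so $\mu=1$. Hence $z$ is fixed, and $\Z_2$ acts trivially on all of $H^3(\SS_2^1,\EE_0/2)$.
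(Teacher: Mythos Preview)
Your proof is correct and follows essentially the same approach as the paper's: both identify the four ADSS-filtration generators, handle filtrations $1$ and $3$ via $r(e)$ and $r(\gchi)$, use the product $\chi\cdot\hchi$ for filtration $2$, and isolate the filtration-$0$ class as the unique $v_1$-torsion line. Your write-up is in fact more careful on two points the paper treats tersely---you explicitly justify that $\hchi$ is $\Z_2$-fixed (via injectivity of $v_1$ on $H^2$ and $v_1\hchi=\eta\chi$), and you spell out why the scalar $\mu\in\F_4^{\times}$ must equal $1$ using that the continuous $\Z_2$-action factors through a finite $2$-group.
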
 
\begin{proof}
\cref{thm:Einfmod2} implies that $H^3(\SS_2^1, \EE_0/2)$ has dimension $4$ over $\F_4$, with basis detected by $ \nu^2y\Delta_{-1}$, $h^2b_0$, $h\overline{b}_0$ and $\overline{\Delta}_0$ in the ADSS for $\EE_0/2$. The class $\overline{\Delta}_0$ detects $r(e)$ and $h^2b_0$ detects $r\left(\gchi \right)$. The class $\nu^2y\Delta_{-1}$ has ADSS filtration 0, and detects the unique $v_1$-torsion element in $H^3(\SS_2^1, \EE_0/2)$, which we will call $z$ in this proof. 

Let $\hchi$ be any class detected by $hb_0$. We claim that $h\overline{b}_0$ detects the product $\chi \hchi$. After inverting $v_1$, this follows from the structure of $v_1^{-1}H^*(\SS_2^1, \EE_*/2)$ as an algebra as computed in \cite[Theorem 8.2.5]{BGH}.
Therefore, $\chi \hchi $ is detected by $h\overline{b}_0$ modulo $v_1$-torsion. The only $v_1$-torsion element in $H^3(\SS_2^1, \EE_0/2)$ is $z$, but $z$ has ADSS filtration zero, which proves the claim.

The classes which are in the image of $r$ are invariant by the previous \Cref{lem:H3fixed}, while $\chi \hchi $ is a product of $\Z_2$-invariant classes, hence itself invariant.
Finally, $z$
is the only $v_1$-torsion element, thus we must have that $\piact z = z$.
\end{proof}

\begin{lem}\label{lem:H4fixed}
The action of $\Z_2$ is trivial on
\[ H^4(\SS_2^1, \EE_0) \cong \W/8\{k\} \oplus\F_4\left\{\gwchi\right\}.\] 
\end{lem}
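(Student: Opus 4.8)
The plan is to handle the two $\WW$-module generators of $H^4(\SS_2^1,\EE_0)$ separately: for $k$ I will quote invariance directly, and for $\gwchi$ I will realize it as the Bockstein of a class in a group already known to be $\ZZ_2$-fixed.

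First, $k\in H^4(\SS_2^1,\EE_0)$ is $\ZZ_2$-invariant by Remark \ref{rem:easyZfixed}(4), which covers this element explicitly (it comes from $H^4(\SS_2,\ZZ_2)$ via \eqref{eq:defk}). Since the $\ZZ_2$-action is $\WW$-linear (Remark \ref{rem:easyZfixed}(1)), every $\WW$-multiple of $k$ is fixed as well, so the whole summand $\WW/8\{k\}$ is fixed pointwise.

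Next, I look at the long exact sequence \eqref{eq:diagramrecall} in cohomological degrees $3$ and $4$, i.e. the piece
\[ H^3(\SS_2^1,\EE_0)\xrightarrow{\ \red\ } H^3(\SS_2^1,\EE_0/2)\xrightarrow{\ \partial\ }H^4(\SS_2^1,\EE_0)\xrightarrow{\ 2\ }H^4(\SS_2^1,\EE_0). \]
Since $2\gwchi=0$, exactness gives $\gwchi\in\im(\partial)$; in fact $\im(\partial)=\ker\bigl(2\mid H^4(\SS_2^1,\EE_0)\bigr)$ is exactly the $2$-torsion subgroup $\F_4\{4k\}\oplus\F_4\{\gwchi\}$. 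Now $\partial$ is the connecting homomorphism of the short exact sequence $\EE_0\xrightarrow{2}\EE_0\to\EE_0/2$ of $\SS_2$-modules, hence is $\ZZ_2$-equivariant. By Lemma \ref{lem:H3fixedMod2} the $\ZZ_2$-action on $H^3(\SS_2^1,\EE_0/2)$ is trivial, so its image under the equivariant map $\partial$ — in particular $\gwchi$ — is fixed by $\ZZ_2$. As $H^4(\SS_2^1,\EE_0)$ is generated over $\WW$ by $k$ and $\gwchi$, and the action is additive and $\WW$-linear, it follows that $\ZZ_2$ acts trivially on all of $H^4(\SS_2^1,\EE_0)$.

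I do not expect a genuine obstacle here: the only point is to recognize $\gwchi$ as lying in the image of the integral Bockstein and to invoke the already-established mod $2$ result, Lemma \ref{lem:H3fixedMod2}. The more hands-on route would instead write $\piact\gwchi=a\gwchi+b\act 4k$ with $a\in\F_4^{\times}$ and $b\in\ZZ/2$ (forced by $\WW$-linearity and the torsion structure), then argue $a=1$ by reducing mod $2$ and using the relation $v_1^2\,\red(\gwchi)=\red([\etachi]^2)$ together with the $\ZZ_2$-invariance of $v_1$ and $[\etachi]$ (cf. the proof of Lemma \ref{lem:H3fixed}), and finally argue $b=0$ by a separate product argument; this works but is strictly more laborious, and the Bockstein argument sidesteps it entirely.
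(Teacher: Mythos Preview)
Your proof is correct and matches the paper's argument essentially verbatim: invoke Remark~\ref{rem:easyZfixed}(4) for $k$, and for $\gwchi$ observe that it has order $2$, hence lies in $\im(\partial)$, which is a trivial $\ZZ_2$-module since $\partial$ is equivariant and its source is trivial by Lemma~\ref{lem:H3fixedMod2}.
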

\begin{proof}
The action on  $k$ is trivial by \Cref{rem:easyZfixed}(4). Since $\gwchi$ has order $2$, it is an element in $\im( \partial) \subseteq H^4(\SS_2^1,\EE_0)$ for $\partial$ as in \eqref{eq:diagramrecall}. However, $\partial $ is a $\Z_2$-equivariant map with source which is a trivial $\Z_2$-module by \Cref{lem:H3fixedMod2}, so $\im \partial $ is a trivial $\Z_2$-module, and in particular, $\piact \gwchi = \gwchi$. 
\end{proof}

\begin{prop}
For $0\leq t<12$, all classes in $H^{s}(\SS_2^1;\EE_t)$ detected in the ADSS $E_\infty^{p,s-p}(\EE_t)$ in filtrations $p=1,2$ are fixed by $\Z_2$.
\end{prop}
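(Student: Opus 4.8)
The goal is to show that every class in $H^s(\SS_2^1,\EE_t)$ detected in ADSS filtration $p=1$ or $p=2$, for $0\leq t<12$, is $\Z_2$-fixed. The strategy is to read off a list of generators for these groups from \Cref{fig:table} (equivalently, from the module structure over $H^*(\SS_2^1,\EE_*)$ recorded in \Cref{sec:name}), and to exhibit each such generator as a product of already-known $\Z_2$-invariant classes. The invariants we have in hand are: everything coming from the ANSS, namely $1,\eta,\nu,\mu,\sigma,\alpha_5,\epsilon$ (by \Cref{rem:easyZfixed}(3)); the classes $k$, $\wchi$, $[\etachi]$ (by \Cref{rem:easyZfixed}(4)); the generators of $H^3(\SS_2^1,\EE_0)$ and of $H^4(\SS_2^1,\EE_0)$, i.e.\ $e$, $\gchi$, $\gwchi$ (by \Cref{lem:H3fixed} and \Cref{lem:H4fixed}); and $v_1\in H^0(\SS_2^1,\EE_2/2)$ together with the reduction map being $\Z_2$-equivariant (by \Cref{rem:easyZfixed}(5)). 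Since the $\Z_2$-action is a ring map, any product or $v_1$-multiple of invariant classes is invariant.

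\textbf{Key steps.} First I would enumerate, bidegree by bidegree in the range $0\leq t<12$, the generators of $E_\infty^{1,*}(\EE_t)$ and $E_\infty^{2,*}(\EE_t)$ appearing in \Cref{lem:t=0}--\Cref{lem:caset8} and the $t=6,10$ lemmas. The filtration-$1$ and filtration-$2$ generators in this range are, up to $g$- and $k$-multiples, the classes named $\gchi=[gb_0]$, $\eta b_0$, $hb_1$, $v_1^{2n}\etachi$, $\eta hb_1$, $v_1b_1$, and the filtration-$2$ classes $\wchi=[\overline b_0]$, $\gwchi=[g\overline b_0]$, $v_1^{2n}\wchi$, $h\overline b_1$, $\eta h\overline b_1$, and the $\hbone$, $\hbonebar$, etc., of \eqref{eq:list6}. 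Second, I would observe that each of these is a product of invariants: $\gchi$ and $\gwchi$ are handled directly (filtration-$1$/$2$ parts of $H^3,H^4$ of $\EE_0$, already done); $v_1b_1=\sigma$ is ANSS; $\wchi$ and $[\etachi]$ are on the list; $v_1^{2n}\wchi$ and $v_1^{2n}\etachi$ are $v_1$-multiples of invariants, using the injectivity of \eqref{eq:inj} together with $\Z_2$-equivariance of reduction mod $2$ and invariance of $v_1$; the classes $\hbone=[hb_1]$ and $\hbonebar$ follow from $\mu\hbone=\eta^2\sigma$ and $\mu\hbonebar=\eta\sigma[\etachi]$ (from \Cref{sec:name}), since $\mu$-multiplication is injective on the relevant groups in this range (one checks the target groups have no $\mu$-torsion in the needed bidegrees), exhibiting $\hbone,\hbonebar$ as (the unique preimages of) products of invariants; similarly $\ghbone,\ghbonebar$ via $g$-linearity or $\mu\ghbone=\eta^3\hbone$; and $\eta b_0,\eta hb_1,\eta h\overline b_1$ are literally $\eta$-multiples of the preceding. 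Third, multiply through by $g$ and $k$ as needed using $\Z_2$-linearity in those classes.

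\textbf{Main obstacle.} The delicate point is the classes $\hbone$ and $\hbonebar$ (and their $g$-multiples): they are not themselves obviously products, and the relations like $\mu\hbone=\eta^2\sigma$ only pin them down if $\mu$ acts injectively on $H^2(\SS_2^1,\EE_6)\to H^2(\SS_2^1,\EE_{12})$, etc. So the real work is to verify, from \Cref{fig:Einf}/\Cref{sec:Einf}, that in each relevant bidegree the multiplication-by-$\mu$ (or by $\eta$, or by a suitable ANSS class) map is injective, so that ``the unique class mapping to an invariant under an equivariant injection is invariant'' applies. This is a finite check but must be done with care; alternatively, one can phrase it as: $\piact\hbone-\hbone$ lies in the kernel of $\mu$-multiplication, which one shows is zero in the range. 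I expect this bookkeeping — matching each filtration-$1,2$ generator to an invariant via an equivariant injection — to be the bulk of the proof, with no conceptual difficulty beyond what is already set up.
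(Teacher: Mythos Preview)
Your strategy is correct and matches the paper's in spirit: reduce every filtration-$1,2$ generator to a product of known invariants via an equivariant injection. The specific implementation differs, and the difference matters for how much extra work you incur.

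The paper does not use $\mu$-multiplication for $\hbone$ and $\hbonebar$. Instead it first reduces mod $2$ (using the injectivity of $r$ in the relevant bidegrees, which you also invoke), and then uses $v_1^2$-multiplication in $H^*(\SS_2^1,\EE_*/2)$: the key relations are $v_1^2\,r(\hbone)=\eta\sigma$ and $v_1^2\,r(\hbonebar)=\sigma[\etachi]$, landing in $t=10$. Your relations $\mu\hbone=\eta^2\sigma$ and $\mu\hbonebar=\eta\sigma[\etachi]$ land in $H^*(\SS_2^1,\EE_{12})$, which is \emph{outside} the range $0\leq t<12$ computed in this paper. To verify the injectivity of $\mu$-multiplication you would have to either compute those integral groups from scratch, or pass to mod $2$ and check injectivity of $v_1^2\eta$ into $H^*(\SS_2^1,\EE_{12}/2)$ using \Cref{thm:Einfmod2}; either way you end up doing strictly more than the paper's $v_1^2$ argument. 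The paper's choice of $v_1^2$ over $\mu$ is precisely what keeps all targets inside the already-computed range.

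One imprecision to fix: you speak of handling $\kchi$, $\ghbone$, $\ghbonebar$ ``via $g$-linearity'', but $g\in H^2(C_6,\EE_0)$ is not an element of $H^*(\SS_2^1,\EE_*)$, so there is no $g$-multiplication on the cohomology ring; the $\F_4[g]$-structure lives only on the ADSS $E_2$-page, not on its abutment. These classes therefore need their own arguments. The paper dispatches $\kchi$ by observing $H^5(\SS_2^1,\EE_0)\cong\F_4$ is one-dimensional, and handles $\ghbone$, $\ghbonebar$ by more $v_1^2$-multiplication (their $v_1^2$-multiples are $\eta$-multiples of already-fixed classes).
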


\begin{proof}
We have
\begin{itemize} 
\item $\wchi$ and $[\etachi]$ are fixed, by \Cref{rem:easyZfixed}; 
\item $\gchi$ is fixed, by \Cref{lem:H3fixed};
\item $\gwchi$ is fixed, by \Cref{lem:H4fixed}; 
\item $\kchi$ is fixed, since $H^5(\SS_2^1,\EE_0) $ is one-dimensional over $\F_4$ with $\kchi$ as generator.
\end{itemize}
Along with the multiples of these classes by ANSS classes and $k$, which are fixed by \Cref{rem:easyZfixed}, this includes all classes with $0\leq t\leq 3$. It remains to prove that classes in degrees $4\leq t\leq 10$ are fixed.

The method to address this is to study the image of these classes under the ($\Z_2$-equivariant) map
\[H^*(\SS_2^1,\EE_*) \xrightarrow{r} H^*(\SS_2^1,\EE_*/2) . \]
On $H^s(\SS_2^1,\EE_t)$, the map is injective if $t=6,10$; and for $t=4,8$ when $s>1$. When $s=1$, $2\nu$ and $2\sigma$ map to zero, but these classes are clearly fixed since they come from the ANSS in the sense of \Cref{rem:hurimagelanguage}. 
So, we may assert that if the image of a class is fixed in $ H^*(\SS_2^1,\EE_t/2)$, then it is fixed in $H^*(\SS_2^1,\EE_t) $ for $4\leq t\leq 10$. 

The advantage of passing to $\EE_*/2$ is that in $H^*(\SS_2^1, \EE_*/2)$, we have the $\Z_2$-fixed element $v_1 \in H^0(\SS_2^1, \EE_2/2)$, and we can use $v_1$-multiplication to make arguments. 

The map $v_1^2\colon H^s(\SS_2^1, \EE_t/2) \to H^{s}(\SS_2^1, \EE_{t+4}/2)$ is injective in the cases when $s=2$ and $0\leq t \leq 4$, as well as $2 \leq s \leq 5$ and $t = 6$.
This implies that $[v_1^{2n}\wchi]$, $[v_1^{2n}\etachi]$ for $n=1,2$ are $\Z_2$-fixed.
 Since 
$v_1^2\hbone = \eta\sigma$,
which is fixed, $\hbone$ must also be fixed. 
This accounts for all classes with $s\leq 2$. 

When $s=3$, the class $\hbonebar$ is fixed since $v_1^2\hbonebar = \sigma [\etachi]$, which is fixed. Similarly, when $s=4,5$, the $v_1^2$-multiples of $\ghbone$ and $\ghbonebar$ are 
$\eta$-multiples of fixed classes, hence they must be fixed by the injectivity of $v_1^2$ multiplications in these degrees. 

The multiples of these classes by ANSS elements and $k$ then take care of everything else, proving the claim.
\end{proof}

We summarize the above claims in the following result.
\begin{thm}\label{thm:acttriv}
In the range $0\leq t<12$, the action of $\Z_2$ on $H^*(\SS_2^1, \EE_*)$ is trivial.
\end{thm}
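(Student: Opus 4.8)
The plan is to assemble \Cref{thm:acttriv} as a corollary of the preceding lemmas and propositions by organizing the cohomology groups $H^*(\SS_2^1,\EE_t)$, $0 \leq t < 12$, into a small number of classes of elements and checking triviality of the $\Z_2$-action on each. The bookkeeping tool is \Cref{fig:Einf}: every group in the range has a $\WW[k]$-module basis consisting of the named classes from \eqref{eq:list1}--\eqref{eq:list6}, and the $\Z_2$-action is continuous, $\WW$-linear, and multiplicative (\Cref{rem:easyZfixed}), as well as $k$-linear and linear over all ANSS classes.

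First I would dispose of the ANSS classes and their $k$-multiples: by \Cref{rem:easyZfixed}(3)--(4), everything in the image of the restriction from $H^*(\SS_2,\EE_*)$ or from the ANSS is fixed, and this takes care of all of filtration $p=0$ (by \Cref{thm:bluegreenfix}) together with $\eta$-, $\nu$-, $\mu$-, $\sigma$-, $k$-multiples of the remaining generators. Next I would record that the ``seed'' classes $\wchi$ and $[\etachi]$ are fixed (\Cref{rem:easyZfixed}(4), using \eqref{def:wchi} and \eqref{eq:massey}), and that $e = [\overline{\Delta}_0]$ is fixed by \Cref{lem:H3fixed}; hence all of filtration $p=3$ is fixed via \Cref{thm:bluegreenfix}. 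This leaves precisely the filtration $p=1,2$ classes, which is the content of the last proposition above: $\gchi$ and $\gwchi$ are fixed by \Cref{lem:H3fixed,lem:H4fixed}, $\kchi$ is forced because $H^5(\SS_2^1,\EE_0)$ is one-dimensional over $\F_4$, and the remaining classes in topological degrees $4 \leq t \leq 10$ — the $v_1$-periodic families $[v_1^{2n}\wchi]$, $[v_1^{2n}\etachi]$ and the $\sigma$-family classes $\hbone$, $\hbonebar$, $\ghbone$, $\ghbonebar$ — are handled by reducing mod $2$ and exploiting the fixed element $v_1 \in H^0(\SS_2^1,\EE_2/2)$, since in each relevant bidegree $v_1^2$-multiplication is injective and sends these classes to $\eta$- or $\sigma$-multiples of already-fixed classes (e.g. $v_1^2\hbone = \eta\sigma$, $v_1^2\hbonebar = \sigma[\etachi]$).

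Thus the proof is essentially an enumeration: go down the list of generators in \Cref{fig:table}/\Cref{fig:Einf} in order of increasing $t$, and for each one cite the lemma that pins it down, concluding that the full $\Z_2$-action is trivial. The main obstacle — and the step that genuinely required work rather than bookkeeping — is already isolated as \Cref{lem:H3fixed}: showing $\piact e = e$ rather than $e + \lambda\gchi$. That argument is not formal; it uses the rational invariance of $e$ from \cite[Proposition 6.2.1]{BGH} to reduce $\lambda$ to an element of $\F_4$, then detects $\lambda$ by passing to $\EE_8/2$ via the hidden product $\sigma\chi^2 = v_1^4 r(e)$ of \Cref{lem:sigmatimes} and observing that invariance of $\sigma$ and $\wchi$ forces the coefficient of $v_1^2\eta^2\chi$ to vanish. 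Everything downstream — \Cref{lem:H3fixedMod2}, \Cref{lem:H4fixed}, and the $v_1$-multiplication arguments for $4\le t\le 10$ — leans on that single computation, so in writing the final theorem I would simply state it as the summary of \Cref{thm:bluegreenfix} and the last proposition, with no new input required.
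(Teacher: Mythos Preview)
Your proposal is correct and matches the paper's approach exactly. In the paper, \Cref{thm:acttriv} is stated with no proof beyond the sentence ``We summarize the above claims in the following result,'' so the content is precisely the enumeration you describe: \Cref{thm:bluegreenfix} handles filtrations $p=0,3$, and the subsequent proposition (built on \Cref{lem:H3fixed}, \Cref{lem:H3fixedMod2}, \Cref{lem:H4fixed}, and the $v_1$-multiplication arguments) handles $p=1,2$.
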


\subsection{Extensions from invariants to coinvariants}
We now turn to solving the extension problem coming from the exact sequence
 \[
\xymatrix@R=1pc{
0 \ar[r] &  H^{s-1}(\SS_2^1,\EE_t)_{\Z_2} \ar[r] &  H^s(\SS_2,\EE_t) \ar[r] & H^s(\SS_2^1, \EE_t)^{\ZZ_2} \ar[r] &  0.
} \]

We will see that in our range, this always splits, giving the following result.

\begin{thm}
In degrees $0\leq t<12$, there is an isomorphism of bi-graded abelian groups
\[ H^*(\SS_2,\EE_t) \cong H^*(\SS_2^1, \EE_t)\otimes_{\WW} E(\zeta) \]
where $E(\zeta)$ is an exterior algebra over $\WW$ on a class $\zeta \in  H^1(\mathbb{S}_2, \EE_0)$. 
\end{thm}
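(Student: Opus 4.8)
The plan is to show that the short exact sequence
\[
0 \to H^{s-1}(\SS_2^1,\EE_t)_{\Z_2} \to H^s(\SS_2,\EE_t) \to H^s(\SS_2^1, \EE_t)^{\ZZ_2} \to 0
\]
splits for each $(s,t)$ with $0\leq t < 12$. By \Cref{thm:acttriv}, the $\Z_2$-action on $H^*(\SS_2^1,\EE_*)$ is trivial in this range, so both the invariants and coinvariants equal $H^{*}(\SS_2^1,\EE_*)$ itself; thus the statement to prove is that $H^s(\SS_2,\EE_t)$ is, as a $\WW$-module, the direct sum $H^s(\SS_2^1,\EE_t)\oplus H^{s-1}(\SS_2^1,\EE_t)$, with the subobject identified with $\zeta\cdot H^{s-1}(\SS_2^1,\EE_t)$. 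The first move is to exhibit a large supply of classes in $H^*(\SS_2,\EE_*)$ that restrict to a generating set of $H^*(\SS_2^1,\EE_*)$: every class coming from the ANSS (in the sense of \Cref{rem:hurimagelanguage}) lifts to $\SS_2$ by construction, as does $k$ by \eqref{eq:defk}, $\wchi$ by \eqref{def:wchi}, and $[\etachi]$ by \eqref{eq:massey}. The remaining generators listed in \Cref{fig:table} --- the $v_1$-periodic families $[v_1^{2n}\wchi]$, $[v_1^{2n}\etachi]$, the $\sigma$-family classes $\hbone, \hbonebar, \ghbone, \ghbonebar$, and $e, \gchi, \gwchi, \kchi$ --- must be treated by hand: I would argue that each either is already known to lift (e.g.\ $e$ comes from $H^3(\SS_2,\ZZ_2)$ via the edge-splitting discussion, and the $v_1$-periodic classes are $\sigma$- or $\eta$-multiples of lifted classes modulo checking the relevant restriction maps are injective, cf.\ \eqref{eq:inj}), or can be replaced by a lift differing by something in the kernel of restriction, i.e.\ in $\zeta\cdot H^{*-1}$.

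Concretely, I would proceed degree by degree up the $t$-axis using the structure already recorded in \Cref{fig:table} and \Cref{fig:Einf}. For a fixed $(s,t)$, pick $\WW$-module generators $x_1,\dots,x_m$ of $H^s(\SS_2^1,\EE_t)$; for each choose a lift $\tilde x_i\in H^s(\SS_2,\EE_t)$ (using the observations above). These lifts span a complement to $\ker(\mathrm{res})=\zeta\cdot H^{s-1}(\SS_2^1,\EE_t)$ provided the $\tilde x_i$ are $\WW$-linearly independent modulo that kernel, which holds because their restrictions are the chosen generators; and provided no $\WW$-relation among the $x_i$ fails to lift, i.e.\ if $\sum a_i x_i = 0$ in $H^s(\SS_2^1,\EE_t)$ then $\sum a_i\tilde x_i$ lies in $\zeta\cdot H^{s-1}(\SS_2^1,\EE_t)$ and can be corrected. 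The only additive relations are the torsion orders listed in \Cref{fig:table} (e.g.\ $8k$, $4\nu$, $2\eta$, $2\nu^2$, and order-$2$ relations on the $\chi$-family classes), and for each such relation one checks that $a_i\tilde x_i$ maps to the correct torsion class or $0$ in the coinvariants, using that the torsion orders of the relevant classes in $H^*(\SS_2,\EE_*)$ agree with those in $H^*(\SS_2^1,\EE_*)$ (again because restriction is injective on those classes, or because the class comes from the ANSS). Assembling these lifts over all $s$ and invoking that $\zeta$ satisfies $\zeta^2=0$ (it lies in $H^1$ of a module over $\ZZ_2$ of cohomological dimension $1$, more precisely $\zeta$ is the pullback of the generator of $H^1(\Z_2,\Z_2)$ which squares to zero), one gets the asserted isomorphism $H^*(\SS_2,\EE_t)\cong H^*(\SS_2^1,\EE_t)\otimes_\WW E(\zeta)$.

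The main obstacle, I expect, is \emph{not} the formal bookkeeping but verifying the splitting in the handful of degrees where $H^s(\SS_2^1,\EE_t)$ contains a class that is neither manifestly from the ANSS nor an obvious multiple of $\wchi,[\etachi],k$ --- principally the classes $e=[\overline\Delta_0]$, $\gchi=[gb_0]$, and the $\sigma$-family classes $\hbone,\hbonebar,\ghbone,\ghbonebar$. For $e$ one uses that $H^3(\SS_2,\ZZ_2)\to H^3(\SS_2^1,\ZZ_2)$ hits a class restricting to the rational generator (cf.\ \cite[Proposition 6.2.1]{BGH}) together with \Cref{lem:H3fixed}; for $\gchi$ one uses that it is the image under the Bockstein $\partial$ of a class $r(\gchi)$ which, being detected by $h^2 b_0 = h\cdot hb_0$ in $H^*(\SS_2^1,\EE_0/2)$, is a product of $\Z_2$-fixed classes and in fact lies in the image from $H^*(\SS_2,\EE_0/2)$, so $\gchi$ lifts via the Bockstein on $\SS_2$-cohomology; and for the $\sigma$-family one uses the hidden products of \Cref{lem:sigmatimes} and \eqref{eq:list6}, e.g.\ $v_1^2\hbone=\eta\sigma$ and $v_1^2\hbonebar=\sigma[\etachi]$, to realize these classes (after multiplication by the $\Z_2$-fixed unit-related element $v_1$, working mod $2$) as products of classes that lift to $\SS_2$, then descend the lift using the injectivity of the relevant $v_1$- and reduction-mod-$2$ maps. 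Once these special cases are in hand, the rest is the routine degree-by-degree assembly sketched above, and the exterior-algebra form of the answer follows formally.
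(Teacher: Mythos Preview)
Your overall strategy---show the Lyndon--Hochschild--Serre short exact sequence splits by lifting each $\WW$-generator of $H^s(\SS_2^1,\EE_t)$ to a class of the \emph{same order} in $H^s(\SS_2,\EE_t)$---matches the paper's, and your observations that ANSS classes, $k$, $\wchi$, $[\etachi]$ all lift with the correct order are fine. But the heart of the proof is precisely the step you wave past: for classes like $\gchi$, $\gwchi$, $\kchi$ (and their analogues), one cannot simply assert that ``torsion orders agree because restriction is injective''---that is circular, since injectivity of restriction on those classes is equivalent to what you are trying to prove. The paper's actual technique (Lemmas 6.7--6.10) is to pick an arbitrary lift $\overline{x}$, write $2\overline{x}=y\zeta$ with $y\in H^{s-1}(\SS_2^1,\EE_t)$, and then multiply by $\eta$ (or $\nu$): since $2\eta=0$ one gets $0=\eta y\zeta$, and a direct check that $\eta$-multiplication on the relevant $\zeta$-multiples is injective forces $y$ to vanish (possibly after adjusting the lift). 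You never identify this trick, and without it the ``routine degree-by-degree assembly'' is not routine.

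Two of your proposed workarounds also have gaps. Your Bockstein argument for $\gchi$ claims that the class detected by $h^2b_0$ is a product $h\cdot(hb_0)$ in $H^*(\SS_2^1,\EE_0/2)$, but $h$ lives only in $H^*(C_6,\EE_0/2)$, not in $H^*(\SS_2^1,\EE_0/2)$; the ADSS is not multiplicative, so this factorization is not available. Your $\sigma$-family argument runs the wrong direction: knowing that $v_1^2\hbone=\eta\sigma$ lifts does not produce a lift of $\hbone$ itself, because $v_1$ is not invertible integrally and injectivity of $v_1$-multiplication gives uniqueness, not divisibility. In both cases the paper instead just bounds the order of a lift via the $\eta$-multiplication trick above.
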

\begin{rem}
The cohomology $H^s(\SS_2, \EE_t)$ in the range $0\leq t<12$, and for $s\leq 6$ is the $\WW$-module generated by the classes listed in \Cref{fig:table2}, with the only additive relation given by the order of the class indicated below the element in the table. The result is also depicted in \Cref{fig:EinfS2}.
\end{rem}

\begin{proof}[Proof sketch.]
We don't argue in each degree as the arguments are easy and repetitive. However, we give an idea of the methods used to make the arguments as well as explicit examples in Lemmas~\ref{lem:split30}, \ref{lem:split40}, \ref{lem:split50} and \ref{lem:split54} below.

As mentioned above, we have exact sequences
\begin{equation}\label{eq:zetaeq}
0 \to H^{s-1}(\SS_2^1, \EE_t)\zeta  \to H^{s}(\SS_2, \EE_t) \to H^{s}(\SS_2^1, \EE_t)  \to 0
\end{equation}
and we need to prove that these are additively split. For this, we need to show that the classes of $H^{s}(\SS_2^1, \EE_t) $ lift to classes of the same order in $H^{s}(\SS_2, \EE_t)$. The argument is a brute force computation using the multiplicative structure of $H^{*}(\SS_2, \EE_*) $ and basic algebra facts.

 In this section, if $x \in H^{s}(\SS_2^1, \EE_t)$, then any class in $H^{s}(\SS_2, \EE_t) $ that maps to $x$ will be called $\overline{x}$. (After this section we will get rid of the redundance in names and just assume we make a choice that we call $x$.) The image of $x$ in $H^{s+1}(\SS_2, \EE_t)$ under multiplication by $\zeta$  will be called $\zeta x$. 

Some key tricks are:
\begin{itemize}
\item The class $k \in H^4(\SS_2^1, \EE_0)$ lifts to a class of order $8$ in $H^{4}(\SS_2, \EE_0)$. This follows from the discussion about $k$ around \Cref{lem:kfixed}. 
\item Any class coming from the ANSS
lifts to a class of the same order in $H^{*}(\SS_2, \EE_*) $ because of the factorization
\[\xymatrix{ \Ext_{BP_*BP}^{*,*}(BP_*,BP_*) \ar[r] & \ar[r] H^*(\mathbb{S}_2, \EE_*)   \ar[r] &  H^*(\SS_2^1, \EE_*)
} . \]
Together with the previous claim about $k$, this implies that all classes detected in filtration $0$ of the ADSS in $H^{*}(\SS_2^1, \EE_*) $ lift to classes of the same order in $H^{*}(\SS_2, \EE_*)$. 
\item This also tells us a bound on the order of many products.  If $x \in  H^*(\SS_2, \EE_*)$  has order $n$ and $y \in H^{*}(\SS_2^1, \EE_*) $ is another class, then $xy \in H^{*}(\SS_2^1, \EE_*) $ lifts to a class of order at most $n$. For example, any class in the image of multiplication by $\eta$ or $\nu^2$ lift to classes of order $2$. Products of the form $xk$ have order bound by that of $x$.
\end{itemize}

These kinds of arguments imply that the short exact sequences \eqref{eq:zetaeq} are split for almost all cases in our range. There are a couple slightly trickier arguments which we isolate in the lemmas that take the rest of this section.
\end{proof}

\begin{lem}\label{lem:split30}
The sequence
\[0 \to \F_4\{\wchi\zeta\}  \to H^{3}(\SS_2, \EE_0) \to  \W\{e\}\oplus \F_4\left\{\gchi\right\} \to 0\]
is additively split.
\end{lem}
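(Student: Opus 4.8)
The plan is to exhibit explicit lifts of the generators $e$ and $\gchi$ of $H^3(\SS_2^1,\EE_0)$ to classes of the same additive order in $H^3(\SS_2,\EE_0)$, which splits the sequence \eqref{eq:zetaeq} in this bidegree. The torsion class $\gchi$ is easy: it has order $2$, and by the general remarks in the proof sketch (products with $\eta$ or with torsion classes lift to classes of the same order) it suffices to realize $\gchi$ as such a product. Indeed $\gchi=[gb_0]=g\cdot\wchi$ up to the relation $g=\eta^2/v_1^2$; more directly, $\gchi$ is $\wchi$ multiplied by the $\Z_2$-fixed class $g\in H^2(C_6,\EE_0)$ detected in filtration $\geq 1$, so any lift has order dividing $2$, and since $\gchi$ is nonzero its image in $H^3(\SS_2,\EE_0)$ is nonzero of order exactly $2$.

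The main point is the free summand $\WW\{e\}$: I must show $e=[\overline{\Delta}_0]$ lifts to a class $\bar e\in H^3(\SS_2,\EE_0)$ which is still $\WW$-free, i.e.\ not annihilated by any power of $2$. Here I would use the known structure of the \emph{rational} cohomology. By \cite[Proposition 6.2.1]{BGH} (already invoked in the proof of \Cref{lem:H3fixed}), the image of $e$ in $H^3(\SS_2^1,\ZZ_2)\otimes\QQ$ is nonzero and $\Z_2$-fixed; since $\Z_2$ has cohomological dimension $1$, the analogous Serre spectral sequence / exact sequence rationally gives that $H^3(\SS_2,\EE_0)\otimes\QQ \twoheadrightarrow \big(H^3(\SS_2^1,\EE_0)\otimes\QQ\big)^{\Z_2}$ surjects onto the line spanned by $e$. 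Therefore there is a class $\bar e\in H^3(\SS_2,\EE_0)$ mapping to $e$ whose image in $H^3(\SS_2,\EE_0)\otimes\QQ$ is nonzero; such a $\bar e$ generates a free $\WW$-summand. Combined with the order-$2$ lift of $\gchi$, the two lifts span a submodule $\WW\{\bar e\}\oplus\F_4\{\overline{\gchi}\}$ mapping isomorphically onto $\WW\{e\}\oplus\F_4\{\gchi\}$, hence split the sequence.

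Alternatively — and this may be cleaner to write — I would lift $e$ using multiplicativity: $\sigma\in H^1(\SS_2,\EE_8)$ comes from the ANSS so lifts to $\SS_2$-cohomology, $\wchi$ lifts by \Cref{rem:easyZfixed}(4), and by \Cref{lem:sigmatimes} we have $\sigma\wchi=[c_4\overline{\Delta}_0]$ over $\SS_2^1$; since $c_4$ is a $\WW[\![j]\!]$-module generator and $[c_4\overline{\Delta}_0]=c_4 e$ in $H^3(\SS_2^1,\EE_8)$, one can transport a lift of $e$ to $\SS_2$ by dividing an explicit lift of $[c_4\overline{\Delta}_0]$, using that multiplication by $c_4$ is injective on the relevant $\WW$-free part. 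Either route works; the rational argument is the most robust and is the one I expect to present.

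The main obstacle is purely bookkeeping: making sure the chosen lift $\bar e$ is genuinely $\WW$-free (not merely nonzero mod $2$) and that no additive extension hides in the sum with $\zeta\wchi$. The rational-image argument settles the first point cleanly; for the second, note that $\zeta\wchi$ has order $2$ while $\bar e$ has infinite order, so $H^3(\SS_2,\EE_0)$ contains $\WW\oplus\F_4\oplus\F_4$ as claimed and there is nothing further to check. I do not anticipate any genuinely hard step here — this lemma, like the others isolated in this section, is a brute-force multiplicative computation organized around the facts that $k$, ANSS classes, $\wchi$, and $[\etachi]$ all lift with their order preserved.
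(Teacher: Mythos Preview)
Your handling of $e$ is correct but more elaborate than needed: since the right-hand summand $\WW\{e\}$ is torsion-free and the kernel $\F_4\{\wchi\zeta\}$ is torsion, any lift $\bar e$ of $e$ automatically has infinite order. This is the paper's one-line argument.

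The real gap is in your argument for $\gchi$. You assert that $\gchi=[gb_0]$ equals $g\cdot\wchi$, but this is wrong on two counts. First, $g\in H^2(C_6,\EE_0)$ is not an element of $H^*(\SS_2,\EE_*)$ or $H^*(\SS_2^1,\EE_*)$; it only lives in the $E_1$-page of the ADSS, so the product $g\cdot\wchi$ is not defined in the target ring. Second, even at the level of ADSS representatives, $\wchi=[\overline{b}_0]$ has filtration $p=2$, so $g\overline{b}_0$ detects $\gwchi\in H^4$, not $\gchi=[gb_0]\in H^3$ (filtration $p=1$). There is no class in $H^1(\SS_2^1,\EE_0)$ to multiply $\wchi$ by, and $\gchi$ is not visibly an $\eta$- or $\nu^2$-multiple, so the ``products with torsion classes'' trick from the proof sketch does not apply here.

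The paper's argument for $\gchi$ is instead the prototype for \Cref{lem:split40}--\ref{lem:split54}: take any lift $\overline{\gchi}$; then $2\overline{\gchi}$ lies in the kernel $\F_4\{\wchi\zeta\}$, say $2\overline{\gchi}=a\wchi\zeta$. Multiplying by $\eta$ gives $0=2\eta\overline{\gchi}=a\eta\wchi\zeta$, and since $\eta\wchi\zeta\neq 0$ in $H^4(\SS_2,\EE_2)$ we get $a=0$. Your final paragraph also elides this: knowing $\bar e$ is free and $\zeta\wchi$ has order $2$ does not by itself force the middle group to be $\WW\oplus\F_4\oplus\F_4$ --- you still need the order-$2$ lift of $\gchi$, which is exactly the missing step.
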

\begin{proof}
That $e$ lifts to a class of infinite order follows by the freeness of the summand. If $\overline{\gchi}$ is any lift of $\gchi$, and $2\overline{\gchi }= a \wchi \zeta$, then $0 = 2\eta \overline{\gchi} = a \eta \wchi \zeta$, since $2\eta =0$. But $\eta\wchi \zeta$ is a non-zero class generating an $\F_4$, thus we get that $a$ is zero, and $\overline{\gchi}$ gives a splitting.
\end{proof}

\begin{lem}\label{lem:split40}
The  sequence
\[0 \to \W\{e \zeta\}\oplus \F_4\left\{\gchi\zeta\right\}  \to H^{4}(\SS_2, \EE_0) \to\W/8\{k\}\oplus \F_4\left\{\gwchi\right\} \to 0 \]
is additively split.
\end{lem}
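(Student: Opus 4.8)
The plan is to split the extension \eqref{eq:zetaeq} in bidegree $(s,t)=(4,0)$ by lifting each generator of the quotient to a class of the same order. Since the $\Z_2$-action on $H^*(\SS_2^1,\EE_0)$ is trivial (\Cref{thm:acttriv}), coinvariants equal the full group, so the kernel of $H^4(\SS_2,\EE_0)\to H^4(\SS_2^1,\EE_0)$ is exactly $H^3(\SS_2^1,\EE_0)\zeta=\WW\{e\zeta\}\oplus\F_4\{\gchi\zeta\}$ and the quotient is $H^4(\SS_2^1,\EE_0)=\WW/8\{k\}\oplus\F_4\{\gwchi\}$ by \Cref{lem:H4fixed}. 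It therefore suffices to lift $k$ to an order-$8$ class and $\gwchi$ to an order-$2$ class in $H^4(\SS_2,\EE_0)$. For $k$ this is already one of the key tricks recorded in the proof sketch: the class $k\in H^4(\GG_2,\WW)$ of \eqref{eq:defk} restricts to $H^4(\SS_2,\EE_0)$, and by the discussion of the edge homomorphism and $k$-linearity around \Cref{lem:kfixed} its further restriction to $H^4(\SS_2^1,\EE_0)$ still generates a $\WW/8$; hence the lift has order $8$. So the only real work is the lift of $\gwchi$.

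For that I would run a Bockstein argument. Since $\gwchi$ has order $2$, it lies in the image of the connecting homomorphism $\partial\colon H^3(\SS_2^1,\EE_0/2)\to H^4(\SS_2^1,\EE_0)$ attached to $\EE_0\xrightarrow{2}\EE_0\to\EE_0/2$ (this is noted already in the proof of \Cref{lem:H4fixed}); choose $w\in H^3(\SS_2^1,\EE_0/2)$ with $\partial(w)=\gwchi$. By \Cref{lem:H3fixedMod2}, $\Z_2$ acts trivially on $H^3(\SS_2^1,\EE_0/2)$, so $w$ is $\Z_2$-invariant and therefore lifts, along the analogue of \eqref{eq:zetaeq} for the coefficient module $\EE_0/2$ (which holds by the same Serre spectral sequence argument, using $\mathrm{cd}(\Z_2)=1$), to a class $\tilde w\in H^3(\SS_2,\EE_0/2)$. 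Now set $\overline{\gwchi}:=\partial(\tilde w)\in H^4(\SS_2,\EE_0)$, using the connecting map for $\EE_0\xrightarrow{2}\EE_0\to\EE_0/2$ over $\SS_2$. Naturality of the long exact Bockstein sequence in the group variable gives that $\overline{\gwchi}$ restricts to $\partial(w)=\gwchi$, so it is genuinely a lift; and since $H^3(\SS_2,\EE_0/2)$ is killed by $2$, we get $2\overline{\gwchi}=\partial(2\tilde w)=0$, so $\overline{\gwchi}$ has order exactly $2$.

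Putting the pieces together, $e\zeta$ spans a free $\WW$-summand, $\gchi\zeta$ a copy of $\F_4$, and the lifts of $k$ and $\gwchi$ span a $\WW/8$ and an $\F_4$ hitting the quotient; these four classes give an additive splitting $H^4(\SS_2,\EE_0)\cong\WW\{e\zeta\}\oplus\F_4\{\gchi\zeta\}\oplus\WW/8\{k\}\oplus\F_4\{\gwchi\}$. The main obstacle is the lift of $\gwchi$, and specifically the need for a $\Z_2$-invariant preimage $w$ under $\partial$; this is precisely supplied by \Cref{lem:H3fixedMod2}. A more hands-on multiplicative argument in the style of \Cref{lem:split30} (analyzing $2\overline{\gwchi}=ae\zeta+b\gchi\zeta$ and multiplying by a suitable ANSS class) should also work, but it would require identifying a product such as $\eta\gwchi$, whereas the Bockstein route only uses facts already in place.
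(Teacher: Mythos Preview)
Your proof is correct but takes a different route from the paper. The paper uses exactly the ``hands-on multiplicative argument'' you mention at the end: it picks an arbitrary lift $\overline{\gwchi}$, writes $2\overline{\gwchi}=a\,e\zeta+b\,\gchi\zeta$ with $a\in\WW$, $b\in\F_4$, and multiplies by $\eta$. Since $2\eta=0$ the left side dies, and since $\eta e\zeta$ and $\eta\gchi\zeta$ are linearly independent $2$-torsion classes in $H^5(\SS_2,\EE_2)$ (these live in the already-computed range), one reads off $b=0$ and $a=2a'$; then $\overline{\gwchi}-a'e\zeta$ is an order-$2$ lift. So the multiplicative approach does not require identifying $\eta\gwchi$, only the $\eta$-multiples of the \emph{kernel} classes $e\zeta$ and $\gchi\zeta$, which are visible from \Cref{fig:table}.

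Your Bockstein argument is a nice alternative: it trades that multiplicative input for \Cref{lem:H3fixedMod2} and naturality of $\partial$ in the group variable. The advantage is that it is more conceptual and uses only structural facts (the image of $\partial$ is $2$-torsion, restriction surjects onto invariants because $\mathrm{cd}(\Z_2)=1$). The paper's argument, on the other hand, stays parallel to \Cref{lem:split30}, \Cref{lem:split50}, and \Cref{lem:split54}, so it fits a uniform template across all the splitting lemmas without invoking the mod-$2$ cohomology computations a second time.
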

\begin{proof}
We have already shown that $k$ lifts to an element of order $8$. 
Assume that $a\in \W, b\in \F_4 $ are constants such that
\[2 \overline{\gwchi} = a  e \zeta + b \gchi\zeta,\] 
for an arbitrary lift $\overline{\gwchi}$. Multiplying both sides of this equation by $\eta$, we determine that $b=0$ and $a=2a'$ for some $a'\in \WW$. But if that is the case, then $2\left(\overline{\gwchi}-a'e\zeta\right)=0$, and so
$\left(\overline{\gwchi}-a'e\zeta\right)$ is a lift of $\gwchi$ of order $2$, giving the splitting and proving the claim. 
\end{proof}

\begin{lem}\label{lem:split50}
The sequence
\[0 \to  \W/8\{k\zeta\}\oplus \F_4\left\{\gwchi\zeta\right\}  \to H^{5}(\SS_2, \EE_0) \to  \F_4\left\{\kchi\right\} \to 0 \]
is additively split.
\end{lem}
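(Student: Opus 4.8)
The plan is to exhibit a lift of the generator $\kchi \in H^5(\SS_2^1,\EE_0)$ to a class of order $2$ in $H^5(\SS_2,\EE_0)$, which will split the sequence. First I would recall that $\kchi$ has additive order $2$ (it generates an $\F_4$), so for any chosen lift $\overline{\kchi} \in H^5(\SS_2,\EE_0)$ we have $2\overline{\kchi} \in \ker(H^5(\SS_2,\EE_0) \to H^5(\SS_2^1,\EE_0))$, hence $2\overline{\kchi} = a\, k\zeta + b\, \gwchi\zeta$ for some $a \in \W/8$ and $b \in \F_4$. The goal is to show that after adjusting $\overline{\kchi}$ by an element of the sub, we can arrange $2\overline{\kchi} = 0$.

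The main tool is multiplication by $\eta$ together with the identification of $\kchi$ as $[g^2 b_0]$ and the relations from \cref{sec:name}. Multiplying the equation $2\overline{\kchi} = a\,k\zeta + b\,\gwchi\zeta$ by $\eta$ kills the left-hand side since $2\eta = 0$, so $a\,\eta k\zeta + b\,\eta\gwchi\zeta = 0$ in $H^6(\SS_2,\EE_0)$. Now $\eta\gwchi\zeta$ and $\eta k \zeta$ lie in $H^6(\SS_2^1,\EE_0)\zeta$ (the sub in the degree $6$ version of the exact sequence), and since the action of $\Z_2$ is trivial by \cref{thm:acttriv} this subgroup is simply $H^6(\SS_2^1,\EE_0)$; there, $\eta k$ has order $4$ (it is $\eta$ times the order $8$ class $k$) and $\eta\gwchi$ has order $2$, and these are $\W$-linearly independent in $H^6(\SS_2^1,\EE_0)$ (compare \cref{fig:Einf}). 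Therefore $b = 0$ and $a \in 2(\W/8)$, say $a = 2a'$. Then $\overline{\kchi} - a'k\zeta$ is a lift of $\kchi$ satisfying $2(\overline{\kchi}-a'k\zeta) = 0$, so it has order $2$ and provides the desired splitting.

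The step I expect to require the most care is verifying that $\eta\gwchi\zeta$ and $\eta k\zeta$ are genuinely nonzero and independent in $H^6(\SS_2,\EE_0)$ — equivalently, locating these classes precisely in $H^6(\SS_2^1,\EE_0)$ and confirming $\eta k \neq 0$ has order exactly $4$ while $\eta\gwchi \neq 0$. This is exactly the kind of fact one reads off \cref{fig:Einf} together with the $\eta$- and $k$-linearity of the ADSS, but it must be checked rather than assumed; the rest is the same brute-force pattern (multiply by $\eta$, use $2\eta=0$ and the $\W$-module structure to conclude the obstruction class is divisible by $2$) used in Lemmas~\ref{lem:split30}, \ref{lem:split40} and \ref{lem:split50}.
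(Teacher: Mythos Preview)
Your approach is exactly the paper's: lift, write $2\overline{\kchi}=ak\zeta+b\gwchi\zeta$, multiply by $\eta$, and use $2\eta=0$ to force $b=0$ and $a=2a'$, then adjust the lift. Two small bookkeeping slips to fix: after multiplying by $\eta$ you land in $H^6(\SS_2,\EE_2)$, not $H^6(\SS_2,\EE_0)$, and the relevant sub is $H^5(\SS_2^1,\EE_2)\zeta$; also, $\eta k$ has order $2$ (not $4$), since $2\eta=0$. Neither slip affects the conclusion, since $\eta k$ and $\eta\gwchi$ are indeed nonzero and $\F_4$-independent in $H^5(\SS_2^1,\EE_2)$, which is what you need.
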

\begin{proof}
Choose a lift $\overline{\kchi}$. Then 
\[2\overline{\kchi} = ak\zeta + b \gwchi\zeta.\] Multiplying by $\eta$ gives $b=0$ and $a=2a'$. Then $\overline{\kchi}-a'k$ is a lift of $\kchi$ of order $2$. 
\end{proof}

\begin{rem}
The same argument as in \Cref{lem:split40} and \Cref{lem:split50} is used to prove that the sequence for $H^{2}(\SS_2, \EE_t)$ splits for $t=4,8$, although when $t=4$, we use $\nu$ multiplication instead of $\eta$ multiplication to make the argument. 
\end{rem}

We will do one more explicit example for good measure.
\begin{lem}\label{lem:split54}
The sequence
\[0 \to  \W/4\{\nu e\zeta\}\oplus \F_4\{\eta^2\wchi\zeta\}  \to H^{5}(\SS_2, \EE_4) \to  \W/4\{\nu k\}\oplus \F_4\left\{\eta^2\gchi, \eta^2e\right\} \to 0\]
is additively split. 
\end{lem}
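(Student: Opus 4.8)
The plan is to split the short exact sequence \eqref{eq:zetaeq} in bidegree $(s,t)=(5,4)$, namely
\[ 0 \to H^4(\SS_2^1,\EE_4)\zeta \to H^5(\SS_2,\EE_4) \to H^5(\SS_2^1,\EE_4)\to 0 , \]
by lifting each of the three module generators $\nu k$, $\eta^2\gchi$, $\eta^2 e$ of
\[ H^5(\SS_2^1,\EE_4)\cong \W/4\{\nu k\}\oplus \F_4\{\eta^2\gchi\}\oplus\F_4\{\eta^2 e\} \]
to a class of the same additive order in $H^5(\SS_2,\EE_4)$. Since the target is a direct sum of cyclic $\W$-modules, such a compatible family of lifts assembles into a $\W$-linear section, which is all that is needed.

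For $\nu k$: the class $\nu$ comes from the ANSS, hence lifts to a class $\overline\nu \in H^1(\SS_2,\EE_4)$ with $4\overline\nu = 0$, while $k$ lifts to a class $\overline k \in H^4(\SS_2,\EE_0)$ of order $8$ by the discussion around \Cref{lem:kfixed}. Then $\overline\nu\,\overline k \in H^5(\SS_2,\EE_4)$ is a lift of $\nu k$ with $4\,\overline\nu\,\overline k = (4\overline\nu)\overline k = 0$; as it maps onto a class of order $4$ under the projection, its order is exactly $4$.

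For the two $\F_4$-generators, I would combine the fact that $\eta$ comes from the ANSS — so it lifts to $\overline\eta \in H^1(\SS_2,\EE_2)$ with $2\overline\eta = 0$ — with lifts $\overline{\gchi},\overline e \in H^3(\SS_2,\EE_0)$ of $\gchi$ and $e$, which exist by the argument given in the proof of \Cref{lem:split30} (the order of $\overline e$ plays no role). Then $\overline\eta^{\,2}\overline{\gchi}$ and $\overline\eta^{\,2}\overline e$ in $H^5(\SS_2,\EE_4)$ lift $\eta^2\gchi$ and $\eta^2 e$ respectively, and each is annihilated by $2$ because $2\overline\eta = 0$; being nonzero modulo $\zeta$, both have order exactly $2$. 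Since these lifts respect the only relations among the generators of $H^5(\SS_2^1,\EE_4)$, namely $4\cdot\nu k = 0$ and $2\cdot\eta^2\gchi = 2\cdot\eta^2 e = 0$, the resulting $\W$-linear map $H^5(\SS_2^1,\EE_4)\to H^5(\SS_2,\EE_4)$ is a section and the sequence splits.

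I do not expect a genuine obstacle in this case: unlike in \Cref{lem:split40} and \Cref{lem:split50}, every generator of the quotient is visibly a product of an ANSS class (which lifts together with its order) and a class already known to lift, so the order bookkeeping closes at once, with no need to eliminate a stray correction term via an $\eta$- or $\nu$-multiplication trick.
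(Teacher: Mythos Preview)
Your argument is correct and follows essentially the same route as the paper: lift each generator as a product of an ANSS class (of the right order) with an already-lifted factor ($k$, $\overline{\gchi}$, or $\overline e$), so that the order bounds are automatic. The paper's proof is simply a terser version of what you wrote.
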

\begin{proof}
Both $\eta^2\gchi$ and $ \eta^2e$ have lifts of the form $\eta^2 \overline{\gchi}$, $\eta^2\overline{e}$, so they have order $2$.  
Since $\nu $ and $k$ are already defined as elements in $H^*(\SS_2,\EE_*)$, the order of their product $\nu k$ is bounded by the order of $\nu$, which is 4 (in cohomology). 
\end{proof}

  \begin{figure}[H]
 \includegraphics[width=\textwidth]{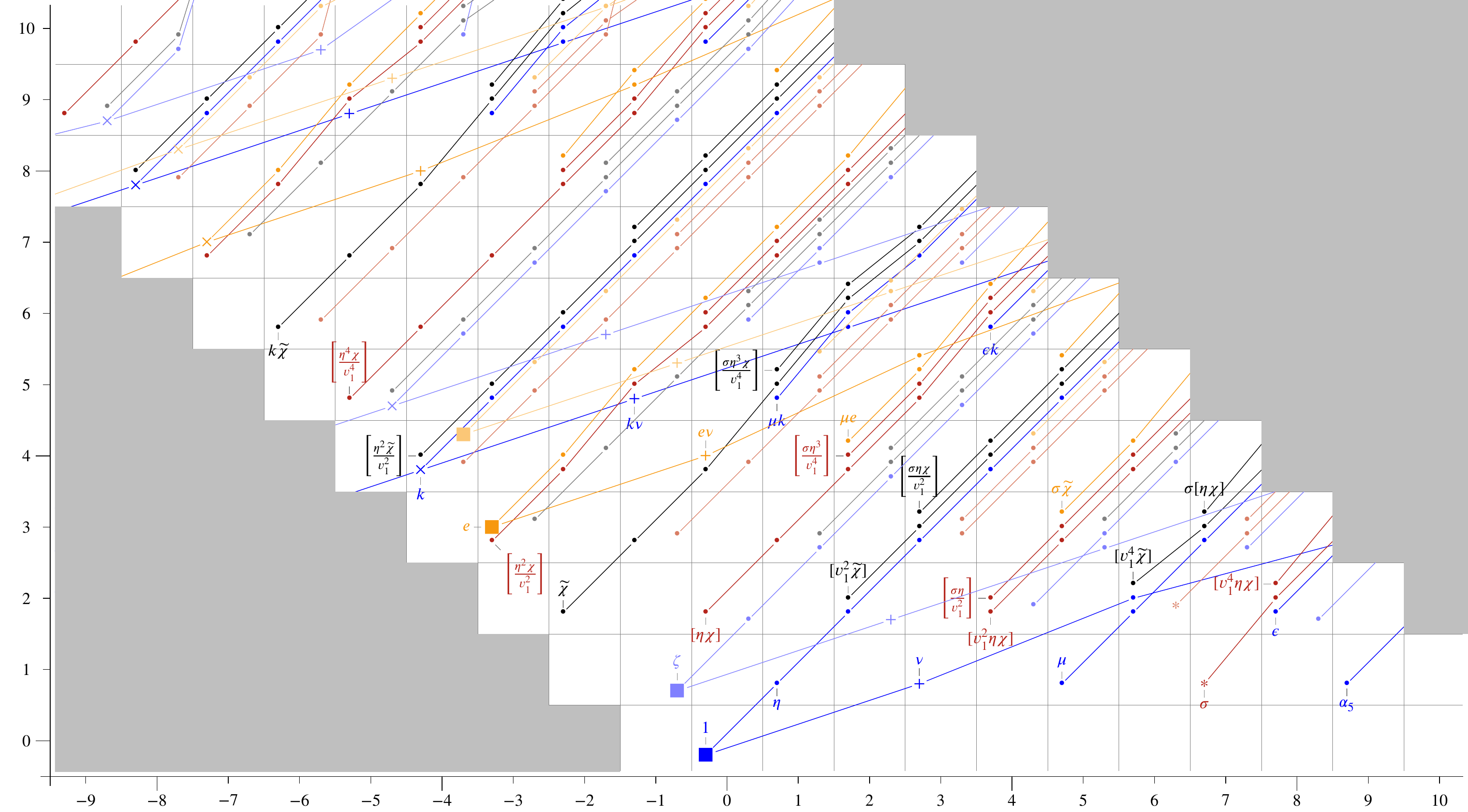}
\caption[$H^s(\SS_2, \EE_t)$ and $H^s(\GG_2, \EE_t)$ in Adams grading $(t-s,s)$]{The cohomology $H^s(\SS_2, \EE_t)$ in Adams grading $(t-s,s)$. A $\blacksquare = \W$, a $\bullet = \F_4$, $+ = \W/4$, $\times =\W/8$ and $\ast =\W/16$. Pale classes are multiples of $\zeta$. The colors are to remind the reader of the ADSS filtration. Note that interpreting $\blacksquare = \Z_2$, a $\bullet = \Z/2$, $+ = \Z/4$, $\times =\Z/8$ and $\ast =\Z/16$ this is also a picture of $H^s(\GG_2,\EE_t)$.
\label{fig:EinfS2}}
\end{figure}

\section{Galois fixed points and the cohomology of $\GG_2$}\label{sec:gal}

 % !TEX root = coh-master.tex

It remains to compute $H^*(\GG_2^1, \EE_t)$ and $H^*(\GG_2, \EE_t)$.
By \cite[Lemma 1.3.2]{BobkovaGoerss} 
\[H^*(\SS_2^1,\EE_*) \cong \WW\otimes_{\ZZ_2}H^*(\GG_2^1, \EE_*), \quad \text{and} \quad H^*(\SS_2,\EE_*) \cong \WW\otimes_{\ZZ_2}H^*(\GG_2, \EE_*). \]
From this, it follows that to get the $H^*(\GG_2, \EE_*)$, we can literally just change all the $\W$ to $\Z_2$'s in our answers (and $\F_4$'s to $\Z/2$'s), and similarly for $\GG_2^1$. 

However, the reader should be warned that the Galois group $\Gal(\F_4/\F_2)$ does not act on the duality resolution. Therefore, a priori, it could be the case that a generator we detect from the ADSS does not correspond to a Galois invariant element in $H^*(\SS_2^1, \EE_*)$. But we have been extremely careful in choosing the generators, as follows.
\begin{itemize}
\item  The elements which come from the ANSS in the sense of \cref{rem:hurimagelanguage}, like $\eta$, $\nu$ and $\sigma$ are Galois invariant since we have a factorization
\[\Ext_{BP_*BP}(BP_*,BP_*) \to H^*(\GG_2, \EE_*) \to H^*(\SS_2, \EE_*)^{\Gal}\subseteq H^*(\SS_2, \EE_*);\]
\item The elements that came directly from the cohomology of $\GG_2$, like 
 $k$ (\cref{lem:kfixed}), $\wchi$ and $\zeta$ (\cref{rem:chizeta}), and the Massey product $[\etachi]$ \eqref{eq:massey} are Galois invariant by construction;
 \item The element $v_1 \in H^0(\GG_2, \EE_0/2)$ is Galois invariant by construction, and this implies that the generators of $H^*(\SS_2, \EE_*)$ we named using the reduction modulo 2 map and $v_1$-multiplications in $H^*(\SS_2, \EE_*/2)$, like $\hbonebar$ for example, are also Galois invariant. 
 \end{itemize}
 Therefore, we get more than just an abstract statement about the structure of the groups, we actually get a full set of generators.

 \begin{theorem}\label{thm:G2coh}
  The cohomology $H^s(\GG_2^1, \EE_t)$ in the range $0\leq t<12$, and for $s\leq 6$ is the $\ZZ_2$-module generated by the classes listed in \Cref{fig:table}, with the only additive relation given by the order of the class indicated below the element in the table. The result is also depicted in \Cref{fig:Einf}.
  
 The cohomology $H^s(\GG_2, \EE_t)$ in the range $0\leq t<12$, and for $s\leq 6$ is the $\ZZ_2$-module generated by the classes listed in \Cref{fig:table2}, with the only additive relation given by the order of the class indicated below the element in the table. The result is also depicted in \Cref{fig:EinfS2}.
 \end{theorem}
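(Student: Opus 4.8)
The plan is to deduce both statements from the $\SS_2^1$- and $\SS_2$-computations of \Cref{sec:main,sec:name,sec:Z2action} by Galois descent. First I would invoke \cite[Lemma 1.3.2]{BobkovaGoerss}, which gives isomorphisms
\[H^*(\SS_2^1,\EE_*) \cong \WW\otimes_{\ZZ_2}H^*(\GG_2^1, \EE_*), \qquad H^*(\SS_2,\EE_*) \cong \WW\otimes_{\ZZ_2}H^*(\GG_2, \EE_*).\]
Since $\WW\cong\ZZ_2[\Gal]$ is free of rank $2$ over $\ZZ_2$, this immediately shows that, as bi-graded abelian groups (and as modules over the relevant subrings), $H^*(\GG_2^1,\EE_*)$ and $H^*(\GG_2,\EE_*)$ are obtained from \Cref{fig:Einf} and \Cref{fig:EinfS2} by replacing every $\WW$ with $\ZZ_2$ and every $\F_4$ with $\Z/2$, keeping the stated orders. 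That already gives the abstract content of the theorem.

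The one genuine point to address — and the step I expect to be the main obstacle — is that $\Gal(\F_4/\F_2)$ does not act on the algebraic duality resolution, so it is not automatic that a generator detected in the ADSS corresponds to a $\Gal$-invariant class; I would therefore verify, generator by generator, that each class named in \Cref{fig:table} and \Cref{fig:table2} actually lies in $H^*(\SS_2^1,\EE_*)^{\Gal}$ (resp. $H^*(\SS_2,\EE_*)^{\Gal}$), so that it descends to a true generator rather than an abstract summand. This should split into the three mechanisms already in play in the paper: (i) classes coming from the ANSS in the sense of \Cref{rem:hurimagelanguage} — $\eta$, $\nu$, $\mu$, $\sigma$, $\alpha_5$, $\nu^2$, $\epsilon$, and their multiples by $k$ and $\Delta_0$ — are $\Gal$-invariant through the factorization $\Ext_{BP_*BP}(BP_*,BP_*)\to H^*(\GG_2,\EE_*)\to H^*(\SS_2,\EE_*)^{\Gal}$; (ii) the classes $k$, $\wchi$, $\zeta$, and $[\etachi]$ are $\Gal$-invariant by their very construction (in \Cref{lem:kfixed}, \Cref{rem:chizeta}, and \eqref{eq:massey}); and (iii) the classes whose names involve $v_1$-powers — $[v_1^{2n}\wchi]$, $[v_1^{2n}\etachi]$, $\hbone$, $\hbonebar$, $\ghbone$, $\ghbonebar$ — were defined using the Galois-equivariant reduction map $r\colon H^*(\SS_2^1,\EE_*)\to H^*(\SS_2^1,\EE_*/2)$ together with multiplication by the $\Gal$-invariant class $v_1\in H^0(\GG_2,\EE_0/2)$ (\Cref{rem:easyZfixed}(5)), and since the relevant reductions are injective in the degrees at issue (cf. \eqref{eq:inj}), $\Gal$-invariance of the image forces $\Gal$-invariance of the class.

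Running through \Cref{fig:table} and \Cref{fig:table2}, every listed generator is of one of these three types or a $k$-multiple thereof, so all generators are $\Gal$-invariant; applying the descent isomorphisms then produces the asserted $\ZZ_2$-module descriptions of $H^s(\GG_2^1,\EE_t)$ and $H^s(\GG_2,\EE_t)$ for $0\leq t<12$ and $s\leq 6$, with generators and orders exactly as tabulated, and the charts \Cref{fig:Einf}, \Cref{fig:EinfS2} reinterpreted over $\ZZ_2$ as in their captions. For the $\GG_2$ (rather than $\GG_2^1$) statement I would additionally note that the extension by $\zeta$ from \Cref{sec:Z2action} is compatible with the Galois action — $\zeta$ comes from $H^1(\GG_2,\ZZ_2)$ and the chosen splitting by $\pi$ respects $\Gal$ — so that the exterior-algebra decomposition $H^*(\SS_2,\EE_t)\cong H^*(\SS_2^1,\EE_t)\otimes_{\WW}E(\zeta)$ descends verbatim to $H^*(\GG_2,\EE_t)\cong H^*(\GG_2^1,\EE_t)\otimes_{\ZZ_2}E(\zeta)$.
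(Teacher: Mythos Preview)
Your proposal is correct and follows essentially the same approach as the paper: invoke the Galois descent isomorphisms from \cite[Lemma 1.3.2]{BobkovaGoerss} to reduce to replacing $\WW$ by $\ZZ_2$, then verify that the chosen generators are actually $\Gal$-invariant via the same three mechanisms (ANSS classes, classes built directly in $H^*(\GG_2,-)$, and classes named through the $\Gal$-equivariant mod $2$ reduction and $v_1$-multiplication). Your additional remark on the compatibility of the $\zeta$-extension with Galois is a nice clarification but not strictly needed, since the descent isomorphism already applies to $\SS_2$ directly.
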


We also record the following result about the action of $\GG_2/\GG_2^1$ on $H^*(\GG_2^1, \EE_*)$. This follows directly from \cref{thm:G2coh} but we also give an easy proof.
 
 \begin{lem}\label{lem:inv-enu-1} 
The action of $\GG_2/\GG_2^1$ on $H^*(\GG_2^1,\EE_t)$ is trivial, for $0\leq t <12$. 
\end{lem}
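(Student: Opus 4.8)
The plan is to deduce the statement from \Cref{thm:acttriv}, which already records that $\Z_2$ acts trivially on $H^*(\SS_2^1,\EE_*)$ in the range $0\leq t<12$. (I would note first that the result is in fact immediate from \Cref{thm:G2coh}: the generators of $H^*(\GG_2^1,\EE_t)$ listed in \Cref{fig:table} restrict to the $\Z_2$-fixed generators of $H^*(\SS_2^1,\EE_t)$ tracked throughout \Cref{sec:Z2action}, and the restriction map is injective, so triviality of the action on the former follows from triviality on the latter; but the direct argument below is just as short.)

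First I would identify the acting group. Since $\zeta$ of \eqref{eq:zeta} extends to a homomorphism $\GG_2\to\Z_2$ with $\Gal$ in its kernel, the subgroup $\GG_2^1$ is normal in $\GG_2$, contains $\SS_2^1$, and $\GG_2/\GG_2^1\cong\SS_2/\SS_2^1\cong\Z_2$; a topological generator of this quotient is the image of $\pi\in\SS_2\subseteq\GG_2$ from \Cref{rem:pialpha}, because $\zeta(\pi)$ is a topological generator of $\Z_2^\times/(\pm1)$. Next I would set up the comparison: $\SS_2^1$ is $\pi$-stable inside $\SS_2$ and $\GG_2^1$ is $\pi$-stable inside $\GG_2$, so conjugation by $\pi$ restricts compatibly along the inclusion $\SS_2^1\hookrightarrow\GG_2^1$; by naturality of group cohomology, applied with coefficients in the $\GG_2$-module $\EE_t$, this produces a commutative square relating the $\pi$-action on $H^*(\GG_2^1,\EE_t)$, the $\pi$-action on $H^*(\SS_2^1,\EE_t)$, and the restriction map between them.

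I would then invoke \cite[Lemma 1.3.2]{BobkovaGoerss}: the restriction $H^*(\GG_2^1,\EE_t)\to H^*(\SS_2^1,\EE_t)$ is the base-change map $M\hookrightarrow\WW\otimes_{\ZZ_2}M$, hence injective. Combining this injectivity, the commutative square, and the fact that $\pi$ acts trivially on $H^*(\SS_2^1,\EE_t)$ for $0\leq t<12$ by \Cref{thm:acttriv}, I conclude that $\pi$ acts trivially on $H^*(\GG_2^1,\EE_t)$ as well. Since $\pi$ topologically generates $\GG_2/\GG_2^1\cong\Z_2$ and the action is continuous, the whole quotient acts trivially, which is the claim.

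I do not anticipate a genuine obstacle. The only point that needs (routine) care is writing out the square of groups $\SS_2^1\hookrightarrow\GG_2^1$ with both vertical maps given by conjugation by $\pi$ and checking that it commutes on the nose -- which it does, since $\pi$ is a single element of $\GG_2$ normalizing each of the two subgroups -- together with recalling that $\pi$ genuinely lifts a topological generator of the quotient, which is automatic from $\zeta(\pi)$ being a generator of $\Z_2^\times/(\pm1)$. Everything else is a formal consequence of \Cref{thm:acttriv} and the injectivity in \cite[Lemma 1.3.2]{BobkovaGoerss}.
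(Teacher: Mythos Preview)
Your proposal is correct and follows essentially the same approach as the paper: both use that $\GG_2/\GG_2^1\cong\SS_2/\SS_2^1$ and that restriction identifies $H^*(\GG_2^1,\EE_t)$ with a subgroup of $H^*(\SS_2^1,\EE_t)$ (the paper phrases this as $H^*(\GG_2^1,\EE_t)=H^*(\SS_2^1,\EE_t)^{\Gal}$), so triviality of the action follows from \Cref{thm:acttriv}. Your write-up is more explicit about the generator $\pi$ and the naturality square, but the argument is the same.
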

\begin{proof}
The action of $\SS_2/\SS_2^1$ on $H^*(\SS_2^1, \EE_t)$ is trivial in this range \cref{thm:acttriv}. The claim for $\GG_2^1$ follows from the fact that $H^*(\GG_2^1,\EE_t) = H^*(\SS_2^1, \EE_t)^{\Gal}$ and the natural map
\[\SS_2/\SS_2^1 \xrightarrow{\cong} \GG_2/\GG_2^1  \]
is an isomorphism. 
\end{proof}
 
\newpage

\begin{table}[H]
  \begin{adjustbox}{addcode={\begin{minipage}{\width}}{\caption{%
      Generators for the cohomology groups $H^s(\SS_2^1, \EE_t)$ and $H^s(\GG_2^1, \EE_t)$. The colors correspond to the ADSS filtration.
 \label{fig:table}      
      }\end{minipage}},rotate=90,center}
      \includegraphics[width=\textheight]{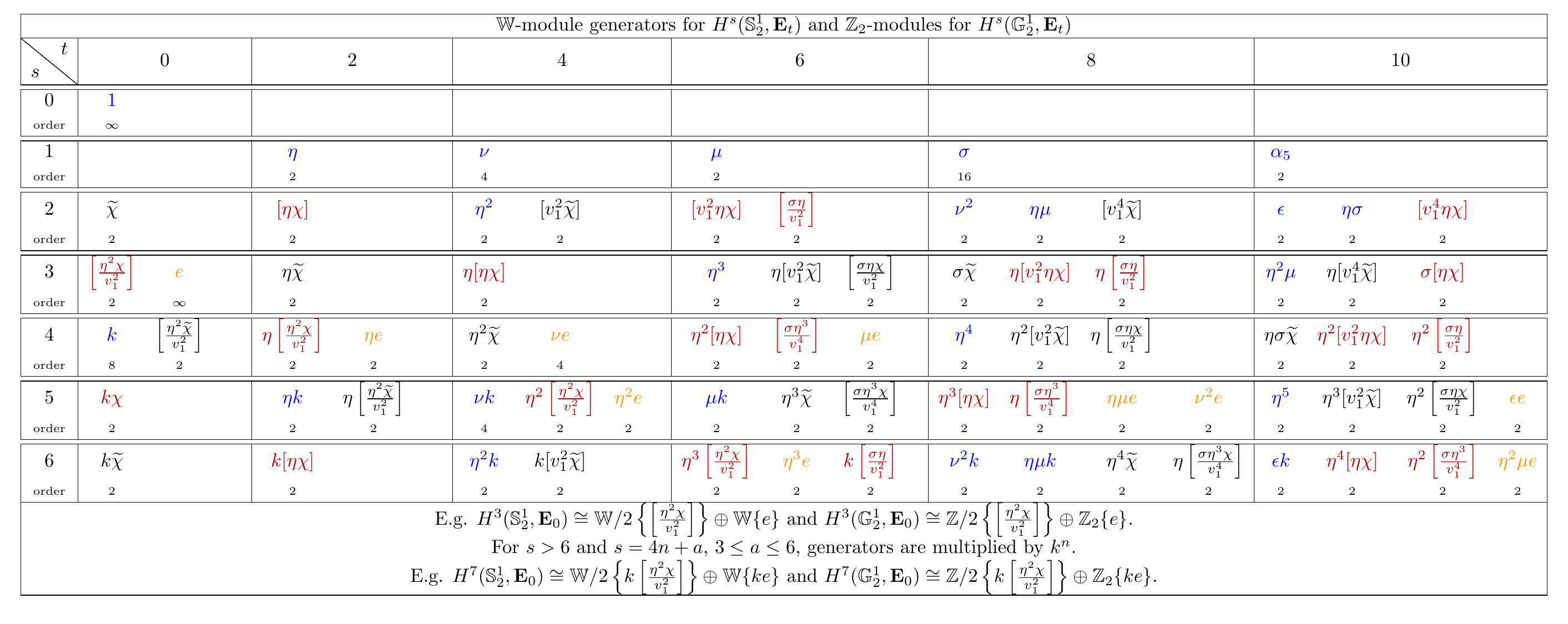}%
  \end{adjustbox}
\end{table}

 \begin{table}[H]
  \begin{adjustbox}{addcode={\begin{minipage}{\width}}{\caption{%
 Generators for the cohomology groups $H^s(\SS_2, \EE_t)$ and $H^s(\GG_2, \EE_t)$
\label{fig:table2}      
      }\end{minipage}},rotate=90,center}
      \includegraphics[width=\textheight]{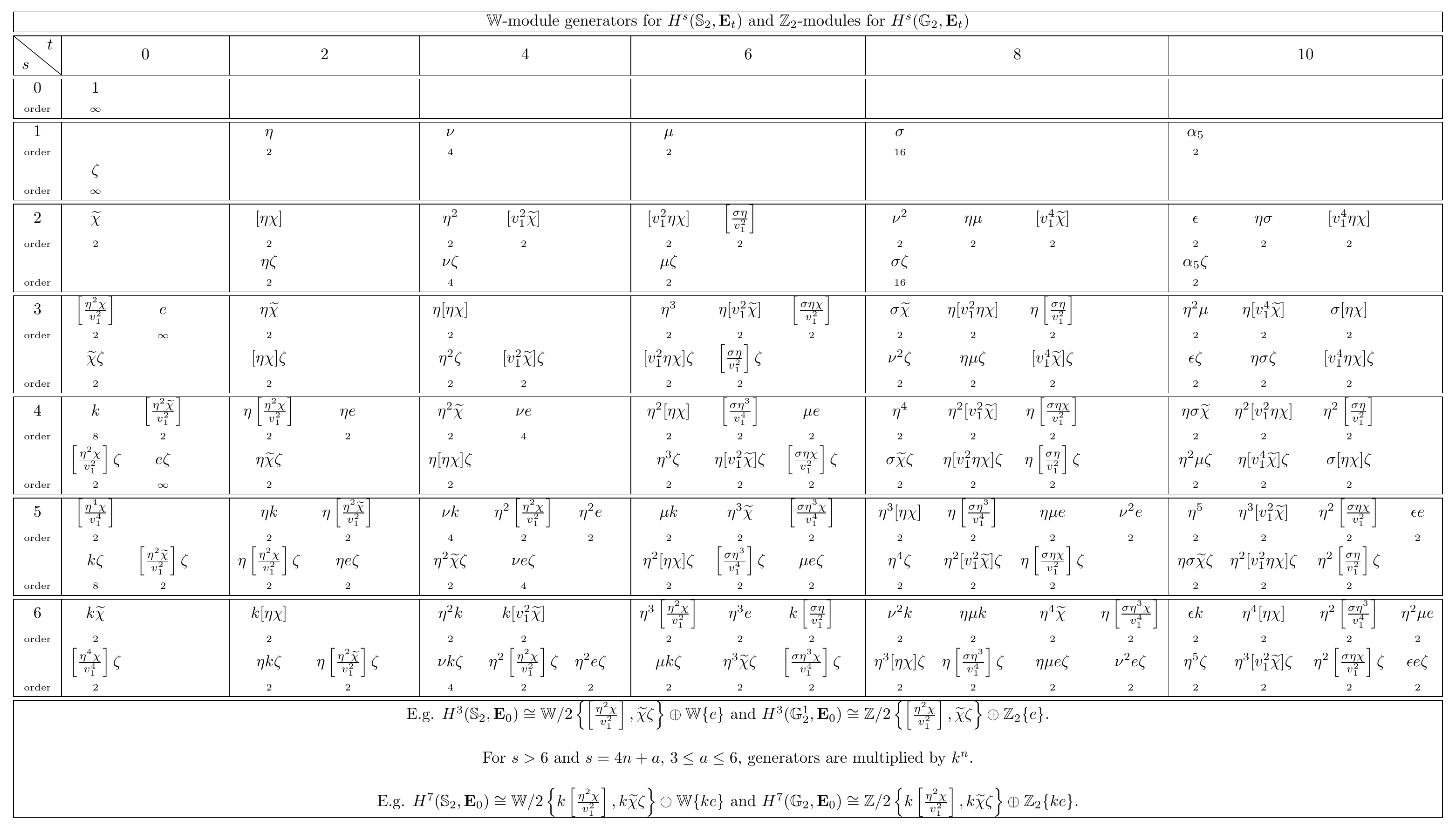}%
  \end{adjustbox}
\end{table}

  \begin{figure}[H]
    \includegraphics[width=\textwidth]{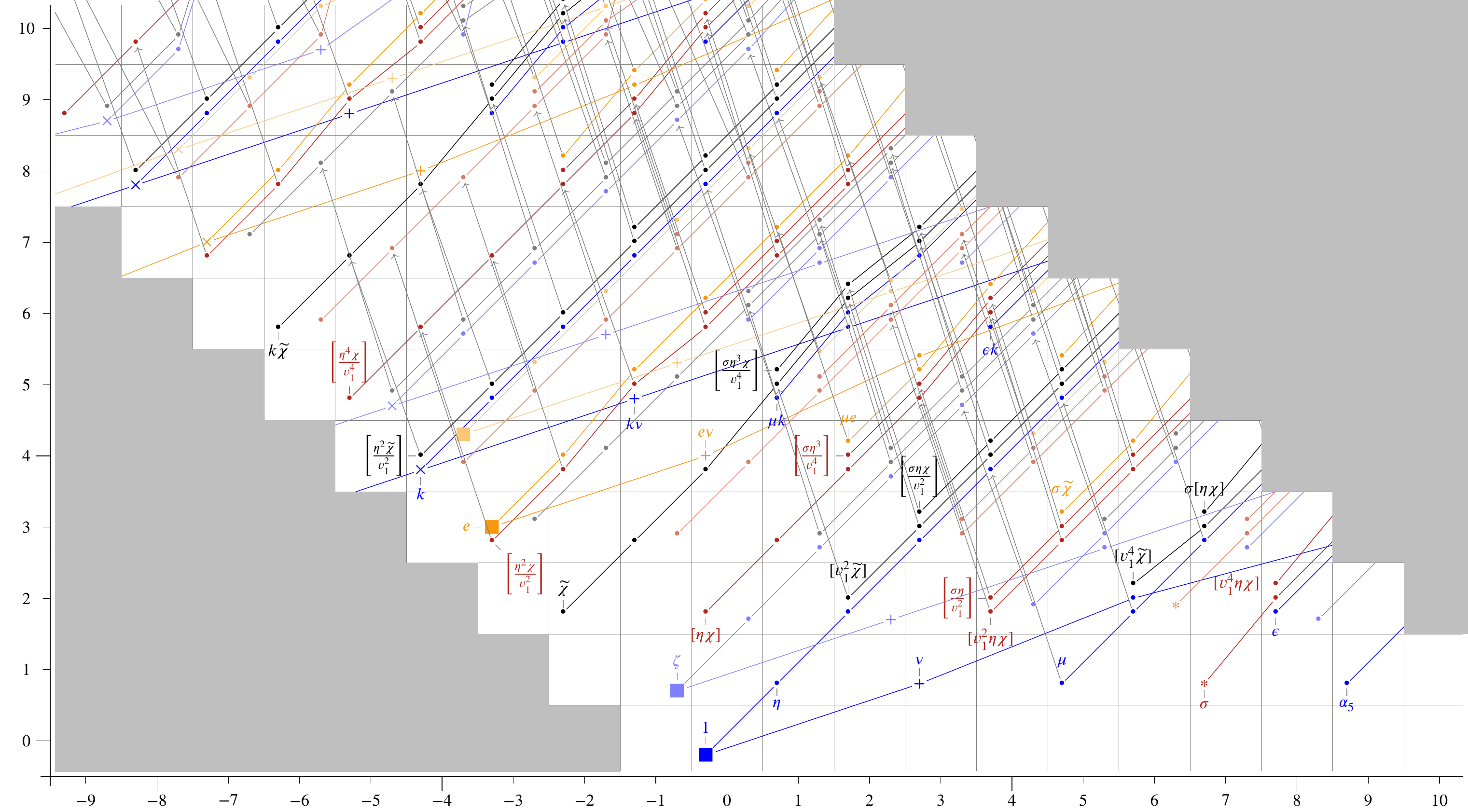}
     \includegraphics[width=\textwidth]{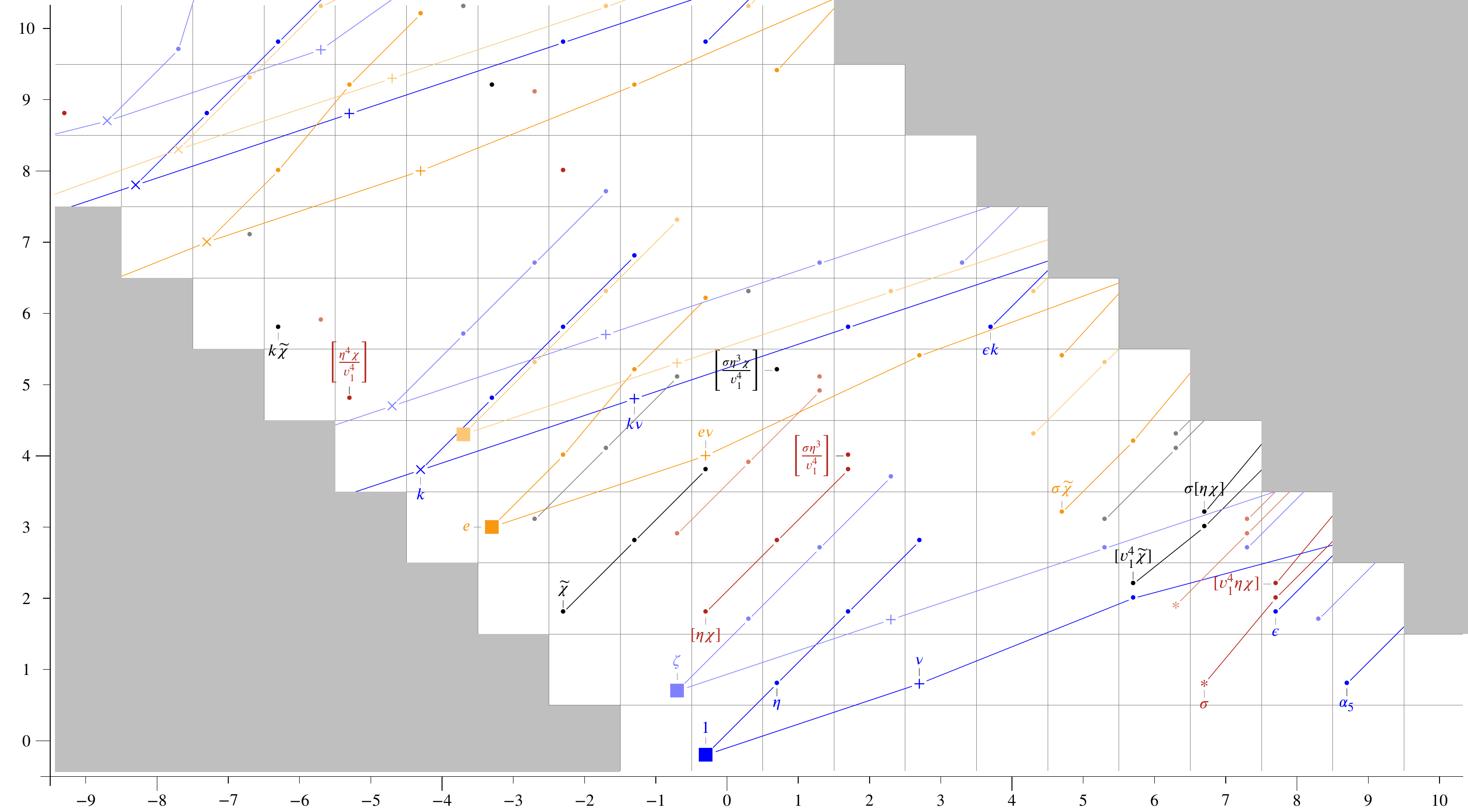}
    \caption{The $d_3$ differentials in the homotopy fixed point spectral sequence $H^s(\GG_2, \EE_t) \Longrightarrow \pi_{t-s}E^{h\GG_2}$ (top) and the $E_5$ page of the homotopy fixed point spectral sequence $H^s(\GG_2, \EE_t) \Longrightarrow \pi_{t-s}E^{h\GG_2}$ (bottom). This is also a picture for the group $\SS_2$.}
    \end{figure}

      \begin{figure}[H]
    \includegraphics[width=\textwidth]{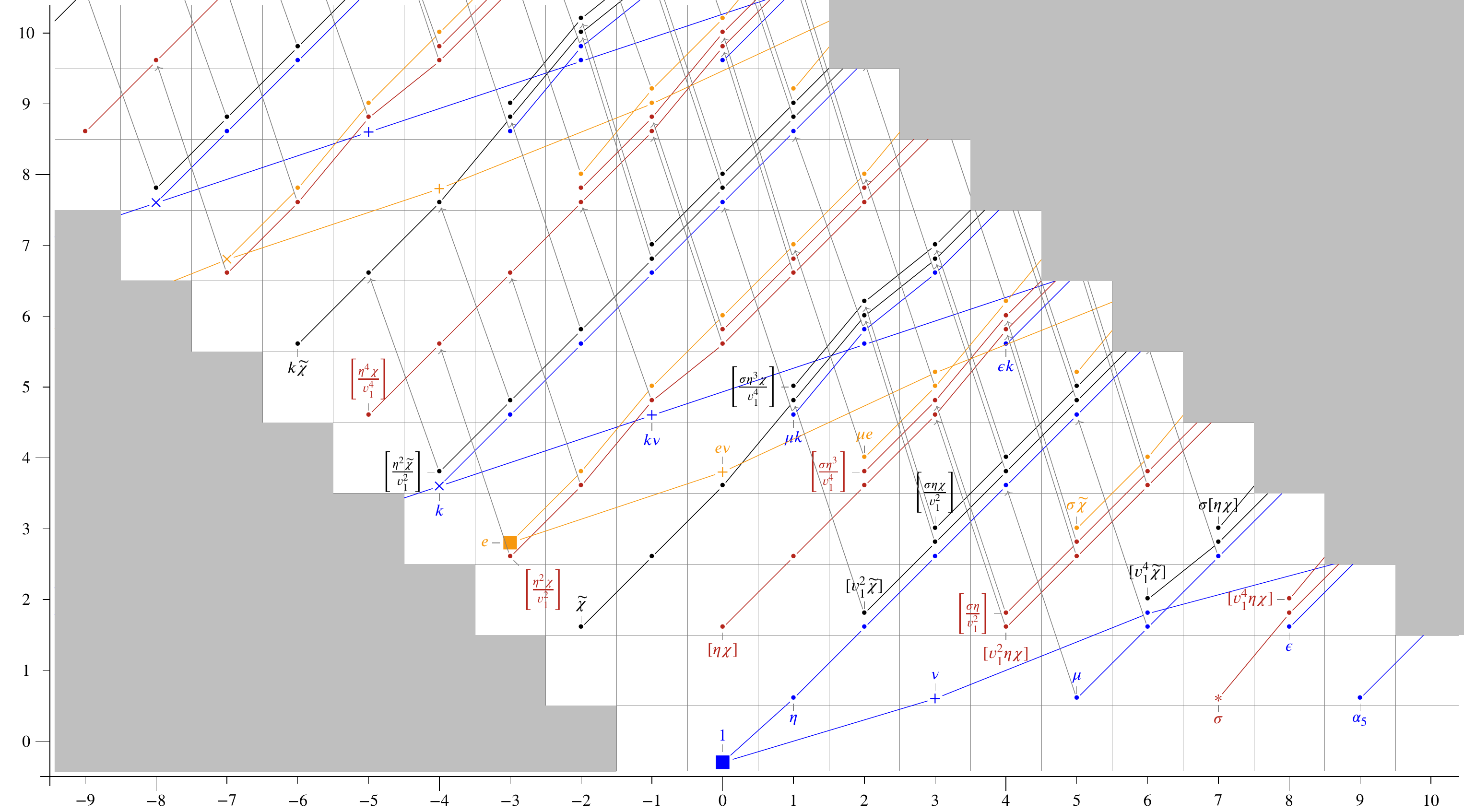}
     \includegraphics[width=\textwidth]{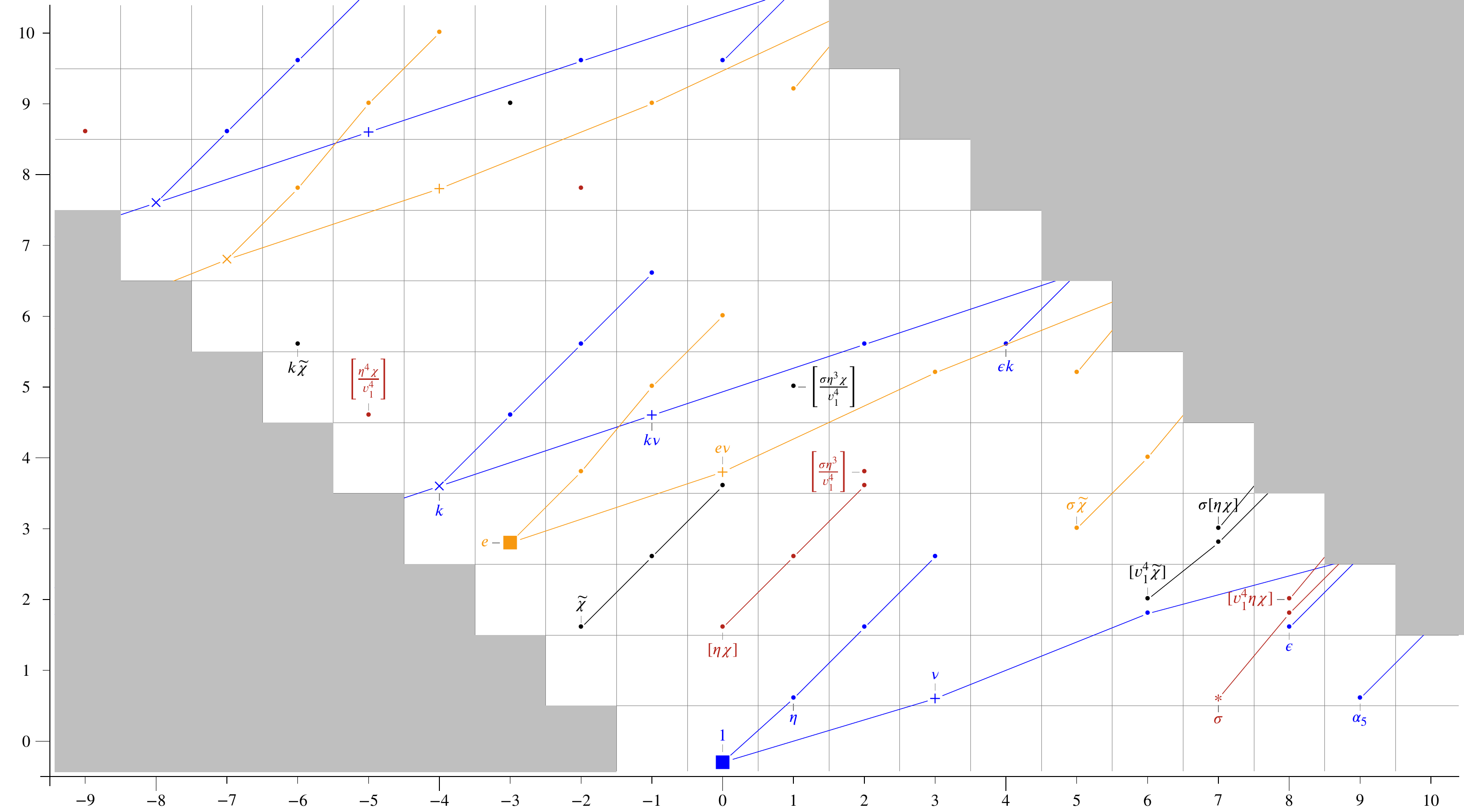}
    \caption{The $d_3$ differentials in the homotopy fixed point spectral sequence $H^s(\GG_2^1, \EE_t) \Longrightarrow \pi_{t-s}E^{h\GG_2^1}$ (top) and the $E_5$ page of the homotopy fixed point spectral sequence $H^s(\GG_2^1, \EE_t) \Longrightarrow \pi_{t-s}E^{h\GG_2^1}$ (bottom). This is also a picture for the group $\SS_2^1$.}
    \end{figure}

\section{Differentials}\label{sec:diffs}

 % !TEX root = coh-master.tex

In this section, we compute $d_3$-differentials in the homotopy fixed point spectral sequence of $\EE^{h\mathbb{G}_2^1}$ and $\EE^{h\GG_2}$ in our range.

\begin{rem}
We will use the comparison of spectral sequences
\begin{equation}\label{eq:sscompare}
\xymatrix@R=1pc{
H^*(\GG_2, \EE_*) \ar@{=>}[r] \ar[d]^-r & \pi_*L_{K(2)}S^0\ar[d] \\
H^*(\GG_2, \EE_*/2) \ar@{=>}[r] \ar[d]^-{v_1^{-1}}  & \pi_*L_{K(2)}S^0/2\ar[d] \\
v_1^{-1}H^*(\GG_2, \EE_*/2) \ar@{=>}[r]  & \pi_*L_{K(1)}L_{K(2)}S^0/2 .
}
\end{equation}
where the bottom spectral sequence is the $v_1$-localized $K(2)$-local $\EE_2$-based Adams--Novikov spectral sequence for the Moore spectrum $S^0/2$. Denote by $v_1^{-1}r$ the left-hand side vertical composition, and note that this is a ring map in cohomology. However, the Moore spectrum is not a ring spectrum so the middle and bottom spectral sequences of \eqref{eq:sscompare} are not multiplicative spectral sequences.
The differentials in the $v_1$-localized spectral sequence for the Moore spectrum were completely computed in \cite[Theorem 8.3.5]{BGH}.
\end{rem}

\begin{prop}\label{thm:d3}
Let $G=\GG_2$, $\mathbb{S}_2$, $\GG_2^1$ or $\mathbb{S}_2^1$. 
In the spectral sequence
\begin{equation}\label{eq:HFSSS21-Sec6}
E_2^{s,t}=H^s(G, \EE_t) \Longrightarrow \pi_{t-s}\EE^{hG}
\end{equation}
the classes
\[1, \ \eta, \ \nu, \ \sigma, \ \epsilon, \  \alpha_5, \  \wchi, \ [\etachi],  e, k\]
are $d_3$-cycles. For $G=\mathbb{G}_2$ or $\mathbb{S}_2$, the class $\zeta$ is also a $d_3$-cycle.
\end{prop}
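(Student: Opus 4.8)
The plan is to prove that each listed class is a $d_3$-cycle by a mixture of three mechanisms: (i) classes coming from the ANSS are permanent cycles automatically, since the ANSS for $S^0$ maps to the homotopy fixed point spectral sequence (see \Cref{sec:Hurewicz}); (ii) classes in low filtration in low stems have no room for a $d_3$; and (iii) for the remaining classes, we transport the computation to the $v_1$-localized homotopy fixed point spectral sequence for the Moore spectrum via the ring map $v_1^{-1}r$ of \eqref{eq:sscompare}, where the differentials were computed in \cite[Theorem 8.3.5]{BGH}, and use injectivity of $v_1^{-1}r$ on the relevant classes.

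First I would dispatch the easy cases. The classes $1, \eta, \nu, \sigma, \epsilon, \alpha_5$ all come from the ANSS by \eqref{eq:list1} and \eqref{eq:list2}, hence are permanent cycles and in particular $d_3$-cycles; likewise $k$ lifts to an element of $H^4(\GG_2,\EE_0)$ of order $8$ detecting (a multiple related to) $\kappabar \in \pi_{20}S^0$ in homotopy, so it too is a permanent cycle (alternatively, $k$ supports no $d_3$ because $E_3^{7,3}(\GG_2)$ in the relevant stem is zero by inspection of \Cref{fig:EinfS2}). The class $\zeta$ for $G = \GG_2$ or $\SS_2$ detects $\zeta \in \pi_{-1}L_{K(2)}S^0$ by the remark after \eqref{def:zeta}, citing \cite[Proposition 8.2]{DH}, so $\zeta$ is a permanent cycle; this is where the hypothesis $G \neq \GG_2^1, \SS_2^1$ is used. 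The target of a $d_3$ on $\zeta \in H^1(G,\EE_0)$ would lie in $H^4(G,\EE_{-2})$, and one can also simply observe this group vanishes in our range, but it is cleanest to invoke the homotopy class.

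The substantive cases are $\wchi$, $[\etachi]$, and $e$. For $\wchi \in H^2(\SS_2^1,\EE_0)$ detected by $\overline{b}_0$ (see \eqref{eq:list3}) and $[\etachi] \in H^2(\GG_2,\EE_2)$ (see \eqref{eq:massey}), I would use the reduction map $r$ to $\EE_*/2$-coefficients: $r(\wchi) = \chi^2$ and $r([\etachi]) = \etachi$ in $H^2(\GG_2,\EE_*/2)$, and after inverting $v_1$ these are polynomial/exterior generators of $v_1^{-1}H^*(\GG_2,\EE_*/2)$ by \cite[Theorem 8.2.5]{BGH}. Since the bottom spectral sequence of \eqref{eq:sscompare} has its $d_3$-differentials recorded in \cite[Theorem 8.3.5]{BGH}, and $\chi, \chi^2$ (equivalently $\sigma\chi$, $\eta\chi$ in the localized picture) are $d_3$-cycles there, naturality of \eqref{eq:sscompare} together with the injectivity of $v_1^{-1}r$ on $H^2$ in these degrees (which follows from comparing \Cref{thm:Einfmod2} with the $v_1$-localized answer, cf.\ \eqref{eq:inj}) forces $d_3(\wchi) = 0$ and $d_3([\etachi]) = 0$. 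For $e = [\overline{\Delta}_0] \in H^3(\SS_2^1,\EE_0)$, the potential target $d_3(e)$ lies in $H^6(\SS_2^1,\EE_{-2}) \cong H^6(\SS_2^1,\EE_{22})$ (up to $\Delta$); I would again push to $\EE_*/2$ and the $v_1$-localized setting, where $r(e)$ is tracked by \cite[Theorem 8.3.5]{BGH} (note the relation $\sigma\chi^2 = v_1^4 r(e)$ from \Cref{lem:H3fixed}, and $\sigma$, $\chi^2$ are $d_3$-cycles so $v_1^4 r(e)$ is, hence $r(e)$ is since $v_1$ is a unit downstairs), and then use that $r$ is injective on $H^3(\SS_2^1,\EE_0)$ modulo the $2$-divisible part — more precisely, that $v_1^{-1}r$ detects $e$ — to conclude $d_3(e) = 0$.

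The main obstacle is the bookkeeping in the last paragraph: one must check, degree by degree, that $v_1^{-1}r$ (or already $r$) is injective on the specific source groups $H^2$ and $H^3$ so that vanishing of the differential downstairs pulls back. This requires a careful comparison of \Cref{thm:Einfmod2}, the integral answer of \Cref{sec:Einf}, and the structure of $v_1^{-1}H^*(\SS_2^1,\EE_*/2)$ from \cite{BGH}, exactly the injectivity statements packaged in \eqref{eq:inj}. A secondary subtlety is that the middle and bottom rows of \eqref{eq:sscompare} are spectral sequences for the non-multiplicative Moore spectrum, so I would phrase the pullback argument purely in terms of the $d_3$-differential being a module map over the permanent cycles (or over $\F_4[v_1^{\pm1}]$ after localization), rather than invoking multiplicativity of the whole spectral sequence. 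Once injectivity is in hand, every remaining verification is a one-line diagram chase in \eqref{eq:sscompare}.
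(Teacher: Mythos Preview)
Your overall strategy---reduce to the ANSS image for the easy classes, then use the comparison \eqref{eq:sscompare} with the $v_1$-localized spectral sequence of \cite{BGH} for $\wchi$, $[\etachi]$, $e$---matches the paper's. However, there is a genuine gap in how you execute the comparison argument.

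You repeatedly appeal to injectivity of $v_1^{-1}r$ on the \emph{source} groups $H^2$ and $H^3$ (citing \eqref{eq:inj}) to ``pull back'' the vanishing of the differential. That is the wrong direction. Naturality gives $v_1^{-1}r\bigl(d_3(\wchi)\bigr)=d_3\bigl(v_1^{-1}r(\wchi)\bigr)=d_3(\chi^2)=0$, so $d_3(\wchi)$ lies in the kernel of $v_1^{-1}r$ on the \emph{target} group $H^5(\GG_2,\EE_2)$. To conclude $d_3(\wchi)=0$ you must check that this kernel is zero, i.e., that every generator of $H^5(\GG_2,\EE_2)$ has nonzero image under $v_1^{-1}r$. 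The paper does exactly this: it lists $H^5(\GG_2,\EE_2)\cong\F_2\{\eta k,\,\eta\gwchi,\,\eta\gchi\zeta,\,\eta e\zeta\}$ and observes each class survives $v_1$-localization. The same correction applies to $e$ (target $H^6(\GG_2,\EE_2)$, not $H^6(\SS_2^1,\EE_{-2})$ as you wrote---your bidegree is off, and the ``up to $\Delta$'' periodicity does not hold for $\SS_2^1$) and to $k$ (target $H^7(\GG_2,\EE_2)$). Injectivity on the source only tells you that $\wchi$ itself is detected downstairs, which you already knew.

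Separately, your treatment of $k$ is not correct. The relation $k\Delta=\kappabar$ of \Cref{rem:kappabar} does not make $k$ a permanent cycle: $\kappabar$ lives in stem $20$, while $k\in H^4(\GG_2,\EE_0)$ is in stem $-4$, and $\Delta$ is not a permanent cycle in the HFPSS. Your alternative (``$E_3^{7,2}$ is zero by inspection'') is also false: the paper computes $H^7(\GG_2,\EE_2)\cong\F_2\{\eta k\wchi,\,\eta\kchi\zeta\}$, which is nonzero. The paper handles $k$ by the same $v_1^{-1}r$-kernel argument as for $\wchi$ and $e$, using that $k\mapsto v_1^{-4}\eta^4$ under $v_1^{-1}r$. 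Finally, note that the paper gets $[\etachi]$ for free once $\wchi$ is a $d_3$-cycle, since $[\etachi]=\langle\wchi,2,\eta\rangle$ is a Massey product of $d_3$-cycles.
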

\begin{proof}
It suffices  to do the case $G=\GG_2$, as the claim for the other subgroups then follows by naturality.
The classes $1, \ \eta, \ \nu, \ \sigma, \ \epsilon, \  \alpha_5$ are $d_3$-cycles because they are the images of permanent cycles in the $BP$-based ANSS for $S^0$, see \Cref{sec:Hurewicz}. That the class $\zeta$ is a permanent cycle is a well-known result due to Hopkins--Miller \cite[Theorem 6]{DH}.

The element $\wchi$ maps to $\chi^2$ in $v_1^{-1}H^*(\GG_2, \EE_*/2)$ 
and $\chi^2$ is a $d_3$ cycle in the $v_1$-localized spectral sequence by \cite[Theorem 8.3.5]{BGH}.
Therefore, $d_3(\wchi)$ is an element of
\[H^5(\GG_2,E_2) \cong \F_2\left\{\eta k, \eta\gwchi, \eta\gchi\zeta,\eta e\zeta \right\},\]
and $d_3(\wchi)$ is in the kernel of $v_1^{-1}r$. However, none of these classes map to zero under $v_1^{-1}r$, since $k$ maps to $v_1^{-4}\eta^4$ and $e$ to $v_1^{-4}\sigma\chi^2$ (see \cref{lem:sigmatimes}). Therefore, $\wchi$ is a $d_3$-cycle in the spectral sequence \eqref{eq:HFSSS21-Sec6} when $G=\GG_2$, and hence so is the Massey product $[\etachi]=\langle \wchi,2,\eta \rangle $.

For $k$ and $e$, we use an analogous argument. Both elements map to non-trivial $d_3$-cycles and, again, the targets must be in the kernel of $v_1^{-1}r$. But the kernel of $v_1^{-1}r$ in the bi-degrees of both $d_3(k)$ and $d_3(e)$ is zero. Indeed, $d_3(k)$ is in
\[ H^{7}(\GG_2,E_2) \cong \F_2 \left\{\eta  k\wchi, \eta  \kchi\zeta \right\},\] 
and $d_3(e)$ is in
\[H^6(\GG_2, E_2) \cong \F_2\left\{ k[\eta\chi],\eta k\zeta, \eta\gwchi\zeta \right\}. \qedhere\]
\end{proof}

We also have the following standard differential that can be found in many sources. See, for example, \cite[Remark 4.1.7]{BGH} or \cite[Lemma 2.21]{BobkovaGoerss}.
\begin{lem}\label{lem:diffmu}
Let $G=\GG_2$, $\mathbb{S}_2$, $\GG_2^1$ or $\mathbb{S}_2^1$. For any $z\in E_2^{s,t}  \cong H^s(G,\EE_t)$ we have 
\[d_3(\mu z) = \eta^4 z + \mu d_3(z) . \]
\end{lem}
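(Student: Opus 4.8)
The statement is the Leibniz-type formula $d_3(\mu z) = \eta^4 z + \mu\, d_3(z)$, which should be seen as reflecting the fact that $\mu$ itself is \emph{not} a $d_3$-cycle: it supports the differential $d_3(\mu) = \eta^4$ (this is the $n=1$, $\alpha_3$-family analogue of the classical Toda differential $d_3(\alpha_{i/j})$ in the Adams--Novikov spectral sequence, or equivalently of $d_3(v_1^2) = \eta^3$ in the relevant Moore-spectrum setting). So the first and most important step is to establish $d_3(\mu) = \eta^4$ in the homotopy fixed point spectral sequence for $\EE^{hG}$. For $G = \GG_2$ this follows because $\mu = \alpha_3$ comes from the ANSS for the sphere (see \cref{sec:Hurewicz}), where the differential $d_3(\alpha_3) = \eta^4$ (in Adams--Novikov indexing) is classical — alternatively one invokes the cited references \cite[Remark 4.1.7]{BGH} or \cite[Lemma 2.21]{BobkovaGoerss}. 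For the other three groups $\mathbb{S}_2$, $\GG_2^1$, $\mathbb{S}_2^1$, the differential follows by naturality of the homotopy fixed point spectral sequences along the restriction maps, exactly as in the proof of \cref{thm:d3}: $\mu$, $\eta$ and hence $\eta^4$ are all in the image of the corresponding classes for $\GG_2$ (they all come from the ANSS), and $\mu$ maps to $\mu$, $\eta^4$ to $\eta^4$.

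\textbf{Second step: promote this to the Leibniz formula.} The homotopy fixed point spectral sequence $H^s(G,\EE_t) \Rightarrow \pi_{t-s}\EE^{hG}$ is a multiplicative spectral sequence (it is the $\EE$-based descent/Adams--Novikov spectral sequence for the ring spectrum $\EE^{hG}$, so $d_r$ is a derivation with respect to the multiplication on each page). Given any class $z \in E_2^{s,t} = H^s(G,\EE_t)$ which happens to survive to the $E_3$-page (which is automatic here since the spectral sequence has no $d_2$ — compare the collapse statements in \cref{sec:main}, and in any case $\mu z$ and $z$ both live on $E_3$), the Leibniz rule for $d_3$ on the product $\mu \cdot z$ reads
\[
d_3(\mu z) = d_3(\mu)\, z + (-1)^{|\mu|}\mu\, d_3(z) = \eta^4 z + \mu\, d_3(z),
\]
where the sign disappears because $|\mu| = (1,6)$ gives total degree $1$ but $2\mu = 0$ so the sign is irrelevant modulo $2$ anyway (all the groups in question in these bidegrees are $2$-torsion, or one simply works modulo $2$). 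Substituting $d_3(\mu) = \eta^4$ from the first step finishes the argument. One should remark that both sides make sense: $\eta^4 z$ and $\mu\, d_3(z)$ lie in $E_3^{s+3, t+2} \cong E_3^{s+3,t+2}$ (multiplication by $\mu$ raises $(s,t)$ by $(1,6)$, by $\eta^4$ by $(4,8)$, and $d_3$ raises $(s,t)$ by $(3,2)$; these match up) and $\mu\, d_3(z)$ denotes the product of $\mu$ with a representative of $d_3(z)$ on $E_3$, which is well-defined up to the image of $d_3$ — but this ambiguity is exactly what makes the formula hold as an identity on $E_4$, so one should state the identity as taking place among $d_3$-values, i.e. among elements of $E_4$.

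\textbf{Expected main obstacle.} There is essentially no obstacle: the content is entirely the single input $d_3(\mu) = \eta^4$, which the paper explicitly says ``can be found in many sources,'' and the rest is the derivation property of a multiplicative spectral sequence. The only point requiring a sentence of care is the non-multiplicativity issue: for $G = \mathbb{S}_2$ or $\mathbb{S}_2^1$ the spectral sequence is still the homotopy fixed point spectral sequence for the ring spectrum $\EE^{hG}$, hence genuinely multiplicative, so the Leibniz rule applies directly — one does \emph{not} need to pass through the (non-multiplicative) Moore-spectrum spectral sequences of \cref{sec:diffs}. (Contrast with \cref{thm:d3}, where the Moore spectrum comparison was used; here it is unnecessary.) So the proof is short: cite $d_3(\mu) = \eta^4$, invoke multiplicativity, apply Leibniz, done.
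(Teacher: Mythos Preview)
Your proposal is correct, and in fact more detailed than the paper itself: the paper gives no proof for this lemma, merely citing \cite[Remark 4.1.7]{BGH} and \cite[Lemma 2.21]{BobkovaGoerss} as standard sources. Your argument --- establish $d_3(\mu)=\eta^4$ from the ANSS comparison and then apply the Leibniz rule in the multiplicative homotopy fixed point spectral sequence --- is exactly the content of those references.

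One small correction: your parenthetical pointing to \cref{sec:main} for the vanishing of $d_2$ is misplaced, since that section concerns the ADSS, not the HFPSS. The correct (and simpler) reason that $E_2=E_3$ here is the one you also mention implicitly: $\EE_t=0$ for $t$ odd, so $d_2\colon E_2^{s,t}\to E_2^{s+2,t+1}$ has zero target. This parity argument kills all even differentials and is the reason the statement is phrased for $z\in E_2$ rather than $E_3$.
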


\begin{lem}\label{lem:etanonzero}
Let $G=\GG_2$, $\GG_2^1$, $\SS_2$ or $\SS_2^1$.
\begin{enumerate}
\item
If $x\in H^*(G, \EE_*/2)$ is a $v_1$-free class, then $\eta x \neq 0$ in $H^*(G, \EE_*/2)$. 
\item 
 If that $x\in H^*(G, \EE_*)$ is a class such that $v_1^{-1}r(x) \neq 0$, then $\eta x \neq 0$ in $H^*(G, \EE_*)$. 
\end{enumerate}
\end{lem}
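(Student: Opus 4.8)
The plan is to prove the two statements of \Cref{lem:etanonzero} essentially in parallel, using the $v_1$-periodic picture as a lever. For part (1), the key input is \cite[Theorem 8.2.5]{BGH}, which identifies $v_1^{-1}H^*(\SS_2^1,\EE_*/2)$ as $\F_4[v_1^{\pm 1},\eta,\sigma,\chi]/(\sigma^2,\chi^3)$, together with the analogous statement for $\GG_2^1$ obtained by taking $\Gal$-fixed points (and the $\zeta$-extension for $\SS_2$, $\GG_2$). In that ring, $\eta$ acts injectively: it is a polynomial generator, and no relation among $\sigma^2$, $\chi^3$ is divisible by $\eta$, so multiplication by $\eta$ has trivial kernel. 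Hence if $x$ is $v_1$-free, its nonzero image in $v_1^{-1}H^*(G,\EE_*/2)$ has nonzero $\eta$-multiple there, and since the localization map $H^*(G,\EE_*/2)\to v_1^{-1}H^*(G,\EE_*/2)$ is injective on $v_1$-free classes (that is what ``$v_1$-free'' buys us), $\eta x\neq 0$ already in $H^*(G,\EE_*/2)$. I would first dispatch the case $G=\SS_2^1$, then deduce $\GG_2^1$ by Galois descent, and finally handle $\SS_2$ and $\GG_2$ using the splitting $H^*(\SS_2,\EE_*/2)\cong H^*(\SS_2^1,\EE_*/2)\otimes E(\zeta)$ (or directly from the $v_1$-localized algebra structure, which also appears in \cite{BGH}), noting $\zeta$-multiples of $v_1$-free classes are again $v_1$-free.

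For part (2), the plan is to reduce to part (1) via the composite $v_1^{-1}r\colon H^*(G,\EE_*)\to v_1^{-1}H^*(G,\EE_*/2)$. Suppose $x\in H^*(G,\EE_*)$ has $v_1^{-1}r(x)\neq 0$. Since $v_1^{-1}r$ is a ring map (as noted in the remark preceding \Cref{thm:d3}) and $\eta$ is in the image of $r$ in the sense that $v_1^{-1}r(\eta)=\eta\neq 0$, we get $v_1^{-1}r(\eta x)=\eta\cdot v_1^{-1}r(x)$. By the injectivity of $\eta$-multiplication on $v_1^{-1}H^*(G,\EE_*/2)$ established above, $\eta\cdot v_1^{-1}r(x)\neq 0$, hence $v_1^{-1}r(\eta x)\neq 0$, and in particular $\eta x\neq 0$ in $H^*(G,\EE_*)$. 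This is clean because it never requires knowing that $r(x)$ itself is $v_1$-free; the hypothesis $v_1^{-1}r(x)\neq 0$ is exactly the condition that survives all the way to the localization.

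The one point that needs care — and which I expect to be the main (if modest) obstacle — is justifying that multiplication by $\eta$ is injective on the localized rings, for all four groups $G\in\{\SS_2^1,\GG_2^1,\SS_2,\GG_2\}$. For $\SS_2^1$ this is immediate from the explicit presentation $\F_4[v_1^{\pm1},\eta,\sigma,\chi]/(\sigma^2,\chi^3)$: a monomial basis is $\{v_1^j\eta^i\sigma^\epsilon\chi^\delta : i\geq 0, j\in\Z, \epsilon\in\{0,1\}, \delta\in\{0,1,2\}\}$, and multiplication by $\eta$ sends this basis injectively to itself (it just raises $i$ by one). For $\GG_2^1$ one takes $\Gal$-fixed points; since $\Gal$ acts $\F_4$-semilinearly and fixes $v_1,\eta,\sigma,\chi$ (all coming from the ANSS or constructed Galois-equivariantly, as discussed in \Cref{sec:gal}), the fixed subring is $\F_2[v_1^{\pm1},\eta,\sigma,\chi]/(\sigma^2,\chi^3)$ and the same monomial argument applies. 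For $\SS_2$ and $\GG_2$ one tensors with $E(\zeta)$ over the appropriate base, and $\eta$-multiplication remains injective since $\zeta$ is a non-zero-divisor not involving $\eta$. I would state this injectivity as a short preliminary observation and then give the two-line arguments for parts (1) and (2) as above.
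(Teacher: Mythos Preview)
Your proposal is correct and follows essentially the same approach as the paper: both arguments reduce to the injectivity of $\eta$-multiplication on $v_1^{-1}H^*(G,\EE_*/2)$, established from the explicit algebra structure in \cite{BGH}, and then use that $v_1^{-1}r$ is a ring map to conclude. The paper is terser, citing \cite[Theorems 8.2.5, 8.2.6]{BGH} directly for all four groups rather than deducing the $\GG_2^1$, $\SS_2$, $\GG_2$ cases from $\SS_2^1$ via Galois descent and the $\zeta$-extension as you do, but this is only a difference in exposition.
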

\begin{proof}
By \cite[Theorems 8.2.5, 8.2.6]{BGH}, $\eta$ multiplication in $v_1^{-1}H^*(G, \EE_*/2)$ is injective. The claims follow from the fact that $v_1^{-1}r(\eta x) = \eta (v_1^{-1}r)(x) \neq 0$ if $ v_1^{-1}r(x) \neq 0$.
\end{proof}

\begin{theorem}
For $G=\GG_2, \SS_2, \GG_2^1, $ or $\SS_2^1$, 
 the non-zero $d_3$-differentials in the homotopy fixed point spectral sequence \eqref{eq:HFSSS21-Sec6} with source in the range $0\leq t<12$ are multiples by $d_3$-cycles of \cref{thm:d3} 
of the following differentials:
\begin{multicols}{2}
\begin{enumerate}[(1)]
\item\label{diffgchi} $d_3\left(\gchi\right) =\eta \kchi = k[\etachi]$;
\item\label{diffgwchi}  $d_3\left( \gwchi \right) = \eta k\wchi$;
\item \label{diffv12wchi} $d_3([v_1^2 \wchi]) = \eta^3 \wchi$;
\item \label{diffmu} $d_3(\mu)=\eta^4$;
\item \label{diffhbone}  \label{diffgother} $d_3\left( \hbone\right) = \eta \ghbone $;
\item \label{diffv12etachi} $d_3([v_1^2 \etachi]) = \eta^3 [\etachi]$;
\item \label{diffhbonebar} $d_3\left( \hbonebar\right) = \eta \ghbonebar $.
\end{enumerate}
\end{multicols}
\end{theorem}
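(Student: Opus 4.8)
The plan is to reduce everything to the $v_1$-localized spectral sequence for the Moore spectrum, whose $d_3$-differentials are completely known by \cite[Theorem 8.3.5]{BGH}, and then pull back along the ring map $v_1^{-1}r$ from \eqref{eq:sscompare}. By naturality it suffices to treat $G = \GG_2$; the cases $\SS_2$, $\GG_2^1$, $\SS_2^1$ follow from the vertical maps in \Cref{thm:G2coh}. The differentials (\ref{diffmu}) and the whole ``$\mu$-family'' behavior is governed by \Cref{lem:diffmu}: once we know $d_3(z)$ for a generator $z$, we get $d_3(\mu z) = \eta^4 z + \mu d_3(z)$, and since all classes in our range are $\W[k,\eta,\mu,\ldots]$-combinations of the generators named in \Cref{fig:table2}, together with the $d_3$-cycles from \Cref{thm:d3}, the seven listed differentials plus the Leibniz/module structure determine the whole $d_3$.

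\textbf{Key steps.} First, establish (\ref{diffmu}): $d_3(\mu) = \eta^4$. This is the special case $z = 1$ of \Cref{lem:diffmu}, which the paper cites as standard; alternatively it follows from the relation $c_4 k = \eta^4$ in $H^*(G_{24},\EE_*)$ together with the fact that $\mu$ detects $\alpha_3$ and $d_3(\alpha_3) = \eta\epsilon$ lifts appropriately — but invoking \Cref{lem:diffmu} directly is cleanest. Second, for (\ref{diffgchi}), (\ref{diffgwchi}), (\ref{diffv12wchi}), (\ref{diffv12etachi}): these all live over $\EE_0$, $\EE_4$, $\EE_{2}$ and reduce, under $v_1^{-1}r$, to statements in $v_1^{-1}H^*(\GG_2,\EE_*/2) \cong \F_4[v_1^{\pm 1},\eta,\sigma,\chi]/(\sigma^2,\chi^3)$. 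Here $\gchi \mapsto v_1^{-1}\eta\chi$ (up to units), $\wchi \mapsto \chi^2$, $[v_1^2\wchi]\mapsto v_1^2\chi^2$, $[\etachi]\mapsto \eta\chi$, $[v_1^2\etachi]\mapsto v_1^2\eta\chi$. From \cite[Theorem 8.3.5]{BGH} one reads off the corresponding $d_3$ in the localized SS, e.g. $d_3(\chi) = ?$ — actually $\chi$ itself may or may not survive; the correct input is that $d_3$ on $v_1^{-1}\eta\chi$, on $v_1^2\chi^2$, etc., are the known nonzero classes, and one checks they equal the $v_1^{-1}r$-images of $\eta\kchi$, $\eta^3\wchi$, $\eta k\wchi$, $\eta^3[\etachi]$ respectively (using $k \mapsto v_1^{-4}\eta^4$, $e\mapsto v_1^{-4}\sigma\chi^2$, $\kchi\mapsto v_1^{-4}\eta^4\chi$, and $\gchi \mapsto$ the appropriate $v_1$-inverted monomial, as in the proof of \Cref{thm:d3}). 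The target group in each case is a small $\F_2$-vector space listed explicitly in the ambient cohomology, and one argues as in \Cref{thm:d3}: the actual value of $d_3$ must lie in $\ker(v_1^{-1}r)$ added to the pulled-back class; combined with \Cref{lem:etanonzero} (which guarantees $\eta$-multiples of $v_1$-free classes are nonzero) this pins down $d_3$ on the nose. Third, for (\ref{diffhbone}) and (\ref{diffhbonebar}): $\hbone = [hb_1]$ maps under $v_1^{-1}r$ to $v_1^{-1}\sigma\eta$-type monomials (since $\sigma$ is detected by $v_1 b_1$), and $\ghbone$, $\ghbonebar$ to their $g$-multiples; the known $d_3$ in the localized SS on these, again matched against the target groups $H^{s+1}(\GG_2,\EE_6)$ via \Cref{fig:table2}, forces the stated formulas. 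The relations $[\etachi]\hbone = \eta\hbonebar$ and $[\etachi]\ghbone = \eta\ghbonebar$ recorded in \S\ref{sec:name} make (\ref{diffhbonebar}) a formal consequence of (\ref{diffhbone}) and the Leibniz rule once one knows $[\etachi]$ is a $d_3$-cycle (\Cref{thm:d3}): $d_3(\eta\hbonebar) = \eta\, d_3(\hbonebar)$ while $d_3([\etachi]\hbone) = [\etachi]d_3(\hbone) = [\etachi]\eta\ghbone = \eta[\etachi]\ghbone = \eta\cdot\eta\ghbonebar$... so one must check the $\eta$-torsion bookkeeping is consistent, which it is since $\eta\ghbonebar$ has order $2$.

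\textbf{Main obstacle.} The hard part will be the precise identification of the $v_1^{-1}r$-images of the non-ANSS generators ($\gchi$, $\gwchi$, $[v_1^2\wchi]$, $\hbone$, $\hbonebar$, and their $\eta$-, $k$-multiples that appear as targets) with explicit monomials in $\F_4[v_1^{\pm1},\eta,\sigma,\chi]/(\sigma^2,\chi^3)$, and then cross-referencing \cite[Theorem 8.3.5]{BGH} for the $d_3$ on each such monomial. This requires careful tracking of the unit ambiguities $a_0 \in \F_4^\times$, $a_1\in\F_4$ in the definitions \eqref{eq:list6} of $\hbonebar$, $\ghbonebar$ and the detection statement for $\sigma[\etachi]$ in \Cref{lem:sigmatimes}; one must verify these ambiguities do not affect whether the relevant $d_3$-target is zero or nonzero. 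A secondary subtlety is ruling out that $d_3$ could hit a class \emph{outside} the span of the named generators — but since \Cref{fig:Einf} and \Cref{fig:EinfS2} give a complete list of $E_2$ in the target bidegrees, and each such bidegree is a small explicit $\F_2$-module, this is a finite check. Once the differentials on generators are established, extending to all sources in $0 \leq t < 12$ is routine: every such class is a $d_3$-cycle times one of the seven generators (or is itself a $d_3$-cycle from \Cref{thm:d3}), so the Leibniz rule and \Cref{lem:diffmu} finish it.
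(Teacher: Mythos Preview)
Your overall strategy---reduce to $\GG_2$, invoke \Cref{lem:diffmu} for (\ref{diffmu}), and compare along $v_1^{-1}r$ with \cite[Theorem~8.3.5]{BGH} for the rest---matches the paper's and is correct for (\ref{diffgchi}), (\ref{diffgwchi}), (\ref{diffv12wchi}), where $v_1^{-1}r$ is injective on the target groups. But there is a genuine gap for (\ref{diffhbone}), (\ref{diffv12etachi}), and (\ref{diffhbonebar}).

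The target groups $H^5(\GG_2,\EE_8)$ and $H^6(\GG_2,\EE_8)$ contain $\nu^2$-multiples (namely $\nu^2 e$, respectively $\nu^2 k$ and $\nu^2 e\zeta$), and since $v_1\nu=0$ in $H^*(G_{24},\EE_*/2)$ these lie in $\ker(v_1^{-1}r)$. So the localized comparison only yields, e.g., $d_3\big(\hbone\big)=\eta\ghbone+a\,\nu^2 e$ with $a\in\F_2$ undetermined, and similarly for the other two. You grouped (\ref{diffv12etachi}) with the ``easy'' cases, but its target is the same $H^5(\GG_2,\EE_8)$, so it has the same ambiguity. Your Leibniz argument for (\ref{diffhbonebar}) does not close the gap either: from $[\etachi]\hbone=\eta\hbonebar$ you only get $\eta\,d_3\big(\hbonebar\big)=\eta^2\ghbonebar$, and both $\nu^2 k$ and $\nu^2 e\zeta$ are annihilated by $\eta$ (since $\eta\nu=0$), so they are not ruled out.

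The paper's fix is to multiply by $\nu$ instead. One shows $\nu\hbone=0$ in $H^3(\SS_2^1,\EE_{10}/2)$ and $\nu\hbonebar=0$ in $H^4(\GG_2,\EE_{10})$ by observing these groups are $v_1$-torsion-free in the relevant degrees, so $\eta$-multiplication is injective on them by \Cref{lem:etanonzero}; then $d_3(\nu\cdot\text{source})=0$ forces the $\nu^2$-coefficients to vanish. For (\ref{diffhbonebar}) there is a further wrinkle: after this step one still needs $\nu^3 e\zeta\neq 0$ in $H^7(\GG_2,\EE_{12})$, which requires analyzing the $\ZZ_2$-action on $H^6(\SS_2^1,\EE_{12}/2)$---a computation at $t=12$, just beyond the range already tabulated. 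This is the step your sketch is missing entirely.
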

\begin{proof}
As usual, it suffices to prove the claims for $G=\GG_2$.
The differential \eqref{diffmu} is determined by \cref{lem:diffmu}.
The differentials \eqref{diffgchi}--\eqref{diffv12wchi} can be shown by mapping further to the $v_1$-localized spectral sequence, as $v_1^{-1}r$ is injective on their target groups. So, these differentials follow directly from \cite[Lemma 8.3.3 or Theorem 8.3.5]{BGH}. 

For \eqref{diffhbone}--\eqref{diffhbonebar} the target group for the $d_3$-differentials contains $\nu^2$-multiples, which are in the kernel of $v_1^{-1}r$.
Comparing with the $v_1$-localized spectral sequence we get
\begin{enumerate}
\item[(5)]  $d_3\left( \hbone\right) = \eta \ghbone \mod \nu^2e$
\item[(6)]  $d_3([v_1^2 \etachi]) = \eta^3 [\etachi] \mod \nu^2e$
\item[(7)] $d_3\left( \hbonebar\right) = \eta \ghbonebar \mod (\nu^2k, \nu^2e \zeta)$. 
\end{enumerate}

For \eqref{diffhbone}, we have
\[d_3\left(\hbone\right) =\eta \ghbone  + a\nu^2e\]
for some coefficient $a \in \F_2$. By naturality, the same differential happens in the spectral sequence
\begin{align}\label{eq:s21ss}
E_2^{*,*}=H^*(\SS_2^1, \EE_*/2) \Longrightarrow \pi_*\EE^{h\SS_2^1}\wedge S^0/2.
\end{align}
Then $\nu$-multiplication gives that
\[d_3\left(\nu \hbone\right) =  a\nu^3e,\]
since the differentials are $\nu$-linear and $\nu \eta=0$.

The class $\nu^3e$ is non-trivial on the $E_2$-page of \eqref{eq:s21ss}.
However, we will show that
\[\nu \hbone=0 \in H^3(\SS_2^1, \EE_{10}/2),\]
which will imply that $a=0$. 
Indeed, \Cref{thm:Einfmod2} implies that there is no $v_1$-torsion in $H^3(\SS_2^1, \EE_{10}/2)$. Then 
\Cref{lem:etanonzero} implies that
$\eta \colon  H^3(\SS_2^1, \EE_{10}/2)  \to H^4(\SS_2^1, \EE_{12}/2)$ is injective.

The differential \eqref{diffv12etachi} is proved analogously.

For \eqref{diffhbonebar}, we again have
\[d_3\left( \hbonebar\right) = \eta \ghbonebar + a\nu^2k + b\nu^2e \zeta\]
for $a,b\in \F_2$. We again multiply by $\nu$ to get
\[d_3\left( \nu \hbonebar\right) =  a\nu^3k + b\nu^3e \zeta.\]
Since $H^4(\GG_2, \EE_{10})$ maps injectively into $v_1^{-1}H^{4}(\GG_2, \EE_{10}/2)$, we must have that 
\[0=\nu  \hbonebar \in H^4(\GG_2, \EE_{10}) \]
since all classes in $H^4(\GG_2, \EE_{10}) $ support non-trivial $\eta$ multiplications by \cref{lem:etanonzero}. 

So, $a\nu^3k + b\nu^3e \zeta =0$. Mapping to the spectral sequence \eqref{eq:s21ss}, naturality gives a differential 
$ d_3\left( \nu \hbonebar\right) =  a\nu^3k $ in  \eqref{eq:s21ss}. However, since $\nu \hbonebar =0$, we conclude $a=0$. Therefore, to prove that $b=0$, it's enough to prove that $\nu^3e \zeta\neq0$ in $H^7(\GG_2, \EE_{12})$. Furthermore, it's enough to prove this in $H^7(\SS_2, \EE_{12}/2)$.

We have an exact sequence
\[ 0 \to H^6(\SS_2^1, \EE_{12}/2)_{\ZZ_2} \xrightarrow{\zeta} H^7(\SS_2, \EE_{12}/2) \to H^7(\SS_2^1, \EE_{12}/2)^{\ZZ_2} \to 0, \]
so we need to analyze the $\ZZ_2$-action on 
\begin{align}\label{eq:HSixETwelve}
\begin{split}
H^6(\SS_2^1, \EE_{12}/2) &\cong \F_4\{ kv_1\epsilon ,  k \hbtwo, k\btwobar\} \oplus \F_4\{ \nu^3 e\}  \\
& \quad \quad \oplus \F_4\{\eta^6 , \eta^5v_1\chi, \eta^3 \gbone, \eta^4 v_1^2 \chi^2, \eta^3 \hbonebar, v_1^3\eta^3 e \}
\end{split}
\end{align}
The first and second summands generate the $v_1$-power torsion, while the third summand consists of $v_1$-free classes. 
That $\eta$, $v_1$, $\chi$, $\hbonebar$ and $e$ are fixed  under the $\ZZ_2$  action was shown in \cref{sec:Z2action}. The class $\gbone \in H^3(\SS_2^1, \EE_{6}/2)$ is fixed since it is fixed after $v_1$-localization, and $H^3(\SS_2^1, \EE_{6}/2)$ maps injectively into $v_1^{-1}H^3(\SS_2^1, \EE_{*}/2)$. This shows that the action of $\Z_2$ on the third summand is trivial.

Since $ kv_1\epsilon $ and $\nu^3 e$ are products of $\Z_2$-fixed classes, they are fixed. Since $k$ is fixed, we can analyze the action on $k \hbtwo$ and $k\btwobar$ by looking at the action on
\begin{align*}\hbtwo, \btwobar \in H^2(\SS_2^1, \EE_{12}/2)  &\cong \F_4\{v_1\epsilon, \hbtwo, \btwobar \} \\
 & \quad \quad \oplus \F_4\{v_1^4 \eta^2, \eta v_1^5 \chi, v_1^3\hbone, v_1^6 \chi^2, v_1^2 \sigma \chi\}
\end{align*}
The second summand is again generated by $v_1$-free classes, so, since $\hbtwo$ and $\btwobar$ are $v_1$-power torsion, we have 
\begin{align*} \pi_* \hbtwo &= a_1 v_1\epsilon+ a_2 \hbtwo + a_3 \btwobar  \\
\pi_* \btwobar&= b_1 v_1\epsilon+ b_2 \hbtwo + b_3 \btwobar 
\end{align*}
for $a_i,b_i \in \F_4$. Multiplying by $k$, we have that the first summand
in \eqref{eq:HSixETwelve} is  a $\ZZ_2$-submodule. 
Hence 
 the class $\nu^3 e$ is non-trivial in the coinvariants $H^6(\SS_2^1, \EE_{12}/2)_{\ZZ_2}$. Therefore, the class $\zeta \nu^3 e \neq 0$ in $H^7(\SS_2, \EE_{12}/2)$, which finishes the proof.
\end{proof}

\newcommand{\etalchar}[1]{$^{#1}$}

%\bibliographystyle{alphaurl}
%\bibliography{coh-bib}

\end{document}